\theoremstyle{plain} 
\newtheorem{thm}{Theorem}[section]
\newtheorem{lem}[thm]{Lemma}
\newtheorem{prop}[thm]{Proposition}
\newtheorem{cor}[thm]{Corollary}
\theoremstyle{plain}
\newtheorem{defn}[thm]{Definition}
\theoremstyle{remark}
\newtheorem{rem}{Remark}[section]
\DeclareMathOperator{\diver}{div}
\DeclareMathOperator{\push}{\! _\# \! }
\DeclareMathOperator*{\argmin}{argmin}
\DeclareMathOperator{\supp}{spt}
\DeclareMathOperator{\loc}{loc}
\newcommand{\stint}[3]{\int_{#1}^{#2}\!\!\!\int_{#3}}
\numberwithin{equation}{section}
\title{
Formulation of Chimera Gradient Flows for Chemotaxis Systems with Indirect Signal Production and Degenerate Diffusion}
\author{Yoshifumi MIMURA\thanks{mimura.yoshifumi@nihon-u.ac.jp}
\\ 
Department of Mathematics, 
College of Humanities \& Sciences, 
 Nihon University
 \\ 
 3-25-40 Sakurajosui Setagaya-ku Tokyo 156-8550, Japan
}
\date{}
\begin{document}

\maketitle

\begin{abstract}
A parabolic system of three unknown functions, not expressible as gradient flows, is treated as three coupled gradient flows. For each unknown function, the minimizing movement scheme is used to construct a time-discrete approximate solution. Unlike standard minimizing movement scheme for gradient flows, the relative compactness of the time-discrete approximate solution with respect to the time step is not inherently guaranteed. However, the existence of a Lyapunov functional ensures this relative compactness, leading to the existence of time-global solutions.
\end{abstract}

\noindent
{\sc keywords}:{\ Keller-Segel; Wasserstein distance; 
Minimizing movement scheme; Global existence; degenerate diffusion}

\noindent
{Mathmatics Subjet Classification: 35K65, 35K40, 47J30, 35Q92, 35B33}.

\section{Introduction}
We consider the following system.
\begin{equation} 
\label{P}
\begin{cases}
\begin{split} 
\partial_t u &= \Delta u^m - \nabla \cdot (u\nabla v),
 & \text{$x \in \mathbb{R}^d,\ t>0$}, \\
\varepsilon_1 \partial_t v &= \kappa_1 \Delta v - \gamma_1 v +  w, 
& \text{$x \in \mathbb{R}^d,\ t>0$}, \\ 
\varepsilon_2 \partial_t w &= \kappa_2 \Delta w -  \gamma_2 w  +  u, 
& \text{$x \in \mathbb{R}^d,\ t>0$}, \\
u(x,0)& = u_0 \geq 0, \ 
\varepsilon_1 v(x,0)= \varepsilon_1 v_0 \geq 0, \ 
\varepsilon_2 w(x, 0)=\varepsilon_2 w_0\geq 0, 
  &  \text{$x \in \mathbb{R}^d$}, 
\end{split}
\end{cases}
\end{equation}
where $\varepsilon_i, \kappa_i>0$ and $\gamma_i \geq 0$ for $i=1,2$. 
In this paper, we consider the case where
\begin{equation*} 
\begin{split} 
m\geq 2-\frac{4}{d}, \quad d\geq 5. 
\end{split}
\end{equation*}
Assuming the integrability of $\nabla u^m$ and $u\nabla v$, 
it follows from the first equation of system \eqref{P} that 
the conservation law of the mass 
\begin{equation*}
\begin{split}
M:=\int_{\mathbb{R}^d} u(x)\,dx=\int_{\mathbb{R}^d} u_0(x)\,dx
\end{split}
\end{equation*}
is satisfied.

The objectives of this paper are threefold:
\begin{itemize}

\item[1.]
to provide sufficient conditions for the existence of global-in-time weak solutions to problem \eqref{P},

\item[2.]
to demonstrate an application of the minimizing movement approach to a system of evolution equations that is not a gradient flow,

\item[3.]
to propose a method that allows for handling Lyapunov functionals without concern for the smoothness of the solution.

\end{itemize}

Let us briefly explain the system \eqref{P}. 
In conditions where 
\begin{equation} \label{ks}
\begin{split} 
\text{$\kappa_1 \neq 0, \varepsilon_2=\kappa_2=0$
\quad \text{ or }\quad 
$\kappa_2 \neq 0, \varepsilon_1=\kappa_1=0$}
\end{split}
\end{equation}
as specified in equation \eqref{P}, the system is well-known as the Keller-Segel system. The existence of a critical mass threshold $M_c$
has been predicted and studied: if $\|u_0\|_{L^1}<M_c$, 
solutions exist globally in time, whereas for any 
$M > M_c$, there exists a solution with $\|u_0\|_{L^1} =M$ 
that blows up in finite time. 
The Keller-Segel system, under the conditions of \eqref{ks}, 
is further classified into fully parabolic and parabolic-elliptic types 
depending on whether 
\begin{equation*} 
\begin{split} 
\varepsilon_1+\varepsilon_2\neq 0 
\quad \text{ or }\quad \varepsilon_1+\varepsilon_2=0, 
\end{split}
\end{equation*}
respectively. 
Research concerning the aforementioned threshold $M_c$ 
has been reported for both cases. 
See for instance, \cite{b1998, b-c-c, b2015, b-l, g-z, ym1, ym2, n-s-y} 
for time global existence, 
\cite{b-z, h-i-y, h-v1, i-l-m, l-m, o-s-w, o-w} for the existence of blow-up solutions, 
\cite{b-c-l, b-d-p, c-c, s-t} for the threshold. 
Additionally, for a review of the Keller-Segel system up to recent years, see \cite{a-t} 
and the references therein.
Furthermore, the method of constructing time-global solutions 
by viewing the Keller-Segel system as a gradient flow can be considered a precursor to the approaches in this paper. For the parabolic-elliptic case with $m=1$ and $d=2$, 
it is formulated as a gradient flow in \cite{b-c-c}, 
and for the fully parabolic case with $m=1$ and $d=2$ in 
\cite{b2015}, 
and for the fully parabolic case with $m>1$ in \cite{b-l, ym1, ym2, ym3}. 
Additionally, while not specific to the so-called minimal models, see \cite{k-w} for an application of the gradient flow approach to the multi-species Keller-Segel system.

To view the Keller-Segel system as a gradient flow, the concept of Wasserstein distance is required. The Wasserstein distance is a notion of distance applicable to probability measures, and was utilized by Jordan-Kinderlehrer-Otto \cite{j-k-o} in constructing solutions to the Fokker--Planck equation. This method is referred to as the JKO-Scheme or the Minimizing Movement Scheme (MMS). the MMS is often used in a more general context than the original work of Jordan-Kinderlehrer-Otto, and it involves discussions in abstract metric spaces as seen in \cite{a-g-s}. Both the JKO-Scheme and 
the MMS involve discretizing the time interval, variationally constructing discrete solutions at each time step, and then obtaining the existence of solutions in the limit as the time discretization approaches zero. Due to their variational nature, by focusing on the minimal point,  solutions to partial differential equations can be derived via the Euler-Lagrange equations, and by focusing on the minimum values, the existence of curves of maximal slope, which do not require the concept of derivatives, can be established as alternatives to solutions of partial differential equations. In any case, the resulting partial differential equations are of the form known as gradient flows.

However, the concept of a gradient flow is inherently unstable. For instance, even a slight perturbation to the first or second equation of the Keller-Segel system could disrupt its structure as a gradient flow. In mathematical modeling, inaccuracies and subsequent model revisions are unavoidable issues, and the MMS, being specialized for gradient flows, has inherent limitations in terms of versatility. A notable example is the problem \eqref{P} where coefficients are not set to zero; unlike the Keller-Segel system, 
\eqref{P} under these conditions no longer fits within the framework of gradient flows. Hence, this paper proposes a slight modification to MMS to variationally construct solutions to problem \eqref{P}. This minor modification uniquely does not require the knowledge of Lyapunov functionals or their lower bounds during the construction phase of approximate solutions. Moreover, it not only addresses equations that are non-gradient in flow, but also significantly diverges from the MMS, which constructs solutions under the assumption that the Lyapunov functional 
is already known.

The existence of a threshold similar to that in the Keller-Segel system 
has been confirmed for problem \eqref{P}, 
specifically reported when $m=1$ and $d=4$. 
Fujie-Senba \cite{f-s} have proved
that in a bounded domain, under spherical symmetry with the boundary conditions 
\begin{equation*}
\frac{\partial u}{\partial \bm{\nu}}
=\frac{\partial v}{\partial \bm{\nu}}=\frac{\partial w}{\partial \bm{\nu}}=0,  
\quad x \in \partial \Omega,\ t>0,  
\end{equation*}
or 
the boundary conditions 
\begin{equation*}
\frac{\partial u}{\partial \bm{\nu}}- u\frac{\partial v}{\partial \bm{\nu}}=v
=w=0,  
\quad x \in \partial \Omega,\ t>0
\end{equation*}
without spherical symmetry, 
if $\|u_0\|_{L^1}<(8\pi)^2$, 
the solution remains global in time. 
Furthermore, Fujie-Senba \cite{f-s2}  
have proved the existence of solutions that blow up in finite time 
when $M$ exceeds $(8\pi)^2$ and 
is not a natural number multiple of $(8\pi)^2$, 
satisfying $\|u_0\|_{L^1} =M$. 
Additionally, Hosono-Ogawa \cite{h-o} deal with problems that are reduced 
to \eqref{P} through variable transformations, 
demonstrating that global-in-time solutions exist under the conditions 
$m=1$, $\varepsilon_1=\varepsilon_2=0$ and $d=4$ 
when $\|u_0\|_{L^1} < (8\pi)^2$. 
Also, Hosono-Lauren\c cot \cite{h-l} showed that 
when $m=1$, $\varepsilon_1=\varepsilon_2=1$ and $d=4$, 
time-global solutions exist if $\|u_0\|_{L^1}<(8\pi)^2$. 

Fujie-Senba \cite{f-s}, Hosono-Ogawa \cite{h-o} 
and Hosono-Lauren\c cot \cite{h-l} 
derive the Lyapunov functional or the modified Lyapunov functional  
through direct calculation from the existence of classical local solutions. 
Roughly speaking, the existence of time-global solutions 
requires the boundedness from below of the functional, 
for which Fujie-Senba and Hosono et al. apply Adams-type inequality and Brezis-Merle inequality, respectively. 
Contrary to their analysis, our problem includes a degenerate diffusion term in the first equation, which precludes the expectation of classical solutions. Consequently, deriving and utilizing the Lyapunov functional is challenging. Therefore, instead of direct differentiation, we derived the Lyapunov functional from its variational properties and obtained the existence of time-global solutions and the energy inequality as demonstrated below.

Before describing the main theorem of this paper, let us first clarify the concept of weak solutions that will be addressed herein.

\begin{defn}[time-global weak solutions]\label{weak solutions}
A triple $(u,v,w)$ of non-negative functions is defined as a 
\textbf{time-global weak solution} of \eqref{P} if it satisfies the following conditions: 

\begin{itemize}
\item[\textbf{\rm(i)}] For any $T>0$, the functions meet these regularity conditions:
\begin{itemize}
\item $u \in L^{\infty}(0,T; (L^1 \cap L^m)(\mathbb{R}^d))$,
\item $v \in L^{\infty}(0,T; H^1(\mathbb{R}^d))$,
\item $w \in L^{\infty}(0,T; L^2(\mathbb{R}^d))$,
\item $\|u(t)\|_{L^1}=\|u_0\|_{L^1}$ for all $t \in [0,T]$, \\
\item $\displaystyle \sup_{t \in [0,T]}\int_{\mathbb{R}^d}|x|^2\,u\,dx<\infty$. 
\end{itemize}

\item[\textbf{\rm(ii)}] For any $T>0$, the functions demonstrate additional regularity:
\begin{itemize}
    \item $u \in L^{2}(0,T;L^2(\mathbb{R}^d))$,
    \item $u^m \in L^{1}(0,T;W^{1,\frac{d}{d-1}}(\mathbb{R}^d))$,
    \item $v \in L^{2}(0,T;W^{3,2}(\mathbb{R}^d))$,
    \item $w \in L^{2}(0,T;W^{2,2}(\mathbb{R}^d))$.
\end{itemize}

\item[\textbf{\rm(iii)}] 
As $t$ approaches $0$ from above, the following initial conditions are met:
\begin{itemize}
    \item $\lim\limits_{t \downarrow 0}\mathcal{W}_2(u(t), u_0) = 0$,
    \item $\lim\limits_{t \downarrow 0}\|v(t) - v_0\|_{H^1} = 0$,
    \item $\lim\limits_{t \downarrow 0}\|w(t) - w_0\|_{L^2} = 0$,
\end{itemize}
where $\mathcal{W}_2$ denotes the Wasserstein distance defined in Section 2.

\item[\textbf{\rm(iv)}] The triplet $(u,v,w)$ satisfies the following system 
of equations
\begin{equation*}
\begin{cases}\vspace{2mm}
\displaystyle 
\stint{0}{\infty}{\mathbb{R}^d}
[u\partial_t \varphi- \langle \nabla u^m - u \nabla v, \nabla \varphi\rangle  ]\,dxdt=0, 
\quad \text{ for all }\varphi \in C^{\infty}_c(\mathbb{R}^d \times (0,\infty)), 
\\ \vspace{2mm}
\displaystyle 
\varepsilon_1 \partial_t v = 
\kappa_1 \Delta v - \gamma_1 v + w, 
\quad \text{ a.e. in }\mathbb{R}^d \times (0, \infty), 
\\
\vspace{2mm}
\displaystyle 
\varepsilon_2 \partial w= 
\kappa_2 \Delta w - \gamma_2 w + u, 
\quad \text{ a.e. in }\mathbb{R}^d \times (0, \infty). 
\end{cases}
\end{equation*}
\end{itemize}
\end{defn}
In this paper, 
we establish 
the proof of the existence of time-global weak solutions 
by constructing approximate solutions using variational methods and obtaining 
the solution as a limit of these approximations. 
To address the technical issues related to the convergence of 
the approximate solutions, we make the following assumptions: 
\begin{equation}\label{uniform L^2}
\left (1< m < 2 \ \ \text{ and }\ \ \frac{4m+ 2(m-1)d}{d(2-m)} \geq 2\right ) 
\ \ \text{ or } \ \ m \geq 2.
\end{equation}
When $m>2-\frac{4}{d}$, 
 the existence of time-global weak solutions can be established 
 without any restrictions on $\|u_0\|_{L^1}$. 
This is due to the fact that under this condition, 
the Lyapunov functional is always bounded from below.

\begin{thm}[global existence in sub-critical exponent]\label{subcritical}
Assume that $\kappa_1\kappa_2\varepsilon_1\varepsilon_2 \neq 0$ and 
$m>2-\frac{4}{d}$ along with condition \eqref{uniform L^2}. 
Then, for any non-negative functions $u_0$ satisfying 
$u_0 \in (L^1\cap L^m)(\mathbb{R}^d)$ and $|x|^2 u_0 \in L^1(\mathbb{R}^d)$,  
$v_0 \in W^{2,2}(\mathbb{R}^d)$, and $w_0 \in L^2(\mathbb{R}^d)$, 
there exists a time-global weak solution to \eqref{P}. 
\end{thm}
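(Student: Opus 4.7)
The plan is to realize the paper's chimera gradient-flow program as three coupled backward-Euler variational subproblems at each time step $\tau>0$. Given the previous iterate $(u^{n-1}_\tau, v^{n-1}_\tau, w^{n-1}_\tau)$, I would define $u^n_\tau$ as the Jordan--Kinderlehrer--Otto minimizer, over nonnegative densities of mass $\|u_0\|_{L^1}$, of
\[
u\mapsto \frac{1}{2\tau}\mathcal{W}_2^2(u, u^{n-1}_\tau)+\frac{1}{m-1}\int u^m\,dx - \int u\,v^{n-1}_\tau\,dx,
\]
and $v^n_\tau$, $w^n_\tau$ as the $L^2$-MMS minimizers of
\[
v\mapsto \frac{\varepsilon_1}{2\tau}\|v-v^{n-1}_\tau\|_{L^2}^2 + \frac{\kappa_1}{2}\|\nabla v\|_{L^2}^2 + \frac{\gamma_1}{2}\|v\|_{L^2}^2 - \int v\,w^{n-1}_\tau\,dx,
\]
and the analogous functional coupling $w$ to $u^{n-1}_\tau$. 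Existence at each step follows from lower semicontinuity and coercivity (with $\int u^m$ supplying coercivity for the JKO step and the Wasserstein term preventing mass escape), and the Euler--Lagrange relations are implicit-Euler-type approximations of the three equations of \eqref{P}.

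The decisive step is to discover a Lyapunov functional \emph{at the discrete level}, rather than insert one by hand. Comparing each minimum with a natural competitor (typically the previous iterate) produces three one-step inequalities of the form ``energy drop $+$ dissipation $\leq$ coupling error''. Summing them with positive weights chosen so that the coupling errors telescope up to a manageable remainder, I would obtain a global inequality
\[
\mathcal{L}(U^n)+\tau D^n\leq \mathcal{L}(U^{n-1}),\qquad U^n=(u^n_\tau,v^n_\tau,w^n_\tau),
\]
for some functional
\[
\mathcal{L}(u,v,w)=\frac{1}{m-1}\int u^m+\frac{\kappa_1}{2}\int|\nabla v|^2+\frac{\gamma_1}{2}\int v^2+a\Bigl(\frac{\kappa_2}{2}\int|\nabla w|^2+\frac{\gamma_2}{2}\int w^2\Bigr) - \int uv - b\int vw,
\]
with constants $a,b>0$ depending on $\varepsilon_i,\kappa_i,\gamma_i$, together with a positive dissipation $D^n$ controlling the parabolic quantities listed in Definition \ref{weak solutions}(ii).

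The subcritical hypothesis $m>2-\tfrac{4}{d}$ is used precisely to bound $\mathcal{L}$ from below uniformly in $\|u_0\|_{L^1}$. Expressing $v$ through the resolvent $(\kappa_1(-\Delta)+\gamma_1)^{-1}$ of $w$, hence iteratively of $u$, the coupling integrals are estimated via the Hardy--Littlewood--Sobolev inequality and a Gagliardo--Nirenberg interpolation between $L^1$ and $L^m$; the resulting exponent of $\int u^m$ is strictly less than $1$ exactly when $m>2-\tfrac{4}{d}$, so the negative cross terms are absorbed by the positive $L^m$ and $H^1$ contributions. Summing the Lyapunov inequality over $n$ then gives uniform-in-$\tau$ control of $u^n_\tau$ in $L^m$, of $v^n_\tau$ in $H^1$, and of $w^n_\tau$ in $L^2$, together with the integrated dissipation that, via hypothesis \eqref{uniform L^2} and bootstrapping of the two linear equations, yields the $L^2_tL^2_x$ bound on $u$, the $L^1_tW^{1,d/(d-1)}_x$ bound on $u^m$, and the $W^{3,2}_x$, $W^{2,2}_x$ regularity of $v,w$. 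A second-moment estimate obtained by testing the JKO step against translations provides tightness, and mass conservation is immediate from the Euler--Lagrange equation for $u$.

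The final step is the limit $\tau\downarrow 0$. Piecewise-constant and piecewise-affine interpolations of the discrete iterates satisfy equicontinuity in $\mathcal{W}_2$ for $u$ (from the $\tau$-weighted Wasserstein velocity in the JKO step) and in $L^2$ for $v,w$, so an Aubin--Lions-type compactness argument delivers a converging subsequence whose limit inherits all the regularity demanded by Definition \ref{weak solutions}, with the initial-data conditions following from the equicontinuity. The Euler--Lagrange relations of the three subproblems then pass to the weak formulation \textbf{(iv)} in the standard way. I expect the principal obstacle to be the identification of the chemotactic flux $u\nabla v$ in the limit: the JKO step only supplies the velocity $\nabla(\tfrac{m}{m-1}u^{m-1})-\nabla v$ tested against $u$, so rewriting this as $\nabla u^m - u\nabla v$ and passing to the limit demands strong convergence of $u$ in $L^2_tL^2_x$ paired with weak convergence of $\nabla v$ in $L^2_tL^2_x$, which is exactly what condition \eqref{uniform L^2} together with the bootstrapped regularity of $v$ is engineered to provide.
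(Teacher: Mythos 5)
Your overall program---three coupled minimizing-movement steps, a discrete Lyapunov functional, uniform estimates, compactness, and passage to the limit---matches the paper's. The genuine gap is in the middle: your candidate Lyapunov functional and the way you plan to obtain and lower-bound it both fail. The functional you write down,
\[
\mathcal{L}(u,v,w)=\frac{1}{m-1}\int u^m+\frac{\kappa_1}{2}\int|\nabla v|^2+\frac{\gamma_1}{2}\int v^2+a\Bigl(\frac{\kappa_2}{2}\int|\nabla w|^2+\frac{\gamma_2}{2}\int w^2\Bigr) - \int uv - b\int vw,
\]
controls $v$ only in $H^1$. Absorbing $-\int uv$ into $\int u^m$ plus $\|\nabla v\|_{L^2}^2$ via H\"older--Sobolev--interpolation requires $m>2-2/d$, strictly stronger than the hypothesis $m>2-4/d$. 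To reach the full subcritical range one needs the functional itself to carry $\|\Delta v\|_{L^2}^2$, since the right estimate is $\int uv\lesssim \|u\|_{L^1}^{1-\theta}\|u\|_{L^m}^{\theta}\|\Delta v\|_{L^2}$ with $2\theta<m$ precisely when $m>2-4/d$; this is exactly why the paper's $\mathcal{L}$ in \eqref{Lyapunov} contains $\frac{\kappa_1\kappa_2}{2}\|\Delta v\|_{L^2}^2$ and $\frac{\varepsilon_2}{2\varepsilon_1}\|\kappa_1\Delta v-\gamma_1 v+w\|_{L^2}^2$ rather than $\|\nabla w\|^2$ and $\int vw$. Your fallback---``expressing $v$ through the resolvent of $w$, hence iteratively of $u$, and estimating $\int uv$ by Hardy--Littlewood--Sobolev''---is circular at this point: when proving a lower bound for $\mathcal{L}$ over the admissible class $X_M$ the arguments $(u,v,w)$ are free, so one cannot invoke the PDE to tie $v$ to $w$ and $w$ to $u$.

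The derivation method is also not the one that works here. Summing the three one-step variational inequalities with weights $1,1,a$ produces coupling terms $\int(u^n-u^{n-1})v^{n-1}$, $\int(v^n-v^{n-1})w^{n-1}$, $a\int(w^n-w^{n-1})u^{n-1}$ that do not telescope into a decreasing energy of the form you posit. The paper instead uses \emph{only} the $u$-step minimality comparison, which after a discrete product rule reduces to controlling $\langle u_\tau^{k-1},\partial_t v_\tau^k\rangle_{L^2}$; it then invokes the $v$- and $w$-step Euler--Lagrange equations as exact identities to substitute $u_\tau^{k-1}=(\varepsilon_2\partial_t-\kappa_2\Delta+\gamma_2)(\varepsilon_1\partial_t-\kappa_1\Delta+\gamma_1)v_\tau^k$, and the resulting summation by parts is what generates the $\|\Delta v\|^2$ and $\|\kappa_1\Delta v-\gamma_1 v+w\|^2$ terms together with the correct discrete dissipation. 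This substitution requires the Gauss--Seidel ordering of the paper's scheme \eqref{mm} (solve for $w_\tau^k$ from $u_\tau^{k-1}$, then $v_\tau^k$ from the freshly computed $w_\tau^k$, then $u_\tau^k$ from the freshly computed $v_\tau^k$). Your fully decoupled scheme ($u^n$ from $v^{n-1}$, $v^n$ from $w^{n-1}$, $w^n$ from $u^{n-1}$) misaligns the Euler--Lagrange indices by one and breaks the cancellation, so the Lyapunov decrease would not close.
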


When $m=2-\frac{4}{d}$, 
a mass threshold appears, and the existence of time-global weak solutions 
is conditional upon $\|u_0\|_{L^1} <M_*$. 
Furthermore, since condition \eqref{uniform L^2} is satisfied 
for $d \geq 6$, we can summarize without explicit mention of 
\eqref{uniform L^2} as follows: 
\begin{thm}[global existence in critical exponent] \label{critical}
Assume that $\kappa_1\kappa_2\varepsilon_1\varepsilon_2 \neq 0$ 
and 
$m=2-\frac{4}{d}$,  and $d \geq 6$. 
Then, for any non-negative functions $u_0$ satisfying 
$u_0 \in (L^1\cap L^m)(\mathbb{R}^d)$ and $|x|^2 u_0 \in L^1(\mathbb{R}^d)$, 
$v_0 \in W^{2,2}(\mathbb{R}^d)$, and $w_0 \in L^2(\mathbb{R}^d)$, 
there exists a time-global weak solution to problem \eqref{P}, 
provided that 
\begin{equation*} 
\|u_0\|_{L^1} < M_*, 
\end{equation*}
where $M_*$ is defined by 
\begin{equation} \label{critical mass}
M_*:=
\left ( \frac{2d}{d-4}\frac{\kappa_1 \kappa_2}{C_*^2} \right)^{\frac{d}{4}}
\end{equation}
and 
\begin{equation} \label{dividing constant}
\begin{split} 
C_*^2&: =
\sup_{(u,v) 
\in (L^1 \cap L^m)(\mathbb{R}^d) \times \dot{H}^2(\mathbb{R}^d)} 
\frac{
\displaystyle \int_{\mathbb{R}^d} 
u(-\Delta)^{-2}u\,dx}
{\|u\|_{L^1}^{\frac{4-(2-m)d}{d(m-1)}}\|u\|_{L^m}^{\frac{m(d-4)}{d(m-1)}}}. 
\end{split}
\end{equation}
\end{thm}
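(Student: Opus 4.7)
The plan is to mirror the construction used for Theorem~\ref{subcritical}. I would rerun the coupled minimizing--movement scheme -- a Wasserstein JKO step for $u$ together with $L^{2}$-type variational steps for $v$ and $w$ -- to produce a discrete approximation $(u_\tau,v_\tau,w_\tau)$. Because this scheme is built without presupposing a Lyapunov functional, the discrete construction, the Wasserstein estimates on $u_\tau$, and the short-time $L^{2}$ estimates on $v_\tau,w_\tau$ from the subcritical proof transfer verbatim. The critical exponent $m=2-4/d$ intervenes only in the step that turns the discrete energy dissipation into a uniform-in-$\tau$ bound on $\|u_\tau\|_{L^{m}}$, $\|\nabla v_\tau\|_{L^{2}}$ and $\|w_\tau\|_{L^{2}}$, and it is here that the mass restriction $\|u_0\|_{L^{1}}<M_*$ is needed.

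The new work is to exhibit the Lyapunov functional and prove it is bounded from below under that restriction. I would look for a functional of the schematic form
$$\mathcal F(u,v,w)=\frac{1}{m-1}\int u^{m}\,dx-\int uv\,dx+\mathcal E(v,w),$$
where $\mathcal E(v,w)$ is a sum of nonnegative quadratic forms in $v,w,\nabla v,\nabla w$ whose coefficients are tuned to $\kappa_i,\gamma_i,\varepsilon_i$, and extract this functional from the discrete optimality conditions of the $v$- and $w$-steps. Completing the square in $\mathcal E(v,w)-\int uv$ against the elliptic parts of the second and third equations of \eqref{P} reduces the cross term, modulo absorbable nonnegative remainders, to $-\tfrac{1}{\kappa_1\kappa_2}\int u(-\Delta)^{-2}u$. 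Substituting $m=2-4/d$ into \eqref{dividing constant}, which causes the exponents in the denominator to simplify, and using mass conservation $\|u(t)\|_{L^{1}}=\|u_0\|_{L^{1}}$, the net coefficient of $\|u\|_{L^{m}}^{m}$ in $\mathcal F$ is nonnegative exactly when $\|u_0\|_{L^{1}}<M_*$ with $M_*$ given by \eqref{critical mass}.

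Once this uniform lower bound is in place, the remainder of the proof is identical to the subcritical case: a combination of Aubin--Lions-type compactness for $u_\tau^{m}$, Wasserstein equicontinuity for $u_\tau$, and strong $L^{2}$ compactness for $v_\tau,w_\tau$ produces a limit $(u,v,w)$; the discrete Euler--Lagrange equations for the three variational steps pass to the limit, yielding the weak formulation in Definition \ref{weak solutions}(iv); and the regularity (i)--(ii) and initial traces (iii) follow from the usual estimates on the approximating sequence.

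The principal obstacle is the sharp lower bound. Because the modified MMS is intentionally run without the Lyapunov functional being known in advance, one must first derive the correct $\mathcal F$ from the discrete optimality conditions, and then perform the square completion tightly enough that the constant $C_*^{2}$ in \eqref{dividing constant} -- rather than a larger constant -- ends up controlling $\int uv$; any looseness at this step would produce a strictly smaller threshold than the stated $M_*$. A secondary technical point is verifying that the nonnegative remainders generated by this completion (which involve $\gamma_i$ and $\varepsilon_i$) are really absorbed by $\mathcal E(v,w)$ on the discrete side, so that the resulting energy inequality holds along the whole approximating trajectory and not merely in the formal continuous limit.
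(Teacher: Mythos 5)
Your proposal is correct and follows essentially the same route as the paper: a coupled minimizing--movement construction, derivation of the Lyapunov functional $\mathcal{L}$ from the discrete variational inequality and the Euler--Lagrange equations for the $v$- and $w$-steps, a sharp lower bound on $\mathcal{L}$ under $\|u_0\|_{L^1}<M_*$, and passage to the limit of the piecewise-constant interpolations. The completion of the square you describe, reducing the cross term (with the correct factor $\tfrac{1}{2}$) to $-\tfrac{1}{2\kappa_1\kappa_2}\int_{\mathbb{R}^d}u(-\Delta)^{-2}u\,dx$, is exactly the variational characterization $\Tilde{C}_*^2 = C_*^2$ that the paper carries out in part (ii) of the proof of Proposition~\ref{lower bounds in critical}, so the content of the two presentations coincides.
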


In particular, since $M_*$ is the threshold for the lower boundedness of 
the Lyapunov functional, 
it can be expected to serve as the threshold 
for the existence of finite-time blow-up solutions in \eqref{P}.

\begin{thm}[energy inequality]\label{energy inequality}
Assume that $\kappa_1\kappa_2\varepsilon_1\varepsilon_2 \neq 0$. 
Define the functional 
$\mathcal{L}$ as follows:
\begin{equation}\label{Lyapunov}
\begin{split} 
\mathcal{L}(u,v,w)& :=\frac{1}{m-1} 
\int_{\mathbb{R}^d} u^m\,dx - \int_{\mathbb{R}^d} uv\,dx 
  \\
& \hspace{7mm}
+ 
\frac{\kappa_1 \kappa_2}{2}\int_{\mathbb{R}^d}|\Delta v|^2\,dx 
+ \frac{\gamma_1 \kappa_2 + \gamma_2 \kappa_1}{2}
\int_{\mathbb{R}^d}|\nabla v|^2\,dx 
+ \frac{\gamma_1 \gamma_2}{2}
\int_{\mathbb{R}^d}v^2\,dx \\
& \hspace{15mm} + 
\displaystyle 
\frac{\varepsilon_2}{2\varepsilon_1}
\int_{\mathbb{R}^d} |\kappa_1 \Delta v - \gamma_1 v + w|^2\,dx, 
\end{split}
\end{equation}
Then,  the following inequality holds for the solutions obtained by 
Theorem \ref{subcritical} and Theorem \ref{critical}:
\begin{multline*} 
\mathcal{L}(u_0, v_0, w_0) 
- \mathcal{L}(u(T), v(T), w(T)) \\
\geq 
\stint{0}{T}{\mathbb{R}^d} 
\frac{| \nabla u^m -  u \nabla v|^2}{u}\,dxdt 
 +\frac{\varepsilon_1 \kappa_2 + \varepsilon_2 \kappa_1}{\varepsilon_1^2}
\stint{0}{T}{\mathbb{R}^d}
|\nabla 
\left \{ 
\kappa_1 \Delta v(t) -\gamma_1v(t)
+ w(t) \right \} |^2 \,dxdt \\
+ \frac{\gamma_1 \varepsilon_2 + \gamma_2 \varepsilon_1}{\varepsilon_1^2}
\stint{0}{T}{\mathbb{R}^d}
|\kappa_1 \Delta v(t) -\gamma_1v(t)
+ w(t)|^2 \,dxdt
\end{multline*}
for every $T \in [0,\infty)$. 
\end{thm}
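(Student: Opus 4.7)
The plan is to derive the energy inequality at the level of the time-discrete approximants furnished by the chimera minimizing movement scheme, and then pass to the limit as the time step $\tau\downarrow 0$.

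Write $(u_\tau^n,v_\tau^n,w_\tau^n)$ for the discrete approximants. By construction, $u_\tau^{n+1}$ is the Wasserstein minimizer of a functional of the form $\frac{1}{2\tau}\mathcal{W}_2^2(u,u_\tau^n)+\frac{1}{m-1}\int u^m\,dx-\int u\,v_\tau^n\,dx$, while $v_\tau^{n+1}$ and $w_\tau^{n+1}$ are $L^2$ minimizers of quadratic functionals whose Euler--Lagrange equations reproduce $\varepsilon_1(v_\tau^{n+1}-v_\tau^n)/\tau=\kappa_1\Delta v_\tau^{n+1}-\gamma_1 v_\tau^{n+1}+w_\tau^n$ and the analogue for $w$. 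Perturbing the minimizer $u_\tau^{n+1}$ along smooth transport vector fields and applying the de Giorgi / Ambrosio--Gigli--Savar\'e variational inequality yields the Fisher-information-type discrete dissipation
\[
\tfrac{1}{m-1}\!\int (u_\tau^n)^m\,dx-\!\int u_\tau^n v_\tau^n\,dx-\Bigl[\tfrac{1}{m-1}\!\int (u_\tau^{n+1})^m\,dx-\!\int u_\tau^{n+1} v_\tau^n\,dx\Bigr]\geq \tau\!\int\frac{|\nabla (u_\tau^{n+1})^m-u_\tau^{n+1}\nabla v_\tau^n|^2}{u_\tau^{n+1}}\,dx,
\]
and the $v$- and $w$-steps produce quadratic dissipation inequalities with leading dissipations comparable to $\tau\|\varepsilon_i(\cdot_\tau^{n+1}-\cdot_\tau^n)/\tau\|_{L^2}^2$ plus the Dirichlet contributions $\tau\|\nabla v_\tau^{n+1}\|_{L^2}^2$ and $\tau\|\nabla w_\tau^{n+1}\|_{L^2}^2$.

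The second step is the algebraic recombination that is the heart of the chimera construction. The Lyapunov functional $\mathcal{L}$ is designed precisely so that the mixed term $-\int uv\,dx$ produced by the $u$-step cancels against the contribution of the $v$-step after substituting the Euler--Lagrange identity $\varepsilon_1(v_\tau^{n+1}-v_\tau^n)/\tau=\kappa_1\Delta v_\tau^{n+1}-\gamma_1 v_\tau^{n+1}+w_\tau^n$, and so that the coupling quantity $\kappa_1\Delta v-\gamma_1 v+w$ absorbs the contribution of the $w$-step via the analogous identity. Combining the three inequalities with the correct weights and telescoping over $n=0,\dots,N-1$ with $N\tau=T$ gives a discrete energy inequality
\[
\mathcal{L}(u_\tau^0,v_\tau^0,w_\tau^0)-\mathcal{L}(u_\tau^N,v_\tau^N,w_\tau^N)\geq \tau\sum_{n=0}^{N-1}\mathcal{D}_\tau^n,
\]
where $\mathcal{D}_\tau^n$ reproduces, modulo $o(1)$ error terms in $\tau$, the integrand on the right-hand side of the claimed inequality. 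To pass to the limit I would use: convexity and weak lower semicontinuity of each quadratic summand of $\mathcal{L}$ in the spaces provided by Definition \ref{weak solutions}; continuity of $\int uv\,dx$ under weak $L^m$ convergence of $u_\tau^n$ combined with strong $L^{m'}$ convergence of $v_\tau^n$; and weak lower semicontinuity of the two quadratic dissipation integrals built from $\kappa_1\Delta v-\gamma_1 v+w$.

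The hard part will be the lower semicontinuity of the Fisher-information dissipation $\int |\nabla u^m-u\nabla v|^2/u\,dx\,dt$ under the joint convergence of $u_\tau^{n+1}$ and $\nabla v_\tau^n$. The standard route is the joint convexity of $(P,q,\rho)\mapsto |P-q|^2/\rho$ on $\mathbb{R}^d\times\mathbb{R}^d\times[0,\infty)$, applied to $(P,q,\rho)=(\nabla u^m,u\nabla v,u)$, which delivers weak lower semicontinuity once these three quantities are identified as weak limits. The difficulty is identifying $u\nabla v$ as the genuine product of limits in the degenerate region $\{u=0\}$ and in regions where $\nabla v$ is only weakly compact; this is precisely where the regularity $u^m\in L^1(0,T;W^{1,d/(d-1)})$ and $v\in L^2(0,T;W^{3,2})$ from Definition \ref{weak solutions}, together with a standard truncation/mollification reduction, are needed to make the convex lower semicontinuity applicable.
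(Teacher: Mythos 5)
Your outline captures the right skeleton—discrete dissipation from the three minimizing steps, algebraic recombination via $\mathcal{L}$, telescoping, and lower-semicontinuity in the limit—but the discrete dissipation inequality you assert for the $u$-step is not obtainable by the method you describe, and the factor you lose there is not an ``$o(1)$ error''. Perturbing $u_\tau^{n+1}$ along transport vector fields gives the Euler--Lagrange equation and hence the slope estimate
\[
\int_{\mathbb{R}^d}\frac{|\nabla (u_\tau^{n+1})^m-u_\tau^{n+1}\nabla v_\tau^{n+1}|^2}{u_\tau^{n+1}}\,dx
\leq \frac{\mathcal{W}_2^2(u_\tau^{n+1},u_\tau^n)}{\tau^2},
\]
while the minimizing property gives only $\mathcal{E}(u_\tau^n,v_\tau^{n+1})-\mathcal{E}(u_\tau^{n+1},v_\tau^{n+1})\geq \frac{1}{2\tau}\mathcal{W}_2^2(u_\tau^{n+1},u_\tau^n)$. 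Combined, these yield a discrete dissipation with coefficient $\tau/2$, not $\tau$. Since $-\int uv\,dx$ is not geodesically $0$-convex for general $v$, you cannot close the gap via a convexity/subgradient argument. The paper recovers the sharp coefficient by introducing the De Giorgi variational interpolation $\tilde{U}_\tau$ (Definition~\ref{DefinitionDGVI}), proving a second slope estimate along $\tilde{U}_\tau$ (Lemma~\ref{lem slope estimate2}), and invoking the Moreau--Yosida derivative identity \cite[Theorem 3.1.4]{a-g-s} to split the discrete energy drop into \emph{two} halves of Fisher information, one evaluated on $\overline{u}_\tau$ and one on $\tilde{U}_\tau$ (Lemma~\ref{preEI}); both pieces are then shown to converge to the same limit (Lemma~\ref{properties of DGVI}). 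This interpolation is the essential ingredient missing from your proposal.

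A secondary, non-fatal difference: for lower semicontinuity of the Fisher-information integral you suggest joint convexity of $(P,q,\rho)\mapsto |P-q|^2/\rho$, whereas the paper (Lemma~\ref{quadratic lsc}) uses the established $L^1$-convergence of $\nabla\overline{u}_\tau^m-\overline{u}_\tau\nabla\overline{v}_\tau$ against test fields and a Cauchy--Schwarz/duality argument. Both are sound; the paper's route avoids having to track the product $u\nabla v$ on $\{u=0\}$, which you correctly flag as delicate in the convexity route. Finally, note the indexing: in the scheme \eqref{mm} the $u$-step uses the updated $v_\tau^k$, so the Fisher information involves $\nabla v_\tau^{k}$ rather than $\nabla v_\tau^{k-1}$, although this washes out in the limit.
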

It is anticipated that 
the existence of time-global weak solutions can also be proven 
for cases where $d=4,5$ and $m=2-\frac{4}{d}$. 
However, although we have succeeded in constructing approximate solutions 
and demonstrating the boundedness from below of $\mathcal{L}$ 
under the restricted condition for $M$, 
we were unable to verify what appears to be 
the condition $u \in L^2(\mathbb{R}^d \times (0,T))$, 
which seems 
necessary for the convergence of these approximate solutions. 
To establish the versatility of the methods 
used in this paper, 
it is desirable to clarify this issue in future research.

The structure of this paper is as follows: Section 2 introduces the Wasserstein distance and constructs approximate solutions for problem \eqref{P}. 
Section 3 derives the Lyapunov functional $\mathcal{L}$ for \eqref{P}, and Section 4 examines its boundedness from below. Notably, 
if $m>2-\frac{4}{d}$ 
or $m=2-\frac{4}{d}$ and $M < M_*$, $\mathcal{L}$ is bounded below. 
Under this condition, Section 5 derives uniform estimates for the approximate solutions that are independent of the time discretization step. The uniform boundedness obtained in Section 5 leads to the compactness of the approximate solutions, as shown in Section 6. This compactness implies the convergence of the approximate solutions to the solution of problem \eqref{P}, as demonstrated in Section 7. Section 8 proves the energy inequality.

\section{Time-discretized approximate solutions}

In this section, we partition the time interval $[0, \infty)$ and construct variational approximations to the solutions of \eqref{P} at each partition point. 
Before defining the approximate solutions, 
we revisit the concept of the (quadratic) Wasserstein distance, 
denoted as $\mathcal{W}_2$. 
Consider the space $\mathscr{P}(\mathbb{R}^d)$ of 
probability measures on $\mathbb{R}^d$. We define 
\begin{equation*} 
\begin{split} 
\mathscr{P}_2(\mathbb{R}^d)& := 
\left \{\mu \in \mathscr{P}(\mathbb{R}^d)\ : \ 
\int_{\mathbb{R}^d}|x|^2d\mu(x)<+\infty \right \}, \\
M\mathscr{P}_2(\mathbb{R}^d)& := 
\left \{ \mu \ : \ \frac{\mu}{M} \in \mathscr{P}_2(\mathbb{R}^d), M>0 \right \}. 
\end{split}
\end{equation*}

\begin{defn}[Wasserstein distance]
For $\mu, \nu \in \mathscr{P}_2(\mathbb{R}^d)$, 
the \textbf{Wasserstein distance} $\mathcal{W}_2$ is defined as 
\begin{equation*} 
\begin{split} 
\mathcal{W}_2(\mu, \nu):
=\inf_{p \in \Gamma(\mu, \nu)}
\left ( \iint_{\mathbb{R}^d \times \mathbb{R}^d}
|x-y|^2\,dp(x,y) \right )^{\frac{1}{2}}, 
\end{split}
\end{equation*}
where $\Gamma(\mu, \nu)$ represents the set of measures 
$p \in \mathscr{P}(\mathbb{R}^d \times \mathbb{R}^d)$ that satisfy
\begin{equation*}
\iint_{\mathbb{R}^d \times \mathbb{R}^d} b(x)\,dp(x,y)
=\int_{\mathbb{R}^d}b(x)\,d\mu(x), \ 
\iint_{\mathbb{R}^d \times \mathbb{R}^d} b(y)\,dp(x,y)
=\int_{\mathbb{R}^d}b(y)\,d\nu(y), 
\end{equation*}
for any continuous and bounded function 
$b \in C_b(\mathbb{R}^d)$. 
For $\mu, \nu \in M\mathscr{P}_2(\mathbb{R}^d)$, 
$\mathcal{W}_2(\mu, \nu)$ is additionally defined by 
\[
\mathcal{W}_2(\mu, \nu) 
:=\sqrt{M} \mathcal{W}_2\left (\frac{\mu}{M}, \frac{\nu}{M} \right ). 
\]
\end{defn}
It is well-established that a measure 
$\mu \in \mathscr{P}_2(\mathbb{R}^d)$, if absolutely continuous 
with respect to the Lebesgue measure $\mathscr{L}^d$ and 
denoted by $\mu=u\mathscr{L}^d$ with density $u \in L^1(\mathbb{R}^d)$, 
allows for the existence of a unique optimal transport $p_*$ 
and an optimal transport map $\bm{t}_{\mu}^{\nu}$, characterized by 
\begin{equation*} 
\begin{split} 
\mathcal{W}_2(\mu, \nu):
=
\left ( \iint_{\mathbb{R}^d \times \mathbb{R}^d}
|x-y|^2\,dp_*(x,y) \right )^{\frac{1}{2}}
=\left ( 
\int_{\mathbb{R}^d} |x-\bm{t}_{\mu}^{\nu}(x)|^2\,d\mu(x) \right )^{\frac{1}{2}}. 
\end{split}
\end{equation*}
Moreover, $\bm{t}_{\mu}^{\nu}$ coincides almost everywhere 
with the gradient of a convex function 
(refer to \cite[\S 6.2.3]{a-g-s}, 
\cite[Theorem 2.3]{a-s}, \cite[Theorem 2.12]{v2} for instance). 
For more detailed information on Wasserstein distances, consider consulting sources such as Chapter 7 of \cite{a-g-s}, Chapter 6 of \cite{v1}, and Chapter 5 of \cite{s}.
In this paper, we specifically address cases 
where both $\mu$ and $\nu$ possess densities 
$\rho_1$ and $\rho_2$, respectively, 
simplifying the notation for the Wasserstein distance to 
$\mathcal{W}_2(\rho_1, \rho_2)$ instead of 
$\mathcal{W}_2(\rho_1 \mathscr{L}^d, \rho_2 \mathscr{L}^d)$ and 
$\rho_1, \rho_2 \in \mathscr{P}_2(\mathbb{R}^d)$ instead of 
$\mu, \nu \in \mathscr{P}_2(\mathbb{R}^d)$.  
Let $p \in \Gamma_o(\rho_1, \rho_2)$ denote that $p$ is the optimal transport 
between $\rho_1$ and $\rho_2$.

We define three functionals $\mathcal{E}$, $\mathcal{F}$, and $\mathcal{G}$ 
as follows:
\begin{equation*}
\begin{split}
\mathcal{E}(u,v) &:= \frac{1}{m-1}\int_{\mathbb{R}^d} u^m\, dx 
- \int_{\mathbb{R}^d} uv\, dx, \\
\mathcal{F}(v,w) &:= \frac{1}{2} \int_{\mathbb{R}^d} 
(\kappa_1 |\nabla v|^2 + \gamma_1 v^2)\,dx - \int_{\mathbb{R}^d} vw\,dx, \\
\mathcal{G}(w, u) &:= \frac{1}{2} \int_{\mathbb{R}^d} 
(\kappa_2 |\nabla w|^2 + \gamma_2 w^2)\,dx - \int_{\mathbb{R}^d} uw\,dx.
\end{split}
\end{equation*}
Then, the system \eqref{P} can formally be expressed as
\begin{equation*}
\begin{cases}
\ \ \, \partial_t u = -\nabla_u \mathcal{E}(u,v), \\
\varepsilon_1 \partial_t v = -\nabla_v \mathcal{F}(v,w), \\
\varepsilon_2 \partial_t w = -\nabla_w \mathcal{G}(w,u),
\end{cases}
\end{equation*}
where $\nabla_u$ represents the gradient of the functional $u \mapsto \mathcal{E}(u,v)$ in the Wasserstein space. 
Namely, when $\mathcal{E}(u):=\int_{\mathbb{R}^d}f(u),dx$, we have
$-\nabla_u \mathcal{E}(u)
= \nabla \cdot (u\nabla f'(u))$. 
Similarly, $\nabla_v$ and $\nabla_w$ represent the gradients of the functionals $v \mapsto \mathcal{F}(v,w)$ and $w \mapsto \mathcal{G}(w, u)$ in $L^2$-space, respectively. 
Let us refer to this system of equations as the 
\textbf{Chimera Gradient Flow}.
As will be discussed later, it is possible to construct a \lq \lq formal\rq \rq 
approximate solution for the Chimera Gradient Flow using techniques similar to the minimizing movement scheme. While convergence of the approximate solution is not generally guaranteed by theory, in the current problem \eqref{P}, the existence of a Lyapunov functional can be asserted, which allows for a discussion on the convergence of the approximate solutions.

Now, we divide the time interval $[0,\infty)$ into sub-intervals with length $\tau>0$.  
Let $(u_{\tau}^0, v_{\tau}^0, w_{\tau}^0) := (\rho_{\tau}*u_0, v_0, w_0)$ 
be an approximated initial data, 
where 
${\rho_{\tau}}$ is a family of mollifiers satisfying
\begin{equation*}
\rho_{\tau} = \frac{1}{\sqrt{\tau}} \rho\left(\frac{x}{\sqrt{\tau}^{1/d}}\right), \quad \int_{\mathbb{R}^d} \rho(x) , dx = 1, \quad \rho \in C_c^{\infty}(\mathbb{R}^d), \quad \rho \geq 0.
\end{equation*}
Note that $\mathcal{W}_2^2(u_0, u_{\tau}^0) \to 0$ as $\tau \to 0$ by 
\cite[Lemma 7.1.10]{a-g-s} and that 
$\mathcal{L}(u_{\tau}^0,v_{\tau}^0,w_{\tau}^0) 
\to 
\mathcal{L}(u_0, v_0, w_0)$ as $\tau \to 0$, where 
$\mathcal{L}$ is the functional defined in Theorem \ref{energy inequality}. 
Therefore, there exists a $\tau_*>0$ such that for $\tau \in (0,\tau_*)$, 
\begin{equation} \label{initial energy}
\mathcal{L}(u_{\tau}^0,v_{\tau}^0,w_{\tau}^0) 
\leq \mathcal{L}(u_0, v_0, w_0)+1
\end{equation}
Hereafter, assume $\tau \in (0,\tau_*)$ 
and it will be understood without stating each time that \eqref{initial energy} holds. 
For $k = 1, 2, 3, \cdots$, we recursively define 
$(u_{\tau}^k, v_{\tau}^k, w_{\tau}^k)$ by
\begin{equation} \label{mm}
\begin{cases}
\begin{split} 
& w_{\tau}^k \in 
\argmin_{w \in H^1(\mathbb{R}^d)}
\left \{ 
\mathcal{G}(w, u_{\tau}^{k-1})
+ \frac{\varepsilon_2}{2\tau}\|w - w_{\tau}^{k-1}\|_{L^2}^2
\right \},  \\
& v_{\tau}^k \in 
\argmin_{v \in H^1(\mathbb{R}^d)}
\left \{ 
\mathcal{F}(v, w_{\tau}^k)
+ \frac{\varepsilon_1}{2\tau} \|v - v_{\tau}^{k-1}\|_{L^2}^2 
\right \},  \\
& u_{\tau}^k \in 
\argmin_{u \in M\mathscr{P}_2(\mathbb{R}^d)\cap L^m(\mathbb{R}^d)}
\left \{ 
\mathcal{E}(u,v_{\tau}^k)+ 
\frac{1}{2\tau}\mathcal{W}_2^2(u,u_{\tau}^{k-1}) 
\right \}, 
\end{split}
\end{cases}
\end{equation}
where 
$\argmin_{X}\{\cdots \}$ denotes the set of minimizers of the functional $\{\cdots\}$ over $X$. Note that $k$ represents a parameter, not an exponent.

\begin{prop}\label{propEL}
For any $k \in \mathbb{N}$, 
the triple $(u_{\tau}^k, v_{\tau}^k, w_{\tau}^k)$ is defined, where $u_{\tau}^k \in L^2(\mathbb{R}^d)$ with $(u_{\tau}^k)^m \in W^{1,1}(\mathbb{R}^d)$, $v_{\tau}^k \in W^{4,2}(\mathbb{R}^d)$, $w_{\tau}^k \in W^{2,2}(\mathbb{R}^d)$, 
and $u_{\tau}^k, v_{\tau}^k, w_{\tau}^k \geq 0$. 
Furthermore, they satisfy the following 
Euler-Lagrange equations: 
\begin{equation}\label{EL}
\begin{cases}\vspace{2mm}
\displaystyle 
 \ \ \int_{\mathbb{R}^d \times \mathbb{R}^d}
\langle y-x, \bm{\xi}(y) \rangle \,Mdp_k(x,y)+  
 \tau \int_{\mathbb{R}^d} \langle \nabla (u_{\tau}^k)^m
-  u_{\tau}^k \nabla v_{\tau}^{k}, \bm{\xi} \rangle\, dx  = 
0, \\ \vspace{2mm}
\displaystyle 
\varepsilon_1 \int_{\mathbb{R}^d}(v_{\tau}^k - v_{\tau}^{k-1})\zeta \,dx 
= \tau \int_{\mathbb{R}^d}( \kappa_1\Delta  v_{\tau}^k
- \gamma_1 v_{\tau}^k +  w_{\tau}^{k})\zeta\, dx, \\
\vspace{2mm}
\displaystyle 
\varepsilon_2 \int_{\mathbb{R}^d}(w_{\tau}^k - w_{\tau}^{k-1})\eta \,dx 
=  \tau \int_{\mathbb{R}^d} 
(\kappa_2 \Delta  w_{\tau}^k 
- \gamma_2 w_{\tau}^k + u_{\tau}^{k-1})\eta\, dx, 
\end{cases}
\end{equation}
for all 
$\bm{\xi} \in C^{\infty}_c(\mathbb{R}^d;\mathbb{R}^d)$, 
for all $\zeta, \eta \in C^{\infty}_c(\mathbb{R}^d)$, 
where 
$p_k \in \Gamma_o(u_{\tau}^{k-1}, u_{\tau}^{k})$ is the optimal transport plan between $u_{\tau}^{k-1}$ and $u_{\tau}^k$, 
which satisfies 
\[
\mathcal{W}_2(u_{\tau}^k/M, u_{\tau}^{k-1}/M)
= \left ( \int_{\mathbb{R}^d \times \mathbb{R}^d} |x-y|^2\,dp_k(x,y) \right )^{\frac{1}{2}}. 
\]
\end{prop}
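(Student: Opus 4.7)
The plan is to solve the three variational problems sequentially in the order they are posed, exploiting convexity for the first two and Jordan--Kinderlehrer--Otto theory for the third, and proving regularity via a short elliptic bootstrap that cascades $u_\tau^{k-1}\in L^2 \Rightarrow w_\tau^k \in W^{2,2} \Rightarrow v_\tau^k \in W^{4,2}$. The $w$-step minimizes a strictly convex, $H^1$-coercive quadratic functional (coercivity from the $L^2$-penalty and $\kappa_2\int|\nabla w|^2$; the cross term $-\int u_\tau^{k-1}w$ is controlled by the inductive $L^2$-bound on $u_\tau^{k-1}$, with $u_\tau^0=\rho_\tau\ast u_0$ smooth as the base case). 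The direct method gives the unique minimizer; non-negativity follows because replacing $w$ by $w^+$ strictly decreases the energy when $u_\tau^{k-1}\geq 0$ and $w_\tau^{k-1}\geq 0$; the first variation against $\eta\in C_c^\infty$ yields the third line of \eqref{EL}, and linear elliptic regularity applied to $-\kappa_2\Delta w+(\gamma_2+\varepsilon_2/\tau)w \in L^2$ promotes $w_\tau^k$ to $W^{2,2}$. The $v$-step is identical in structure, with the forcing now in $W^{2,2}$, so $v_\tau^k\in W^{4,2}$ by two further applications of elliptic regularity; this gives the second line of \eqref{EL}.

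The substantive step is the JKO minimization defining $u_\tau^k$. The functional is bounded below on $M\mathscr{P}_2\cap L^m$: since $v_\tau^k\in W^{4,2}$ embeds into $L^{m'}$ with $m'=m/(m-1)$ in every $d\geq 5$ (using $W^{4,2}\hookrightarrow L^\infty$ for $d<8$ and comparing $(2d-4)/(d-4)\leq 2d/(d-8)$ for $d\geq 9$), Young's inequality absorbs $\int uv_\tau^k$ into $\frac{1}{2(m-1)}\int u^m$ up to a constant depending on $v_\tau^k$. A minimizing sequence therefore has bounded $L^m$-norm and, by the Wasserstein penalty together with the second-moment bound on $u_\tau^{k-1}$, uniformly bounded second moments while preserving the fixed mass $M$. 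Passing to a subsequence converging weakly in $L^m$ and narrowly as measures, the three constituent functionals are lower semicontinuous (weak $L^m$-lsc of $\|\cdot\|_{L^m}^m$, continuity of the linear $-\int uv_\tau^k$ under weak $L^m$-convergence against $v_\tau^k\in L^{m'}$, and narrow lower semicontinuity of $\mathcal{W}_2^2(\cdot,u_\tau^{k-1})$ for tight sequences of fixed mass), producing a non-negative minimizer $u_\tau^k$.

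The Euler--Lagrange identity in \eqref{EL} is obtained by perturbing along the smooth flow $u_s=(\mathrm{id}+s\bm{\xi})_\# u_\tau^k$ and differentiating at $s=0$: the Wasserstein square differentiates via Brenier's theorem on the optimal plan to $\int\langle y-x,\bm{\xi}(y)\rangle M\,dp_k$ (cf.~\cite[Chapter 10]{a-g-s}), while the internal and potential energies produce the $\tau\int\langle\nabla(u_\tau^k)^m-u_\tau^k\nabla v_\tau^k,\bm{\xi}\rangle dx$ term after one integration by parts. This identity exhibits $\nabla(u_\tau^k)^m$ as the sum of $u_\tau^k\nabla v_\tau^k$ (an $L^1$-field, since $\nabla v_\tau^k\in L^\infty_{\mathrm{loc}}$ and $u_\tau^k\in L^1$) and a transport momentum that is $L^1$ by Cauchy--Schwarz on $p_k$, giving $(u_\tau^k)^m\in W^{1,1}$; the $L^2$-integrability of $u_\tau^k$ then follows either immediately from $L^1\cap L^m\hookrightarrow L^2$ (when $m\geq 2$) or, when $1<m<2$, by Sobolev embedding of $(u_\tau^k)^m\in W^{1,1}$ into $L^{d/(d-1)}$ combined with \eqref{uniform L^2} and interpolation with the $L^1$-mass.

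I expect the JKO step to be the main obstacle, specifically the lower bound for the composite functional (requiring the Sobolev embedding of $v_\tau^k$ into $L^{m'}$ across all admissible dimensions and exponents) and the careful derivation of the Wasserstein variation yielding the optimal plan $p_k$ in \eqref{EL}; by comparison the $w$- and $v$-steps are routine, and the regularity upgrades are bookkeeping once the EL equations and \eqref{uniform L^2} are in hand.
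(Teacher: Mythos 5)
Your plan mirrors the paper's: sequential minimization ($w$-step, then $v$-step, then the JKO $u$-step), non-negativity by a truncation argument, elliptic regularity to bootstrap $u_\tau^{k-1}\in L^2 \Rightarrow w_\tau^k\in W^{2,2}\Rightarrow v_\tau^k\in W^{4,2}$, and the Euler--Lagrange equations via push-forward perturbations and Brenier. The $w^+$-truncation is equivalent to the paper's $|w|$-truncation and works just as well; the embedding discussion for $v_\tau^k\in L^{m'}$ is correct.

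The genuine gap is in the last step, the $L^2$-integrability of $u_\tau^k$ when $1<m<2$. You claim it follows from the Sobolev embedding $(u_\tau^k)^m\in W^{1,1}\hookrightarrow L^{d/(d-1)}$, interpolation with the $L^1$-mass, and \eqref{uniform L^2}. But interpolation between $L^1$ and $L^{md/(d-1)}$ only reaches $L^q$ for $q\le md/(d-1)$, and at the exponents of interest $md/(d-1)<2$: indeed $md/(d-1)\ge 2$ iff $m\ge 2-2/d$, while the paper admits $m$ as small as $2-4/d<2-2/d$. So a single Sobolev step followed by interpolation cannot land in $L^2$. The paper closes this gap with an iterative bootstrap (Lemma \ref{pointwise estimate}): starting from $q_1=md/(d-1)$, it repeatedly uses the slope estimate \eqref{slope estimate} and elliptic regularity of $v_\tau^k$ to show $u_\tau^k\in L^{q_j}$ with $q_{j+1}=2mq_jd/(2d+q_j(d-4))$, and the iteration climbs until it reaches $\min\{2,\,2(m-1)d/(d-4)\}=2$ precisely because $m\ge 2-4/d$. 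Moreover, \eqref{uniform L^2} plays no role in this per-step $L^2$ bound; it enters later for the space-time estimate $\sup_\tau\int_0^T\|\overline u_\tau\|_{L^2}^2\,dt<\infty$ (Lemma \ref{uL2p}). Conflating the two is a real misstep, because without the iteration the regularity chain $u_\tau^k\in L^2$ (needed to start the next $w$-step) is not established.

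A minor further imprecision: to see $u_\tau^k\nabla v_\tau^k\in L^1(\mathbb{R}^d)$ globally you cite $\nabla v_\tau^k\in L^\infty_{\loc}$ together with $u_\tau^k\in L^1$, but local boundedness of $\nabla v_\tau^k$ gives only local integrability of the product. The paper pairs $u_\tau^k\in L^{2d/(d+4)}$ (available since $m\ge 2d/(d+4)$) with $\nabla v_\tau^k\in L^{2d/(d-4)}$ from $v_\tau^k\in W^{3,2}$.
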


The remainder of this section provides a proof of Proposition \ref{EL} by dividing it into several lemmas.

\begin{lem}
Assuming $u_{\tau}^{k-1}, v_{\tau}^{k-1}, w_{\tau}^{k-1} \in L^2(\mathbb{R}^d)$, 
the pair $(v_{\tau}^k, w_{\tau}^k)$ is 
uniquely well-determined as defined in \eqref{mm}, 
with both components being non-negative. 
Additionally,  
the second and third equations of \eqref{EL} 
are satisfied, and $v_{\tau}^k$ and $w_{\tau}^k$ respectively qualify as 
elements of $W^{4,2}(\mathbb{R}^d)$ and $W^{2,2}(\mathbb{R}^d)$. 
\end{lem}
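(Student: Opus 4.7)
The plan is to treat the two $H^1$ minimization problems separately, in the order $w_\tau^k$ then $v_\tau^k$, applying the direct method of calculus of variations in each case. First, I would observe that after expanding the penalty $\frac{\varepsilon_2}{2\tau}\|w - w_\tau^{k-1}\|_{L^2}^2$, the functional minimized for $w_\tau^k$ is the sum of a strictly convex, coercive quadratic form on $H^1(\mathbb{R}^d)$,
\begin{equation*}
\tfrac{\kappa_2}{2}\|\nabla w\|_{L^2}^2 + \tfrac{1}{2}\bigl(\gamma_2 + \varepsilon_2/\tau\bigr)\|w\|_{L^2}^2,
\end{equation*}
and a bounded linear functional $w \mapsto -\int w\bigl(u_\tau^{k-1}+\tfrac{\varepsilon_2}{\tau}w_\tau^{k-1}\bigr)\,dx$, which is continuous on $L^2$ by the hypothesis $u_\tau^{k-1}, w_\tau^{k-1} \in L^2$. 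Weak lower semicontinuity and coercivity in $H^1$ yield a minimizer; strict convexity yields uniqueness. The analogous argument, with $u_\tau^{k-1}$ replaced by the freshly-constructed $w_\tau^k\in L^2$, gives existence and uniqueness of $v_\tau^k$.

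For non-negativity I would use the standard truncation comparison. Since $u_\tau^{k-1}\geq 0$ and $w_\tau^{k-1}\geq 0$, replacing any candidate $w$ by its positive part $w_+$ never increases the functional: $|\nabla w_+| \le |\nabla w|$ and $w_+^2 \le w^2$ a.e., $\int u_\tau^{k-1}w_+ \ge \int u_\tau^{k-1}w$, and pointwise $|w_+ - w_\tau^{k-1}| \le |w - w_\tau^{k-1}|$ thanks to $w_\tau^{k-1}\ge 0$, with strict inequality on $\{w<0\}$. Uniqueness of the minimizer then forces $w_\tau^k\ge 0$ a.e. The same truncation, now using $w_\tau^k\ge 0$ and (inductively, with base case $v_\tau^0=v_0\ge 0$) $v_\tau^{k-1}\ge 0$, delivers $v_\tau^k\ge 0$.

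For the Euler--Lagrange equations I would take variations $w_\tau^k + s\eta$ with $\eta\in C_c^\infty(\mathbb{R}^d)$, differentiate at $s=0$, and rearrange to obtain
\begin{equation*}
\varepsilon_2 \int (w_\tau^k - w_\tau^{k-1})\eta\, dx = \tau \int \bigl(-\kappa_2 \nabla w_\tau^k\cdot\nabla\eta - \gamma_2 w_\tau^k\eta + u_\tau^{k-1}\eta\bigr)\,dx,
\end{equation*}
and analogously for $v_\tau^k$. These are the third (resp.\ second) equations of (\ref{EL}) in weak form. Reading them as distributional identities,
\begin{equation*}
-\kappa_2\Delta w_\tau^k + \bigl(\gamma_2 + \tfrac{\varepsilon_2}{\tau}\bigr)w_\tau^k = u_\tau^{k-1} + \tfrac{\varepsilon_2}{\tau}w_\tau^{k-1} \in L^2(\mathbb{R}^d),
\end{equation*}
constant-coefficient elliptic $L^2$ regularity on the whole space (most conveniently via the Fourier multiplier $(\kappa_2|\xi|^2+\gamma_2+\varepsilon_2/\tau)^{-1}$) yields $w_\tau^k\in W^{2,2}$.

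The corresponding identity for $v_\tau^k$ has right-hand side $w_\tau^k + (\varepsilon_1/\tau)v_\tau^{k-1}$. The first term is now in $W^{2,2}$ by the previous step, but to conclude $v_\tau^k\in W^{4,2}$ we also need $v_\tau^{k-1}\in W^{2,2}$, not merely in $L^2$. This is the one subtle point of the proof: the bare hypothesis $v_\tau^{k-1}\in L^2$ is sufficient only for $v_\tau^k\in W^{2,2}$, and the $W^{4,2}$ claim must be obtained by bootstrapping along the induction in $k$, using the input regularity $v_\tau^0 = v_0 \in W^{2,2}$ to propagate $v_\tau^{k-1}\in W^{2,2}$ from step to step. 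Once this is in hand, the same Fourier-multiplier argument applied twice yields $v_\tau^k\in W^{4,2}$.
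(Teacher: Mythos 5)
Your proposal follows essentially the same route as the paper: direct method for existence, strict convexity for uniqueness, a truncation-comparison argument for non-negativity, and first-variation plus elliptic regularity for the Euler--Lagrange equations. The only cosmetic difference is that you compare with $w_+$ where the paper compares with $|w|$; the inequalities $\mathcal{G}(|w|,u)\le\mathcal{G}(w,u)$ and $\||w|-w_\tau^{k-1}\|_{L^2}\le\|w-w_\tau^{k-1}\|_{L^2}$ (using $u,w_\tau^{k-1}\ge 0$) play exactly the role your $w_+$-inequalities play, so both variants are correct and interchangeable.

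You also flag a genuine imprecision in the lemma as stated. Under the bare hypothesis $v_\tau^{k-1}\in L^2$, the identity $-\kappa_1\Delta v_\tau^k + (\gamma_1+\varepsilon_1/\tau)v_\tau^k = w_\tau^k + (\varepsilon_1/\tau)v_\tau^{k-1}$ has an $L^2$ right-hand side and yields only $v_\tau^k\in W^{2,2}$, not $W^{4,2}$. The paper's proof is silent on this (it only writes out the $w$-argument and says the $v$-argument is similar), and the $W^{4,2}$ conclusion in fact requires the inductive input $v_\tau^{k-1}\in W^{2,2}$, which is available starting from $v_\tau^0=v_0\in W^{2,2}$ and propagated as you describe. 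Your bootstrap remark is the correct fix and is more careful than the paper on this point.
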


\begin{proof}

The proof for $v_{\tau}^k$ can be similarly established; 
therefore, for $w_{\tau}^k$, 
the proof is provided divided into four parts: 
existence, uniqueness, non-negativity, and the Euler-Lagrange equations.

\vspace{2mm}

\noindent
\textbf{(i) existence of $w_{\tau}^k$ }
\vspace{2mm}

The negative term in $\mathcal{G}$ is estimated 
using the Cauchy-Schwarz and Young's inequalities, 
yielding
\begin{equation*} 
\begin{split} 
\int_{\mathbb{R}^d}wu_{\tau}^{k-1}\,dx 
& = \int_{\mathbb{R}^d}(w-w_{\tau}^{k-1})u_{\tau}^{k-1}\,dx+   
\int_{\mathbb{R}^d}w_{\tau}^{k-1}u_{\tau}^{k-1}\,dx   \\
& \leq \|w -w_{\tau}^{k-1}\|_{L^2}\|u_{\tau}^{k-1}\|_{L^2}  
+ \int_{\mathbb{R}^d}w_{\tau}^{k-1}u_{\tau}^{k-1}\,dx \\
& \leq \frac{\varepsilon_2}{4\tau}\|w-w_{\tau}^{k-1}\|_{L^2}^2 
+ \frac{\tau}{\varepsilon_2}\|u_{\tau}^{k-1}\|_{L^2}^2 
+ \int_{\mathbb{R}^d}w_{\tau}^{k-1}u_{\tau}^{k-1}\,dx. 
\end{split}
\end{equation*}
From Young's inequality: 
\begin{equation*} 
\begin{split} 
\|w-w_{\tau}^{k-1}\|_{L^2}^2 
\geq \frac{1}{2}\|w\|_{L^2}^2 -\|w_{\tau}^{k-1}\|_{L^2}^2,  
\end{split}
\end{equation*}
we derive 
\begin{equation} \label{LEG}
\begin{split} 
\mathcal{G}(w, u_{\tau}^{k-1})+\frac{\varepsilon_2 \|w-w_{\tau}^{k-1}\|_{L^2}^2}{2\tau}
& \geq 
\frac{1}{2}\int_{\mathbb{R}^d}(\kappa_2|\nabla w|^2 + \gamma_2 w^2)\,dx
+ 
\frac{\varepsilon_2 \|w\|_{L^2}^2}{8\tau} \\
& \hspace{5mm}
 -\frac{\varepsilon_2}{4\tau}\|w_{\tau}^{k-1}\|_{L^2}^2
- \frac{\tau}{\varepsilon_2}\|u_{\tau}^{k-1}\|_{L^2}^2 
- \int_{\mathbb{R}^d}w_{\tau}^{k-1}u_{\tau}^{k-1}\,dx. 
\end{split}
\end{equation}
Hence, the minimizing sequence 
$\{w_n\}$ for the left-hand side is bounded in 
$H^1(\mathbb{R}^d)$, and we may assume 
it weakly converges to some $w_* \in H^1(\mathbb{R}^d)$. 
At this point, we have 
\begin{equation*} 
\begin{split} 
\mathcal{G}(w_*, u_{\tau}^{k-1}) \leq \liminf_{n \to \infty}\mathcal{G}(w_n, u_{\tau}^{k-1}), 
\quad \|w_*-w_{\tau}^{k-1}\|_{L^2}^2 
\leq \liminf_{n \to \infty}\|w_n-w_{\tau}^{k-1}\|_{L^2}^2  
\end{split}
\end{equation*}
which confirms that $w_*$ is a minimizer of the left-hand side in \eqref{LEG}. 
Therefore, we can define $w_{\tau}^k:=w_*$. 

\vspace{2mm}

\noindent 
\textbf{(ii) uniqueness}

\vspace{2mm}

The uniqueness of discrete solution follows from 
the strict convexity of 
$w \mapsto \mathcal{G}(w, u_{\tau}^{k-1})$ and  
$w\mapsto \|w-w_{\tau}^{k-1}\|_{L^2}^2$.

\vspace{2mm}

\noindent
\textbf{(iii) non-negativity}

\vspace{2mm}
 
The non-negativity of $u_{\tau}^{k-1}$ is clear because 
$u_{\tau}^{k-1}$ belongs to $M\mathscr{P}_2(\Omega)$ for $k \geq 2$ and 
$u_0 \geq 0$. 
Consequently, it holds that  
\begin{equation}\label{morethanabs}
\mathcal{G}(w_{\tau}^k, u_{\tau}^{k-1}) \geq
\mathcal{G}(|w_{\tau}^k|, u_{\tau}^{k-1}).
\end{equation}
We prove $v_{\tau}^k \geq 0$ by 
induction in $k$. 
For $k=0$, we have $w_{\tau}^0=w_0 \geq 0$ by 
the assumption. 
By the triangle inequality, 
$ \bigl | |w_{\tau}^k|-|w_{\tau}^{k-1}|
\bigr | \leq  | w_{\tau}^k - w_{\tau}^{k-1} |$. 
Therefore, if $w_{\tau}^{k-1} \geq 0$, then we have
\begin{equation}\label{triangle}
\bigl \| |w_{\tau}^k|-w_{\tau}^{k-1}\bigr \|_{L^2} 
\leq \|w_{\tau}^k - w_{\tau}^{k-1}\|_{L^2}. 
\end{equation}
Combining \eqref{morethanabs} and \eqref{triangle}, we obtain 
\[\mathcal{G}(w_{\tau}^k, u_{\tau}^{k-1}) + \frac{\varepsilon_2}{2 \tau}
\|v_{\tau}^k - v_{\tau}^{k-1}\|_{L^2}^2 \geq 
\mathcal{G}(|w_{\tau}^k|, u_{\tau}^{k-1})+ \frac{\varepsilon_2}{2 \tau}  
\bigl \| |v_{\tau}^k|-v_{\tau}^{k-1}\bigr \|_{L^2}^2.  \]
Since $w_{\tau}^k$ is a unique minimizer of the functional 
\[ w \mapsto \mathcal{G}(w, u_{\tau}^{k-1}) + 
\frac{\varepsilon_2}{2 \tau}
\|w - w_{\tau}^{k-1}\|_{L^2}^2, \]
we have $|w_{\tau}^k|=w_{\tau}^k$. 

\vspace{2mm}

\noindent
\textbf{(iv) Euler-Lagrange equation} 

\vspace{2mm}

For any $\eta \in C^{\infty}_c(\mathbb{R}^d)$, 
\begin{equation*} 
\begin{split} 
\frac{d}{d\varepsilon}
\left (\mathcal{G}(w_{\tau}^k + \varepsilon \eta)
+\frac{\varepsilon_2 
\|w_{\tau}^k + \varepsilon \eta-w_{\tau}^{k-1}\|_{L^2}^2}{2\tau}\right )
\Biggm |_{\varepsilon=0}=0, 
\end{split}
\end{equation*}
thus confirming that the third equation of \eqref{EL} holds. 
From elliptic regularity, we have
$w_{\tau}^k \in W^{2,2}(\mathbb{R}^d)$. 
\end{proof}

\begin{lem}
Assume that 
$u_{\tau}^{k-1} \in M\mathscr{P}_2(\mathbb{R}^d) \cap L^m(\mathbb{R}^d)$, 
and $v_{\tau}^k \in W^{4,2}(\mathbb{R}^d)$. 
Then, $u_{\tau}^k$,
 defined in \eqref{mm}, is uniquely well-determined 
and non-negative. 
\end{lem}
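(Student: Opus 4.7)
The plan is the direct method of the calculus of variations applied to
\[
\Phi(u) := \mathcal{E}(u, v_{\tau}^k) + \frac{1}{2\tau}\mathcal{W}_2^2(u, u_{\tau}^{k-1})
\]
on $\mathcal{A} := M\mathscr{P}_2(\mathbb{R}^d) \cap L^m(\mathbb{R}^d)$ with $M := \|u_{\tau}^{k-1}\|_{L^1}$. Non-negativity of a minimizer is automatic from membership in $M\mathscr{P}_2$, so the substantive content lies in existence and uniqueness.

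For existence, I would first dominate the only potentially negative term $-\int u\,v_{\tau}^k\,dx$ via a Young-type splitting
\[
\int u\,v_{\tau}^k\,dx \leq \frac{1}{2(m-1)}\int u^m\,dx + C\int |v_{\tau}^k|^{m/(m-1)}\,dx,
\]
whose last integral is finite since $v_{\tau}^k \in W^{4,2}(\mathbb{R}^d)$ by the preceding lemma and Sobolev embedding places this space inside $L^{m/(m-1)}(\mathbb{R}^d)$ in the regime $d\geq 5$ and $m\geq 2-4/d$. Absorbing half of the $\int u^m$ term yields
\[
\Phi(u) \geq \frac{1}{2(m-1)}\int u^m\,dx + \frac{1}{2\tau}\mathcal{W}_2^2(u, u_{\tau}^{k-1}) - C,
\]
so any minimizing sequence $\{u_n\}$ is bounded in $L^m$, has uniformly bounded second moments (inherited from $u_{\tau}^{k-1}$), and retains the fixed mass $M$. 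Tightness from the moment bound combined with the $L^m$ bound yields a subsequence narrowly convergent to some $\mu = u_*\mathscr{L}^d$ with $u_*\in L^m$, $u_*\geq 0$, $\|u_*\|_{L^1}=M$, and with $u_n \rightharpoonup u_*$ weakly in $L^m$. Each term of $\Phi$ is lower semicontinuous along this convergence: $u\mapsto \int u^m\,dx$ by $L^m$-weak lower semicontinuity of a convex continuous functional; $u\mapsto \mathcal{W}_2^2(u, u_{\tau}^{k-1})$ by \cite[Lemma 7.1.4]{a-g-s}; and $\int u_n v_{\tau}^k\,dx \to \int u_* v_{\tau}^k\,dx$ by the $L^m$/$L^{m/(m-1)}$ duality. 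Thus $u_{\tau}^k:=u_*$ minimizes $\Phi$ on $\mathcal{A}$.

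Uniqueness follows from strict convexity of $\Phi$ along linear interpolations of measures: the entropy $u\mapsto \tfrac{1}{m-1}\int u^m\,dx$ is strictly convex for $m>1$, the coupling term $-\int u v_{\tau}^k\,dx$ is affine, and $u\mapsto \mathcal{W}_2^2(u, u_{\tau}^{k-1})$ is convex because, given $p_i\in \Gamma_o(u_i, u_{\tau}^{k-1})$, the measure $\lambda p_1+(1-\lambda)p_2$ is an admissible (suboptimal) coupling of $\lambda u_1+(1-\lambda)u_2$ with $u_{\tau}^{k-1}$. The main obstacle I anticipate is precisely verifying $v_{\tau}^k \in L^{m/(m-1)}(\mathbb{R}^d)$ in the full range $d\geq 5$, which demands carefully matching Sobolev embedding of $W^{4,2}(\mathbb{R}^d)$ against the lower bound $m\geq 2-4/d$ and is the technical point on which both the coercive lower bound and the passage to the limit in the coupling term hinge.
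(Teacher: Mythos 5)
Your proof is correct and follows essentially the same strategy as the paper: control the coupling term $\int u\,v_{\tau}^k\,dx$ via Hölder/Young using the embedding $W^{4,2}(\mathbb{R}^d)\hookrightarrow L^{m'}(\mathbb{R}^d)$ (which indeed holds throughout $d\geq 5$, $m\geq 2-\tfrac{4}{d}$, since $m'\leq \frac{2(d-2)}{d-4}\leq \frac{2d}{d-8}$ when $d>8$ and $W^{4,2}$ lands in all required $L^q$ for $5\leq d\leq 8$), obtain coercivity and hence $L^m$-boundedness and uniform second moments of the minimizing sequence, pass to a weak limit, and use lower semicontinuity of the entropy and of $\mathcal{W}_2^2$ (via \cite[Lemma 7.1.4]{a-g-s}) together with $L^m$–$L^{m'}$ duality for the coupling term; uniqueness via strict convexity of the entropy and convexity of $u\mapsto\mathcal{W}_2^2(u,u_{\tau}^{k-1})$ by linear interpolation of transport plans is identical to the paper's argument. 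The only cosmetic difference is that you invoke tightness/narrow convergence for the compactness step where the paper derives equi-integrability from the $L^1$–$L^m$ interpolation estimate and applies Dunford–Pettis; both routes are valid and give the same conclusion.
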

\begin{proof}
The proof is provided in two parts: existence and uniqueness.

\vspace{2mm}

\noindent 
\textbf{(i) existence of $\bm{u_{\tau}^k}$}

\vspace{2mm}

Since $\frac{2(d-2)}{d} \leq m \leq 2$, the conjugate exponent 
$m'$ satisfies 
\[
2 \leq m' \leq \frac{2(d-2)}{d-4}. 
\]
Furthermore, given 
$v_{\tau}^k \in W^{4,2}(\mathbb{R}^d)$ embeds into 
$L^{m'}(\mathbb{R}^d)$, 
the H\"older and Young's inequalities yield
\begin{equation*} 
\begin{split} 
\int_{\mathbb{R}^d} uv_{\tau}^k\, dx 
\leq \|u\|_{L^m}\|v_{\tau}^k\|_{L^{m'}} 
\leq \frac{1}{m}\|u\|_{L^m}^m + \frac{1}{m'}\|v_{\tau}^k\|_{L^{m'}}^{m'}. 
\end{split}
\end{equation*}
From this, it follows that 
\begin{equation*} 
\begin{split} 
\mathcal{E}(u,v_{\tau}^k)+ 
\frac{1}{2\tau}\mathcal{W}_2^2(u,u_{\tau}^{k-1}) 
\geq \frac{1}{m(m-1)}\|u\|_{L^m}^m 
-\frac{1}{m'}\|v_{\tau}^k\|_{L^{m'}}^{m'}
+ \frac{1}{2\tau}\mathcal{W}_2^2(u,u_{\tau}^{k-1}), 
\end{split}
\end{equation*}
and the minimizing sequence 
$\{u_n\} \subset M\mathscr{P}_2(\mathbb{R}^d) \cap L^m(\mathbb{R}^d)$ 
in the left-hand side 
forms a bounded sequence in $L^m(\mathbb{R}^d)$. 
Moreover, for any $K>0$, 
the Cauchy-Schwarz inequality implies 
\begin{equation*} 
\begin{split} 
\int_{\{ u_n >K\}}u_n\,dx 
& = \int_{\{ u_n >K\}}(u_n)^{\frac{m}{2}} (u_n)^{1-\frac{m}{2}}\,dx \\
& \leq 
\left (\int_{\{ u_n >K\}}u_n^m\,dx \right )^{\frac{1}{2}}
\left (\int_{\{ u_n >K\}}u_n^{1-(m-1)}\,dx \right )^{\frac{1}{2}} \\
& \leq 
\frac{1}{K^{m-1}} 
\left (\int_{\mathbb{R}^d}u_n^m\,dx \right )^{\frac{1}{2}}
\left (\int_{\mathbb{R}^d}u_n\,dx \right )^{\frac{1}{2}}
= \frac{\sqrt{M}}{K^{m-1}}\|u_n\|_{L^m}^{\frac{m}{2}}, 
\end{split}
\end{equation*}
hence, the sequence $\{u_n\}$ is equi-integrable. 
The boundedness of $\|u_n\|_{L^m}$ and the Dunford-Pettis theorem 
imply that a subsequence of $\{u_n\}$, still denoted by $\{u_n\}$, 
weakly converges to some $u_* \in (L^1 \cap L^m)(\mathbb{R}^d)$. 
Particularly, 
from $\|u_n \|_{L^1}= M$, 
it follows $\|u_*\|_{L^1}=M$. At this point, 
\begin{equation*} 
\mathcal{E}(u_*, v_{\tau}^k) \leq 
\liminf_{n\to \infty}\mathcal{E}(u_n, v_{\tau}^k)
\end{equation*}
and from \cite[Lemma 7.1.4]{a-g-s} 
\begin{equation*} 
\mathcal{W}(u_*, u_{\tau}^{k-1}) 
\leq \liminf_{n \to \infty}
\mathcal{W}(u_n, u_{\tau}^{k-1}). 
\end{equation*} 
By the triangle inequality, we have 
\begin{equation*} 
\begin{split} 
\left ( \int_{\mathbb{R}^d}|x|^2u_*(x)\,dx \right )^{\frac{1}{2}}
= \mathcal{W}(u_*, \delta_0)
& \leq \mathcal{W}(u_*, u_{\tau}^{k-1}) + \mathcal{W}(u_{\tau}^{k-1}, \delta_0) \\
& = \liminf_{n \to \infty}\mathcal{W}(u_n, u_{\tau}^{k-1})
+
\left ( \int_{\mathbb{R}^d}|x|^2u_{\tau}^{k-1}(x)\,dx \right )^{\frac{1}{2}}, 
\end{split}
\end{equation*}
hence, $u_* \in M\mathscr{P}_2(\mathbb{R}^d)$. 
Thus, 
\[
\mathcal{E}(u_*, v_{\tau}^k)+ \frac{1}{2\tau}\mathcal{W}(u_*, u_{\tau}^{k-1}) 
= \lim_{n\to \infty}
\left ( \mathcal{E}(u_n, v_{\tau}^k) 
+ \frac{1}{2\tau}\mathcal{W}(u_n, u_{\tau}^{k-1}) \right ). 
\]
Therefore, $u_{\tau}^k:=u_*$ can be defined.

\vspace{2mm}

\noindent 
\textbf{(ii) uniqueness}  

\vspace{2mm}

It is clear that $u \mapsto \mathcal{E}(u, v_{\tau}^k)$ is strictly convex, 
so we demonstrate that $u \mapsto \mathcal{W}_2^2(u, u_{\tau}^{k-1})$ 
is weakly convex following \cite[{ Proposition A.1}]{o1}. 
Let $p_1 \in \Gamma_o(u_1, u_3)$ and $\ p_2 \in \Gamma_o(u_2, u_3)$. 
In this case, it is straightforward to verify that 
$
p_s:=(1-s)p_1+sp_2 \in \Gamma((1-s)u_1+ su_2, u_3)
$. 
Consequently, we have 
\begin{equation*} 
\begin{split} 
\mathcal{W}_2^2((1-s)u_1 + su_2, u_3) 
& \leq \int_{\mathbb{R}^d \times \mathbb{R}^d}|x-y|^2\,dp_s
\\
& = (1-s)\int_{\mathbb{R}^d \times \mathbb{R}^d}|x-y|^2\,dp_1
+ s\int_{\mathbb{R}^d \times \mathbb{R}^d}|x-y|^2\,dp_2 \\
& = 
(1-s)\mathcal{W}_2^2(u_1, u_3) + s\mathcal{W}_2^2(u_2, u_3)
\end{split}
\end{equation*}
The convexity demonstrated here implies the uniqueness of $u_{\tau}^k$. 
\end{proof}

To derive the Euler-Lagrange equations for $u_{\tau}^k$, 
 it is important to recall the concept of the push-forward and its properties. 
\begin{defn}[push-forward \text{\cite[\S 5.2]{a-g-s}}]
Let $\mu, \nu \in \mathscr{P}(\mathbb{R}^d)$. 
Consider a $\mu$-measurable map $\bm{t}:\mathbb{R}^d \to \mathbb{R}^d$. 
If for every $f \in C_b(\mathbb{R}^d)$, 
the equality   
\[ \int_{\mathbb{R}^d}f(y)\, d\nu(y) =
\int_{\mathbb{R}^d}f(\bm{t}(x))\,d\mu(x)\] 
holds, then $\nu$ is termed the \textbf{push-forward} of $\mu$ through $\bm{t}$, 
denoted by $\nu= \bm{t}\push \mu$. Specifically, if 
$\mu=\rho_1\mathscr{L}^d$ and 
$\nu=\rho_2\mathscr{L}^d$ with $\rho_1, \rho_2 \in L^p(\mathbb{R}^d)$ and 
$1 \leq p <\infty$, the same notation remains consistent provided that 
\[
\int_{\mathbb{R}^d}f(y)\rho_2(y)\,dy= 
\int_{\mathbb{R}^d} f(\bm{t}(x))\rho_1(x)\,dx 
 \]
for every $f \in L^{p'}(\mathbb{R}^d)$. 
\end{defn}

If $\nu=\bm{t}\push \mu$ with 
$d\mu=u\,d\mathscr{L}^d$ and $d\nu=v\,d\mathscr{L}^d$, 
then one can deduce from change of variables that  
\begin{equation}\label{push forward for densities}
 v(\bm{t}(x))|\det{(D\bm{t}(x))}|=u(x)
 \end{equation}
holds for almost every $x$ in $\mathbb{R}^d$.

\begin{lem} \label{push-forward integrability}
Let $1 \leq p < \infty$ and  
$\bm{\xi} \in C^{\infty}_c(\mathbb{R}^d; \mathbb{R}^d)$. 
If $v_{n}$ converges to $v$ in $L^p(\mathbb{R}^d)$, 
then for $\delta>0$ small enough and for every $t \in [0,\delta]$,  
$v_{n}(\bm{id}+t\bm{\xi})$ converges to $v(\bm{id}+t\bm{\xi})$ 
in $L^p(\mathbb{R}^d)$. 
In addition, it holds that 
\[ \| v_{n}(\bm{id}+t\bm{\xi}) - v(\bm{id}+t\bm{\xi}) \|_{L^p(\mathbb{R}^d)} \leq 
C_{\delta} \| v_{n} - v \|_{L^p(\mathbb{R}^d)} \text{ for all } t \in [0,\delta], \]
where $C_{\delta}$ is a positive constant depending on $\delta$ 
and $\bm{\xi}$.
\end{lem}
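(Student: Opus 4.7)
The plan is to realize the map $\Phi_t(x) := x + t \bm{\xi}(x)$ as a $C^\infty$-diffeomorphism of $\mathbb{R}^d$ for sufficiently small $t$, so that the transformation $v \mapsto v \circ \Phi_t$ amounts to a change of variables whose Jacobian is uniformly bounded above and below. Once that is established, the lemma reduces to a one-line change-of-variables estimate on the difference $v_n - v$.

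First I would fix $\delta > 0$ small enough so that $\delta \, \|D\bm{\xi}\|_{L^\infty} < 1/2$. Since $D\Phi_t(x) = I + t\, D\bm{\xi}(x)$, a Neumann-series argument (or the inverse function theorem) gives that $\Phi_t$ is a smooth diffeomorphism of $\mathbb{R}^d$ for every $t \in [0,\delta]$, agreeing with the identity outside a fixed compact set containing $\supp \bm{\xi}$. Moreover there exist constants $c, C > 0$, depending only on $\delta$ and $\bm{\xi}$, such that
\begin{equation*}
c \leq |\det D\Phi_t(x)| \leq C \qquad \text{for all } x \in \mathbb{R}^d,\ t \in [0,\delta],
\end{equation*}
and consequently the inverse Jacobian $|\det D\Phi_t^{-1}(y)|$ is likewise bounded above by $1/c$ and below by $1/C$ uniformly in $t \in [0,\delta]$ and $y \in \mathbb{R}^d$.

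Next I would apply the change of variables $y = \Phi_t(x)$ to the $L^p$ norm of the composed difference:
\begin{equation*}
\int_{\mathbb{R}^d} |v_n(\Phi_t(x)) - v(\Phi_t(x))|^p\,dx
= \int_{\mathbb{R}^d} |v_n(y) - v(y)|^p \, |\det D\Phi_t^{-1}(y)|\,dy
\leq \frac{1}{c}\, \|v_n - v\|_{L^p(\mathbb{R}^d)}^p.
\end{equation*}
Taking $p$-th roots yields the quantitative bound with $C_\delta := c^{-1/p}$, and the hypothesis $v_n \to v$ in $L^p(\mathbb{R}^d)$ immediately gives the claimed convergence of $v_n(\bm{\mathrm{id}} + t\bm{\xi})$ to $v(\bm{\mathrm{id}} + t\bm{\xi})$ in $L^p(\mathbb{R}^d)$ for every $t \in [0,\delta]$.

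I do not anticipate a serious obstacle: the only point requiring a touch of care is verifying that $\Phi_t$ is a \emph{global} diffeomorphism, but this follows because $\bm{\xi}$ is compactly supported, so $\Phi_t$ coincides with the identity outside a compact set and the local diffeomorphism property combined with injectivity (from the contraction estimate $\delta \|D\bm{\xi}\|_{L^\infty} < 1/2$ via the mean value inequality) upgrades to a global one. Everything else is routine change-of-variables bookkeeping.
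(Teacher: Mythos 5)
Your proof is correct and takes essentially the same approach as the paper: change variables via $y = x + t\bm{\xi}(x)$, using that this map is a diffeomorphism with Jacobian uniformly bounded for $t \in [0,\delta]$. The only difference is that you supply more detail on why the map is a global diffeomorphism (the contraction estimate and the fact that it is the identity off a compact set), which the paper leaves implicit.
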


\begin{proof}Let $\bm{r}_t(x):=x+t\bm{\xi}(x)$. 
Note that for $\delta$ small enough, 
$\bm{r}_t$ is a $C^1$ diffeomorphism and 
$\det{D\bm{r}_t}>0$ in $t \in [0,\delta]$ 
since 
$\bm{\xi} \in C^{\infty}_c(\mathbb{R}^d; \mathbb{R}^d)$.  
By the change of variables $y = \bm{r}_t(x)$, we have 
\begin{equation*}
\begin{split}
 \int_{\mathbb{R}^d} | v_n(\bm{r}_t(x))-v(\bm{r}_t(x)) |^p\,dx 
&= \int_{\mathbb{R}^d} |v_n(y)-v(y)|^p \det (D\bm{r}_t^{-1}(y))\, dy 
 \\
& \leq 
\sup_{(y,t) \in \mathbb{R}^d \times [0,\delta]}
\Bigl(\det{( D\bm{r}_t^{-1}(y))}\Bigr)\|v_n-v\|_{L^p(\mathbb{R}^d)}^p. 
\end{split}
\end{equation*}
\end{proof}

With these preparations in place, the Euler-Lagrange equations for $u_{\tau}^k$ 
are derived.

\begin{lem}
Let the pair $(u_{\tau}^k, v_{\tau}^k)$ be 
a solution of \eqref{mm}. Then,  
the first equation in \eqref{EL} holds.  
\end{lem}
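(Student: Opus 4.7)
The plan is to exploit the minimality of $u_{\tau}^k$ through inner variations along the flow of a given test vector field $\bm{\xi} \in C^{\infty}_c(\mathbb{R}^d;\mathbb{R}^d)$. Setting $\bm{r}_t(x) := x + t\bm{\xi}(x)$, one sees (as already noted in the proof of Lemma \ref{push-forward integrability}) that for $|t|$ small enough $\bm{r}_t$ is a $C^1$-diffeomorphism with $\det D\bm{r}_t = 1 + t\,\diver \bm{\xi} + O(t^2)$ uniformly in $x$. Define the competitor $\tilde{u}_t := \bm{r}_t \push u_{\tau}^k$, which automatically belongs to $M\mathscr{P}_2(\mathbb{R}^d)\cap L^m(\mathbb{R}^d)$ and so may be inserted into the minimization \eqref{mm}.

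Next, I compute Taylor expansions at $t = 0$ of each piece of the functional at $\tilde{u}_t$. By the change of variables $y = \bm{r}_t(x)$ and \eqref{push forward for densities},
\begin{equation*}
\frac{1}{m-1}\int \tilde{u}_t^m\,dy = \frac{1}{m-1}\int (u_{\tau}^k)^m (\det D\bm{r}_t)^{1-m}\,dx = \frac{1}{m-1}\int (u_{\tau}^k)^m\,dx - t\int (u_{\tau}^k)^m\,\diver \bm{\xi}\,dx + O(t^2),
\end{equation*}
while using $v_{\tau}^k \in W^{4,2} \hookrightarrow C^1$ together with the compact support of $\bm{\xi}$ gives $\int \tilde{u}_t v_{\tau}^k\,dy = \int u_{\tau}^k v_{\tau}^k(\bm{r}_t)\,dx = \int u_{\tau}^k v_{\tau}^k\,dx + t \int u_{\tau}^k \langle \nabla v_{\tau}^k, \bm{\xi}\rangle\,dx + O(t^2)$. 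For the Wasserstein cost, I use that $q_t := (\pi_1, \bm{r}_t \circ \pi_2)\push p_k$ is an admissible coupling between $u_{\tau}^{k-1}/M$ and $\tilde{u}_t/M$, yielding the one-sided bound
\begin{equation*}
\mathcal{W}_2^2(\tilde{u}_t, u_{\tau}^{k-1}) \leq M\int |x - \bm{r}_t(y)|^2\,dp_k = \mathcal{W}_2^2(u_{\tau}^k, u_{\tau}^{k-1}) + 2tM\int \langle y-x,\bm{\xi}(y)\rangle\,dp_k + O(t^2).
\end{equation*}

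Substituting these expansions into $\mathcal{E}(\tilde{u}_t, v_{\tau}^k) + \frac{1}{2\tau}\mathcal{W}_2^2(\tilde{u}_t, u_{\tau}^{k-1}) \geq \mathcal{E}(u_{\tau}^k, v_{\tau}^k) + \frac{1}{2\tau}\mathcal{W}_2^2(u_{\tau}^k, u_{\tau}^{k-1})$, dividing by $t>0$ and sending $t\downarrow 0$, I obtain
\begin{equation*}
0 \leq -\tau\int(u_{\tau}^k)^m\,\diver \bm{\xi}\,dx - \tau\int u_{\tau}^k\langle \nabla v_{\tau}^k,\bm{\xi}\rangle\,dx + M\int\langle y-x,\bm{\xi}(y)\rangle\,dp_k.
\end{equation*}
Replacing $\bm{\xi}$ by $-\bm{\xi}$ (still a valid test field) reverses the inequality and forces equality. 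Integration by parts using $(u_{\tau}^k)^m \in W^{1,1}(\mathbb{R}^d)$ and $\bm{\xi} \in C^{\infty}_c$ then converts $-\int (u_{\tau}^k)^m\,\diver \bm{\xi}\,dx$ into $\int \langle \nabla (u_{\tau}^k)^m, \bm{\xi}\rangle\,dx$, which is exactly the first equation of \eqref{EL}.

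The main technical points are, first, the uniform control of the $O(t^2)$ remainders: this rests on the smoothness and compact support of $\bm{\xi}$, together with Lemma \ref{push-forward integrability} applied to $(u_{\tau}^k)^m$ and $v_{\tau}^k$ to pass to the limit in the pushed-forward integrands. Second, the integration-by-parts step requires $(u_{\tau}^k)^m \in W^{1,1}$; this regularity is itself encoded in the weak identity, since the right-hand side is bounded by $C\|\bm{\xi}\|_{L^{\infty}}$ (using $u_{\tau}^k \in L^2$, $\nabla v_{\tau}^k \in L^2$, and the Cauchy--Schwarz estimate $M\int |y-x|\,dp_k \leq \sqrt{M}\,\mathcal{W}_2(u_{\tau}^k, u_{\tau}^{k-1})$), so that the distributional gradient of $(u_{\tau}^k)^m$ is an $L^1$ vector field. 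This simultaneously yields the integration by parts and the asserted $W^{1,1}$-regularity.
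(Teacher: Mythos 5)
Your overall strategy — inner variations $\tilde{u}_t = \bm{r}_t\push u_\tau^k$, Taylor expansion of the internal energy, the potential energy, and a one-sided expansion of the Wasserstein cost via the modified coupling $q_t$, then $\pm\bm{\xi}$ to convert the inequality to an equality — is exactly what the paper does, so on those points the proof is sound and aligned.

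The gap is in the final regularity upgrade. You assert that the bound
\begin{equation*}
\left|\int_{\mathbb{R}^d}(u_\tau^k)^m\,\diver\bm{\xi}\,dx\right|\leq C\|\bm{\xi}\|_{L^\infty}
\end{equation*}
implies "the distributional gradient of $(u_\tau^k)^m$ is an $L^1$ vector field." That implication is false: a uniform bound of this type only shows that the distributional gradient is a finite $\mathbb{R}^d$-valued Radon measure, i.e.\ $(u_\tau^k)^m\in\bv(\mathbb{R}^d)$, and the first equation of \eqref{EL} is stated with $\nabla(u_\tau^k)^m$ as a genuine $L^1$ function (and Proposition \ref{propEL} asserts $(u_\tau^k)^m\in W^{1,1}$). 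Absolute continuity of that measure with respect to $\mathscr{L}^d$ does not come for free. The paper obtains it by a second representation step: the linear functional
\begin{equation*}
\bm{\xi}\longmapsto -\int_{\mathbb{R}^d}(u_\tau^k)^m\,\diver\bm{\xi}\,dx-\int_{\mathbb{R}^d}\langle\nabla v_\tau^k,\bm{\xi}\rangle u_\tau^k\,dx
\end{equation*}
is bounded by $\frac{\mathcal{W}_2(u_\tau^k,u_\tau^{k-1})}{\tau}\bigl(\int|\bm{\xi}|^2 u_\tau^k\,dx\bigr)^{1/2}$, hence is continuous in the weighted norm $\|\cdot\|_{L^2(u_\tau^k\mathscr{L}^d)}$; Riesz representation in that Hilbert space produces $R_\tau^k\in L^2(u_\tau^k\mathscr{L}^d)$ so that the measure from Riesz--Markov equals $u_\tau^k(R_\tau^k+\nabla v_\tau^k)\mathscr{L}^d$, which is absolutely continuous with $L^1$ density by Cauchy--Schwarz. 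You list the ingredients ($u_\tau^k\in L^2$, $\nabla v_\tau^k\in L^2$, the $\mathcal{W}_2$-estimate) but deploy them only to produce the $\|\bm{\xi}\|_{L^\infty}$-bound, which cannot by itself rule out a singular part of the gradient. Insert the weighted-$L^2$ duality step and the proof is complete.
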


\begin{proof}
The proof is provided in four steps. 

\vspace{2mm}

\noindent 
\textbf{Step 1 : G\^ateaux derivative of the internal energy}

\vspace{2mm}

Define $U_{s}:=(\bm{id} + s \bm{\xi})\push u_{\tau}^k$. 
Then, from the relation \eqref{push forward for densities}, we have 
\[U_{s}(x + s \bm{\xi}(x))
|\det{(\bm{id} + s D\bm{\xi}(x)})|=u_{\tau}^k(x).  \]
Consequently, we have 
\begin{equation*} 
\begin{split} 
\frac{1}{s}
\int_{\mathbb{R}^d}
U_{s}^m - (u_{\tau}^k)^m\,dx
&= \frac{1}{s}
\left( \int_{\mathbb{R}^d} U_{s}^{m-1}(\bm{id} 
+ s \bm{\xi}(x))u_{\tau}^k(x)\,dx 
- \int_{\mathbb{R}^d} (u_{\tau}^k)^m\,dx 
\right) \\
& = 
\int_{\mathbb{R}^d} 
\frac{|\det{(\bm{id} + s D\bm{\xi}(x)})|^{1-m}-1}
{s}(u_{\tau}^k)^m\,dx.  
\end{split}
\end{equation*}
Since $(|\det{(\bm{id} + sD\bm{\xi}(x)})|^{1-m}-1)/s$ 
uniformly converges to $(1-m)\diver{\bm{\xi}}$ 
because of the uniform boundedness of $D \bm{\xi}$, we obtain  
\begin{equation*}
\lim_{s \to 0}\int_{\mathbb{R}^d}
\frac{U_{s}^m - (u_{\tau}^k)^m}{s}\,dx
= (1-m) \int_{\mathbb{R}^d} (u_{\tau}^k)^m\diver{\bm{\xi}} \,dx.  
\end{equation*}

\noindent 
\textbf{Step 2 : G\^ateaux derivative of the potential energy}

\vspace{2mm}

Let $\{v_n\} \subset C^{\infty}_c(\mathbb{R}^d)$ 
be a sequence converging to $v$ in $W^{2,2}(\mathbb{R}^d)$. 
Define 
\[ 
I_n(s):=\int_{\mathbb{R}^d} 
U_s v_n\,dx=\int_{\mathbb{R}^d} v_n(x + s \bm{\xi}(x))u(x)\,dx
\text{\ \ \  and \ \ }I(s):=\int_{\mathbb{R}^d}U_sv\,dx.
\]
Then, $I_n(s)$ converges uniformly to $I(s)$ in a small interval $[0,\delta]$, 
and 
$I_n(s)$ is differentiable with 
\[
I_n'(s) = \int_{\mathbb{R}^d} 
\langle \nabla v_n(x + s \bm{\xi}(x)), \bm{\xi}(x)\rangle u(x)\,dx.  
 \]
On the other hand, 
\begin{multline*}
\left | \int_{\mathbb{R}^d}
  \langle \nabla v(x + s \bm{\xi}(x)) - \nabla v_n(x + s \bm{\xi}(x)), 
\bm{\xi}(x)\rangle u(x)\,dx\right | \\
 \leq \|\bm{\xi}\|_{L^{\infty}} \|u\|_{L^m} 
\left (\int_{\supp{\bm{\xi}}} |\nabla v(x + s \bm{\xi}(x)) 
- \nabla v_n(x + s \bm{\xi}(x))|^{m'}\,dx \right)^{1/m'}. 
\end{multline*}
By Lemma \ref{push-forward integrability}, the right-hand side converges uniformly 
to $0$ in $[0,\delta]$. 
Hence, $I(s)$ is differentiable at $s=0$, and we have
\[ 
I'(0) 
=\int_{\mathbb{R}^d} \langle \nabla v(x), \bm{\xi}(x) \rangle u(x) \, dx. 
\] 

\noindent 
\textbf{Step 3 : the right G\^ateaux derivative of the Wasserstein distance}

\vspace{2mm}

Let $p_k \in \Gamma_o(u_{\tau}^{k-1}/M, u_{\tau}^k/M)$. 
Define $p_s \in \Gamma(U_s/M, u_{\tau}^k/M)$ by 
\[ 
\int_{\mathbb{R}^d \times \mathbb{R}^d}
f(x,y)\,dp_s(x,y) = 
\int_{\mathbb{R}^d \times \mathbb{R}^d} 
f(x,y+ s\bm{\xi}(y))\,dp_k(x,y) 
\]
for any $f \in C_b(\mathbb{R}^d \times \mathbb{R}^d)$. 
Then by definition of the Wasserstein distance, 
\[\mathcal{W}_2^2(U_s, u_{\tau}^k) 
\leq 
\int_{\mathbb{R}^d \times \mathbb{R}^d} |x-y|^2\,Mdp_s(x,y)
=
\int_{\mathbb{R}^d \times \mathbb{R}^d} |x-y-s\bm{\xi}(y)|^2\,Mdp_k(x,y). \]
Therefore we have 
\begin{equation*}
\begin{split}
\mathcal{W}_2^2(U_s, u_{\tau}^{k-1})- \mathcal{W}_2^2(u_{\tau}^k,u_{\tau}^{k-1}) 
& \leq 
\int_{\mathbb{R}^d \times \mathbb{R}^d} 
 (|x-y-s\bm{\xi}(y)|^2
- 
 |x-y|^2)\,Mdp_k(x,y) \\
 & \leq 
  -2s\int_{\mathbb{R}^d \times \mathbb{R}^d} 
 \langle x-y, \bm{\xi}(y)\rangle \,Mdp_k(x,y) 
 + s^2M\|\bm{\xi}\|_{L^{\infty}}. 
\end{split}
\end{equation*}
Dividing by $s>0$ and passing to the limit as $s \downarrow 0$, 
we obtain 
\begin{equation*}
\limsup_{s \downarrow 0} 
\frac{\mathcal{W}_2^2(U_s, u_{\tau}^{k-1}) 
- \mathcal{W}_2^2(u_{\tau}^k, u_{\tau}^{k-1})}{s}
\leq 2\int_{\mathbb{R}^d \times \mathbb{R}^d} 
\langle y-x, \bm{\xi}(y) \rangle \,Mdp_k(x,y). 
\end{equation*}

\noindent 
\textbf{Step 4 : Euler-Lagrange equations}

\vspace{2mm}

Given the minimality of $u_{\tau}^k$, we have 
\begin{equation*}  
\mathcal{E}(U_s, v_{\tau}^k)+\frac{1}{2\tau}\mathcal{W}_2^2(U_s, u_{\tau}^{k-1}) 
- \mathcal{E}(u_{\tau}^k, v_{\tau}^k)
+\frac{1}{2\tau}\mathcal{W}_2^2(u_{\tau}^k, u_{\tau}^{k-1})  \geq 0
\end{equation*}
Dividing both sides by $s>0$ and taking the limit as $s \downarrow 0$, 
we obtain 
\begin{equation*} 
\begin{split} 
\frac{1}{\tau}\int_{\mathbb{R}^d \times \mathbb{R}^d} 
\langle y-x, \bm{\xi}(y) \rangle \,Mdp_k(x,y)
-\int_{\mathbb{R}^d} (u_{\tau}^k)^m \diver \bm{\xi}\,dx 
-\int_{\mathbb{R}^d}\langle \nabla v_{\tau}^k ,\bm{\xi}\rangle  u_{\tau}^k\,dx\geq 0
\end{split}
\end{equation*}
By considering $-\bm{\xi}$ instead of $\bm{\xi}$, we obtain 
\begin{equation*} 
\begin{split} 
\frac{1}{\tau}\int_{\mathbb{R}^d \times \mathbb{R}^d} 
\langle y-x, \bm{\xi}(y) \rangle \,Mdp_k(x,y)
-\int_{\mathbb{R}^d} (u_{\tau}^k)^m \diver \bm{\xi}\,dx 
-\int_{\mathbb{R}^d}\langle \nabla v_{\tau}^k ,\bm{\xi}\rangle  u_{\tau}^k\,dx=0. 
\end{split}
\end{equation*}

Below, we demonstrate that $\nabla (u_{\tau}^k)^m \in L^1(\mathbb{R}^d)$. 
By the Cauchy-Schwarz inequality,  we have 
\begin{equation*} 
\begin{split} 
\left | \int_{\mathbb{R}^d} (u_{\tau}^k)^m \diver \bm{\xi}\,dx 
+ \int_{\mathbb{R}^d}\langle \nabla v_{\tau}^k ,\bm{\xi}\rangle  u_{\tau}^k\,dx \right |
& \leq \left |\frac{1}{\tau} 
\int_{\mathbb{R}^d \times \mathbb{R}^d} 
\langle y-x, \bm{\xi}(y) \rangle \,Mdp_k(x,y) \right | \\
& \leq 
\frac{1}{\tau}
\left (\int_{\mathbb{R}^d \times \mathbb{R}^d}
|y-x|^2\,Mdp_k \right )^{\frac{1}{2}}
\left (\int_{\mathbb{R}^d}
|\bm{\xi}(y)|^2\,Mdp_k
 \right )^{\frac{1}{2}} \\
 & \leq \frac{\mathcal{W}_2(u_{\tau}^{k}, u_{\tau}^{k-1})}{\tau}
 \left (\int_{\mathbb{R}^d}
|\bm{\xi}(y)|^2u_{\tau}^{k}\,dx
 \right )^{\frac{1}{2}}  \\
 & \leq \frac{\mathcal{W}_2(u_{\tau}^{k}, u_{\tau}^{k-1})}{\tau}
 \sqrt{M}\|\bm{\xi}\|_{L^{\infty}}, 
\end{split}
\end{equation*}
so, we have 
\begin{equation*} 
\begin{split} 
\left | \int_{\mathbb{R}^d} (u_{\tau}^{k})^m \diver \bm{\xi}\,dx \right |
& \leq 
\left | \int_{\mathbb{R}^d}
\langle \nabla v_{\tau}^k ,\bm{\xi}\rangle  u^k\,dx \right | 
+ \frac{\mathcal{W}_2(u_{\tau}^{k}, u_{\tau}^{k-1})}{\tau}\sqrt{M}
\|\bm{\xi}\|_{L^{\infty}} \\
& \leq \|\bm{\xi}\|_{L^{\infty}} 
\left ( 
\int_{\mathbb{R}^d}u^k|\nabla v_{\tau}^k|\,dx 
+ \sqrt{M} \frac{\mathcal{W}_2(u_{\tau}^{k}, u_{\tau}^{k-1})}{\tau}
\right ). 
\end{split}
\end{equation*}
By the Hahn-Banach theorem and the Riesz-Markov representation theorem, 
there exists an $\mathbb{R}^d$-valued measure 
$\bm{\mu}^k=(\mu^k_1, \mu^k_2,\ldots , \mu^k_d)$ such 
that 
\begin{equation*}%\label{Riesz1}
- \int_{\mathbb{R}^d}(u_{\tau}^k)^m \diver{\bm{\xi}} \,dx =
\sum_{j=1}^d
\int_{\mathbb{R}^d}
\xi_j\, d\mu^k_j, 
\end{equation*}
where $\bm{\xi}=(\xi_1,\xi_2,\ldots,\xi_d)$. 
Building on this result, we revisit the aforementioned inequality:
\begin{equation*} 
\begin{split} 
\left | \sum_{j=1}^d
\int_{\mathbb{R}^d}
\xi_j\, d\mu^k_j, 
- \int_{\mathbb{R}^d}\langle \nabla v_{\tau}^k ,\bm{\xi}\rangle  u_{\tau}^k\,dx \right |
\leq \frac{\mathcal{W}_2(u_{\tau}^k, u_{\tau}^{k-1})}{\tau}
 \left (\int_{\mathbb{R}^d}
|\bm{\xi}(y)|^2u_{\tau}^{k}\,dx
 \right )^{\frac{1}{2}}  
\end{split}
\end{equation*}
and apply the Riesz representation theorem to find:
\begin{equation*} 
\begin{cases}\vspace{2mm} 
\displaystyle 
\sum_{j=1}^d
\int_{\mathbb{R}^d}
\xi_j\, d\mu^k_j, 
- \int_{\mathbb{R}^d}\langle \nabla v_{\tau}^k ,\bm{\xi}\rangle  u_{\tau}^k\,dx
= \int_{\mathbb{R}^d}\langle R_{\tau}^k, \bm{\xi}\rangle u_{\tau}^k\,dx, 
\\ 
\displaystyle 
\left ( \int_{\mathbb{R}^d}|R_{\tau}^k|^2u_{\tau}^k\,dx\right )^{\frac{1}{2}}
 < \frac{\mathcal{W}_2(u_{\tau}^k, u_{\tau}^{k-1})}{\tau},
 \end{cases} 
\end{equation*}
confirming the existence of an $\mathbb{R}^d$-valued function $R_{\tau}^k$. 
Summarizing the above, we can express the equation as:
\begin{equation*} 
\begin{split} 
- \int_{\mathbb{R}^d}(u_{\tau}^k)^m \diver{\bm{\xi}} \,dx =
\sum_{j=1}^d
\int_{\mathbb{R}^d}
\xi_j\, d\mu^k_j
= \int_{\mathbb{R}^d}\langle u_{\tau}^kR_k+ u^k\nabla v_{\tau}^k , \bm{\xi}\rangle \,dx. 
\end{split}
\end{equation*}
From the fact that 
$u_{\tau}^kR_k+ u_{\tau}^k\nabla v_{\tau}^k \in L^1(\mathbb{R}^d)$, 
it follows that
$
\nabla u_{\tau}^k \in L^1(\mathbb{R}^d) \text{ and }
\bm{\mu}^k= \nabla u_{\tau}^k \mathscr{L}^d. 
$
Consequently, we obtain 
\begin{equation*} 
\begin{split} 
\frac{1}{\tau}\int_{\mathbb{R}^d \times \mathbb{R}^d} 
\langle y-x, \bm{\xi}(y) \rangle \,Mdp_k(x,y)
+ 
\int_{\mathbb{R}^d}\langle \nabla (u_{\tau} ^k)^m 
- u_{\tau}^k\nabla v_{\tau}^k ,\bm{\xi}\rangle \,dx=0. 
\end{split}
\end{equation*}
\end{proof}

The following lemma is an estimation formula 
associated with the minimizing movement scheme and 
can be proven in an abstract setting. 
In this paper, 
it can  be directly derived from the Euler-Lagrange equations \eqref{EL}. 
\begin{lem}[slope estimate {\cite[Lemma 3.1.3]{a-g-s}}]\label{Lem slope estimate}
Let the pair $(u_{\tau}^k, v_{\tau}^k)$ be 
a solution of \eqref{mm}. Then, the following estimate holds. 
\begin{equation} \label{slope estimate}
\begin{split} 
\left (\int_{\mathbb{R}^d} 
\frac{| \nabla (u_{\tau}^k)^m -  u_{\tau}^k \nabla v_{\tau}^k|^2}{u_{\tau}^k}\,dx 
\right)^{\frac{1}{2}}
\leq 
\frac{\mathcal{W}_2(u_{\tau}^k, u_{\tau}^{k-1})}{\tau}. 
\end{split}
\end{equation}
\end{lem}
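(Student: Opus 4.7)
The plan is to read the first Euler-Lagrange equation in \eqref{EL} as a linear functional bound on the dual norm, in the weighted space $L^2(u_\tau^k\,dx;\mathbb{R}^d)$, of the vector field
\[
J_\tau^k := \nabla (u_\tau^k)^m - u_\tau^k \nabla v_\tau^k.
\]
The EL identity reads
\[
\tau \int_{\mathbb{R}^d} \langle J_\tau^k, \bm{\xi}\rangle\,dx
= -\int_{\mathbb{R}^d \times \mathbb{R}^d} \langle y-x, \bm{\xi}(y)\rangle\,M\,dp_k(x,y)
\quad \forall\,\bm{\xi}\in C_c^\infty(\mathbb{R}^d;\mathbb{R}^d),
\]
so the task is to estimate the right-hand side in a way that extracts $\mathcal{W}_2(u_\tau^k,u_\tau^{k-1})$ on one side and a norm of $\bm{\xi}$ weighted by $u_\tau^k$ on the other.

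The central computation is Cauchy-Schwarz under the product measure $M\,dp_k$ on $\mathbb{R}^d\times\mathbb{R}^d$:
\[
\left|\int \langle y-x,\bm{\xi}(y)\rangle\,M\,dp_k\right|
\le \Bigl(\int |y-x|^2\,M\,dp_k\Bigr)^{1/2}\Bigl(\int |\bm{\xi}(y)|^2\,M\,dp_k\Bigr)^{1/2}.
\]
Since $p_k\in\Gamma_o(u_\tau^{k-1}/M,u_\tau^k/M)$, the first factor equals $\mathcal{W}_2(u_\tau^k,u_\tau^{k-1})$ by the rescaling identity $\mathcal{W}_2(\mu,\nu)=\sqrt{M}\,\mathcal{W}_2(\mu/M,\nu/M)$; the second factor equals $\|\bm{\xi}\|_{L^2(u_\tau^k\,dx)}$ because the second marginal of $Mp_k$ is $u_\tau^k$. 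Combining with the EL identity yields, for every $\bm{\xi}\in C_c^\infty(\mathbb{R}^d;\mathbb{R}^d)$,
\[
\left|\int_{\mathbb{R}^d} \langle J_\tau^k,\bm{\xi}\rangle\,dx\right|
\le \frac{\mathcal{W}_2(u_\tau^k,u_\tau^{k-1})}{\tau}\|\bm{\xi}\|_{L^2(u_\tau^k\,dx)}.
\]

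To conclude, I would invoke the representation $J_\tau^k = u_\tau^k R_\tau^k$ with $R_\tau^k\in L^2(u_\tau^k\,dx;\mathbb{R}^d)$ that was produced in the Euler-Lagrange lemma via the Riesz representation theorem; since $|J_\tau^k|^2/u_\tau^k = u_\tau^k|R_\tau^k|^2$, the desired slope estimate is equivalent to $\|R_\tau^k\|_{L^2(u_\tau^k\,dx)}\le \mathcal{W}_2(u_\tau^k,u_\tau^{k-1})/\tau$. This follows by extending the inequality above from $\bm{\xi}\in C_c^\infty$ to $\bm{\xi}=R_\tau^k$, using that $C_c^\infty(\mathbb{R}^d;\mathbb{R}^d)$ is dense in $L^2(u_\tau^k\,dx;\mathbb{R}^d)$ (standard truncation plus mollification, using that $u_\tau^k\in L^1\cap L^m$), and then testing against $R_\tau^k$ itself and dividing by $\|R_\tau^k\|_{L^2(u_\tau^k\,dx)}$. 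The only delicate point is this density step, but it is routine once $R_\tau^k\in L^2(u_\tau^k\,dx;\mathbb{R}^d)$ is in hand, which is already known from Proposition \ref{propEL}. No further regularity of $u_\tau^k$ or $v_\tau^k$ is required, and the argument is essentially a reinterpretation of the EL equation as a bound on a dual weighted $L^2$-norm of $J_\tau^k$.
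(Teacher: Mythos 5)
Your proposal is correct and follows essentially the same route as the paper: both read the Euler--Lagrange identity as a dual bound, apply Cauchy--Schwarz under the transport plan, use that the second marginal of $Mp_k$ is $u_{\tau}^k$ to identify the weight, and conclude via the Riesz representation theorem in $L^2(u_{\tau}^k\mathscr{L}^d;\mathbb{R}^d)$. The only cosmetic difference is that you spell out the density of $C_c^\infty$ in $L^2(u_{\tau}^k\mathscr{L}^d;\mathbb{R}^d)$ before testing against $R_{\tau}^k$, whereas the paper absorbs this step into the statement of the Riesz theorem.
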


\begin{proof}
From the first equation in \eqref{EL} 
and the Cauchy-Schwarz inequality we have 
\begin{equation*}
\begin{split}
\tau \int_{\mathbb{R}^d} 
\langle \nabla (u_{\tau}^k)^{m} 
- u_{\tau}^k \nabla v_{\tau}^k, \bm{\xi} \rangle \,dx 
& = 
 \int_{\mathbb{R}^d \times \mathbb{R}^d}
\langle y-x, \bm{\xi}(y) \rangle \,Mdp_k \\
& \leq 
\left (
\int_{\mathbb{R}^d \times \mathbb{R}^d} |x-y|^2 \, Mdp_k 
\right )^{\frac{1}{2}} 
\left (
 \int_{\mathbb{R}^d \times \mathbb{R}^d}
 |\bm{\xi}(y)|^2\,Mdp_k
\right )^{\frac{1}{2}}
\\
& \leq \mathcal{W}_2(u_{\tau}^k, u_{\tau}^{k-1})
\left(
\int_{\mathbb{R}^d}|\bm{\xi}(y)|^2\,u_{\tau}^k(y)\,dy
 \right)^{\frac{1}{2}}. 
\end{split}
\end{equation*}
As a consequence of the Riesz representation theorem, 
there exists 
a function $R_{\tau}^k$ in $L^2(\mathbb{R}^d; u_{\tau}^k\mathscr{L}^d)$ 
such that 
\begin{equation*} 
\begin{cases}\vspace{2mm}
\displaystyle 
\int_{\mathbb{R}^d} 
\langle \nabla (u_{\tau}^k)^{m} 
- u_{\tau}^k \nabla v_{\tau}^k, \bm{\xi} \rangle \,dx 
= \int_{\mathbb{R}^d} 
\langle 
R_{\tau}^k
, \bm{\xi} \rangle \,u_{\tau}^k dx, \\
\displaystyle 
\|R_{\tau}^k\|_{L^2(\mathbb{R}^d; u_{\tau}^k\mathscr{L}^d)} 
\leq 
\frac{\mathcal{W}_2(u_{\tau}^k, u_{\tau}^{k-1})}{\tau}. 
\end{cases}
\end{equation*}
This confirms that the specified inequality is valid. 
\end{proof}
By setting $u_{\tau}^0:=\rho_{\tau}*u_0$, 
we can ensure that $u_{\tau}^0 \in L^2(\mathbb{R}^d)$ even if 
$u_0 \not \in L^2(\mathbb{R}^d)$. 
This regularity is then transferred to 
$w_{\tau}^1 \in W^{2,2}(\mathbb{R}^d)$ and $v_{\tau}^1 \in W^{4,2}(\mathbb{R}^d)$ 
through the second and third equations of \eqref{EL} and elliptic estimates. 
To maintain this chain of regularity, we hope to obtain 
$u_{\tau}^1 \in L^2(\mathbb{R}^d)$ from 
\eqref{EL} and $v_{\tau}^1 \in W^{4,2}(\mathbb{R}^d)$. 
The following lemma demonstrates that 
this expectation is indeed realized. 
\begin{lem}\label{pointwise estimate}
Let $u_{\tau}^k$ be a solution of the minimizing problem \eqref{mm}. 
Then 
$u_{\tau}^k \in L^2(\mathbb{R}^d)$. 
\end{lem}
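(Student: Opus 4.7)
The plan is to split according to the two alternatives in condition \eqref{uniform L^2}. When $m \geq 2$, the previously established integrability $u_\tau^k \in L^1(\mathbb{R}^d) \cap L^m(\mathbb{R}^d)$ already suffices: since $1 \leq 2 \leq m$, H\"older interpolation gives $u_\tau^k \in L^2(\mathbb{R}^d)$. The substantive case is $1 < m < 2$ with $\frac{4m + 2(m-1)d}{d(2-m)} \geq 2$, and there I would combine the slope estimate (Lemma \ref{Lem slope estimate}) with the regularity $v_\tau^k \in W^{4,2}(\mathbb{R}^d)$ already obtained in Proposition \ref{propEL}.

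The argument in the case $1 < m < 2$ proceeds in four steps. First, the slope estimate together with the triangle inequality in the weighted space $L^2(u_\tau^k \mathscr{L}^d)$ yields
\[
\left(\int_{\mathbb{R}^d} \frac{|\nabla (u_\tau^k)^m|^2}{u_\tau^k}\,dx\right)^{1/2}
\leq \frac{\mathcal{W}_2(u_\tau^k, u_\tau^{k-1})}{\tau} + \left(\int_{\mathbb{R}^d} u_\tau^k |\nabla v_\tau^k|^2\,dx\right)^{1/2}.
\]
Second, I would bound the right-most term via H\"older's inequality, pairing $u_\tau^k \in L^m(\mathbb{R}^d)$ against $|\nabla v_\tau^k|^2$ placed in a Lebesgue space that comes from a Sobolev embedding of $W^{3,2}(\mathbb{R}^d)$; the Wasserstein term is finite because $u_\tau^k$ and $u_\tau^{k-1}$ both belong to $M\mathscr{P}_2(\mathbb{R}^d)$. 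Third, using the identity
$|\nabla (u_\tau^k)^m|^2 / u_\tau^k = \tfrac{4m^2}{(2m-1)^2} |\nabla (u_\tau^k)^{(2m-1)/2}|^2$
and the fact that $(u_\tau^k)^{(2m-1)/2}$ vanishes at infinity (since $u_\tau^k \in L^1$), the Sobolev inequality promotes this to $u_\tau^k \in L^{d(2m-1)/(d-2)}(\mathbb{R}^d)$. Fourth, a H\"older interpolation between $L^m$ and $L^{d(2m-1)/(d-2)}$ produces $u_\tau^k \in L^2$; the numerical hypothesis $\frac{4m+2(m-1)d}{d(2-m)} \geq 2$ is precisely what places the exponent $2$ inside the relevant interpolation interval.

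The main obstacle is the H\"older/Sobolev matching in the second step: the embedding of $W^{3,2}(\mathbb{R}^d)$ into a Lebesgue space depends sensitively on $d$ (it is $L^\infty$ for $d = 5$, any $L^q$ with $q < \infty$ for $d = 6$, and $L^{2d/(d-6)}$ for $d \geq 7$), and one must check that the resulting conjugate exponents are compatible with the sole prior control $u_\tau^k \in L^m$. This compatibility is exactly what is encoded quantitatively in condition \eqref{uniform L^2}; the rest of the argument is essentially Sobolev and interpolation bookkeeping.
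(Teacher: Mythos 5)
Your split is clean at the level of intent, and the $m \geq 2$ branch is fine: $u_\tau^k \in L^1 \cap L^m$ with $m \geq 2$ does give $u_\tau^k \in L^2$ by interpolation. The algebraic identity $|\nabla(u_\tau^k)^m|^2/u_\tau^k = \tfrac{4m^2}{(2m-1)^2}|\nabla(u_\tau^k)^{(2m-1)/2}|^2$ and the resulting Sobolev gain to $L^{(2m-1)d/(d-2)}$ are also correct, and $(2m-1)d/(d-2)\geq 2$ does follow from $m \geq 2-\tfrac{4}{d}$ (it is equivalent to $m \geq \tfrac32 - \tfrac2d$, which is weaker than $m\geq 2-\tfrac4d$ for $d\geq 4$, so the interpolation always lands on $L^2$).

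The gap is in your second step. You need to know $\int u_\tau^k |\nabla v_\tau^k|^2\,dx < \infty$ with only the a priori information $u_\tau^k \in L^1 \cap L^m$ and $v_\tau^k \in W^{4,2}(\mathbb{R}^d)$. By H\"older with $u_\tau^k \in L^r$, $1\leq r\leq m$, this requires $\nabla v_\tau^k \in L^{2r'}$, and by Sobolev $\nabla v_\tau^k \in W^{3,2}(\mathbb{R}^d)\hookrightarrow L^q$ only for $2\leq q\leq \tfrac{2d}{d-6}$ (for $d\geq 7$). Compatibility forces $r\geq d/6$, hence $m\geq d/6$, which fails precisely in the regime $m$ close to $2-\tfrac4d$ once $d\geq 10$ (e.g.\ $d=10$, $m=1.6<5/3$). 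So the H\"older pairing you outline does not close, and condition \eqref{uniform L^2} does not rescue it: for $d=10$, $m=1.6$ one has $\tfrac{4m+2(m-1)d}{d(2-m)}=4.6\geq 2$, yet no admissible $r$ exists. In fact your claim that \eqref{uniform L^2} is ``precisely what places $2$ in the interpolation interval'' is not right either --- that role is played by $m\geq \tfrac32-\tfrac2d$, which is weaker and already implied.

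The paper avoids the weighted-$L^2$ bound entirely. It starts from the cheaper $L^1$ consequence of the slope estimate, $\|\nabla(u_\tau^k)^m - u_\tau^k\nabla v_\tau^k\|_{L^1}\leq \sqrt M\,\mathcal W_2/\tau$, for which one only needs $u_\tau^k\nabla v_\tau^k\in L^1$; this is supplied by $u_\tau^k\in L^{2d/(d+4)}$ (always available from $L^1\cap L^m$ interpolation since $2-\tfrac4d\geq \tfrac{2d}{d+4}$ for $d\geq 4$) paired with $\nabla v_\tau^k\in L^{2d/(d-4)}$, which holds for all $d\geq 5$. This yields $(u_\tau^k)^m\in W^{1,1}$ and hence a first integrability exponent $q_1=md/(d-1)$. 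The paper then bootstraps: assuming $u_\tau^k\in L^q$ it re-runs the slope estimate with a $p$ tailored to $q$, gains to $q_{j+1}=\tfrac{2mq_jd}{2d+q_j(d-4)}$, and iterates until the exponent crosses $\min\{2,\tfrac{2(m-1)d}{d-4}\}\geq 2$. That bootstrap is the missing ingredient; a one-shot Sobolev/interpolation argument of the kind you propose underestimates the amount of gain needed in high dimensions, and the paper's lemma makes no use of \eqref{uniform L^2} at all.
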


\begin{proof}
Initially, we establish that 
$
u_{\tau}^k \in L^{\frac{md}{d-1}}(\mathbb{R}^d)$. 
By the Cauchy-Schwarz inequality and the 
slope estimate \eqref{slope estimate}, 
we obtain 
\begin{equation*} 
\begin{split} 
\int_{\mathbb{R}^d} 
| \nabla (u_{\tau}^k)^m - u_{\tau}^k \nabla v_{\tau}^k|\,dx 
& \leq 
\left ( 
\int_{\mathbb{R}^d} 
u_{\tau}^k\,dx 
\right )^{\frac{1}{2}} 
\left ( 
\int_{\mathbb{R}^d} 
\frac{| \nabla (u_{\tau}^k)^m - u_{\tau}^k \nabla v_{\tau}^k|^2}{u_{\tau}^k}\,dx 
\right )^{\frac{1}{2}} \\
& \leq 
\sqrt{M}\ \frac{\mathcal{W}_2(u_{\tau}^k, u_{\tau}^{k-1})}{\tau}. 
\end{split}
\end{equation*}
Using the Euler-Lagrange equations \eqref{EL} and 
elliptic estimates, 
we conclude that
$u_{\tau}^{k-1} \in L^2(\mathbb{R}^d)$ implies $w_{\tau}^k \in W^{2,2}(\mathbb{R}^d)$, 
which in turn ensures  $v_{\tau}^k \in W^{4,2}(\mathbb{R}^d)$. 
So, it follows from the Sobolev inequality that 
\begin{equation*} 
\begin{split} 
\int_{\mathbb{R}^d} |u_{\tau}^k \nabla v_{\tau}^k|\,dx 
& \leq \|u_{\tau}^k\|_{L^{\frac{2d}{d+4}}} \|\nabla v_{\tau}^k \|_{\frac{2d}{d-4}} \\
& \leq 
C_{3,2} \|u_{\tau}^k\|_{L^{\frac{2d}{d+4}}} 
 \| v_{\tau}^k \|_{W^{3,2}}. 
\end{split}
\end{equation*}
Since $m \geq 2-4/d > 2d/(d+4)$ we have 
\[
\|\nabla (u_{\tau}^k)^m \|_{L^1} 
\leq 
\sqrt{M}\ \frac{\mathcal{W}_2(u_{\tau}^k, u_{\tau}^{k-1})}{\tau} 
+ \|u\nabla v_{\tau}^k\|_{L^1}<\infty. 
\]
Consequently, by the Sobolev inequality we obtain 
\begin{equation*} 
\begin{split} 
\|u_{\tau}^k \|_{L^{\frac{md}{d-1}}}^m 
= \|(u_{\tau}^k)^m\|_{\frac{d}{d-1}} 
\leq C_{1,1} \| (u_{\tau}^k)^m \|_{W^{1,1}} <\infty.  
\end{split}
\end{equation*}
Therefore, we have $u_{\tau}^k \in L^{\frac{md}{d-1}}(\mathbb{R}^d)$. 
Note that 
since $d>4$,   
\begin{equation*} 
\begin{split} 
\frac{md}{d-1} \geq 
\frac{2(d-2)}{d}\cdot \frac{d}{d-1}= 
\frac{2(d-2)}{d-1}> \frac{2d}{d+2}
\end{split}
\end{equation*}
holds.

Next, let us suppose that $u_{\tau}^k \in (L^1 \cap L^q)(\mathbb{R}^d)$ 
and $2d/(d+2) < q < 2$. 
Let $p ={qd}/{2(d-q)}$. 
Then we have 
$1 < p < q$. 
By the Cauchy-Schwarz  inequality and H\"older's inequality, 
for any $\bm{\xi} \in C_c^{\infty}(\mathbb{R}^d;\mathbb{R}^d)$ we have  
\begin{equation*}
\begin{split}
\int_{\mathbb{R}^d} 
\langle \nabla (u_{\tau}^k)^{m} 
- u_{\tau}^k \nabla v_{\tau}^k, \bm{\xi} \rangle \,dx 
& \leq 
\left(\int_{\mathbb{R}^d}
\frac{|\nabla (u_{\tau}^k)^{m} 
-  u_{\tau}^k \nabla v_{\tau}^k|^2}
{u_{\tau}^k}\,dx\right)^{\frac{1}{2}}
\left(
\int_{\mathbb{R}^d}|\bm{\xi}|^2\,u_{\tau}^k\,dx
 \right)^{\frac{1}{2}}\\
& 
\leq 
\left(\int_{\mathbb{R}^d}
\frac{|\nabla (u_{\tau}^k)^{m} 
- u_{\tau}^k \nabla v_{\tau}^k|^2}
{u_{\tau}^k}\,dx\right)^{\frac{1}{2}}
\|u_{\tau}^k\|_{L^p}^{\frac{1}{2}}
 \|\bm{\xi}\|_{L^{\frac{2p}{p-1}}}. 
\end{split}
\end{equation*}
By duality and the slope estimate \eqref{slope estimate}, we obtain 
\begin{equation}\label{a priori} 
\begin{split}
\| \nabla (u_{\tau}^k)^{m} - u_{\tau}^k
 \nabla v_{\tau}^k\|_{L^{\frac{2p}{p+1}}} 
& \leq 
\left(\int_{\mathbb{R}^d}
\frac{|\nabla (u_{\tau}^k)^{m} 
- u_{\tau}^k \nabla v_{\tau}^k|^2}
{u_{\tau}^k}\,dx\right)^{\frac{1}{2}}
\|u_{\tau}^k\|_{L^p}^{\frac{1}{2}} \\
& \leq 
\frac{\mathcal{W}_2(u_{\tau}^k, u_{\tau}^{k-1})}{\tau}
\|u_{\tau}^k\|_{L^p}^{\frac{1}{2}}. 
\end{split}
\end{equation}
Since $2p/(p+1) = 2qd/\{2d + q(d-2)\}< d/(d-1)$, 
the Sobolev inequality 
with $C_{1,\frac{2qd}{2d+q(d-2)}}$ 
and 
the inequality \eqref{a priori} and H\"older's inequality 
lead to 
\begin{equation*}
\begin{split}
\|(u_{\tau}^k)^{m}\|_{L^{\frac{2qd}{2d+q(d-4)}}} 
& \leq C_{1,\frac{2qd}{2d+q(d-2)}} 
\|\nabla (u_{\tau}^k)^{m}\|_{L^{\frac{2qd}{2d+q(d-2)}}} 
\\
& \leq 
C_{1,\frac{2qd}{2d+q(d-2)}}
\left ( \frac{\mathcal{W}_2(u_{\tau}^k,u_{\tau}^{k-1})}{\tau} \|u_{\tau}^k\|_{L^p}^{\frac{1}{2}} 
+ \|u_{\tau}^k\|_{L^q} 
\|\nabla v_{\tau}^k\|_{L^\frac{2d}{d-2}} \right ), 
\end{split}
\end{equation*}
which implies that 
$u_{\tau}^k \in {L^{\frac{2m qd}{2d+q(d-4)}}}(\mathbb{R}^d)$ 
since $u \in L^1(\mathbb{R}^d) \cap L^q(\mathbb{R}^d) \subset L^p(\mathbb{R}^d)$. Consequently, 
we see that 
\begin{equation*} 
\begin{split} 
\frac{2d}{d+2} 
< q < 2 
\text{ and } u_{\tau}^k \in L^1(\mathbb{R}^d) \cap L^q(\mathbb{R}^d)
\ \Longrightarrow  \ 
u_{\tau}^k \in L^{\frac{2mqd}{2d+ d(d-4)}}
\end{split}
\end{equation*}
Now, let us define 
\begin{equation*} 
\begin{cases}\vspace{2mm}
\displaystyle q_1:=\frac{md}{d-1} >\frac{2d}{d+2}, \\
q_{j+1} :=\displaystyle \frac{2m q_jd}{2d+q_j(d-4)}. 
\end{cases}
\end{equation*}
Since if $q_j<2$ and $q_j < {2(m-1)d}/{(d-4)}$ 
we have $u_{\tau}^k \in L^{q_{j+1}}(\mathbb{R}^d)$ and $q_{j+1}>q_j$, 
we see that 
$u_{\tau}^k \in L^{\min \{2, \frac{2(m-1)d}{d-4}\}}(\mathbb{R}^d)$. 
When $m\geq 2-4/d$ we have $2(m-1)d/(d-4) \geq 2$. 
Therefore, $u_{\tau}^k \in L^2(\mathbb{R}^d)$. 
\end{proof}

In this paper, we initially derived the Euler-Lagrange equations for each unknown function and subsequently obtained the regularity of the discrete solutions. Conversely, as seen in \cite{m-m-s, b-l, b2015, k-w}, there is also an approach where the regularity of the discrete solutions is discussed before deriving the Euler-Lagrange equations. Knowing the sufficient regularity of discrete solutions in advance can facilitate the derivation of the Euler-Lagrange equations. Therefore, it is advisable to select the method according to the problem at hand.

\section{Lyapunov functional for discrete solutions}
In this section, we derive the Lyapunov functional for problem \eqref{P} using the variational properties of discrete solutions and the Euler-Lagrange equations. For convenience, we introduce the following two operators for the discrete solution 
$z_{\tau}^k \in \{v_{\tau}^k, w_{\tau}^k\}$. 
\begin{equation}\label{DP} 
\begin{split} 
\partial_t z_{\tau}^k := \frac{z_{\tau}^k-z_{\tau}^{k-1}}{\tau}, 
\quad 
\overline{\partial_t}z_{\tau}^k:= \frac{z_{\tau}^k+ z_{\tau}^{k-1}}{\tau}. 
\end{split}
\end{equation}
From the definitions, 
it is clear that these operators are linear, and the following relations hold:
\begin{equation*} 
\begin{cases}\vspace{2mm}
\displaystyle 
z_{\tau}^k 
= \frac{\tau}{2}(\partial_t + \overline{\partial_t})z_{\tau}^k, 
\quad 
 z_{\tau}^{k-1} = \frac{\tau}{2}(\overline{\partial_t}- \partial_t)z_{\tau}^k,
\\ \vspace{2mm}
\displaystyle 
 \langle \partial_t z_{\tau}^k, \overline{\partial_t} z_{\tau}^k\rangle_{L^2}
= \frac{\|z_{\tau}^k\|_{L^2}^2- \|z_{\tau}^{k-1}\|_{L^2}^2}{\tau^2}, \\
\displaystyle 
\nabla \partial_t z_{\tau}^k = \partial_t \nabla  z_{\tau}^k, 
\quad 
\nabla \overline{\partial_t} z_{\tau}^k 
= \overline{\partial_t} \nabla  z_{\tau}^k, 
 \quad 
 \Delta \partial_t  z_{\tau}^k = \partial_t \Delta z_{\tau}^k, 
\quad 
\Delta \overline{\partial_t}  z_{\tau}^k = \overline{\partial_t} \Delta z_{\tau}^k. 
\end{cases}
\end{equation*}

\begin{prop}\label{ED}
Let 
$(u_{\tau}^k, v_{\tau}^k, w_{\tau}^k)$ be a solution in \eqref{mm} and  
$\mathcal{L}$ be the functional defined in \ref{Lyapunov}. 
Then, 
for any $N \in \mathbb{N}$, 
the following inequality holds: 
\begin{equation*} 
\begin{split} 
\frac{1}{2\tau}\sum_{k=1}^N \mathcal{W}_2^2(u_{\tau}^k, u_{\tau}^{k-1}) 
+ \sum_{k=1}^N \mathcal{D}_{\tau}^k 
\leq 
\mathcal{L}(u_{\tau}^0,v_{\tau}^0,w_{\tau}^0)
- \mathcal{L}(u_{\tau}^N,v_{\tau}^N,w_{\tau}^N), 
\end{split}
\end{equation*}
where 
\begin{equation*} 
\begin{split} 
\mathcal{D}_{\tau}^k
& :=   \frac{ \varepsilon_1 \varepsilon_2 \tau^2}{2}
\|\partial_t^2 v_{\tau}^k\|_{L^2}^2
+ \frac{ \kappa_1 \kappa_2 \tau^2}{2} 
\|\partial_t \Delta v_{\tau}^k\|_{L^2}^2 
\\
& \hspace{7mm} + \frac{\tau^2}{2}
\left [
\frac{2(\varepsilon_1 \kappa_2 + \varepsilon_2 \kappa_1)}{\tau}
+ (\gamma_1 \kappa_2 + \gamma_2 \kappa_1)
\right ] \|\partial_t \nabla v_{\tau}^k \|_{L^2}^2 
\\ 
& \hspace{7mm} 
+ \frac{\tau^2}{2}
\left [
\frac{2(\gamma_1 \varepsilon_2 + \gamma_2 \varepsilon_1)}{\tau}
+ \gamma_1 \gamma_2
\right ]\|\partial_t v_{\tau}^k \|_{L^2}^2. 
\end{split}
\end{equation*}
\end{prop}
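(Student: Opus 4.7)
The plan is to establish the discrete energy inequality term-by-term for each $k$ and then telescope the sum. The strategy uses only the minimality of $u_\tau^k$ variationally, while the contributions from $v_\tau^k$ and $w_\tau^k$ are handled via the Euler-Lagrange identities in \eqref{EL}, which is what makes the Lyapunov functional $\mathcal{L}$ a natural object despite being non-obvious from the functionals $\mathcal{E},\mathcal{F},\mathcal{G}$ alone.

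The first and central step is to combine the second and third equations of \eqref{EL} by substituting $w_\tau^k = \varepsilon_1\partial_t v_\tau^k - \kappa_1\Delta v_\tau^k + \gamma_1 v_\tau^k$ into the discrete $w$-equation. This yields a fourth-order scalar identity for $v_\tau^k$ whose coefficients precisely match those appearing in $\mathcal{L}$:
\begin{equation*}
\varepsilon_1\varepsilon_2\,\partial_t^2 v_\tau^k - (\varepsilon_1\kappa_2+\varepsilon_2\kappa_1)\partial_t\Delta v_\tau^k + (\varepsilon_1\gamma_2+\varepsilon_2\gamma_1)\partial_t v_\tau^k + \kappa_1\kappa_2\Delta^2 v_\tau^k - (\gamma_1\kappa_2+\gamma_2\kappa_1)\Delta v_\tau^k + \gamma_1\gamma_2 v_\tau^k = u_\tau^{k-1}.
\end{equation*}
Testing this against $v_\tau^k - v_\tau^{k-1} = \tau\,\partial_t v_\tau^k$, integrating by parts, and applying the algebraic identity $\langle a, a-b\rangle = \tfrac12(|a|^2-|b|^2) + \tfrac12|a-b|^2$ to each quadratic term should produce the clean identity
\begin{equation*}
\bigl[\mathcal{L}_v\bigr]_{k-1}^{k} + \mathcal{D}_\tau^k = \int_{\mathbb{R}^d} u_\tau^{k-1}(v_\tau^k-v_\tau^{k-1})\,dx,
\end{equation*}
where $\mathcal{L}_v$ collects the $\tfrac{\kappa_1\kappa_2}{2}\|\Delta v\|^2,\tfrac{\gamma_1\kappa_2+\gamma_2\kappa_1}{2}\|\nabla v\|^2,\tfrac{\gamma_1\gamma_2}{2}\|v\|^2$ pieces of $\mathcal{L}$ together with $\tfrac{\varepsilon_1\varepsilon_2}{2}\|\partial_t v\|^2$. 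The last piece agrees with the final term of $\mathcal{L}$ evaluated at a discrete solution because, again by the second EL equation, $\kappa_1\Delta v_\tau^k-\gamma_1 v_\tau^k + w_\tau^k = \varepsilon_1\partial_t v_\tau^k$, so the factor $\varepsilon_2/(2\varepsilon_1)$ produces $\tfrac{\varepsilon_1\varepsilon_2}{2}\|\partial_t v_\tau^k\|^2$. The dissipation $\mathcal{D}_\tau^k$ is exactly the accumulation of the $\tfrac12|a-b|^2$ remainders in $\|\Delta v\|^2,\|\nabla v\|^2,\|v\|^2,\|\partial_t v\|^2$, once rewritten with $\|z_\tau^k-z_\tau^{k-1}\|^2 = \tau^2\|\partial_t z_\tau^k\|^2$.

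The remaining step is to handle the $u$-$v$ coupling. Rewriting the discrete difference of $\mathcal{L}$ and using the identity above,
\begin{equation*}
\mathcal{L}(u_\tau^k,v_\tau^k,w_\tau^k) - \mathcal{L}(u_\tau^{k-1},v_\tau^{k-1},w_\tau^{k-1}) + \mathcal{D}_\tau^k = \tfrac{1}{m-1}\int\bigl[(u_\tau^k)^m - (u_\tau^{k-1})^m\bigr]\,dx - \int(u_\tau^k-u_\tau^{k-1})v_\tau^k\,dx,
\end{equation*}
because the cross term $-\int u^kv^k + \int u^{k-1}v^{k-1}$ combines with $\int u^{k-1}(v^k-v^{k-1})$ to give exactly $-\int(u^k-u^{k-1})v^k$. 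Using $u_\tau^{k-1}$ as a competitor in the variational problem for $u_\tau^k$ in \eqref{mm} then gives
\begin{equation*}
\tfrac{1}{m-1}\int[(u_\tau^k)^m - (u_\tau^{k-1})^m]\,dx - \int(u_\tau^k-u_\tau^{k-1})v_\tau^k\,dx + \tfrac{1}{2\tau}\mathcal{W}_2^2(u_\tau^k,u_\tau^{k-1}) \leq 0.
\end{equation*}
Summing over $k=1,\ldots,N$ telescopes the $\mathcal{L}$-differences and yields the claimed inequality.

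The main obstacles I anticipate are essentially bookkeeping: (a) justifying the integrations by parts for $\int\Delta^2 v_\tau^k(v_\tau^k - v_\tau^{k-1})\,dx$ and $\int\partial_t\Delta v_\tau^k(v_\tau^k-v_\tau^{k-1})\,dx$, which requires $v_\tau^k\in W^{4,2}(\mathbb{R}^d)$ (already supplied by Proposition \ref{propEL}); (b) verifying that all "remainder" quadratic terms, once converted from $\|z_\tau^k - z_\tau^{k-1}\|^2$ into $\tau^2\|\partial_t z_\tau^k\|^2$, assemble exactly into the coefficients prescribed in the statement of $\mathcal{D}_\tau^k$. The conceptual subtlety is that $\mathcal{L}$ is not the sum $\mathcal{E}+\mathcal{F}+\mathcal{G}$; it is tailored so that the "missing" pieces arise precisely from the discrete $v$-equation tested against $v_\tau^k-v_\tau^{k-1}$, which is why we bypass minimality of $v_\tau^k,w_\tau^k$ and use their EL equations instead.
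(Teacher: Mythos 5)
Your proposal reproduces the paper's argument: use $u_\tau^{k-1}$ as a competitor in the minimization for $u_\tau^k$, substitute the $v$- and $w$-Euler--Lagrange equations into each other to obtain the fourth-order discrete identity for $v_\tau^k$, test it against $v_\tau^k - v_\tau^{k-1}$, and separate telescoping differences from nonnegative dissipation. The paper organizes the algebra with the discrete operators $\partial_t,\overline{\partial_t}$ and the identity $\langle \partial_t z,\overline{\partial_t}z\rangle = (\|z^k\|^2-\|z^{k-1}\|^2)/\tau^2$, which is precisely your polarization identity $\langle a,a-b\rangle=\tfrac12(|a|^2-|b|^2)+\tfrac12|a-b|^2$ in different clothes; so the two proofs are the same decomposition.

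Two small points of care. First, your claim that $\mathcal{D}_\tau^k$ consists \emph{exactly} of the $\tfrac12|a-b|^2$ remainders is not quite accurate: the middle-order terms $-(\varepsilon_1\kappa_2+\varepsilon_2\kappa_1)\partial_t\Delta v_\tau^k$ and $(\varepsilon_1\gamma_2+\varepsilon_2\gamma_1)\partial_t v_\tau^k$, paired against $v_\tau^k-v_\tau^{k-1}=\tau\partial_t v_\tau^k$, produce \emph{pure} squares $\tau(\varepsilon_1\kappa_2+\varepsilon_2\kappa_1)\|\partial_t\nabla v_\tau^k\|_{L^2}^2$ and $\tau(\varepsilon_1\gamma_2+\varepsilon_2\gamma_1)\|\partial_t v_\tau^k\|_{L^2}^2$, not $\tfrac12|a-b|^2$ remainders; they are the $2/\tau$ parts of the bracketed coefficients in $\mathcal{D}_\tau^k$. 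Second and more consequentially, your identity $\bigl[\mathcal{L}_v\bigr]_{k-1}^{k}+\mathcal{D}_\tau^k=\int u_\tau^{k-1}(v_\tau^k-v_\tau^{k-1})\,dx$ requires the fourth-order equation to hold at $k=1$, but the $w$-Euler--Lagrange equation involves $\partial_t w_\tau^1=(w_\tau^1-w_\tau^0)/\tau$, and $w_\tau^0=w_0$ is initial data rather than something produced by the $v$-equation. To make the substitution valid at $k=1$ (equivalently, to make $\varepsilon_1\partial_t v_\tau^0=\kappa_1\Delta v_\tau^0-\gamma_1 v_\tau^0+w_\tau^0$ hold, so that $\frac{\varepsilon_1\varepsilon_2}{2}\|\partial_t v_\tau^0\|_{L^2}^2$ agrees with the last term of $\mathcal{L}(u_\tau^0,v_\tau^0,w_\tau^0)$) one must define the ghost value $v_\tau^{-1}:=v_\tau^0-\tau\varepsilon_1^{-1}(\kappa_1\Delta v_\tau^0-\gamma_1 v_\tau^0+w_\tau^0)$, which the paper does explicitly; your proposal omits this and without it the $k=1$ step of the telescoping is unjustified.
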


\begin{proof}
By definition of $u_{\tau}^k$, 
the following inequality holds: 
\begin{equation*} 
\begin{split} 
\mathcal{E}(u_{\tau}^k,v_{\tau}^k) + \frac{1}{2\tau}\mathcal{W}_2^2(u_{\tau}^k, u_{\tau}^{k-1})
\leq 
\mathcal{E}(u_{\tau}^{k-1},v_{\tau}^k) 
+ \frac{1}{2\tau}\mathcal{W}_2^2(u_{\tau}^{k-1}, u_{\tau}^{k-1}). 
\end{split}
\end{equation*}
Considering that $u_{\tau}^{k-1} \in L^2(\mathbb{R}^d)$, we have
\begin{equation} \label{estimate of wsd}
\begin{split} 
\frac{1}{2\tau}\mathcal{W}_2^2(u_{\tau}^k, u_{\tau}^{k-1})
& \leq \mathcal{E}(u_{\tau}^{k-1},v_{\tau}^k) - \mathcal{E}(u_{\tau}^k,v_{\tau}^k) \\
&=\mathcal{E}(u_{\tau}^{k-1}, v_{\tau}^{k-1}) - \mathcal{E}(u_{\tau}^k, v_{\tau}^k)
- \langle u_{\tau}^{k-1}, v_{\tau}^k - v_{\tau}^{k-1} \rangle_{L^2} \\
& = 
\mathcal{E}(u_{\tau}^{k-1}, v_{\tau}^{k-1}) - \mathcal{E}(u_{\tau}^k, v_{\tau}^k)
- \tau \langle u_{\tau}^{k-1}, \partial_t v_{\tau}^k \rangle_{L^2}. 
\end{split}
\end{equation}
Let us rewrite the last term on the right-hand side using the Euler-Lagrange equations \eqref{EL}.
The second and third equations in \eqref{EL} can be expressed 
using the operator 
\eqref{DP} as follows:
\begin{equation} \label{RWEL}
\begin{cases}
\begin{split} 
w_{\tau}^k 
&= 
\left (\varepsilon_1 \partial_t -\kappa_1 \Delta + \gamma_1 \right )
v_{\tau}^k,  \\
 u_{\tau}^{k-1} 
&= (\varepsilon_2 \partial_t 
-\kappa_2 \Delta + \gamma_2)w_{\tau}^{k} \\ 
& = 
\left (\varepsilon_2 \partial_t -\kappa_2 \Delta + \gamma_2 \right )
\left (\varepsilon_1 \partial_t -\kappa_1 \Delta + \gamma_1 \right )
v_{\tau}^k 
\\ 
&= \varepsilon_1 \varepsilon_2 \partial_t^2 v_{\tau}^k 
- \{\varepsilon_1(\kappa_2 \Delta - \gamma_2) 
+ \varepsilon_2(\kappa_1 \Delta -\gamma_1) \}\partial_t v_{\tau}^k  \\
& \hspace{5mm} 
+ (\kappa_1 \Delta - \gamma_1)(\kappa_2 \Delta -\gamma_2) v_{\tau}^k. 
\end{split}
\end{cases}
\end{equation}
Define $v_{\tau}^{-1}$ as follows: 
\begin{equation*} 
v_{\tau}^{-1}
:=v_{\tau}^0-\tau
\left (
\frac{\kappa_1\Delta v_{\tau}^0 -\gamma_1 v_{\tau}^0+w_{\tau}^0 }{\varepsilon_1}
\right )
\end{equation*}
This definition allows us to assume that 
\eqref{RWEL} holds for $k \geq 1$. 
Consequently, we have 
\begin{equation*} 
\begin{split} 
\langle u_{\tau}^{k-1}, \partial_t v_{\tau}^k \rangle_{L^2} 
& = \varepsilon_1 \varepsilon_2
\langle \partial_t^2 v_{\tau}^k, \partial_t v_{\tau}^k \rangle_{L^2}
- 
\langle 
(\varepsilon_1\kappa_2+ \varepsilon_2\kappa_1 )
\Delta  -\gamma_1\varepsilon_2 -\gamma_2\varepsilon_1 )
\partial_t v_{\tau}^k, \partial_t v_{\tau}^k \rangle_{L^2} \\
& \hspace{5mm}
+ 
\langle 
(\kappa_1 \Delta - \gamma_1)(\kappa_2 \Delta -\gamma_2)
 v_{\tau}^k, \partial_t v_{\tau}^k \rangle_{L^2} \\
 & = I_1 + I_2 + I_3, 
\end{split}
\end{equation*}
where 
\begin{align*}
%: I_1
I_1&:= \varepsilon_1 \varepsilon_2
\langle \partial_t^2 v_{\tau}^k, \partial_t v_{\tau}^k \rangle_{L^2} \\
&\  = \varepsilon_1 \varepsilon_2\frac{\tau}{2}
\langle 
\partial_t^2 v_{\tau}^k, (\partial_t + \overline{\partial_t})\partial_t v_{\tau}^k
\rangle_{L^2} 
 = \varepsilon_1 \varepsilon_2 \frac{\tau}{2} 
\|\partial_t^2 v_{\tau}^k\|_{L^2}^2
+ \varepsilon_1 \varepsilon_2
\frac{\|\partial_t v_{\tau}^k\|_{L^2}^2-\|\partial_t v_{\tau}^{k-1}\|_{L^2}^2}{2\tau}, \\
%: I_2
I_2&:=- 
\langle 
(\varepsilon_1\kappa_2+ \varepsilon_2\kappa_1 )
\Delta  -\gamma_1\varepsilon_2 -\gamma_2\varepsilon_1 )
\partial_t v_{\tau}^k, \partial_t v_{\tau}^k \rangle_{L^2}
\\
& \  =-(\varepsilon_1\kappa_2+ \varepsilon_2\kappa_1 )
\langle \Delta \partial_t v_{\tau}^k, \partial_t v_{\tau}^k \rangle_{L^2} 
+ (\gamma_1\varepsilon_2 + \gamma_2\varepsilon_1  )
\|\partial_t v_{\tau}^k \|_{L^2}^2 \\
 & \ = 
(\varepsilon_1\kappa_2+ \varepsilon_2\kappa_1 )
\|\nabla \partial_t v_{\tau}^k \|_{L^2}^2 
+ (\gamma_1\varepsilon_2 + \gamma_2 \varepsilon_1)
\|\partial_t v_{\tau}^k \|_{L^2}^2, \\
%: I_3
I_3& :=\langle 
(\kappa_1 \Delta - \gamma_1)(\kappa_2 \Delta -\gamma_2)
 v_{\tau}^k, \partial_t v_{\tau}^k \rangle_{L^2} 
 = 
 \langle 
(\kappa_1 \Delta - \gamma_1)
 v_{\tau}^k, (\kappa_2 \Delta -\gamma_2) \partial_t v_{\tau}^k \rangle_{L^2} 
 \\
&\  = \kappa_1 \kappa_2 
\frac{\tau}{2}
\langle (\partial_t+\overline{\partial_t})
\Delta v_{\tau}^k, \partial_t \Delta v_{\tau}^k \rangle_{L^2}
 + 
\frac{\tau(\gamma_1\kappa_2+ \gamma_2\kappa_1)}{2}
 \langle 
 (\partial_t+\overline{\partial_t})
 \nabla v_{\tau}^k, \partial_t \nabla v_{\tau}^k \rangle_{L^2}  \\
&\  \hspace{10mm}
+ \frac{\gamma_1 \gamma_2 \tau }{2} 
\langle (\partial_t+\overline{\partial_t})v_{\tau}^k, \partial_t v_{\tau}^k \rangle_{L^2} 
%\displaybreak
 \\
&\  = \kappa_1 \kappa_2 
\frac{\tau}{2}\|\partial_t \Delta v_{\tau}^k\|_{L^2}^2 
+ \kappa_1 \kappa_2 
\frac{\|\Delta v_{\tau}^k\|_{L^2}^2-\|\Delta v_{\tau}^{k-1}\|_{L^2}^2}{2\tau} \\
&\  \hspace{5mm} + 
\frac{\tau(\gamma_1\kappa_2+ \gamma_2\kappa_1)}{2}
\|\partial_t \nabla v_{\tau}^k\|_{L^2}^2
+ 
\frac{(\gamma_1\kappa_2+ \gamma_2\kappa_1)}{2\tau}
\{
\|\nabla v_{\tau}^k\|_{L^2}^2-\|\nabla v_{\tau}^{k-1}\|_{L^2}^2
\}
\\
&\  \hspace{10mm} 
+ 
\frac{\gamma_1 \gamma_2 \tau}{2}\|\partial_t v_{\tau}^k\|_{L^2}^2
+ \frac{\gamma_1 \gamma_2}{2\tau}
\{
\|v_{\tau}^k\|_{L^2}^2- \|v_{\tau}^{k-1}\|_{L^2}^2
\}. 
\end{align*}
From the above, 
it can be observed that the right-hand side of  
\eqref{estimate of wsd} can be divided into two parts: 
the norm values of the terms parameterized by $k$, 
and the differences in the norm values 
of the terms parameterized by $k$ and $k-1$. 
The former is denoted as $\mathcal{D}_{\tau}^k$, 
and the latter is denoted as 
$\mathcal{L}(u_{\tau}^{k-1},v_{\tau}^{k-1},w_{\tau}^{k-1})
- \mathcal{L}(u_{\tau}^k,v_{\tau}^k,w_{\tau}^k)$. 
Then, we can see that the following inequality holds. 
\[
\frac{1}{2\tau} \mathcal{W}_2^2(u_{\tau}^k, u_{\tau}^{k-1}) + \mathcal{D}_{\tau}^k 
\leq 
\mathcal{L}(u_{\tau}^{k-1}, v_{\tau}^{k-1}, w_{\tau}^{k-1})
-\mathcal{L}(u_{\tau}^k, v_{\tau}^k, w_{\tau}^k). 
\]
By summing both sides from $k=1$ to $N$, 
we obtain the desired inequality. 
\end{proof}

\begin{rem}
As demonstrated in the proof of Proposition \ref{ED}, the derivation of $\mathcal{L}$ does not utilize the properties of the first term of $\mathcal{E}$. Consequently, if 
$\mathcal{E}$ takes the form
\[
\mathcal{E}(u,v)=\mathcal{E}_0(u) -\int_{\mathbb{R}^d}uv\,dx, 
\]
a similar derivation is possible. Notably, it is not necessary for $u$ to be a gradient flow in the Wasserstein space. 
Therefore, by replacing $\mathcal{E}_0$ or considering the gradient $\nabla_u$ 
as a gradient in a space other than the Wasserstein space, 
it is possible to consider a chimera gradient flow 
endowed with $\mathcal{L}$ as a Lyapunov functional 
different from \eqref{P}. 
\end{rem}

\section{Lower bounds of $\mathcal{L}$}
In this section, 
we examine the lower boundedness of $\mathcal{L}$  
on 
\begin{equation} \label{effective domain}
X_M:=
\{
(u,v,w) \in (L^1\cap L^m)(\mathbb{R}^d) \times W^{2,2}(\mathbb{R}^d) 
\times L^2(\mathbb{R}^d)
\ |\  
u, v, w\geq 0, \ \|u\|_{L^1}=M
\}, 
\end{equation} 
and investigate its coercivity, expressed by the inequality
\begin{equation} \label{coercivity of L}
\begin{split} 
\mathcal{L}(u,v,w) 
& \geq \alpha \|u\|_{L^m}^m + \beta \|\Delta v\|_{L^2}^2  \\
&\hspace{5mm} + \frac{\gamma_1 \kappa_2 + \gamma_2 \kappa_1}{2}
\|\nabla v\|_{L^2}^2
+ \frac{\gamma_1 \gamma_2}{2}
\|v\|_{L^2}^2 
+ \frac{\varepsilon_2}{2\varepsilon_1}
\|\kappa_1 \Delta v - \gamma_1 v + w\|_{L^2}^2
\geq 0
\end{split}
\end{equation}
for some positive constants $\alpha$, $\beta$, and 
for all elements $(u,v,w) \in X_M$. 
By carefully estimating the lower bound of the negative term in $\mathcal{L}$, 
we derive the following propositions. 

\begin{prop}[lower bounds of $\mathcal{L}$ in subcritical case]
\label{lower bounds in subcritical}
Assume $m> 2-\frac{4}{d}$ and $M>0$. 
Then, the functional $\mathcal{L}$ is bounded from below on 
$X_M$ and satisfies \eqref{coercivity of L}. 
\end{prop}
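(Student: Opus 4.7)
The strategy is to dominate the single negative term $-\int_{\mathbb{R}^d} uv\,dx$ in $\mathcal{L}$ by small fractions of the positive terms $\tfrac{1}{m-1}\|u\|_{L^m}^m$ and $\tfrac{\kappa_1\kappa_2}{2}\|\Delta v\|_{L^2}^2$. The subcritical condition $m > 2 - \tfrac{4}{d}$ will enter precisely at the final Young's step, producing an exponent of $\|u\|_{L^m}$ strictly below $m$ and thereby enabling absorption without any mass restriction.

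The first stage is a chain of H\"older, Sobolev, and $L^p$-interpolation estimates. Setting $p = \tfrac{2d}{d+4}$ and $p' = \tfrac{2d}{d-4}$ (both well defined and $\geq 1$ since $d \geq 5$), H\"older gives $\int uv \leq \|u\|_{L^p}\|v\|_{L^{p'}}$. Writing $v = (-\Delta)^{-1}(-\Delta v)$ and invoking the Sobolev/Riesz-potential embedding $W^{2,2}(\mathbb{R}^d) \hookrightarrow L^{p'}(\mathbb{R}^d)$ yields $\|v\|_{L^{p'}} \leq C_S\|\Delta v\|_{L^2}$. Next, since $m > 2 - \tfrac{4}{d} \geq \tfrac{2d}{d+4}$ gives $1 \leq p \leq m$, interpolation produces
\[
\|u\|_{L^p} \leq \|u\|_{L^1}^{\theta}\|u\|_{L^m}^{1-\theta} = M^{\theta}\|u\|_{L^m}^{1-\theta}, \qquad 1-\theta = \frac{m(d-4)}{2d(m-1)}.
\]
Combining these bounds, $\int uv \leq C_S M^{\theta}\|u\|_{L^m}^{1-\theta}\|\Delta v\|_{L^2}$.

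The second stage is Young's inequality, applied twice. First, $ab \leq \eta a^2 + (4\eta)^{-1}b^2$ with $a = \|\Delta v\|_{L^2}$ and $b = C_S M^\theta \|u\|_{L^m}^{1-\theta}$ gives
\[
\int uv \leq \eta\|\Delta v\|_{L^2}^2 + \frac{C_S^2 M^{2\theta}}{4\eta}\|u\|_{L^m}^{2(1-\theta)}.
\]
A direct calculation shows $2(1-\theta) < m \iff m > 2 - \tfrac{4}{d}$, so under the subcritical assumption a second Young's inequality delivers $\|u\|_{L^m}^{2(1-\theta)} \leq \delta\|u\|_{L^m}^m + C_\delta$ for any $\delta>0$. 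Choosing $\eta<\tfrac{\kappa_1\kappa_2}{2}$ and $\delta$ small enough that the total coefficient of $\|u\|_{L^m}^m$ stays below $\tfrac{1}{m-1}$, these estimates plug back into $\mathcal{L}$ to give
\[
\mathcal{L}(u,v,w) \geq \alpha\|u\|_{L^m}^m + \beta\|\Delta v\|_{L^2}^2 + \tfrac{\gamma_1\kappa_2+\gamma_2\kappa_1}{2}\|\nabla v\|_{L^2}^2 + \tfrac{\gamma_1\gamma_2}{2}\|v\|_{L^2}^2 + \tfrac{\varepsilon_2}{2\varepsilon_1}\|\kappa_1\Delta v-\gamma_1 v+w\|_{L^2}^2 - C(M,m,d),
\]
for some $\alpha,\beta>0$. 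Boundedness from below is immediate, and the clean coercivity \eqref{coercivity of L} is obtained by a further small reduction of $\alpha, \beta$ that absorbs $C(M,m,d)$.

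The principal obstacle is the tight exponent bookkeeping that makes the subcritical threshold $m = 2 - \tfrac{4}{d}$ the sharp dividing line: at the threshold, $2(1-\theta) = m$, so one is forced to compete the product $M^{2\theta}$ against $\tfrac{1}{m-1}$, which leads to the mass restriction $M < M_*$ of Theorem \ref{critical}. Strictly above the threshold, the leftover power $2(1-\theta)$ is strictly below $m$, and the absorption goes through for arbitrary $M$.
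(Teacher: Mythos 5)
Your chain of estimates---H\"older with exponents $\tfrac{2d}{d+4}$ and $\tfrac{2d}{d-4}$, the homogeneous Sobolev bound $\|v\|_{L^{2d/(d-4)}}\lesssim\|\Delta v\|_{L^2}$, $L^p$-interpolation of $\|u\|_{L^{2d/(d+4)}}$ between $L^1$ and $L^m$, and a first Young's inequality on the product---is exactly the paper's argument, up to the harmless swap of $\theta$ and $1-\theta$ in the interpolation notation. The identification $2(1-\theta)<m\iff m>2-\tfrac{4}{d}$ is the same key observation, and your subsequent application of Young to $\|u\|_{L^m}^{2(1-\theta)}\leq\delta\|u\|_{L^m}^m+C_\delta$ is a clean way to spell out the ``bounded below since $m>2\theta$'' claim that the paper makes more tersely. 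Boundedness from below is thus correctly established.

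However, the final sentence---that the clean coercivity inequality \eqref{coercivity of L} follows ``by a further small reduction of $\alpha,\beta$ that absorbs $C(M,m,d)$''---is not a valid step. A negative additive constant cannot be absorbed by shrinking the coefficients of the norm terms, because all of $\|u\|_{L^m}^m$, $\|\Delta v\|_{L^2}^2$, $\|\nabla v\|_{L^2}^2$, $\|v\|_{L^2}^2$, and $\|\kappa_1\Delta v-\gamma_1 v+w\|_{L^2}^2$ can simultaneously be made arbitrarily small within $X_M$ (for instance by spreading $u$ out, which preserves $\|u\|_{L^1}=M$ while sending $\|u\|_{L^m}\to 0$, while sending $v,w\to 0$). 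What your two-stage Young argument honestly produces is a lower bound of the form
\[
\mathcal{L}(u,v,w)\ \geq\ \Phi(\|u\|_{L^m})\ +\ \beta\|\Delta v\|_{L^2}^2\ +\ (\text{the remaining non-negative $v$-, $w$-terms}),
\]
where $\Phi(t)=\tfrac{1}{m-1}t^m-ct^{2(1-\theta)}$ for a constant $c=c(M,\kappa_1,\kappa_2,\beta)$; since $2(1-\theta)<m$, the function $\Phi$ is bounded below but is \emph{negative} on a neighborhood of $t=0$, so no power-law minorant $\Phi(t)\geq\alpha t^m$ with $\alpha>0$ valid for all $t\geq 0$ is available. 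This means the ``$\geq 0$'' at the end of \eqref{coercivity of L} cannot be reached from this chain of inequalities. (To be fair, the paper itself simply asserts \eqref{coercivity of L} after displaying essentially the same one-variable inequality; the version with an additive constant, $\mathcal{L}\geq\alpha\|u\|_{L^m}^m+\beta\|\Delta v\|_{L^2}^2+\cdots-C(M)$, is what the argument actually yields, and is all that the subsequent uses of \eqref{coercivity of L} in Lemmas 5.2, 5.5, and 8.4 require, since there $\mathcal{L}$ is a priori bounded above along the scheme.) You should therefore either prove a different, genuine route to the stated coercivity or observe that the weaker $-C$ form suffices for the rest of the paper.
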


\begin{prop}[lower bounds of $\mathcal{L}$ in critical case]
\label{lower bounds in critical}
Assume $m=2-\frac{4}{d}$ and $M>0$. 
Then, the functional $\mathcal{L}$ is bounded from below on $X_M$ 
if and only if $M\leq M_*$, 
where $M_*$ is defined in \eqref{critical mass}. 
In particular, when $M<M_*$, 
\eqref{coercivity of L} holds. 
\end{prop}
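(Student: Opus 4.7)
The plan is to control the sign-indefinite term $-\int uv$ in $\mathcal{L}$ by combining integration by parts, Cauchy--Schwarz, the interpolation inequality encoded in $C_*^2$, and Young's inequality. Concretely, integration by parts and Cauchy--Schwarz give
\[
\int_{\mathbb{R}^d} uv\,dx \le \left(\int_{\mathbb{R}^d} u(-\Delta)^{-2}u\,dx\right)^{1/2}\|\Delta v\|_{L^2}.
\]
Specializing \eqref{dividing constant} to $m = 2-4/d$ with the mass constraint $\|u\|_{L^1}=M$, the $C_*^2$-inequality reduces to $\int u(-\Delta)^{-2}u \le C_*^2 M^{4/d}\|u\|_{L^m}^{m}$, so that $\int uv \le C_* M^{2/d}\|u\|_{L^m}^{m/2}\|\Delta v\|_{L^2}$.

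Next, for any slack $\delta\in(0,1)$, Young's inequality yields
\[
\int uv\,dx \le \frac{C_*^2 M^{4/d}}{2(1-\delta)\kappa_1\kappa_2}\|u\|_{L^m}^{m} + \frac{(1-\delta)\kappa_1\kappa_2}{2}\|\Delta v\|_{L^2}^{2}.
\]
Substituting into \eqref{Lyapunov}, using $1/(m-1)=d/(d-4)$, and invoking the identity $M_*^{4/d}=2d\kappa_1\kappa_2/((d-4)C_*^2)$, one obtains
\[
\mathcal{L}(u,v,w)\ge \frac{d}{d-4}\left(1-\frac{M^{4/d}}{(1-\delta)M_*^{4/d}}\right)\|u\|_{L^m}^{m} + \frac{\delta\kappa_1\kappa_2}{2}\|\Delta v\|_{L^2}^{2} + R(u,v,w),
\]
where $R\ge 0$ collects the remaining (manifestly non-negative) terms of \eqref{Lyapunov}. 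When $M<M_*$, choosing $\delta>0$ small enough makes both explicit coefficients strictly positive, which proves \eqref{coercivity of L}; the boundary case $M=M_*$ follows by letting $\delta\downarrow 0$ and yields $\mathcal{L}\ge 0$ (though coercivity is lost).

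For the converse direction, when $M>M_*$ the plan is to construct a sequence along which $\mathcal{L}\to-\infty$. Pick a near-maximizer $u_*$ of $C_*^2$ with $\|u_*\|_{L^1}=M$ and set $v_* := (-\Delta)^{-2}u_*$, so that $v_*\ge 0$, $\Delta v_*\le 0$, and $\int u_* v_* = \|\Delta v_*\|_{L^2}^2$. Rescale via $u_n(x):=n^d u_*(nx)$ and $v_n(x):= t\,n^{d-4}v_*(nx)$ with a parameter $t>0$ to be optimized, and take $w_n:=-\kappa_1\Delta v_n+\gamma_1 v_n\ge 0$, which makes the $\varepsilon_2/(2\varepsilon_1)$-term in $\mathcal{L}$ vanish identically. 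At the critical exponent the three dominant pieces $\int u_n^m$, $\int u_n v_n$, and $\|\Delta v_n\|_{L^2}^2$ all scale like $n^{d-4}$, while $\|\nabla v_n\|_{L^2}^2$ and $\|v_n\|_{L^2}^2$ scale like $n^{d-6}$ and $n^{d-8}$ and become negligible as $n\to\infty$. Minimizing the leading $O(n^{d-4})$ coefficient in $t$ reduces it to $\frac{d}{d-4}\|u_*\|_{L^m}^m-\frac{1}{2\kappa_1\kappa_2}\int u_*(-\Delta)^{-2}u_*$, which is strictly negative for a near-maximizer when $M>M_*$.

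The main obstacle I expect is the bookkeeping at the critical exponent: verifying that the $L^1$-weight in \eqref{dividing constant} bundles cleanly as $M^{4/d}$ under the constraint $\|u\|_{L^1}=M$, and checking that the subleading terms ($|\nabla v|^2$, $v^2$, and the $(\kappa_1\Delta v-\gamma_1 v+w)^2$ contribution) neither spoil the coercivity for $M<M_*$ nor block the divergence to $-\infty$ for $M>M_*$.
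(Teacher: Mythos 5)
Your plan is correct and reaches the same endpoint as the paper, by a genuinely cleaner route in both directions. For the lower bound, the paper first introduces an auxiliary constant $\tilde C_*$ defined via $\int uv$ rather than $\int u(-\Delta)^{-2}u$, shows $A_u(v)=\|\Delta v\|_{L^2}^2/(\int uv)^2$ attains a minimizer by the direct method, derives the Euler--Lagrange relation $(-\Delta)^2 v_*\propto u$, and only then identifies $\tilde C_*^2=C_*^2$; your single step $\int uv=\int(-\Delta)^{-1}u\cdot(-\Delta)v\le\bigl(\int u(-\Delta)^{-2}u\bigr)^{1/2}\|\Delta v\|_{L^2}$ reproduces that identity directly via Cauchy--Schwarz, with equality exactly at the paper's Euler--Lagrange configuration, and the subsequent Young step with slack $\delta$ and the $\delta\downarrow 0$ limit at $M=M_*$ are then the same. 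For the unboundedness when $M>M_*$, the paper introduces the reduced functional $\mathcal{L}_0$ (dropping the $\gamma_1 v$-term inside the $\varepsilon_2/(2\varepsilon_1)$-square and the $\|\nabla v\|_{L^2}^2$, $\|v\|_{L^2}^2$ terms), chooses $W_\lambda=-\kappa_1\Delta V_\lambda$ to annihilate only the $\mathcal{L}_0$-square, and argues by scaling that the dropped terms are $O(\lambda^{d-6})$ and $O(\lambda^{d-8})$; your choice $w_n=-\kappa_1\Delta v_n+\gamma_1 v_n\ge 0$ annihilates the full $\varepsilon_2/(2\varepsilon_1)$-term of $\mathcal{L}$ itself, so no auxiliary $\mathcal{L}_0$ is needed, and only $\|\nabla v_n\|_{L^2}^2$, $\|v_n\|_{L^2}^2$ remain as subleading contributions. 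The scaling bookkeeping, the optimization over the multiplicative scalar ($t$ in your notation, $\alpha$ in the paper's), and the identity $M_*^{4/d}=\tfrac{2d\kappa_1\kappa_2}{(d-4)C_*^2}$ are identical in both. One shared technical subtlety worth flagging: for $5\le d\le 8$ the exact optimizer $v_*=(-\Delta)^{-2}u_*$ decays like $|x|^{4-d}$ and need not lie in $L^2(\mathbb{R}^d)$, so $v_n$ (or $V_\lambda$) may fail to be admissible in $X_M\subset(L^1\cap L^m)\times W^{2,2}\times L^2$; replacing $v_*$ by a compactly supported smooth near-optimizer of $v\mapsto\int u_*v/\|\Delta v\|_{L^2}$ fixes this, which is precisely the sort of bookkeeping you anticipated at the end of your sketch.
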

\begin{proof}[Proof of Proposition \ref{lower bounds in subcritical}]
When $u \in L^1(\mathbb{R}^d) \cap L^m(\mathbb{R}^d)$ and $v \in W^{2,2}(\mathbb{R}^d)$, the negative term of $\mathcal{L}$ can be estimated as follows.
\begin{equation} \label{FE}
\begin{split} 
\int_{\mathbb{R}^d} uv\,dx 
& \leq \|u\|_{L^{\frac{2d}{d+4}}}\|v\|_{L^{\frac{2d}{d-4}}} \quad 
(\text{H\"older's inequality})\\
& \leq \|u\|_{L^1}^{1-\theta} \|u\|_{L^m}^{\theta}
\|v\|_{L^{\frac{2d}{d-4}}}
\quad (\text{The interpolation inequality}) \\
& \leq C_{1} \|u\|_{L^{\frac{2d}{d+4}}}\|\nabla v\|_{L^{\frac{2d}{d-2}}}
\quad  (\text{The Sobolev inequality}) 
\\
& \leq 
C_{1}C_{2} \|u\|_{L^1}^{1-\theta} \|u\|_{L^m}^{\theta}
 \|D^2 v\|_{L^2}
 \quad (\text{The Sobolev embedding theorem})
 \\
 & \leq 
 C_{1}C_{2}C_3
  \|u\|_{L^1}^{1-\theta} \|u\|_{L^m}^{\theta}
 \|\Delta v\|_{L^2}
 \quad (\text{Corollary 9.10 in \cite{g-t}}) \\
 & \leq 
 \frac{(C_1C_2C_3)^2}{2\kappa_1 \kappa_2}
\|u\|_{L^1}^{2(1-\theta)} \|u\|_{L^m}^{2 \theta} 
+ \frac{\kappa_1 \kappa_2 }{2}\|\Delta v\|_{L^2}^2 
 \quad (\text{Young's inequality})
\end{split}
\end{equation}
where 
\begin{equation} \label{1m-exp}
\begin{split} 
\theta= \frac{m(d-4)}{2d(m-1)}, 
\quad 
1-\theta 
= \frac{4m-(2-m)d}{2d(m-1)}. 
\end{split}
\end{equation}
Consequently, the functional $\mathcal{L}$ satisfies
\begin{equation*} 
\begin{split} 
\mathcal{L}(u,v,w) 
\geq 
\frac{1}{m-1}\|u\|_{L^m}^m 
- \frac{(C_1C_2C_3)^2}{2\kappa_1 \kappa_2}
\|u\|_{L^1}^{2(1-\theta)} \|u\|_{L^m}^{2 \theta}. 
\end{split}
\end{equation*}
From this, it follows that when 
$m > 2 \theta$, i.e.,  
\[
m > 2-\frac{4}{d}, 
\]
$\mathcal{L}$ is bounded below regardless of the value of $\|u\|_{L^1}$ 
and \eqref{coercivity of L} holds. 
\end{proof}

\begin{proof}[Proof of Proposition \ref{lower bounds in critical}]
The proof is divided into three parts.

\vspace{2mm}

\noindent
\textbf{(i) lower bounds and coercivity}

\vspace{2mm}

To obtain a better, lossless estimate in \eqref{FE}, we define
\begin{equation} \label{CM}
\Tilde{C}_*:= 
\sup_{(u,v)  \in (L^1 \cap L^m)(\mathbb{R}^d) \times \dot{H}^2(\mathbb{R}^d)}
\frac{\displaystyle \int_{\mathbb{R}^d} uv\,dx}
{\|u\|_{L^1}^{1-\theta}\|u\|_{L^m}^{\theta}\|\Delta v\|_{L^2}},  
\end{equation}
where 
$\dot{H}^2(\mathbb{R}^d)$ is the closure of compactly supported 
smooth functions in the seminorm $\| \Delta \cdot \|_{L^2}$ 
and $\theta$ is given by \eqref{1m-exp}. 
From the definition of $\Tilde{C}_*$ and Young's inequality, 
for any $\delta$ such that  
$0 \leq \delta < \kappa_1\kappa_2$, 
the following inequality holds
\begin{equation*} 
\begin{split} 
\int_{\mathbb{R}^d} uv\,dx 
\leq \frac{\Tilde{C}_*^2}{2(\kappa_1 \kappa_2-\delta)}
M^{\frac{4}{d}} \|u\|_{L^m}^{m} 
+ \frac{\kappa_1 \kappa_2 -\delta }{2}\|\Delta v\|_{L^2}^2. 
\end{split}
\end{equation*}
Consequently, we have 
\begin{equation*} 
\begin{split} 
& \frac{1}{m-1} \int_{\mathbb{R}^d} u^m\,dx - \int_{\mathbb{R}^d}uv\,dx 
+ \frac{\kappa_1 \kappa_2}{2}\int_{\mathbb{R}^d} |\Delta v|^2\,dx 
\\
& \geq 
\left (
\frac{1}{m-1}
-\frac{\Tilde{C}_*^2}{2(\kappa_1 \kappa_2 -\delta)}M^{\frac{4}{d}}
 \right )\|u\|_{L^m}^m + \frac{\delta}{2} \|\Delta v\|_{L^2}^2  \\
 &= 
  \frac{\Tilde{C}_*^2}{2(\kappa_1 \kappa_2 -\delta)} 
 \left (\frac{2d}{d-4}\frac{(\kappa_1 \kappa_2-\delta)}{\Tilde{C}_*^2} - M^{\frac{4}{d}}
 \right )\|u\|_{L^m}^m + \frac{\delta}{2} \|\Delta v\|_{L^2}^2
\end{split}
\end{equation*}
Setting $\delta=0$ shows that the right-hand side is lower bounded when 
$M \leq 
\left ( \frac{2d}{d-4}\frac{\kappa_1\kappa_2}{\Tilde{C}_*^2} \right )^{\frac{d}{4}}$, 
and as shown in the next, 
$\Tilde{C}_*^2=C_*^2$, defined in \eqref{dividing constant}, holds, 
indicating that this implies $M\leq M_*$. 
When $M<M_*$, since the coefficient of $\|u\|_{L^m}^m$ 
can be made positive by appropriately choosing $\delta$, 
\eqref{coercivity of L} holds. 

\vspace{2mm}

\noindent
\textbf{(ii) $\mathbf{\Tilde{C}_*^2= C_*^2}$}

\vspace{2mm}

By definition, we have 
\begin{equation*} 
\begin{split} 
\Tilde{C}_*^2
& =\sup_{(u,v) 
\in (L^1 \cap L^m)(\mathbb{R}^d) \times \dot{H}^2(\mathbb{R}^d)}
\frac{\displaystyle 
\left ( \int_{\mathbb{R}^d} uv\,dx \right )^2}
{\|u\|_{L^1}^{2(1-\theta)}\|u\|_{L^m}^{2\theta}\|\Delta v\|_{L^2}^2} \\
& = \sup_{u \in (L^1 \cap L^m)(\mathbb{R}^d)}
\frac{1}{\|u\|_{L^1}^{2(1-\theta)}\|u\|_{L^m}^{2\theta}}
\left (
\inf_{v \in \dot{H}^2(\mathbb{R}^d)}
\frac{\|\Delta v\|_{L^2}^2 }
{\left ( \int_{\mathbb{R}^d}uv\,dx\right )^2} \right )^{-1}. 
\end{split}
\end{equation*}
Define 
\begin{equation*} 
\begin{split} 
A_u(v):=\frac{\|\Delta v\|_{L^2}^2}
{\left ( \int_{\mathbb{R}^d}uv\,dx\right )^2}, 
\end{split}
\end{equation*}
which is bounded below, 
and let $\{v_n\}$ be a minimizing sequence for $A_u$. 
Since $\{v_n/\|\Delta v_n\|_{L^2}\}$ is also a minimizing sequence, 
$\{ v_n/\|\Delta v_n\|_{L^2}\}$ forms a bounded sequence 
in $\dot{H}^2(\mathbb{R}^d)
\hookrightarrow L^{\frac{2d}{d-4}}(\mathbb{R}^d)$. 
Thus, we can extract a subsequence $\{v_{n(j)}\}$ such that 
\begin{equation*} 
\begin{split} 
&\Delta v_{n(j)} \rightharpoonup \Delta v_* \text{ weakly in }L^2(\mathbb{R}^d), \\
&v_{n(j)} \rightharpoonup v_* \text{ weakly in }L^{\frac{2d}{d-4}}(\mathbb{R}^d). 
\end{split}
\end{equation*}
Since 
$u \in (L^1 \cap L^m)(\mathbb{R}^d) \subset L^{\frac{2d}{d+4}}(\mathbb{R}^d)$, 
the dual space of $L^{\frac{2d}{d-4}}(\mathbb{R}^d)$, it follows that
\[
\int_{\mathbb{R}^d} uv_{n(j)} \,dx \to \int_{\mathbb{R}^d} uv_* \,dx\ 
(j \to \infty). 
\]
Moreover, 
\[
\|\Delta v_*\| \leq \liminf_{j \to \infty} \|\Delta v_{n(j)}\|_{L^2}
\]
implies that
\[
A_u(v_*) \leq \liminf_{j \to \infty} A_u(v_{n(j)})
\]
confirming the existence of a minimizer $v_*$. 
For any $\varphi \in \dot{H}^2(\mathbb{R}^d)$, 
the following holds
\begin{equation*} 
\begin{split} 
\frac{d}{d\varepsilon}
\left [ 
{\left ( \int_{\mathbb{R}^d}u(v_*+\varepsilon \varphi)\,dx\right )^2}
A_u(v_*+\varepsilon \varphi)
\right ]_{\varepsilon=0}
=
\frac{d}{d\varepsilon}
{\|\Delta v_*+\varepsilon \Delta \varphi\|_{L^2}^2} 
\biggm |_{\varepsilon=0}
\end{split}
\end{equation*}
yielding
\begin{equation*} 
\begin{split} 
A_u(v_*)\int_{\mathbb{R}^d}uv_*\,dx
\int_{\mathbb{R}^d}u\varphi\,dx
=
\int_{\mathbb{R}^d} \Delta v_* \Delta \varphi\,dx
= \int_{\mathbb{R}^d} \varphi(-\Delta)^2v_* \,dx, 
\end{split}
\end{equation*}
that is, 
\begin{equation} \label{relation}
\begin{split} 
\left (A_u(v_*)  \int_{\mathbb{R}^d}uv_*\,dx
\right ) u
=
(-\Delta)^2 v_*. 
\end{split}
\end{equation}
Continuing from the earlier equations, 
by solving for $v_*$ using $(-\Delta)^{-2}$ and then
multiplying both sides by $u$ and integrating, 
we derive that 
\begin{equation*} 
\begin{split} 
\frac{1}{A_u(v_*)}
= \int_{\mathbb{R}^d} 
u(-\Delta)^{-2}u\,dx
=  \int_{\mathbb{R}^d \times \mathbb{R}^d \times \mathbb{R}^d} 
\mathcal{K}(x-z)\mathcal{K}(z-y)u(x)u(y)\,dxdydz
\end{split}
\end{equation*}
where, $\mathcal{K}$ is the fundamental solution of 
$-\Delta$, given by 
\[
\mathcal{K}(x)= \frac{1}{(d-2)\omega_d |x|^{d-2}}, 
\]
and $\omega_d :={2\pi^{{d}/{2}}}/{\Gamma({d}/{2})}$ 
is the surface area of the unit sphere $\mathbb{S}^{d-1}$ in $\mathbb{R}^d$. 
Therefore $\Tilde{C}_*^2=C_*^2$.

\vspace{2mm}

\noindent
\textbf{(iii) unboundedness of $\mathcal{L}$ from below}

\vspace{2mm}

From the analysis provided, we obtain an expression for $C_*$ 
given by  
\[
C_*= \sup_{u \in (L^1 \cap L^m)(\mathbb{R}^d)}\frac{\int_{\mathbb{R}^d}uv_*\,dx}
{\|u\|_{L^1}^{\frac{2}{d}}\|u\|_{L^m}^{\frac{m}{2}}\|\Delta v_*\|_{L^2}}, 
\]
where $v_*$ is defined as specified in equation \eqref{relation}. 
Therefore, for any $\delta>0$, there exists 
a pair 
$(U_*, V_*) \in (L^1\cap L^m)(\mathbb{R}^d) 
\times 
\dot{H}^2(\mathbb{R}^d)$ such that 
\begin{equation*} 
\begin{split} 
\int_{\mathbb{R}^d}U_*V_*\,dx
& > (C_* -\delta)M^{\frac{2}{d}}\|U_*\|_{L^m}^{\frac{m}{2}}\|\Delta V_*\|_{L^2} \\
& = \frac{\sqrt{\alpha}}{\sqrt{\kappa_1\kappa_2}}
(C_* -\delta)M^{\frac{2}{d}}\|U_*\|_{L^m}^{\frac{m}{2}}
\cdot 
\frac{\sqrt{\kappa_1\kappa_2}}{\sqrt{\alpha}}
\|\Delta V_*\|_{L^2} \\
& = 
\frac{\alpha}{2\kappa_1\kappa_2}
(C_* -\delta)^2M^{\frac{4}{d}}\|U_*\|_{L^m}^{m}
+ 
\frac{\kappa_1\kappa_2}{2\alpha}
\|\Delta V_*\|_{L^2}^2, 
\end{split}
\end{equation*}
where 
$(U_*, V_*)$ satisfies 
\begin{equation} \label{relation2}
\begin{split} 
\frac{\|\Delta V_*\|_{L^2}^2}
{
\int_{\mathbb{R}^d}U_*V_*\,dx
} 
U_*
=
(-\Delta)^2 V_*. 
\end{split}
\end{equation}
and we set 
\[
\alpha:= \frac{\kappa_1 \kappa_2 \|\Delta V_*\|_{L^2}}
{C_* M^{\frac{2}{d}}\|U_*\|_{L^m}^{\frac{m}{2}}} 
\]
to utilize the equality condition of Young's inequality.
So, we have 
\begin{equation*} 
\begin{split} 
\int_{\mathbb{R}^d}U_*\left ( \frac{V_*}{\alpha} \right )\,dx
>
\frac{
(C_* -\delta)^2M^{\frac{4}{d}}}{2\kappa_1\kappa_2}\|U_*\|_{L^m}^{m}
+ 
\frac{\kappa_1\kappa_2}{2}
\left \|
\Delta \left ( \frac{V_*}{\alpha} \right )
\right \|_{L^2}^2. 
\end{split}
\end{equation*}
Define the functional $\mathcal{L}_0$ by 
\begin{equation*} 
\mathcal{L}_0(u,v,w) := 
\int_{\mathbb{R}^d} \frac{u^m}{m-1}\,dx - \int_{\mathbb{R}^d} uv\,dx 
+ 
\frac{\kappa_1 \kappa_2}{2}\int_{\mathbb{R}^d}|\Delta v|^2\,dx 
 + 
\displaystyle 
\frac{\varepsilon_2}{2\varepsilon_1}
\int_{\mathbb{R}^d} |\kappa_1 \Delta v  + w|^2\,dx. 
\end{equation*}
Then, it holds that 
\begin{equation*} 
\begin{split} 
\mathcal{L}_0(U_*, \alpha^{-1}V_*, -\kappa_1 \alpha^{-1}\Delta V_*) <
\left (\frac{d}{d-4} - 
\frac{
(C_* -\delta)^2M^{\frac{4}{d}}}{2\kappa_1\kappa_2}
\right )\|U_*\|_{L^m}^{m}. 
\end{split}
\end{equation*}
When $M >M_*$, 
we can choose $\delta$ such that the right-hand side becomes negative. 
Particularly, 
since we may assume $U_*, V_* \geq 0$, 
it follows from the relation \eqref{relation2} that 
$-\kappa_1 \alpha^{-1}\Delta V_* \geq 0$.  
Now, define 
\[ (U_{\lambda}(x), V_{\lambda}(x), W_{\lambda}(x)):=
(\lambda^dU_*(\lambda x), \lambda^{d-4}
\alpha^{-1}V_*(\lambda x), -\lambda^{d-2}\kappa_1\alpha^{-1} \Delta V_*(\lambda x)). 
 \]
Then, we can check the following.   
\begin{equation*} 
\begin{cases}\vspace{2mm} 
\|U_{\lambda}\|_{L^1}= \|U_*\|_{L^1}, \\
\|U_{\lambda}\|_{L^m}^m = \lambda^{d-4}\|U_*\|_{L^m}^m, 
\end{cases}
\ 
\begin{cases}\vspace{2mm}
\|V_{\lambda}\|_{L^2}^2= \lambda^{d-8}\|\alpha^{-1}V_*\|_{L^2}^2,  \\ \vspace{2mm}
\|\nabla V_{\lambda}\|_{L^2}^2 =  \lambda^{d-6}\|\alpha^{-1}\nabla V_*\|_{L^2}^2, \\
\|\Delta V_{\lambda}\|_{L^2}^2 = \lambda^{d-4}\|\alpha^{-1}\Delta V_*\|_{L^2}^2, 
\end{cases}
\
\|W_{\lambda}\|_{L^2}^2 = \lambda^{d-4} \|\kappa_1\alpha^{-1} \Delta V_*\|_{L^2}^2. 
\end{equation*}
Consequently, we obtain 
\begin{equation*} 
\mathcal{L}_0(U_{\lambda}(x), V_{\lambda}(x), W_{\lambda}(x)) = 
\lambda^{d-4} \mathcal{L}_0(U_*, \alpha^{-1}V_*, -\kappa_1 \alpha^{-1}\Delta V_*) 
\to -\infty \quad (\lambda \to \infty), 
\end{equation*}
from which 
\begin{equation*} 
\begin{split} 
\mathcal{L}(U_{\lambda}(x), V_{\lambda}(x), W_{\lambda}(x)) 
\to -\infty \quad (\lambda \to \infty).  
\end{split}
\end{equation*}
follows. 
\end{proof}

\section{Uniform bounds}

In this section, we introduce the piecewise constant interpolation of discrete solutions and derive uniform estimates for them. These uniform estimates play a crucial role in leading to the weak compactness of the discrete solutions.

\begin{defn}[piecewise constant interpolation of discrete solutions]
\label{piecewise constant interpolation}
We define the \textbf{piecewise constant interpolation of discrete solutions} by 
\begin{equation*} 
\begin{cases}\vspace{2mm}
\overline{u}_{\tau}(t):=u_{\tau}^k, & t \in ((k-1)\tau, k\tau], \\ \vspace{2mm}
\overline{v}_{\tau}(t):=v_{\tau}^k, & t \in ((k-1)\tau, k\tau], \\ 
\overline{w}_{\tau}(t):=w_{\tau}^k, & t \in ((k-1)\tau, k\tau]. 
\end{cases}
\qquad \begin{cases}\vspace{2mm}
\underline{u}_{\tau}(t):=u_{\tau}^{k-1}, & t \in ((k-1)\tau, k\tau], \\ \vspace{2mm}
\underline{v}_{\tau}(t):=v_{\tau}^{k-1}, & t \in ((k-1)\tau, k\tau], \\ 
\underline{w}_{\tau}(t):=w_{\tau}^{k-1}, & t \in ((k-1)\tau, k\tau]. 
\end{cases}
\end{equation*}
\end{defn}

\begin{prop}[uniform bounds]\label{uniform bounds}
Assume \eqref{uniform L^2} and 
\begin{equation} \label{lbc}
m>2-\frac{4}{d} \text{ or }\left ( m=2-\frac{4}{d} \text{ and }
M < M_* \right ). 
\end{equation}
Then, for any $T>0$ and 
for any $N \in \mathbb{N}$, 
the following inequalities hold:  
\begin{equation*} 
\begin{split} 
& \sup_{\substack{t\in [0,T] \\ \tau \in (0, \tau_*)}} 
\left \{ 
\|\overline{u}_{\tau}(t)\|_{L^m}^m, 
\|\overline{v}_{\tau}(t)\|_{W^{2,2}}^2, 
\|\overline{w}_{\tau}(t)\|_{H^1}^2, 
\int_{\mathbb{R}^d}|x|^2\overline{u}_{\tau}(t)\,dx
\right \}<\infty, 
\\
&
\sup_{\tau \in (0, \tau_*)}
\int_0^T
\left (
\|\overline{u}_{\tau}(t)\|_{L^2}^2
+\|\nabla \overline{u}_{\tau}^{m}(t)\|_{L^{\frac{d}{d-1}}}^{p_*}
 +\|\overline{v}_{\tau}(t)\|_{W^{3,2}}^2
+\|\overline{w}_{\tau}(t)\|_{W^{2,2}}^2
\right )
\,dt
<\infty, 
\\ 
 &
 \sup_{\tau \in (0,\tau_*)}\left \{ 
\sum_{k=1}^N \frac{\mathcal{W}_2^2(u_{\tau}^k, u_{\tau}^{k-1})}{\tau}, \ 
\sum_{k=1}^N \frac{\|v_{\tau}^k-v_{\tau}^{k-1}\|_{H^1}^2}{\tau}, \ 
\sum_{k=1}^N \frac{\|w_{\tau}^k-w_{\tau}^{k-1}\|_{L^2}^2}{\tau} \right \}<\infty, 
\end{split}
\end{equation*}
where 
\begin{equation} \label{p_*}
p_*:=
\begin{cases}\vspace{2mm}
\displaystyle\frac{2(d-2m)}{d(2-m)}, & \text{ if }m<2, \\
2, & \text{ if }m \geq 2. 
\end{cases}
\end{equation}
\end{prop}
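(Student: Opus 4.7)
The strategy is a cascade through three levels: (a) summability and pointwise-in-time bounds from combining the discrete energy dissipation (Proposition~\ref{ED}) with the coercivity \eqref{coercivity of L}; (b) closure of the remaining pointwise bounds on $v,w$ and the second moment via the Euler-Lagrange equations \eqref{EL}, discrete Gr\"onwall, and the Wasserstein triangle inequality; (c) time-integrated bounds via the slope estimate (Lemma~\ref{Lem slope estimate}), bootstrapped in the spirit of Lemma~\ref{pointwise estimate} and calibrated by assumption \eqref{uniform L^2}.

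For (a), under \eqref{lbc}, Proposition~\ref{lower bounds in subcritical} or \ref{lower bounds in critical} guarantees $\mathcal{L}\geq 0$ on $X_M$ together with the quantitative bound \eqref{coercivity of L}. Proposition~\ref{ED} and \eqref{initial energy} then give
\[
\frac{1}{2\tau}\sum_{k=1}^N \mathcal{W}_2^2(u_\tau^k,u_\tau^{k-1}) + \sum_{k=1}^N \mathcal{D}_\tau^k \;\leq\; \mathcal{L}(u_0,v_0,w_0) + 1,
\]
from which I read off the summability of $\mathcal{W}_2^2(u_\tau^k,u_\tau^{k-1})/\tau$ and, by inspecting the $\partial_t v$ and $\partial_t\nabla v$ contributions to $\mathcal{D}_\tau^k$, of $\|v_\tau^k - v_\tau^{k-1}\|_{H^1}^2/\tau$; the corresponding sum for $w$ follows from the third equation of \eqref{EL}. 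Feeding $\mathcal{L}(u_\tau^N,v_\tau^N,w_\tau^N)\leq C$ back into \eqref{coercivity of L} supplies the pointwise-in-$N$ control of $\|u_\tau^N\|_{L^m}^m$, $\|\Delta v_\tau^N\|_{L^2}^2$ (hence $\|D^2 v_\tau^N\|_{L^2}^2$ by Calder\'on--Zygmund), and $\|\kappa_1\Delta v_\tau^N - \gamma_1 v_\tau^N + w_\tau^N\|_{L^2}^2$.

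For (b), when $\gamma_1\gamma_2 = 0$ the coercivity does not directly control $\|v_\tau^N\|_{L^2}$, so I test the second equation of \eqref{EL} with $\zeta = v_\tau^k$ and use $(v_\tau^k - v_\tau^{k-1})v_\tau^k \geq (|v_\tau^k|^2 - |v_\tau^{k-1}|^2)/2$ to obtain a discrete Gr\"onwall propagation of $\|v_\tau^N\|_{L^2}$ from $\|v_0\|_{L^2}$, conditional on $\sup_k\|w_\tau^k\|_{L^2}\leq C$; the latter is read off from $\|w\|_{L^2}\leq \|\kappa_1\Delta v - \gamma_1 v + w\|_{L^2} + \kappa_1\|\Delta v\|_{L^2} + \gamma_1\|v\|_{L^2}$, closing a short bootstrap if $\gamma_1 > 0$. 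The interpolation $\|\nabla v\|_{L^2}^2 \leq \|v\|_{L^2}\|\Delta v\|_{L^2}$ supplies the remaining $W^{2,2}$ piece. A parallel test of the third equation of \eqref{EL} with $-\Delta w_\tau^k$, with $\|u_\tau^{k-1}\|_{L^{2d/(d+2)}}$ bounded by interpolation between $L^1$ and $L^m$, yields $\sup_N\|w_\tau^N\|_{H^1} \leq C$. For the second moment, the Wasserstein triangle inequality gives $\mathcal{W}_2(u_\tau^N, M\delta_0) \leq \mathcal{W}_2(u_\tau^0, M\delta_0) + \sum_{k=1}^N \mathcal{W}_2(u_\tau^k,u_\tau^{k-1})$, and Cauchy--Schwarz in $k$ bounds the sum by $\sqrt{T \cdot \sum \mathcal{W}_2^2(u_\tau^k,u_\tau^{k-1})/\tau}$.

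For (c), which is the main obstacle, summing the slope estimate over $k$ with weight $\tau$ gives
\[
\int_0^T\!\!\int_{\mathbb{R}^d} \frac{|\nabla\overline{u}_\tau^m - \overline{u}_\tau\nabla\overline{v}_\tau|^2}{\overline{u}_\tau}\,dx\,dt \;\leq\; \sum_{k=1}^N \frac{\mathcal{W}_2^2(u_\tau^k,u_\tau^{k-1})}{\tau} \;\leq\; C.
\]
Reprising the duality from the proof of Lemma~\ref{pointwise estimate}, for suitable $p$ one has the pointwise-in-$t$ estimate $\|\nabla u_\tau^m - u_\tau\nabla v_\tau\|_{L^{2p/(p+1)}} \leq \|u_\tau\|_{L^p}^{1/2}\,\mathcal{W}_2(u_\tau^k,u_\tau^{k-1})/\tau$. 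Iterating the bootstrap of Lemma~\ref{pointwise estimate} in $p$ as necessary to reach the spatial exponent $d/(d-1)$ on $\nabla u^m$, then integrating in time with power $p_*$, using H\"older in $t$ together with the uniform-in-$t$ bound on $\|u_\tau\|_{L^p}$, and balancing exponents via \eqref{uniform L^2}, produces $\int_0^T \|\nabla \overline{u}_\tau^m\|_{L^{d/(d-1)}}^{p_*}\,dt \leq C$. The Sobolev embedding $W^{1,d/(d-1)}\hookrightarrow L^{d/(d-2)}$ followed by interpolation against $\overline{u}_\tau \in L^\infty_t L^m_x$, whose feasibility is precisely certified by \eqref{uniform L^2} (the direct inclusion $L^1\cap L^m\hookrightarrow L^2$ replaces this interpolation when $m\geq 2$, as reflected in the definition \eqref{p_*} of $p_*$), then delivers $\overline{u}_\tau \in L^2_t L^2_x$. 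Finally, elliptic regularity applied successively to the third and second equations of \eqref{EL} with $\overline{u}_\tau \in L^2_{t,x}$ as source upgrades $\overline{w}_\tau \in L^2_t W^{2,2}_x$ and $\overline{v}_\tau \in L^2_t W^{3,2}_x$, completing the list of bounds.
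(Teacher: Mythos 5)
Your plan captures the overall architecture — energy dissipation plus coercivity for the pointwise-in-time bounds, the slope estimate for the time-integrated ones, elliptic regularity for $v, w$ — but there are two substantive gaps, and the dependency graph in (b)--(c) is inverted relative to what actually makes the argument close.

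\textbf{Ordering and the $w$-bound.} You propose obtaining $\sup_N\|w_\tau^N\|_{H^1}$ in step (b), before the $L^2_{t,x}$ bound on $u$ in (c), by testing the third equation of \eqref{EL} with $-\Delta w_\tau^k$ and pairing $u_\tau^{k-1}$ with $\Delta w_\tau^k$ via $\|u_\tau^{k-1}\|_{L^{2d/(d+2)}}\|\Delta w_\tau^k\|_{L^{2d/(d-2)}}$, claiming the first factor is controlled by interpolating $L^1\cap L^m$. This interpolation fails: since $\tfrac{2d}{d+2} - (2-\tfrac{4}{d}) = \tfrac{8}{d(d+2)}>0$, at the critical exponent $m=2-\tfrac{4}{d}$ one has $m < \tfrac{2d}{d+2}$, so $L^1\cap L^m\not\hookrightarrow L^{2d/(d+2)}$. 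Nor is a bare $L^2$--$L^2$ pairing an escape route at this stage, because $\sup_k\|u_\tau^k\|_{L^2}$ is not uniformly bounded; only $\int_0^T\|\overline{u}_\tau\|_{L^2}^2\,dt$ is. The paper instead derives the $w$-bounds (Lemma~\ref{UB4w}) \emph{after} the $L^2_{t,x}$ bound on $u$ (Lemma~\ref{uL2p}), using the minimizing property of $w_\tau^k$ rather than the Euler--Lagrange equation, and absorbs $\|u_\tau^{k-1}\|_{L^2}^2$ into the time sum $\sum_k\tau\|u_\tau^{k-1}\|_{L^2}^2$ (plus the mollified-initial-data estimate \eqref{initialL2}). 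Your cascade must therefore establish the $u$ estimate of (c) before the $w$ estimate of (b), and the $L^{2d/(d+2)}$ pairing should be replaced by an $L^2$-pairing closed against $\int_0^T\|\overline{u}_\tau\|_{L^2}^2\,dt$.

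\textbf{The $L^2_{t,x}$ bound on $u$.} Describing this as ``iterating the bootstrap of Lemma~\ref{pointwise estimate} in $p$ ... then integrating in time with power $p_*$, using H\"older in $t$ together with the uniform-in-$t$ bound on $\|u_\tau\|_{L^p}$'' misses the mechanism and rests on a bound that does not exist. The bootstrap of Lemma~\ref{pointwise estimate} delivers only \emph{qualitative} $L^2$-regularity for each fixed $k$ and $\tau$, not a $\tau$-independent quantitative bound; and there is no uniform-in-$t$ control of $\|u_\tau\|_{L^p}$ for $p>m$, which is exactly the range needed (e.g. $p=\tfrac{d}{d-2}>m$ when $d\in\{5,6\}$ and $m=2-\tfrac4d$). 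What actually closes the estimate (Lemma~\ref{uL2p}) is a \emph{self-bounding} inequality: take $p=\tfrac{d}{d-2}$ in \eqref{a priori}, pass through Sobolev to $\|u\|_{L^{md/(d-2)}}^m\lesssim\|\nabla u^m\|_{L^{d/(d-1)}}$, interpolate \emph{both} $\|u\|_{L^{d/(d-2)}}$ and $\|u\|_{L^2}$ against $\|u\|_{L^m}$ so that $\|u\|_{L^2}$ appears on both sides with a strictly larger exponent on the left, and then conclude via Young's inequality that $\tau\|u_\tau^k\|_{L^2}^{\frac{4m+2(m-1)d}{d(2-m)}}\lesssim \tfrac{\mathcal{W}_2^2(u_\tau^k,u_\tau^{k-1})}{\tau}+\tau$, which is summable by the third inequality of the Proposition; condition \eqref{uniform L^2} is precisely the requirement that the resulting exponent be $\geq 2$. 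You also propose deriving $\overline{u}_\tau\in L^2_{t,x}$ \emph{from} $\int_0^T\|\nabla\overline{u}_\tau^m\|_{L^{d/(d-1)}}^{p_*}\,dt<\infty$; the paper's order is the reverse (Lemma~\ref{uL2p} before Corollary~\ref{umW1p}), and your interpolation against $L^\infty_t L^m_x$ to recover $L^2$ in both variables is left to ``balancing exponents'' without verification.

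\textbf{A smaller point on (a).} You read $\sum_k\|v_\tau^k-v_\tau^{k-1}\|_{H^1}^2/\tau$ off $\mathcal{D}_\tau^k$. The gradient part is fine (coefficient $\tau(\varepsilon_1\kappa_2+\varepsilon_2\kappa_1)>0$), but the $L^2$ part has coefficient proportional to $\gamma_1\varepsilon_2+\gamma_2\varepsilon_1$, which vanishes when $\gamma_1=\gamma_2=0$. The paper supplies the missing $L^2$ piece via Lemma~\ref{vL2}: the coercivity term $\tfrac{\varepsilon_2}{2\varepsilon_1}\|\kappa_1\Delta v_\tau^k-\gamma_1 v_\tau^k+w_\tau^k\|_{L^2}^2$ equals $\tfrac{\varepsilon_1\varepsilon_2}{2}\|\partial_t v_\tau^k\|_{L^2}^2$ by the second Euler--Lagrange equation, and summing with weight $\tau$ closes the estimate. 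Your subsequent telescoping argument for the second moment, and the discrete Gr\"onwall route to $\sup_N\|v_\tau^N\|_{L^2}$ and the $\|\nabla v\|_{L^2}^2\leq\|v\|_{L^2}\|\Delta v\|_{L^2}$ interpolation, are correct alternatives to the paper's Lemma~\ref{2moment}, but they only buy anything once the summable increment estimate for $v$ is actually in hand.
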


\begin{rem}
When $d \geq 5$, $p_* >1$, and 
further, when $d \geq 7$, $p_* \geq 2$. 
\end{rem}

In the following, we will prove Proposition \ref{uniform bounds} 
by dividing it into several lemmas.

\begin{lem}\label{vL2}
Assume \eqref{lbc}. 
then, 
for any $T>0$ and for any $N \in \mathbb{N}$ such that 
$\tau N \leq T$
the following 
inequality holds. 
\begin{equation*} 
\begin{split} 
& \frac{ \varepsilon_1 \varepsilon_2 }{2\tau}
\sum_{k=1}^N
\|v_{\tau}^k - v_{\tau}^{k-1}\|_{L^2}^2 
\leq T \mathcal{L}(u_{\tau}^0, v_{\tau}^0, w_{\tau}^0). 
\end{split}
\end{equation*}
\end{lem}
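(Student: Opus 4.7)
The plan is to exploit an algebraic rewriting of the final summand of $\mathcal{L}$ using the Euler--Lagrange equation for $v_\tau^k$, and then combine the coercivity of $\mathcal{L}$ with its monotone decrease along the discrete flow. The second equation in \eqref{EL}, divided by $\tau$, reads
$$\kappa_1 \Delta v_\tau^k - \gamma_1 v_\tau^k + w_\tau^k = \varepsilon_1 \partial_t v_\tau^k,$$
so the last term of $\mathcal{L}(u_\tau^k, v_\tau^k, w_\tau^k)$ simplifies to
$$\frac{\varepsilon_2}{2\varepsilon_1}\|\kappa_1 \Delta v_\tau^k - \gamma_1 v_\tau^k + w_\tau^k\|_{L^2}^2 = \frac{\varepsilon_1 \varepsilon_2}{2}\|\partial_t v_\tau^k\|_{L^2}^2 = \frac{\varepsilon_1 \varepsilon_2}{2\tau^2}\|v_\tau^k - v_\tau^{k-1}\|_{L^2}^2.$$

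Under hypothesis \eqref{lbc}, Propositions \ref{lower bounds in subcritical} and \ref{lower bounds in critical} deliver the coercivity inequality \eqref{coercivity of L} on $X_M$. Since $(u_\tau^k, v_\tau^k, w_\tau^k) \in X_M$ by the construction and regularity established in Section 2, and since every term appearing on the right-hand side of \eqref{coercivity of L} is non-negative, I may discard all but the last summand to obtain
$$\mathcal{L}(u_\tau^k, v_\tau^k, w_\tau^k) \geq \frac{\varepsilon_1 \varepsilon_2}{2\tau^2}\|v_\tau^k - v_\tau^{k-1}\|_{L^2}^2$$
for every $k \geq 1$. On the other hand, from Proposition \ref{ED}, together with the non-negativity of $\mathcal{W}_2^2(u_\tau^k, u_\tau^{k-1})$ and of each $\mathcal{D}_\tau^k$ (all coefficients $\varepsilon_i, \kappa_i$ being positive and $\gamma_i \geq 0$), the Lyapunov functional is monotone non-increasing along the discrete flow, whence $\mathcal{L}(u_\tau^k, v_\tau^k, w_\tau^k) \leq \mathcal{L}(u_\tau^0, v_\tau^0, w_\tau^0)$ for all $k \leq N$.

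Combining the two bounds yields a uniform pointwise-in-$k$ estimate
$$\frac{\varepsilon_1 \varepsilon_2}{2\tau^2}\|v_\tau^k - v_\tau^{k-1}\|_{L^2}^2 \leq \mathcal{L}(u_\tau^0, v_\tau^0, w_\tau^0),$$
and summing over $k = 1, \ldots, N$ after multiplying through by $\tau$ gives the claim, because the hypothesis $\tau N \leq T$ converts the factor $N$ on the right into the factor $T$. The only non-routine point is the initial identification: one must notice that the apparently auxiliary term $\frac{\varepsilon_2}{2\varepsilon_1}\|\kappa_1\Delta v - \gamma_1 v + w\|_{L^2}^2$ in $\mathcal{L}$ is precisely the square of the discrete time derivative of $v_\tau^k$ rescaled by $\varepsilon_1 \varepsilon_2$, once the Euler--Lagrange equation is invoked. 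Once that identification is made, the argument is a direct combination of coercivity and energy dissipation, with no further estimates needed.
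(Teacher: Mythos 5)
Your proof is correct and follows essentially the same route as the paper: both arguments use the second Euler--Lagrange equation in \eqref{EL} to identify the last term of $\mathcal{L}$ with $\frac{\varepsilon_1\varepsilon_2}{2}\|\partial_t v_\tau^k\|_{L^2}^2$, then combine the coercivity estimate \eqref{coercivity of L} with the monotone decay of $\mathcal{L}$ from Proposition \ref{ED} to obtain a pointwise-in-$k$ bound, and finally sum and use $\tau N \leq T$. The only cosmetic difference is that the paper writes the chain of inequalities with the factor $\varepsilon_1$ carried along and divides at the end, whereas you normalize earlier; the mathematics is identical.
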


\begin{proof}
According to Propositions \ref{lower bounds in subcritical} 
and \ref{lower bounds in critical}, 
and based on \eqref{coercivity of L}, we establish that 
\[
\frac{\varepsilon_2}{2}
 \| \kappa_1\Delta v_{\tau}^k-\gamma_1 v_{\tau}^k + w_{\tau}^k \|_{L^2}^2
\leq \varepsilon_1 \mathcal{L}(u_{\tau}^k,v_{\tau}^k,w_{\tau}^k). 
\]
Furthermore, employing \eqref{EL} and Proposition \ref{ED}, 
we deduce that  
\begin{equation*} 
\begin{split} 
\frac{\varepsilon_2}{2} \|\varepsilon_1\partial_t v_{\tau}^k \|_{L^2}^2
= 
\frac{\varepsilon_2}{2}
 \| \kappa_1 \Delta v_{\tau}^k-\gamma_1 v_{\tau}^k + w_{\tau}^k \|_{L^2}^2
\leq \varepsilon_1 \mathcal{L}(u_{\tau}^k,v_{\tau}^k,w_{\tau}^k)
\leq \varepsilon_1 {L}(u_{\tau}^0,v_{\tau}^0,w_{\tau}^0). 
\end{split}
\end{equation*}
Thus, 
by multiplying both sides by $\tau$ and summing over $k$ from $1$ to $N$, 
we obtain 
\begin{equation*} 
\begin{split} 
\frac{\varepsilon_1^2 \varepsilon_2}{2} 
\sum_{k=1}^N \tau
\|\partial_t v_{\tau}^k \|_{L^2}^2
\leq  \varepsilon_1 \sum_{k=1}^N \tau {L}(u_{\tau}^0,v_{\tau}^0,w_{\tau}^0)
\leq  \varepsilon_1 T \mathcal{L}(u_{\tau}^0, v_{\tau}^0, w_{\tau}^0). 
\end{split}
\end{equation*}
\end{proof}
The following lemma follows immediately from 
Proposition \ref{ED} and Lemma \ref{vL2}.
\begin{lem}\label{Hcontinuity}
Assume \eqref{lbc}. 
Then, for any $T>0$ and for any $N \in \mathbb{N}$ such that 
$\tau N \leq T$
the following 
inequality holds.
\begin{equation*} 
\begin{split} 
\frac{1}{2\tau}
\left ( 
\sum_{k=1}^N \mathcal{W}_2^2(u_{\tau}^k, u_{\tau}^{k-1}) 
+ \sum_{k=1}^N d_{H^1}^2(v_{\tau}^k, v_{\tau}^{k-1}) \right )
\leq (T+1)
\mathcal{L}(u_{\tau}^0,v_{\tau}^0,w_{\tau}^0), 
\end{split}
\end{equation*}
where 
\[d_{H^1}^2(v_{\tau}^k, v_{\tau}^{k-1}):=
(\varepsilon_1 \kappa_2 + \varepsilon_2 \kappa_1)
\|\nabla (v_{\tau}^k - v_{\tau}^{k-1})\|_{L^2}^2 + 
\left (\gamma_1 \varepsilon_2 + \gamma_2 \varepsilon_1+
\varepsilon_1\varepsilon_2 \right )
\|v_{\tau}^k - v_{\tau}^{k-1}\|_{L^2}^2. 
\]
\end{lem}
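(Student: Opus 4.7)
The plan is to read off the inequality directly from Proposition \ref{ED} after discarding superfluous non-negative terms, and then to add the contribution of Lemma \ref{vL2} in order to recover the missing $\varepsilon_1\varepsilon_2\|v_\tau^k-v_\tau^{k-1}\|_{L^2}^2$ summand. No new computation is needed; the only care is in matching coefficients.

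First I would invoke the coercivity estimate \eqref{coercivity of L}, which is available here because assumption \eqref{lbc} puts us in the scope of either Proposition \ref{lower bounds in subcritical} or Proposition \ref{lower bounds in critical}. This gives $\mathcal{L}(u_\tau^N,v_\tau^N,w_\tau^N)\geq 0$, so Proposition \ref{ED} yields
\[
\frac{1}{2\tau}\sum_{k=1}^{N}\mathcal{W}_2^2(u_\tau^k,u_\tau^{k-1}) \;+\; \sum_{k=1}^{N}\mathcal{D}_\tau^k \;\leq\; \mathcal{L}(u_\tau^0,v_\tau^0,w_\tau^0).
\]
Next, I would extract from $\mathcal{D}_\tau^k$ only the two summands carrying the factor $2/\tau$, namely $\tau(\varepsilon_1\kappa_2+\varepsilon_2\kappa_1)\|\partial_t\nabla v_\tau^k\|_{L^2}^2$ and $\tau(\gamma_1\varepsilon_2+\gamma_2\varepsilon_1)\|\partial_t v_\tau^k\|_{L^2}^2$, and discard the remaining four non-negative terms. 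Using the discrete identity $\tau\|\partial_t z_\tau^k\|_{L^2}^2=\tau^{-1}\|z_\tau^k-z_\tau^{k-1}\|_{L^2}^2$ (and its gradient analogue from the last line of \eqref{DP}) these summands rewrite as $\tau^{-1}(\varepsilon_1\kappa_2+\varepsilon_2\kappa_1)\|\nabla(v_\tau^k-v_\tau^{k-1})\|_{L^2}^2$ and $\tau^{-1}(\gamma_1\varepsilon_2+\gamma_2\varepsilon_1)\|v_\tau^k-v_\tau^{k-1}\|_{L^2}^2$. Replacing $1/\tau$ by $1/(2\tau)$ (which only weakens the inequality) gives
\[
\frac{1}{2\tau}\sum_{k=1}^{N}\!\Bigl[\mathcal{W}_2^2(u_\tau^k,u_\tau^{k-1}) + (\varepsilon_1\kappa_2+\varepsilon_2\kappa_1)\|\nabla(v_\tau^k-v_\tau^{k-1})\|_{L^2}^2 + (\gamma_1\varepsilon_2+\gamma_2\varepsilon_1)\|v_\tau^k-v_\tau^{k-1}\|_{L^2}^2\Bigr]\leq \mathcal{L}(u_\tau^0,v_\tau^0,w_\tau^0).
\]

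Finally, I would add Lemma \ref{vL2}, which contributes exactly the missing term
\[
\frac{\varepsilon_1\varepsilon_2}{2\tau}\sum_{k=1}^{N}\|v_\tau^k-v_\tau^{k-1}\|_{L^2}^2 \;\leq\; T\,\mathcal{L}(u_\tau^0,v_\tau^0,w_\tau^0),
\]
so that the $L^2$-coefficient becomes $\gamma_1\varepsilon_2+\gamma_2\varepsilon_1+\varepsilon_1\varepsilon_2$, matching the definition of $d_{H^1}^2$, and the right-hand sides combine to $(T+1)\mathcal{L}(u_\tau^0,v_\tau^0,w_\tau^0)$. This reproduces the stated inequality. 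There is no real obstacle here: the arithmetic of the coefficients in $\mathcal{D}_\tau^k$ is the only point requiring attention, and both Proposition \ref{ED} and Lemma \ref{vL2} have already done the substantive work (the telescoping on the Lyapunov side and the $L^2$-time regularity via \eqref{coercivity of L}, respectively).
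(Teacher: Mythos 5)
Your proposal is correct and follows exactly the route the paper intends: the paper merely remarks that Lemma \ref{Hcontinuity} ``follows immediately from Proposition \ref{ED} and Lemma \ref{vL2}'', and your argument---apply the coercivity bound \eqref{coercivity of L} to drop $\mathcal{L}(u_\tau^N,v_\tau^N,w_\tau^N)$, retain from $\mathcal{D}_\tau^k$ the two terms $\tau(\varepsilon_1\kappa_2+\varepsilon_2\kappa_1)\|\partial_t\nabla v_\tau^k\|_{L^2}^2$ and $\tau(\gamma_1\varepsilon_2+\gamma_2\varepsilon_1)\|\partial_t v_\tau^k\|_{L^2}^2$ (halving them to match the $1/(2\tau)$ prefactor), and then add Lemma \ref{vL2} to supply the $\varepsilon_1\varepsilon_2$ contribution and the $T$ on the right-hand side---is precisely that derivation with the coefficient bookkeeping made explicit.
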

Lemma \ref{Hcontinuity} ensures the uniform boundedness of 
the second moment of $\overline{u}_{\tau}$ and 
$H^1$-norm of $\overline{v}_{\tau}(t)$. 
\begin{lem}\label{2moment}
Assume \eqref{lbc}. Then, 
for any $T>0$ 
and for any $t \in [0, T]$ 
the following 
inequality holds. 
\begin{equation*} 
\begin{split} 
& \int_{\mathbb{R}^d} |x|^2\overline{u}_{\tau}(t)\,dx
+ 
(\varepsilon_1 \kappa_2 + \varepsilon_2 \kappa_1)
\|\nabla \overline{v}_{\tau}(t)\|_{L^2}^2 + 
\left (\gamma_1 \varepsilon_2 + \gamma_2 \varepsilon_1+
\varepsilon_1\varepsilon_2 \right )
\|\overline{v}_{\tau}(t)\|_{L^2}^2  \\
& \hspace{5mm}
\leq 4T(T+1)\mathcal{L}(u_{\tau}^0, v_{\tau}^0, w_{\tau}^0) \\
& \hspace{10mm}
+ 2  
\left (
\int_{\mathbb{R}^d} |x|^2\overline{u}_0(t)\,dx
+ 
(\varepsilon_1 \kappa_2 + \varepsilon_2 \kappa_1)
\|\nabla {v}_0\|_{L^2}^2 + 
\left (\gamma_1 \varepsilon_2 + \gamma_2 \varepsilon_1+
\varepsilon_1\varepsilon_2 \right )
\|{v}_0\|_{L^2}^2
\right ). 
\end{split}
\end{equation*}
\end{lem}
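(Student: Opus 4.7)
The plan is to derive each contribution separately via a telescoping triangle inequality applied to the step increments, and then close the estimate using Lemma \ref{Hcontinuity}. For the second-moment contribution, I would exploit the identity $\int_{\mathbb{R}^d}|x|^2 u_{\tau}^k\,dx = \mathcal{W}_2^2(u_{\tau}^k, M\delta_0)$, where $M\delta_0$ denotes the Dirac mass of total mass $M$ concentrated at the origin; this is immediate from the mass-scaled definition of $\mathcal{W}_2$ on $M\mathscr{P}_2(\mathbb{R}^d)$ introduced in Section 2. For the $v$-contribution, I would introduce the auxiliary norm
\[
\|v\|_{\star}^2 := (\varepsilon_1 \kappa_2 + \varepsilon_2 \kappa_1)\|\nabla v\|_{L^2}^2 + (\gamma_1 \varepsilon_2 + \gamma_2 \varepsilon_1 + \varepsilon_1 \varepsilon_2)\|v\|_{L^2}^2,
\]
so that $\|v_{\tau}^j - v_{\tau}^{j-1}\|_{\star}^2 = d_{H^1}^2(v_{\tau}^j, v_{\tau}^{j-1})$, and the left-hand side of the desired inequality equals $\int |x|^2 \overline{u}_{\tau}(t)\,dx + \|\overline{v}_{\tau}(t)\|_{\star}^2$.

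For a fixed $t \in [0,T]$, I choose $k \in \mathbb{N}$ with $t \in ((k-1)\tau, k\tau]$, so that $k\tau \leq T+\tau_*$. The triangle inequality for $\mathcal{W}_2$ applied telescopically yields
\[
\mathcal{W}_2(u_{\tau}^k, M\delta_0) \leq \mathcal{W}_2(u_{\tau}^0, M\delta_0) + \sum_{j=1}^k \mathcal{W}_2(u_{\tau}^j, u_{\tau}^{j-1}),
\]
and Cauchy--Schwarz on the sum gives
\[
\Bigl(\sum_{j=1}^k \mathcal{W}_2(u_{\tau}^j, u_{\tau}^{j-1})\Bigr)^2 \leq (k\tau)\sum_{j=1}^k \frac{\mathcal{W}_2^2(u_{\tau}^j, u_{\tau}^{j-1})}{\tau},
\]
with entirely analogous inequalities for $v_{\tau}^k$ in the $\|\cdot\|_{\star}$ norm (since $\|\cdot\|_{\star}$ is induced by an inner product). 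Lemma \ref{Hcontinuity} bounds each of $\sum_{j=1}^k \mathcal{W}_2^2(u_{\tau}^j, u_{\tau}^{j-1})/\tau$ and $\sum_{j=1}^k d_{H^1}^2(v_{\tau}^j, v_{\tau}^{j-1})/\tau$ by $2(T+1)\mathcal{L}(u_{\tau}^0, v_{\tau}^0, w_{\tau}^0)$. Squaring via $(a+b)^2 \leq 2a^2 + 2b^2$ then produces the individual estimates
\[
\int_{\mathbb{R}^d}|x|^2 \overline{u}_{\tau}(t)\,dx \leq 2\int_{\mathbb{R}^d}|x|^2 u_{\tau}^0\,dx + 4T(T+1)\mathcal{L}(u_{\tau}^0,v_{\tau}^0,w_{\tau}^0),
\]
\[
\|\overline{v}_{\tau}(t)\|_{\star}^2 \leq 2\|v_0\|_{\star}^2 + 4T(T+1)\mathcal{L}(u_{\tau}^0,v_{\tau}^0,w_{\tau}^0),
\]
and adding them gives the claim.

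No serious obstacle is anticipated; this is a standard discrete telescoping argument in a metric space, and the critical estimates are already encapsulated in Lemma \ref{Hcontinuity}. The only two points that merit attention are (i) the identification of the second moment of $u_{\tau}^k$ with a squared Wasserstein distance to a Dirac mass of the correct total mass, which legitimises the triangle inequality on the Wasserstein side, and (ii) the minor bookkeeping that the index $k$ corresponding to $t \in [0,T]$ satisfies only $k\tau \leq T + \tau_*$ rather than $k\tau \leq T$; the resulting discrepancy produces constants of the form $4(T+\tau_*)(T+\tau_*+1)$, which are of the same order as the stated $4T(T+1)$ and are harmlessly absorbed.
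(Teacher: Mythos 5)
Your strategy --- telescoping triangle inequality, Cauchy--Schwarz in time, and closing via Lemma \ref{Hcontinuity} --- is the same as the paper's, and your identification of the second moment with $\mathcal{W}_2^2(u_{\tau}^k, M\delta_0)$ is correct. However, the constant you produce does not match the statement. Lemma \ref{Hcontinuity} bounds the \emph{sum} $\sum_j \mathcal{W}_2^2(u_{\tau}^j,u_{\tau}^{j-1})/\tau + \sum_j d_{H^1}^2(v_{\tau}^j,v_{\tau}^{j-1})/\tau$ by $2(T+1)\mathcal{L}(u_{\tau}^0,v_{\tau}^0,w_{\tau}^0)$. By bounding each summand separately by the full $2(T+1)\mathcal{L}$, you deduce the two individual estimates with $4T(T+1)\mathcal{L}$ each, and upon adding them you obtain $8T(T+1)\mathcal{L}$, not the asserted $4T(T+1)\mathcal{L}$. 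So "adding them gives the claim" is false as written.

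The paper avoids this by introducing the product metric $d_{WH^1}:=\sqrt{\mathcal{W}_2^2 + d_{H^1}^2}$ on pairs $z=(u,v)$ with reference point $z_*=(\delta_0,0)$, running the telescoping $+$ Cauchy--Schwarz argument \emph{once} in this combined space, and then applying Lemma \ref{Hcontinuity} a single time. Since $d_{WH^1}^2(z_{\tau}^\ell,z_*)$ is exactly the left-hand side of the lemma, no factor is lost. If you want to keep the two contributions notationally separate, the minimal fix is to \emph{add} the two Cauchy--Schwarz inequalities before invoking Lemma \ref{Hcontinuity}, so that the lemma is applied once to the combined sum rather than twice to its parts. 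Your parenthetical remark about the index $k$ only satisfying $k\tau \le T + \tau_*$ is a fair observation and applies equally to the paper's own proof; it affects the constant only at the level of replacing $T$ by $T+\tau_*$.
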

\begin{proof}
Let $z_{\tau}^k:= (u_{\tau}^k, v_{\tau}^k)$, 
$z_*:=(\delta_0, 0)$, 
and define $d_{WH^1}:=\sqrt{\mathcal{W}_2^2+ d_{H^1}^2}$. 
Using the triangle inequality and the Cauchy-Schwarz inequality, 
for any $\ell \in \{1, 2, \cdots, N\}$, we have
\begin{equation*} 
\begin{split} 
d_{WH^1}^2(z_{\tau}^{\ell}, z_*)
&\leq \left (
\sum_{k=1}^{\ell} d_{WH^1}(z_{\tau}^k, z_{\tau}^{k-1}) + d_{WH^1}(z_{\tau}^0, z_*)
\right )^2 \\
& \leq 
\left ( 
\left (\sum_{k=1}^{\ell}\tau \right )^{\frac{1}{2}}
\left (\sum_{k=1}^{\ell} 
\frac{d_{WH^1}^2(z_{\tau}^k, z_{\tau}^{k-1})}{\tau} \right )^{\frac{1}{2}}
+ d_{WH^1}(z_{\tau}^0, z_*)
\right )^2 \\
& \leq 
4T \sum_{k=1}^{N} 
\frac{d_{WH^1}^2(z_{\tau}^k, z_{\tau}^{k-1})}{2\tau} 
+ 2d_{WH^1}^2(z_{\tau}^0, z_*). 
\end{split} 
\end{equation*}
Therefore, the claim substantiated by Lemma \ref{Hcontinuity}.
\end{proof}
The following lemma follows immediately from \eqref{coercivity of L} and Lemma \ref{2moment}. 
\begin{lem}\label{UB}
Assume \eqref{lbc}. 
Then, 
for any $T>0$ 
there exists a constant $C_T$ such that 
for any $t \in [0,T]$, 
\begin{multline*}
\| \overline{u}_{\tau}(t)\|_{L^m}^m +\| \Delta \overline{v}_{\tau}(t) \|_{L^2}^2 \\
+ \frac{\kappa_2(\gamma_1+\varepsilon_1)  + \kappa_1(\varepsilon_2+ \gamma_2) }{2}
\|\nabla \overline{v}_{\tau}(t)\|_{L^2}^2
+ \frac{\gamma_1 \gamma_2+\varepsilon_1\varepsilon_2}{2}
\|\overline{v}_{\tau}(t)\|_{L^2}^2 
<C_T. 
\end{multline*}
\end{lem}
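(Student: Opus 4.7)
The plan is to observe that Lemma \ref{UB} is essentially a bookkeeping consequence of results already established: the coercivity \eqref{coercivity of L} (available under \eqref{lbc} by Propositions \ref{lower bounds in subcritical}--\ref{lower bounds in critical}), the monotonicity of $\mathcal{L}$ along the scheme coming from Proposition \ref{ED}, and the complementary bounds of Lemma \ref{2moment}. The strategy is to control $\mathcal{L}(u_\tau^k, v_\tau^k, w_\tau^k)$ uniformly from above by $\mathcal{L}(u_\tau^0, v_\tau^0, w_\tau^0)$ and then read off the desired norms of $\overline{u}_\tau$ and $\overline{v}_\tau$ from below via coercivity, patching in Lemma \ref{2moment} to avoid any degeneracy when some $\gamma_i$ vanishes.

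First, I would fix $t \in ((k-1)\tau, k\tau]$ so that, by Definition \ref{piecewise constant interpolation}, $(\overline{u}_\tau(t), \overline{v}_\tau(t), \overline{w}_\tau(t)) = (u_\tau^k, v_\tau^k, w_\tau^k)$, and apply \eqref{coercivity of L} at this triple. Dropping the non-negative last term in \eqref{coercivity of L} yields
\begin{equation*}
\alpha \|\overline{u}_\tau(t)\|_{L^m}^m + \beta \|\Delta \overline{v}_\tau(t)\|_{L^2}^2 + \tfrac{\gamma_1 \kappa_2 + \gamma_2 \kappa_1}{2} \|\nabla \overline{v}_\tau(t)\|_{L^2}^2 + \tfrac{\gamma_1 \gamma_2}{2}\|\overline{v}_\tau(t)\|_{L^2}^2 \leq \mathcal{L}(u_\tau^k, v_\tau^k, w_\tau^k).
\end{equation*}
Next, Proposition \ref{ED} (applied with $N = k$ and discarding the non-negative left-hand side) gives $\mathcal{L}(u_\tau^k, v_\tau^k, w_\tau^k) \leq \mathcal{L}(u_\tau^0, v_\tau^0, w_\tau^0)$, which by \eqref{initial energy} is uniformly bounded by $\mathcal{L}(u_0,v_0,w_0)+1$ for all $\tau \in (0,\tau_*)$ and all $k$. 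This immediately furnishes uniform bounds on $\|\overline{u}_\tau(t)\|_{L^m}^m$ and $\|\Delta \overline{v}_\tau(t)\|_{L^2}^2$, as well as on the $\gamma_i$-weighted portions of $\|\nabla \overline{v}_\tau(t)\|_{L^2}^2$ and $\|\overline{v}_\tau(t)\|_{L^2}^2$.

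To upgrade the coefficients to the $(\gamma_i+\varepsilon_i)$-type combinations in the statement, I would add the bound supplied by Lemma \ref{2moment}, which controls
\begin{equation*}
(\varepsilon_1 \kappa_2 + \varepsilon_2 \kappa_1)\|\nabla \overline{v}_\tau(t)\|_{L^2}^2 + (\gamma_1 \varepsilon_2 + \gamma_2 \varepsilon_1 + \varepsilon_1 \varepsilon_2)\|\overline{v}_\tau(t)\|_{L^2}^2
\end{equation*}
uniformly on $[0,T]$. Combining a half of this estimate with the coercivity bound from the previous step produces the coefficient $\tfrac{\kappa_2(\gamma_1 + \varepsilon_1) + \kappa_1(\varepsilon_2 + \gamma_2)}{2}$ exactly in front of $\|\nabla \overline{v}_\tau\|_{L^2}^2$, and an upper bound on $\|\overline{v}_\tau\|_{L^2}^2$ with coefficient $\tfrac{(\gamma_1+\varepsilon_1)(\gamma_2+\varepsilon_2)}{2}$; since the latter dominates $\tfrac{\gamma_1\gamma_2 + \varepsilon_1 \varepsilon_2}{2}$, the claim with the stated coefficients follows. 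One then sets $C_T$ to be the resulting linear combination of $\mathcal{L}(u_0,v_0,w_0)$, $\int_{\mathbb{R}^d} |x|^2 u_0\,dx$, $\|v_0\|_{H^1}^2$ and $T$.

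There is no real obstacle: the argument is entirely structural. The only point worth noting is that the coefficient $\gamma_1 \gamma_2 + \varepsilon_1 \varepsilon_2$ remains strictly positive thanks to the standing hypothesis $\kappa_1 \kappa_2 \varepsilon_1 \varepsilon_2 \neq 0$, so the bound is genuinely informative on $\|\overline{v}_\tau\|_{L^2}^2$ even when $\gamma_1 = \gamma_2 = 0$; this is precisely the role played by Lemma \ref{2moment}, which recovers second-moment and $H^1$-type controls purely from the $\varepsilon_i$ contributions and so bypasses any dependence on the possibly vanishing $\gamma_i$.
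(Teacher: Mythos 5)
Your proposal is correct and follows exactly the route the paper intends: the paper states that Lemma \ref{UB} ``follows immediately from \eqref{coercivity of L} and Lemma \ref{2moment},'' and you have simply spelled out that combination — coercivity plus the monotonicity $\mathcal{L}(u_\tau^k,v_\tau^k,w_\tau^k)\leq\mathcal{L}(u_\tau^0,v_\tau^0,w_\tau^0)$ from Proposition \ref{ED} and \eqref{initial energy}, then adding in Lemma \ref{2moment} to supply the $\varepsilon_i$-weighted pieces of the $\|\nabla\overline{v}_\tau\|_{L^2}^2$ and $\|\overline{v}_\tau\|_{L^2}^2$ coefficients. The algebraic observation that $(\gamma_1+\varepsilon_1)(\gamma_2+\varepsilon_2)\geq\gamma_1\gamma_2+\varepsilon_1\varepsilon_2$ correctly closes the small gap between what the addition of the two estimates produces and the coefficient actually stated in the lemma.
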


Consequently, it is evident that the first inequality of 
Proposition \ref{uniform bounds} holds, 
except for the estimate of $\|\overline{w}_{\tau}(t)\|_{H^1}$. 
To demonstrate the remaining inequalities, we begin with the proof of the following lemma.

\begin{lem} \label{uL2p}
Assume \eqref{uniform L^2} and \eqref{lbc}. 
Then, for any $T>0$, it holds that
\[
\sup_{\tau \in (0, \tau_*)}\int_0^T
\|\overline{u}_{\tau}(t)\|_{L^2}^2\,dt <\infty. 
\]
\end{lem}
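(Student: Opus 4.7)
The plan is to split the proof according to the two alternatives in condition \eqref{uniform L^2}. In the easy case $m \geq 2$, Lemma \ref{UB} already provides a uniform bound on $\overline{u}_\tau$ in $L^\infty(0,T; L^1 \cap L^m)$. Since $1 \leq 2 \leq m$, interpolation of $L^2$ between $L^1$ and $L^m$ gives a uniform $L^\infty(0,T; L^2)$ bound, and integrating in time yields $\int_0^T \|\overline{u}_\tau\|_{L^2}^2\,dt \leq T\,\|\overline{u}_\tau\|_{L^\infty(0,T;L^2)}^2 < \infty$.

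For the substantive case $1 < m < 2$ with $\frac{4m+2(m-1)d}{d(2-m)} \geq 2$ (equivalently $m \geq \frac{3d}{2(d+1)}$), I would apply the Gagliardo-Nirenberg inequality to $\overline{u}_\tau^{m-1/2}$. Taking the target exponent $q = \frac{4}{2m-1}$, so that $\|u^{m-1/2}\|_{L^q}^q = \|u\|_{L^2}^2$, GN yields
$$\|u\|_{L^2}^2 \leq C\,\|\nabla u^{m-1/2}\|_{L^2}^{4\alpha/(2m-1)}\,\|u\|_{L^m}^{2(1-\alpha)}.$$
A direct computation gives $\frac{4\alpha}{2m-1} = \frac{2(2-m)d}{d(m-1)+2m}$, and this exponent is at most $2$ precisely when condition \eqref{uniform L^2} holds (with equality exactly at its boundary). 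Combining with the uniform $L^\infty(L^m)$ bound from Lemma \ref{UB} and Hölder's inequality in time, the task reduces to showing $\int_0^T \|\nabla \overline{u}_\tau^{m-1/2}\|_{L^2}^2\,dt$ is bounded uniformly in $\tau$.

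For that reduction, I would use the identity $|\nabla u^m|^2/u = c_m\,|\nabla u^{m-1/2}|^2$ with $c_m = (2m/(2m-1))^2$, together with $|A|^2 \leq 2|A-B|^2 + 2|B|^2$, to obtain
$$\int |\nabla u^{m-1/2}|^2\,dx \leq C\!\left[\int \frac{|\nabla u^m - u\nabla v|^2}{u}\,dx + \int u|\nabla v|^2\,dx\right].$$
The first term, integrated in $t$, is controlled by the slope estimate (Lemma \ref{Lem slope estimate}) combined with Lemma \ref{Hcontinuity}: $\sum_k \tau\,(\text{slope}_k)^2 \leq \sum_k \mathcal{W}_2^2(u_\tau^k,u_\tau^{k-1})/\tau \leq C$. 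For the drift term, Hölder with $\nabla \overline{v}_\tau \in L^\infty(0,T; L^{2d/(d-2)})$ (coming from $\overline{v}_\tau \in L^\infty(W^{2,2})$ and Sobolev embedding, as furnished by Lemma \ref{UB}) reduces the matter to estimating a Lebesgue norm of $u$, which I would re-express via a further Gagliardo-Nirenberg interpolation of $u^{m-1/2}$, then absorb the resulting sub-quadratic $\|\nabla u^{m-1/2}\|_{L^2}$ factor into the left-hand side through Young's inequality. The main obstacle is exactly this absorption step: matching the Hölder-GN-Young exponents so that the absorbed tail is strictly sub-quadratic is what forces condition \eqref{uniform L^2}, and in the borderline regime $m < d/4$ (where the pure Sobolev embedding from $\nabla u^{m-1/2} \in L^2$ to $u \in L^{d/2}$ fails) one must replace the two-space GN by a three-space Gagliardo-Nirenberg that also uses the uniform $L^1$ (mass) bound, in order to keep all constants independent of $\tau$.
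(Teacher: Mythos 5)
Your treatment of the case $m\geq 2$ is fine (interpolation of $L^2$ between $L^1$ and $L^m$ using the uniform $L^\infty(0,T;L^1\cap L^m)$ bound), and the identity $|\nabla u^m|^2/u=c_m|\nabla u^{m-1/2}|^2$ together with the $|A|^2\leq 2|A-B|^2+2|B|^2$ decomposition is a natural way to try to convert the slope estimate into an $L^2(0,T;\dot H^1)$ bound on $u^{m-1/2}$. However, the drift term $\int u|\nabla v|^2\,dx$ is a genuine dead end. Since $\overline{v}_\tau$ is only controlled \emph{pointwise in time} in $W^{2,2}$, Sobolev gives $\nabla\overline{v}_\tau\in L^2\cap L^{2d/(d-2)}$ and nothing better; any H\"older split of $\int u|\nabla v|^2$ therefore forces $\|u\|_{L^p}$ with $p\geq d/2$. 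The highest Lebesgue exponent of $u$ reachable from the available data (mass, $L^m$, and $\nabla u^{m-1/2}\in L^2$ via Sobolev/Gagliardo--Nirenberg) is $(2m-1)d/(d-2)$, and one checks that $(2m-1)d/(d-2)\geq d/2$ iff $m\geq d/4$. In the critical case $m=2-4/d$, $d\geq 6$ we have $m<d/4$ strictly (indeed $(2m-1)d/(d-2)\geq d/2\iff (d-4)^2\leq0$), and even in the subcritical range $2-4/d<m<2$ the condition fails whenever $d\geq 8$. Your proposed fix — a ``three-space Gagliardo--Nirenberg also using the uniform $L^1$ bound'' — cannot repair this: adding $L^1$ only helps interpolate \emph{downward}, while $L^{d/2}$ lies strictly \emph{above} the reachable range $[1,(2m-1)d/(d-2)]$. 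So the absorption step you flag as the obstacle is in fact impassable in the regime that matters.

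The paper circumvents this by never attempting to control $\int u|\nabla v|^2\,dx$. Instead it uses the weighted duality consequence of the slope estimate, namely inequality \eqref{a priori}, to bound $\nabla(u_\tau^k)^m$ in $L^{d/(d-1)}$ with the factor $\|u_\tau^k\|_{L^{d/(d-2)}}^{1/2}$; the drift is estimated as $\|u_\tau^k\nabla v_\tau^k\|_{L^{d/(d-1)}}\leq\|u_\tau^k\|_{L^2}\|\nabla v_\tau^k\|_{L^{2d/(d-2)}}$, which only calls for $\|u\|_{L^2}$, and $\|u\|_{L^{d/(d-2)}}$ is interpolated between $L^m$ and $L^2$. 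Combining with Sobolev $\|u\|_{L^{md/(d-2)}}^m\lesssim\|\nabla u^m\|_{L^{d/(d-1)}}$ and interpolating again produces a closed inequality of the form
\begin{equation*}
C_3\|u_\tau^k\|_{L^2}^{\frac{4m}{d(2-m)}}
\leq C_4\frac{\mathcal{W}_2(u_\tau^k,u_\tau^{k-1})}{\tau}\|u_\tau^k\|_{L^2}^{\frac{4m-2(m-1)d}{2d(2-m)}}+C_5\|u_\tau^k\|_{L^2},
\end{equation*}
in which \emph{all} appearances of $\|u\|_{L^2}$ on the right carry exponents strictly below the one on the left, so Young's inequality lets everything be absorbed and the sum $\sum_k\tau\|u_\tau^k\|_{L^2}^{(4m+2(m-1)d)/(d(2-m))}$ is bounded by $\sum_k\mathcal{W}_2^2/\tau$; hypothesis \eqref{uniform L^2} is precisely the condition that this exponent is $\geq 2$. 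If you want to pursue your $u^{m-1/2}$-based route, you would need to replace $\int u|\nabla v|^2$ by something estimated only through $\|u\|_{L^2}$ and $\|u\nabla v\|_{L^{d/(d-1)}}$-type quantities, which in effect reproduces the paper's argument.
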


\begin{cor} \label{uL2}
Assume that $m=2-4/d$
and $M<M_*$. 
If $d \geq 6$. 
then, for any $T>0$ it holds that 
\[
\sup_{\tau \in (0, \tau_*)}\int_0^T
\|\overline{u}_{\tau}(t)\|_{L^2}^2\,dt <\infty. 
\]
\end{cor}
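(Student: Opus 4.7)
The plan is to derive Corollary \ref{uL2} as an immediate consequence of Lemma \ref{uL2p}. Since Lemma \ref{uL2p} already delivers the desired bound $\sup_{\tau \in (0,\tau_*)}\int_0^T \|\overline{u}_\tau(t)\|_{L^2}^2\,dt<\infty$ under assumptions \eqref{uniform L^2} and \eqref{lbc}, all I need to do is verify that the hypotheses of the corollary, namely $m=2-4/d$, $M<M_*$, and $d\geq 6$, imply both of these conditions.

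First I would check \eqref{lbc}: by assumption we are exactly in its second branch ($m=2-4/d$ and $M<M_*$), so nothing needs to be done there. The real content of the argument is to verify \eqref{uniform L^2}. Since $d\geq 6$ gives $m=2-4/d\in(1,2)$, we are placed in the first branch of \eqref{uniform L^2}, and what must be shown is the numerical inequality
\begin{equation*}
\frac{4m+2(m-1)d}{d(2-m)}\geq 2.
\end{equation*}

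Substituting $m=2-4/d$, which gives $m-1=1-4/d$ and $2-m=4/d$, I would simplify the numerator as $4m+2(m-1)d=(8-16/d)+(2d-8)=2d-16/d$, and the denominator as $d(2-m)=4$. Hence the left-hand side equals $(d^2-8)/(2d)$, and the required inequality becomes $d^2-4d-8\geq 0$. For $d\geq 6$ this is clearly satisfied (indeed, at $d=6$ the expression equals $36-24-8=4>0$, and the quadratic is increasing for $d\geq 2$). Therefore \eqref{uniform L^2} holds, the hypotheses of Lemma \ref{uL2p} are met, and the conclusion follows.

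There is no genuine obstacle here: the corollary is purely a compatibility check between the numerical exponents appearing in \eqref{uniform L^2} and the critical scaling $m=2-4/d$. The only mild subtlety is that the threshold dimension $d\geq 6$ in the statement is precisely the smallest integer at which the quadratic $d^2-4d-8$ becomes non-negative (the exact real threshold being $2+2\sqrt{3}\approx 5.46$), which explains why the case $d=5$ is excluded and is consistent with the discussion following Theorem \ref{critical} in the introduction.
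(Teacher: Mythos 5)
Your argument is correct and is exactly the reasoning the paper leaves implicit: the corollary follows from Lemma \ref{uL2p} once one checks that $m=2-4/d$ with $d\geq 6$ lands in the first branch of \eqref{uniform L^2}, and your computation reduces that check to $d^2-4d-8\geq 0$, which indeed holds for $d\geq 6$. The paper itself makes this observation (without the arithmetic) in the remark just before Theorem \ref{critical}, so you are matching the intended route, not inventing a new one.
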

\begin{proof}[Proof of Lemma \ref{uL2p}]
It is clear from Lemma \ref{2moment} when $m \geq 2$, 
so consider $1 < m<2$. 
Letting $p=\frac{d}{d-2}$ in \eqref{a priori}, we obtain 
\begin{equation}\label{L2estimate}
\|\nabla (u_{\tau}^k)^m\|_{L^{\frac{d}{d-1}}} 
\leq \frac{\mathcal{W}_2(u_{\tau}^k, u_{\tau}^{k-1})}{\tau}
\|u_{\tau}^k\|_{L^{\frac{d}{d-2}}}^{\frac{1}{2}} 
+ \|u_{\tau}^k\nabla v_{\tau}^k\|_{L^{\frac{d}{d-1}}}. 
\end{equation}
First, we show that the left-hand side can be lower bounded 
by a constant multiple of 
$\|u_{\tau}^k\|_{L^2}^{\frac{4m}{d(2-m)}}$. 
By the Sobolev inequality, we have 
\begin{equation*} 
\begin{split} 
\|u_{\tau}^k\|_{L^{\frac{md}{d-2}}}^m
= \|(u_{\tau}^k)^m\|_{L^{\frac{d}{d-2}}}
\leq C_1
\|\nabla (u_{\tau}^k)^{m}\|_{L^{\frac{d}{d-1}}}. 
\end{split}
\end{equation*}
Since $md/(d-2)\geq 2$, 
by the interpolation inequality,  we obtain 
\begin{equation*} 
\begin{split} 
\|u_{\tau}^k\|_{L^2} 
\leq \|u_{\tau}^k\|_{L^m}^{\frac{4-(2-m)d}{4}}
\|u_{\tau}^k\|_{L^{\frac{md}{d-2}}}^{\frac{(2-m)d}{4}}
\leq 
C_2
\|u_{\tau}^k\|_{L^{\frac{md}{d-2}}}^{\frac{(2-m)d}{4}}, 
\end{split}
\end{equation*}
where $C_2$ is a constant determined by Lemma \ref{UB}. 
Consequently, we obtain 
\begin{equation*} 
\begin{split} 
C_3 \|u_{\tau}^k\|_{L^2}^{\frac{4m}{d(2-m)}} 
\leq \|\nabla (u_{\tau}^k)^m\|_{L^{\frac{d}{d-1}}}. 
\end{split}
\end{equation*}
Next, consider estimating the norm appearing on the right-hand side of equation \eqref{L2estimate} from above using $\|u_{\tau}^k\|_{L^2}$. 
By the interpolation inequality and 
Lemma \ref{UB}, there exists a constant $C_4$ such that 
\begin{equation*} 
\begin{split} 
\|u_{\tau}^k\|_{\frac{d}{d-2}}
\leq \|u_{\tau}^k\|_{L^m}^{\frac{m(d-4)}{d(2-m)}} 
\|u_{\tau}^k\|_{L^2}^{\frac{4m-2(m-1)d}{d(2-m)}} 
\leq C_4\|u_{\tau}^k\|_{L^2}^{\frac{4m-2(m-1)d}{d(2-m)}}. 
\end{split}
\end{equation*}
By H\"older's inequality and 
the Sobolev inequality 
with the Sobolev constant $C_{2,2}$
we have  
\begin{equation} \label{ugradv}
\begin{split} 
\|u_{\tau}^k\nabla v_{\tau}^k\|_{L^{\frac{d}{d-1}}} 
\leq \|u_{\tau}^k\|_{L^2}\|\nabla v_{\tau}^k\|_{L^{\frac{2d}{d-2}}} 
\leq C_{2,2}\|u_{\tau}^k\|_{L^2}\|\Delta v_{\tau}^k\|_{L^2} 
\leq C_5\|u_{\tau}^k\|_{L^2}, 
\end{split}
\end{equation}
where we have used Lemma \ref{UB} to determine the positive constant $C_5$. 
Hence we have 
\begin{equation} \label{gradum}
\begin{split} 
C_3 \|u_{\tau}^k\|_{L^2}^{\frac{4m}{d(2-m)}} 
\leq \|\nabla (u_{\tau}^k)^m\|_{L^{\frac{d}{d-1}}} 
\leq 
C_4\frac{\mathcal{W}_2(u_{\tau}^k, u_{\tau}^{k-1})}{\tau}
\|u_{\tau}^k\|_{L^2}^{\frac{4m-2(m-1)d}{2d(2-m)}}
+ C_5\|u_{\tau}^k\|_{L^2}, 
\end{split}
\end{equation}
Since 
\begin{equation} \label{exponentsrelation}
\begin{split} 
m \geq 2-\frac{4}{d} \geq \frac{2d}{d+4}
\iff 
\frac{4m}{d(2-m)} >1, 
\quad \frac{4m-2(m-1)d}{2d(2-m)}<1 
\iff m<\frac{d}{2}, 
\end{split}
\end{equation}
we obtain 
\begin{equation*} 
\begin{split} 
C_3 \|u_{\tau}^k\|_{L^2}^{\frac{4m}{d(2-m)}-\frac{4m-2(m-1)d}{2d(2-m)}} 
\leq 
C_4\frac{\mathcal{W}_2(u_{\tau}^k, u_{\tau}^{k-1})}{\tau}
+ C_5\|u_{\tau}^k\|_{L^2}^{1-\frac{4m-2(m-1)d}{2d(2-m)}}. 
\end{split}
\end{equation*}
After squaring both sides and rearranging the exponents, we have 
\begin{equation*} 
\begin{split} 
C_3^2 \|u_{\tau}^k\|_{L^2}^{\frac{4m+2(m-1)d}{d(2-m)}}
\leq 2C_4^2\frac{\mathcal{W}_2^2(u_{\tau}^k, u_{\tau}^{k-1})}{\tau^2}
+2 C_5^2 \|u_{\tau}^k\|_{L^2}^{\frac{2(d-2m)}{d(2-m)}}. 
\end{split}
\end{equation*}
For $p=\frac{2m+(m-1)d}{d-2m}$ and its conjugate exponent 
$p'=\frac{2m+(m-1)}{m(d+4) -2d}$, 
by Young's inequality we have 
\begin{equation*} 
\begin{split} 
2 C_5^2 \|u_{\tau}^k\|_{L^2}^{\frac{2(d-2m)}{d(2-m)}}
& = 2C_5^2 \left (\frac{C_3^2}{2} \right )^{-\frac{1}{p}} 
\left (\frac{C_3^2}{2} \right )^{\frac{1}{p}}
 \|u_{\tau}^k\|_{L^2}^{\frac{2(d-2m)}{d(2-m)}}\\
& \leq \frac{1}{p'}\left ( 2C_5^2 \right )^{p'}
\left (\frac{C_3^2}{2} \right )^{-\frac{p'}{p}}
+ \frac{C_3^2}{2} 
\|u_{\tau}^k\|_{L^2}^{p\frac{2(d-2m)}{d(2-m)}} \\
&= C_6 + \frac{C_3^2}{2}
\|u_{\tau}^k\|_{L^2}^{\frac{4m-2(m-1)d}{2d(2-m)}}. 
\end{split}
\end{equation*}
Thus, we obtain 
\begin{equation} \label{uL2PW}
\begin{split} 
\frac{C_3^2}{2} \tau \|u_{\tau}^k\|_{L^2}^{\frac{4m+2(m-1)d}{d(2-m)}}
\leq 2C_4^2\frac{\mathcal{W}_2^2(u_{\tau}^k, u_{\tau}^{k-1})}{\tau}
+C_6\tau. 
\end{split}
\end{equation}
\[
\lceil T/\tau \rceil -1 <\frac{T}{\tau} \leq \lceil T/\tau \rceil 
\]
Using the celling function $\lceil \, \cdot \, \rceil$ and  
summing over $k=1$ to $\lceil T/\tau \rceil$, we have 
\begin{equation*} 
\begin{split} 
\frac{C_3^2}{2}
\int_0^T
\|\overline{u}_{\tau}(t)\|_{L^2}^{\frac{4m+2(m-1)d}{d(2-m)}}\,dt
& \leq 
\frac{C_3^2}{2}
\sum_{k=1}^{\lceil T/\tau \rceil}
\tau \|u_{\tau}^k\|_{L^2}^{\frac{4m+2(m-1)d}{d(2-m)}} \\
& <
2C_4^2\sum_{k=1}^{\lceil T/\tau \rceil}
\frac{\mathcal{W}_2^2(u_{\tau}^k, u_{\tau}^{k-1})}{\tau}
+ C_6(T+\tau_*). 
\end{split}
\end{equation*}
The right-hand side is bounded independently of $\tau$ 
as per Lemma \ref{Hcontinuity}. 
\end{proof}

\begin{cor}\label{umW1p}
Assume \eqref{uniform L^2} and \eqref{lbc}. 
Then, for any $T>0$ 
the following 
inequality holds. 
\begin{equation*} 
\begin{split} 
\sup_{\tau \in (0, \tau_*)}\int_0^T
\|\nabla \overline{u}_{\tau}^{m}(t)\|_{L^{\frac{d}{d-1}}}^{p_*}
\,dt
<\infty, 
\end{split}
\end{equation*}
where $p_*$ is specified in \eqref{p_*}
\end{cor}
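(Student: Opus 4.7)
The plan is to raise the pointwise-in-time estimate \eqref{gradum} to the $p_*$-th power, multiply by $\tau$, and sum over $k = 1, \ldots, \lceil T/\tau \rceil$, so that the sum equals $\int_0^T \|\nabla \overline{u}_\tau^m(t)\|_{L^{d/(d-1)}}^{p_*}\,dt$. From the inequality
\[
\|\nabla (u_\tau^k)^m\|_{L^{d/(d-1)}} \leq C_4 \frac{\mathcal{W}_2(u_\tau^k, u_\tau^{k-1})}{\tau}\|u_\tau^k\|_{L^2}^{\beta} + C_5 \|u_\tau^k\|_{L^2},
\quad \beta := \frac{4m-2(m-1)d}{2d(2-m)},
\]
together with $(a+b)^{p_*} \leq 2^{p_*-1}(a^{p_*}+b^{p_*})$, the bound splits into a cross term $\sum_k \tau (\mathcal{W}_2/\tau)^{p_*} \|u_\tau^k\|_{L^2}^{p_*\beta}$ and a plain term $\int_0^T \|\overline{u}_\tau\|_{L^2}^{p_*}\,dt$.

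The crucial algebraic observation is the identity $p_*/2 + \beta = 1$, which makes $2/p_*$ and $1/\beta$ conjugate Young exponents. Applying Young's inequality pointwise in $k$ to the cross term gives
\[
(\mathcal{W}_2/\tau)^{p_*} \|u_\tau^k\|_{L^2}^{p_*\beta}
\leq \tfrac{p_*}{2}(\mathcal{W}_2/\tau)^2 + \beta \|u_\tau^k\|_{L^2}^{p_*},
\]
so that after weighting by $\tau$ and summing, the first resulting contribution is controlled by $\sum_k \mathcal{W}_2^2/\tau$ (bounded uniformly by Lemma \ref{Hcontinuity}), and the second contribution is again $\int_0^T \|\overline{u}_\tau\|_{L^2}^{p_*}\,dt$. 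Thus the whole proof reduces to bounding $\int_0^T \|\overline{u}_\tau\|_{L^2}^{p_*}\,dt$ uniformly in $\tau$.

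For this remaining integral I would invoke Hölder's inequality in time together with the bound \eqref{uL2PW} proved inside Lemma \ref{uL2p}: summing \eqref{uL2PW} over $k$ yields $\sup_\tau \int_0^T \|\overline{u}_\tau\|_{L^2}^q\,dt < \infty$ with $q = \frac{4m+2(m-1)d}{d(2-m)}$, and the hypothesis $m \geq 2-4/d$ forces $p_* \leq q$, whence $\int_0^T \|\overline{u}_\tau\|_{L^2}^{p_*}\,dt \leq T^{1-p_*/q}\bigl(\int_0^T\|\overline{u}_\tau\|_{L^2}^q\,dt\bigr)^{p_*/q}$ is finite uniformly in $\tau$.

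The main obstacle is the regime $d/(d-2) < m < 2$, where $p_* > 2$ and the Young-inequality exponents $2/p_*$ and $1/\beta$ drop below one, so the splitting above becomes invalid (this is the situation covered by the remark that $p_*\geq 2$ for $d\geq 7$). In that range, however, $\|u_\tau^k\|_{L^{d/(d-2)}}$ is instead uniformly bounded by interpolation between $L^1$ and $L^m$ (the embedding $L^1\cap L^m \hookrightarrow L^{d/(d-2)}$ is valid because $d/(d-2)\leq m$), so \eqref{L2estimate} simplifies to $\|\nabla (u_\tau^k)^m\|_{L^{d/(d-1)}} \leq C(\mathcal{W}_2/\tau + \|u_\tau^k\|_{L^2})$ with a uniform constant; raising this to power $p_*$ and integrating then reduces the task to controlling $\int_0^T (\mathcal{W}_2/\tau)^{p_*}\,dt$, which is handled by substituting the sharper bound $\|u_\tau^k\|_{L^2}^{q}\leq C((\mathcal{W}_2/\tau)^2+1)$ from \eqref{uL2PW} and using $2p_*/q\leq 2$ to close everything through $\sum_k \mathcal{W}_2^2/\tau$ once more.
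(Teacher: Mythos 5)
Your main line of argument is a valid reordering of the paper's: you raise \eqref{gradum} to the $p_*$-th power first and then apply Young with conjugate pair $(2/p_*,\,2/p)$ to the cross term, whereas the paper applies the same Young pair $(2/(2-p),\,2/p)$ directly to \eqref{gradum} and raises to the $p_*$-th power afterward. Both use the identity $p + p_* = 2$, invoke $p_* \leq q = \frac{4m+2(m-1)d}{d(2-m)}$ to handle the plain term via $\int_0^T\|\overline{u}_\tau\|_{L^2}^q\,dt$, and succeed when $p_* < 2$, i.e.\ $m < d/(d-2)$. You also correctly identify that the Young step degenerates when $p_* > 2$.

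Your workaround for $p_* > 2$ does not close, however. After replacing the interpolation in \eqref{L2estimate} by $\|u_\tau^k\|_{L^{d/(d-2)}} \leq C$, you still must bound $\sum_k \tau\bigl(\mathcal{W}_2(u_\tau^k,u_\tau^{k-1})/\tau\bigr)^{p_*}$ with $p_* > 2$, and this is \emph{not} controlled by $\sum_k \mathcal{W}_2^2(u_\tau^k,u_\tau^{k-1})/\tau$: a single step with $\mathcal{W}_2^2\sim\tau$ contributes $\tau^{1-p_*/2}\to\infty$ to the former but $O(1)$ to the latter. The inequality $\|u_\tau^k\|_{L^2}^q\leq C\bigl((\mathcal{W}_2/\tau)^2+1\bigr)$ from \eqref{uL2PW} bounds $\|u\|_{L^2}$ in terms of $\mathcal{W}_2/\tau$, not the reverse, so invoking it and ``$2p_*/q\leq 2$'' produces nothing. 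You also omit the paper's separate treatment of $m\geq 2$, where $p_*=2$ and the uniform bound $\|u\|_{L^{d/(d-2)}}\leq C_7$ (from $L^1\cap L^m\hookrightarrow L^{d/(d-2)}$) is used. Finally, note that the paper's own written argument in the case $1<m<2$ implicitly assumes $p>0$ (both the interpolation behind \eqref{gradum} and the Young exponent $2/p$ require it), so it too breaks down when $m\geq d/(d-2)$, i.e.\ $p_*>2$; the safe conclusion there is the $L^2(0,T)$ bound on $\|\nabla\overline{u}_\tau^m\|_{L^{d/(d-1)}}$, which is all that is used downstream since only $q_*=\min\{2,p_*\}$ appears in Proposition~\ref{L^1}.
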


\begin{proof} 
The proof is divided into two cases: 
$1<m<2$ and $m \geq 2$.

\vspace{2mm}

\noindent
\textbf{(i) the case \bm{$1<m<2$}}

\vspace{2mm}

Let $p= \frac{4m-2(m-1)d}{d(2-m)}$. 
According to \eqref{exponentsrelation}, 
we have $p<2$. From 
\eqref{gradum}, we derive
\begin{equation*} 
\begin{split} 
\| \nabla (u_{\tau}^k)^m \|_{L^\frac{d}{d-1}}
& \leq C_4
\frac{\mathcal{W}_2(u_{\tau}^k, u_{\tau}^{k-1})}{\tau}
\|u_{\tau}^k\|_{L^2}^{\frac{4m-2(m-1)d}{2d(2-m)}} 
+ C_5 \|u_{\tau}^k\|_{L^2} \\
& = C_4
\frac{\mathcal{W}_2(u_{\tau}^k, u_{\tau}^{k-1})}{\tau}
\|u_{\tau}^k\|_{L^2}^{\frac{p}{2}} 
 + C_5 \|u_{\tau}^k\|_{L^2} \\
 & \leq 
 \left (\frac{\mathcal{W}_2(u_{\tau}^k, u_{\tau}^{k-1})}{\tau} \right )^{\frac{2}{2-p}}
+
(C_4^{\frac{2}{p}}+C_5)\|u_{\tau}^k\|_{L^2}. 
\end{split}
\end{equation*}
Raising both sides to the power of $2-p=\frac{2(d-2m)}{d(2-m)}>1$, we get
\begin{equation*} 
\| \nabla (u_{\tau}^k)^m \|_{L^\frac{d}{d-1}}^{2-p}
 \leq 2^{1-p}\left [ 
 \left (\frac{\mathcal{W}_2(u_{\tau}^k, u_{\tau}^{k-1})}{\tau} \right )^{2}
 + (C_4^{\frac{2}{p}}+C_5)^{2-p}\|u_{\tau}^k\|_{L^2}^{2-p} \right ]. 
\end{equation*}
By considering 
$\frac{4m+2(m-1)d}{d(2-m)}
\geq 2-p$ and Young's inequality 
$\|u_{\tau}^k\|_{L^2}^{2-p} 
\leq \|u_{\tau}^k\|_{L^2}^{\frac{4m+2(m-1)d}{d(2-m)}}
+ 1$, 
\begin{multline*}
\int_0^T
\|\nabla (\overline{u}_{\tau}(t))^m \|_{L^\frac{d}{d-1}}^{\frac{2(d-2m)}{d(2-m)}}\,dt
\leq  
\sum_{k=1}^{\lceil T/\tau \rceil}
\tau \|\nabla (u_{\tau}^k)^m \|_{L^\frac{d}{d-1}}^{\frac{2(d-2m)}{d(2-m)}}
\\
 \leq  2^{1-p}
 \sum_{k=1}^{\lceil T/\tau \rceil}
 \frac{\mathcal{W}_2^2(u_{\tau}^k, u_{\tau}^{k-1})}{\tau}
 + 2^{1-p}(C_4^{\frac{2}{p}}+C_5)^{2-p}
 \sum_{k=1}^{\lceil T/\tau \rceil}\tau 
 \|u_{\tau}^k\|_{L^2}^{\frac{4m+2(m-1)d}{d(2-m)}} \\
 + 2^{1-p}(C_4^{\frac{2}{p}}+C_5)^{2-p}(T+\tau_*). 
\end{multline*}
According to Lemmas \ref{Hcontinuity} and \ref{uL2p}, 
the right-hand side is bounded independently of $\tau$. 

\vspace{2mm}

\noindent
\textbf{(ii) the case \bm{$m \geq 2$}}

\vspace{2mm}

The interpolation inequality yields that 
\begin{equation*} 
\|u\|_{L^{\frac{d}{d-2}}}^{\frac{1}{2}}
 \leq \|u\|_{L^1}^{\frac{m(d-2)-d}{2d(m-1)}} 
\|u\|_{L^m}^{\frac{2m}{2d(m-1)}} \leq C_7. 
\end{equation*}
Combining this with \eqref{L2estimate} and \eqref{ugradv}, 
we obtain 
\begin{equation}
\|\nabla (u_{\tau}^k)^m\|_{L^{\frac{d}{d-1}}} 
\leq C_7\frac{\mathcal{W}_2(u_{\tau}^k, u_{\tau}^{k-1})}{\tau}
+ C_5\|u_{\tau}^k\|_{L^2}. 
\end{equation}
Squaring both sides and multiplying by $\tau$, and then summing 
from $k=1$ to ${\lceil T/\tau \rceil}$ yields the result of the corollary. 
\end{proof}

\begin{lem}\label{UB4w}
Assume \eqref{uniform L^2} and \eqref{lbc}. 
Then, 
there exists a positive constant $C$ such that 
for any $\ell \in \mathbb{N}$, 
\begin{equation*} 
\begin{split} 
\frac{\varepsilon_2 \kappa_2}{2}\|\nabla w_{\tau}^{\ell}\|_{L^2}^2+ 
\frac{\varepsilon_2 \gamma_2}{2}\|w_{\tau}^{\ell}\|_{L^2}^2+ 
\frac{\varepsilon_2^2}{4\tau} \sum_{k=1}^{\ell}
\|w_{\tau}^{k}-w_{\tau}^{k-1}\|_{L^2}^2 
\leq 
C. 
\end{split}
\end{equation*}
\end{lem}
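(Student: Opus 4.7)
The plan is to exploit the minimizing property defining $w_{\tau}^k$ directly, rather than testing the Euler--Lagrange equation with a discrete time derivative, since the variational comparison yields the positive kinetic term $\frac{\varepsilon_2}{2\tau}\|w_{\tau}^k-w_{\tau}^{k-1}\|_{L^2}^2$ for free. First I would compare the value of the functional
\[
w\mapsto \mathcal{G}(w,u_{\tau}^{k-1})+\frac{\varepsilon_2}{2\tau}\|w-w_{\tau}^{k-1}\|_{L^2}^2
\]
at $w_{\tau}^k$ with its value at the competitor $w_{\tau}^{k-1}$. Since $w_{\tau}^k$ is the unique minimizer, this yields
\[
\mathcal{G}(w_{\tau}^k,u_{\tau}^{k-1})+\frac{\varepsilon_2}{2\tau}\|w_{\tau}^k-w_{\tau}^{k-1}\|_{L^2}^2 \le \mathcal{G}(w_{\tau}^{k-1},u_{\tau}^{k-1}),
\]
which after expanding $\mathcal{G}$ rearranges into
\[
\tfrac{\kappa_2}{2}\bigl(\|\nabla w_{\tau}^k\|_{L^2}^2-\|\nabla w_{\tau}^{k-1}\|_{L^2}^2\bigr)+\tfrac{\gamma_2}{2}\bigl(\|w_{\tau}^k\|_{L^2}^2-\|w_{\tau}^{k-1}\|_{L^2}^2\bigr)+\tfrac{\varepsilon_2}{2\tau}\|w_{\tau}^k-w_{\tau}^{k-1}\|_{L^2}^2 \le \int_{\mathbb{R}^d}(w_{\tau}^k-w_{\tau}^{k-1})u_{\tau}^{k-1}\,dx.
\]

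Next I would handle the coupling term on the right by Cauchy--Schwarz followed by a Young inequality tuned so that exactly half of the kinetic term survives:
\[
\int_{\mathbb{R}^d}(w_{\tau}^k-w_{\tau}^{k-1})u_{\tau}^{k-1}\,dx \le \frac{\varepsilon_2}{4\tau}\|w_{\tau}^k-w_{\tau}^{k-1}\|_{L^2}^2+\frac{\tau}{\varepsilon_2}\|u_{\tau}^{k-1}\|_{L^2}^2.
\]
Absorbing the first term into the left-hand side and multiplying through by $\varepsilon_2$ gives the one-step inequality
\[
\tfrac{\varepsilon_2\kappa_2}{2}\bigl(\|\nabla w_{\tau}^k\|_{L^2}^2-\|\nabla w_{\tau}^{k-1}\|_{L^2}^2\bigr)+\tfrac{\varepsilon_2\gamma_2}{2}\bigl(\|w_{\tau}^k\|_{L^2}^2-\|w_{\tau}^{k-1}\|_{L^2}^2\bigr)+\tfrac{\varepsilon_2^2}{4\tau}\|w_{\tau}^k-w_{\tau}^{k-1}\|_{L^2}^2 \le \tau\|u_{\tau}^{k-1}\|_{L^2}^2.
\]

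Summing this telescoping inequality from $k=1$ to $\ell$ collapses the $\nabla w$ and $w$ terms into boundary contributions at $k=\ell$ and $k=0$, and converts the right-hand side into the Riemann sum $\sum_{k=1}^{\ell}\tau\|u_{\tau}^{k-1}\|_{L^2}^2=\int_0^{\tau\ell}\|\underline{u}_{\tau}(t)\|_{L^2}^2\,dt$. Using the a priori assumption $\tau\ell\le T$ together with the $L^2_t L^2_x$ bound for $\underline{u}_{\tau}$ supplied by Lemma \ref{uL2p} (or Corollary \ref{uL2} in the critical case), this Riemann sum is uniformly bounded in $\tau$, and the initial contribution $\tfrac{\varepsilon_2\kappa_2}{2}\|\nabla w_0\|_{L^2}^2+\tfrac{\varepsilon_2\gamma_2}{2}\|w_0\|_{L^2}^2$ is finite because $w_0\in H^1$ (inherited from $w_{\tau}^0=w_0$ via \eqref{uniform L^2} and the hypotheses of Theorems \ref{subcritical} and \ref{critical}). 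Rearranging yields the desired bound.

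The only place where any real choice is involved is the factor $1/4$ in Young's inequality, which must be sharp enough to leave the kinetic term on the left of the summed inequality; otherwise the $\tfrac{\varepsilon_2^2}{4\tau}\sum\|w_{\tau}^k-w_{\tau}^{k-1}\|_{L^2}^2$ piece of the conclusion would not survive. Everything else is bookkeeping and invocation of the already-established $L^2(0,T;L^2)$ control on $\underline{u}_{\tau}$, so I expect no conceptual obstacle beyond keeping track of constants.
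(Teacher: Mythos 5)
Your argument follows exactly the paper's route: compare the minimizing value at $w_{\tau}^k$ against the competitor $w_{\tau}^{k-1}$, absorb half the kinetic term by Young's inequality with the specific coefficient $\varepsilon_2/(4\tau)$, telescope the quadratic part of $\mathcal{G}$, and invoke the $L^2(0,T;L^2)$ control on $u_\tau$.

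There is one small but genuine gap. You rewrite $\sum_{k=1}^{\ell}\tau\|u_{\tau}^{k-1}\|_{L^2}^2=\int_0^{\tau\ell}\|\underline{u}_{\tau}(t)\|_{L^2}^2\,dt$ and then claim this is controlled \lq\lq by Lemma \ref{uL2p}.\rq\rq\ But Lemma \ref{uL2p} bounds $\int_0^T\|\overline{u}_{\tau}(t)\|_{L^2}^2\,dt$, i.e.\ the sum $\sum_{k=1}^{N}\tau\|u_{\tau}^{k}\|_{L^2}^2$. Shifting the index produces the extra term $\tau\|u_{\tau}^0\|_{L^2}^2$ that is \emph{not} covered by that lemma, and it is not innocuous: when $1<m<2$ the datum $u_0$ need not lie in $L^2$, and $u_{\tau}^0=\rho_{\tau}*u_0$ has $L^2$ norm that blows up as $\tau\to 0$. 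The paper handles this with the explicit mollifier estimate \eqref{initialL2}, $\tau\|u_{\tau}^0\|_{L^2}^2\le\tau\|\rho_{\tau}\|_{L^2}^2\|u_0\|_{L^1}^2\le\sqrt{\tau_*}\|u_0\|_{L^1}^2$, which exploits the particular scaling $\rho_{\tau}(x)=\tau^{-1/2}\rho(x/\tau^{1/(2d)})$. Without some argument of this type, the one extra summand in your Riemann sum is uncontrolled.

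A smaller remark: you justify finiteness of the initial contribution by writing that $w_0\in H^1$ is \lq\lq inherited\ldots via \eqref{uniform L^2}.\rq\rq\ Condition \eqref{uniform L^2} is a constraint on the diffusion exponent $m$ and says nothing about $w_0$; the theorems as stated assume only $w_0\in L^2(\mathbb{R}^d)$. The paper's own proof also implicitly uses $\mathcal{G}(w_0)<\infty$ (hence $w_0\in H^1$) at this point, so you are in good company, but the justification you offer is not one.
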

\begin{proof}
By the definitions of $w_{\tau}^k$ and applying H\"older's and Young's inequalities, 
we derive:
\begin{equation*} 
\begin{split} 
\mathcal{G}(w_{\tau}^k) 
-\mathcal{G}(w_{\tau}^{k-1}) 
+ \frac{\varepsilon_2}{2\tau}\|w_{\tau}^{k}-w_{\tau}^{k-1}\|_{L^2}^2 
& \leq 
 \int u_{\tau}^{k-1}(w_{\tau}^k - w_{\tau}^{k-1})\,dx 
  \leq 
 \|u_{\tau}^{k-1} \|_{L^2}\|w_{\tau}^k - w_{\tau}^{k-1}\|_{L^2} \\
 & \leq 
 \frac{\tau}{\varepsilon_2} \|u_{\tau}^{k-1} \|_{L^2}^2 
 + \frac{\varepsilon_2}{4\tau}\|w_{\tau}^k - w_{\tau}^{k-1}\|_{L^2}^2. 
\end{split}
\end{equation*}
Summing from $k=1$ to $\ell$ gives
\begin{equation*} 
\begin{split} 
\mathcal{G}(w_{\tau}^{\ell})+ 
\frac{\varepsilon_2}{4\tau} \sum_{k=1}^{\ell}
\|w_{\tau}^{k}-w_{\tau}^{k-1}\|_{L^2}^2 
& \leq 
\mathcal{G}(w_{\tau}^0)
+\frac{1}{\varepsilon_2}\sum_{k=1}^{\ell}  \tau \|u_{\tau}^{k-1} \|_{L^2}^2  \\
& \leq 
\mathcal{G}(w_0)
+\frac{1}{\varepsilon_2}\sum_{k=1}^{N}  \tau \|u_{\tau}^{k-1} \|_{L^2}^2 \\
& \leq 
\mathcal{G}(w_0)
+\frac{1}{\varepsilon_2} 
\left (
\tau \|u_{\tau}^0\|_{L^2}
+ \int_0^T \|\overline{u}_{\tau}(t)\|_{L^2}^2 \,dt
\right ). 
\end{split}
\end{equation*}
For $\tau \in (0, \tau_*)$, 
the term $\tau \|u_{\tau}^0\|_{L^2}^2$ on the right-hand side can be estimated 
using Young's convolution inequality as follows: 
\begin{equation} \label{initialL2}
\tau \|u_{\tau}^0\|_{L^2}^2 
= \tau \|\rho_{\tau}*u_0 \|_{L^2}^2
\leq \tau \|\rho_{\tau}\|_{L^2}^2 \|u_0\|_{L^1}^2 = \sqrt{\tau} \|u_0\|_{L^1}^2
\leq \sqrt{\tau_*} \|u_0\|_{L^1}^2.   
\end{equation}
Thus, it is bounded. 
The subsequent term is also bounded by Lemma \ref{uL2p}. 
\end{proof}

From Lemmas \ref{Hcontinuity} and \ref{UB4w}, 
it can be seen that the third inequality in  
Proposition \ref{uniform bounds} holds. 
Furthermore, as demonstrated by the following lemma, 
even when $\gamma_2 =0$, 
Lemma \ref{UB4w} asserts the uniform boundedness of 
$\|\overline{w}_{\tau}(t)\|_{L^2}$.

\begin{lem}\label{wL2}
Assume \eqref{uniform L^2} and \eqref{lbc}. 
Then, for any $T>0$ 
there exists a positive constant $C_T$ such that for any $t \in [0, T]$, 
\[ \varepsilon_2 \|\overline{w}_{\tau}(t)\|_{L^2}^2 <C_T  \]
\end{lem}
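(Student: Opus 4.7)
The plan is to obtain an $L^2$ bound on $w_{\tau}^{\ell}$ by testing the third Euler--Lagrange equation in \eqref{EL} against $w_{\tau}^{k}$ itself, thereby producing a telescoping sum that preserves the dissipative gradient term even when $\gamma_2=0$. Specifically, choosing $\eta = w_{\tau}^{k}$ in the third equation of \eqref{EL} and using the elementary identity
\[
\int_{\mathbb{R}^d} (w_{\tau}^{k}-w_{\tau}^{k-1})w_{\tau}^{k}\,dx
= \tfrac{1}{2}\left(\|w_{\tau}^{k}\|_{L^2}^2-\|w_{\tau}^{k-1}\|_{L^2}^2\right)
+ \tfrac{1}{2}\|w_{\tau}^{k}-w_{\tau}^{k-1}\|_{L^2}^2,
\]
I would rewrite the identity as
\[
\tfrac{\varepsilon_2}{2}\left(\|w_{\tau}^{k}\|_{L^2}^2-\|w_{\tau}^{k-1}\|_{L^2}^2\right)
+ \tau\kappa_2\|\nabla w_{\tau}^{k}\|_{L^2}^2
+ \tau\gamma_2\|w_{\tau}^{k}\|_{L^2}^2
\leq \tau\int_{\mathbb{R}^d}u_{\tau}^{k-1}w_{\tau}^{k}\,dx,
\]
discarding the positive quadratic $\|w_\tau^k - w_\tau^{k-1}\|^2_{L^2}$ term on the left.

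Next I would estimate the right-hand side using Hölder, Sobolev embedding $\dot{H}^1(\mathbb{R}^d)\hookrightarrow L^{2d/(d-2)}(\mathbb{R}^d)$ (available since $d\geq 5$), and Young's inequality to absorb the resulting gradient:
\[
\tau\int_{\mathbb{R}^d}u_{\tau}^{k-1}w_{\tau}^{k}\,dx
\leq \tau\|u_{\tau}^{k-1}\|_{L^{2d/(d+2)}}\|w_{\tau}^{k}\|_{L^{2d/(d-2)}}
\leq \tfrac{\kappa_2\tau}{2}\|\nabla w_{\tau}^{k}\|_{L^2}^2
+ C\tau\|u_{\tau}^{k-1}\|_{L^{2d/(d+2)}}^2.
\]
Summing from $k=1$ to $\ell$ telescopes the left-hand side and drops the (now non-positive) remainder, yielding
\[
\tfrac{\varepsilon_2}{2}\|w_{\tau}^{\ell}\|_{L^2}^2
\leq \tfrac{\varepsilon_2}{2}\|w_0\|_{L^2}^2
+ C\sum_{k=1}^{\ell}\tau\|u_{\tau}^{k-1}\|_{L^{2d/(d+2)}}^2.
\]

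The core of the argument is then to control the discrete time integral on the right uniformly in $\tau$. Using the interpolation inequality between $L^1$ and $L^2$, with exponent $\theta=2/d$,
\[
\|u_{\tau}^{k-1}\|_{L^{2d/(d+2)}}^2 \leq M^{4/d}\|u_{\tau}^{k-1}\|_{L^2}^{2(d-2)/d},
\]
followed by Hölder's inequality in the discrete time variable, I obtain
\[
\sum_{k=1}^{\ell}\tau\|u_{\tau}^{k-1}\|_{L^{2d/(d+2)}}^2
\leq M^{4/d}(\ell\tau)^{2/d}
\left(\sum_{k=1}^{\ell}\tau\|u_{\tau}^{k-1}\|_{L^2}^2\right)^{(d-2)/d}.
\]
Noting that $\sum_{k=1}^{\ell}\tau\|u_{\tau}^{k-1}\|_{L^2}^2 \leq \tau\|u_{\tau}^0\|_{L^2}^2 + \int_0^T\|\overline{u}_{\tau}(t)\|_{L^2}^2\,dt$ for $\ell\tau\leq T$, I can bound the first summand using the convolution estimate \eqref{initialL2} and the second by Lemma \ref{uL2p}, both uniformly in $\tau\in(0,\tau_*)$.

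The main conceptual obstacle is that Lemma \ref{UB4w} controls $\|\nabla w_{\tau}^{\ell}\|_{L^2}$ but not $\|w_{\tau}^{\ell}\|_{L^2}$ when $\gamma_2=0$, since the Poincaré inequality fails on $\mathbb{R}^d$. The dissipative gradient term must instead be spent on the Sobolev embedding to absorb the source term $u_{\tau}^{k-1}$ via its $L^{2d/(d+2)}$ norm, and the resulting interpolation must exploit both the conserved $L^1$ mass and the $L^2$-in-time bound established in Lemma \ref{uL2p}; verifying that the exponent $2(d-2)/d<2$ makes the discrete time Hölder step applicable is what ultimately closes the estimate.
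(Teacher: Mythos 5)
Your proof is correct, but it takes a genuinely different (and somewhat more elaborate) route than the paper. The paper's proof of this lemma is essentially a one-liner: it writes $\|w_{\tau}^{\ell}\|_{L^2} \leq \sum_{k=1}^{\ell}\|w_{\tau}^{k}-w_{\tau}^{k-1}\|_{L^2} + \|w_0\|_{L^2}$, applies the Cauchy--Schwarz inequality to convert the sum into $(\sum\tau)^{1/2}(\sum \tau^{-1}\|w_{\tau}^{k}-w_{\tau}^{k-1}\|_{L^2}^2)^{1/2}$, and then cites Lemma \ref{UB4w}, whose \emph{third} term $\frac{\varepsilon_2^2}{4\tau}\sum_k\|w_{\tau}^k-w_{\tau}^{k-1}\|_{L^2}^2$ already gives the required control uniformly in $\tau$. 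So the ``obstacle'' you flag --- that Lemma \ref{UB4w} only controls $\|\nabla w_\tau^\ell\|_{L^2}$ when $\gamma_2=0$ --- is not actually an obstacle, because Lemma \ref{UB4w} also controls the weighted sum of time increments, and that is exactly what makes the telescoping-plus-triangle argument close without any Poincar\'e inequality. By contrast, you rebuild the $L^2$ estimate from scratch by testing the Euler--Lagrange equation against $w_\tau^k$, spending the dissipation on the Sobolev embedding $\dot{H}^1(\mathbb{R}^d)\hookrightarrow L^{2d/(d-2)}(\mathbb{R}^d)$, interpolating the source term between $L^1$ and $L^2$, and closing with a discrete-time H\"older inequality; every step you take is valid, and the final appeals to \eqref{initialL2} and Lemma \ref{uL2p} do give a bound independent of $\tau$. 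Your argument is more self-contained (it does not pass through Lemma \ref{UB4w} at all) and, because it avoids the Cauchy--Schwarz-in-time step, gives a slightly sharper polynomial dependence of $C_T$ on $T$; the paper's route is shorter once Lemma \ref{UB4w} is in hand and avoids Sobolev embedding and interpolation entirely.
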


\begin{proof}
Assume that $\ell \in \mathbb{N}$ satisfies 
$\tau \ell \leq T$. 
The, the triangle inequality and the Cauchy-Schwarz inequality yield that 
\begin{equation*} 
\begin{split} 
\varepsilon_2 
\|w_{\tau}^{\ell}\|_{L^2}^2 
&\leq \varepsilon_2
\left ( \sum_{k=1}^{\ell}\|w_{\tau}^{k}- w_{\tau}^{k-1}\|_{L^2}
 +\|w_0\|_{L^2} \right )^2\\ 
& \leq 
\varepsilon_2 \left (
\left (\sum_{k =1}^{\ell} \tau \right )^{\frac{1}{2}} 
\left ( 
\sum_{k =1}^{\ell} \frac{\|w_{\tau}^{k}- w_{\tau}^{k-1}\|_{L^2}^2}{\tau} 
\right )^{\frac{1}{2}}
 + \varepsilon_2 \|w_0\|_{L^2}^2 \right )^2 \\
& \leq 
2\varepsilon_2T
\left ( 
\sum_{k =1}^N \frac{\|w_{\tau}^{k}- w_{\tau}^{k-1}\|_{L^2}^2}{\tau} 
\right ) + 2\varepsilon_2\|w_0\|_{L^2}^2.  
\end{split}
\end{equation*}
The right-hand side is uniformly bounded 
by Lemma \ref{UB4w}. 
\end{proof}

\begin{lem}\label{w22}
Assume \eqref{uniform L^2} and \eqref{lbc}. 
Then, for any $T>0$ it holds that 
\begin{equation*} 
\begin{split} 
\sup_{\tau \in (0, \tau_*)}\int_0^T \|\overline{w}_{\tau}(t)\|_{W^{2,2}}^2\,dt
<\infty. 
\end{split}
\end{equation*}
\end{lem}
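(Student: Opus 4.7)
My plan is to invoke the third Euler–Lagrange equation in \eqref{EL} directly, which reads
\begin{equation*}
\kappa_2 \Delta w_{\tau}^k
= \gamma_2 w_{\tau}^k + \frac{\varepsilon_2}{\tau}\bigl(w_{\tau}^k - w_{\tau}^{k-1}\bigr) - u_{\tau}^{k-1}
\quad \text{a.e. in }\mathbb{R}^d,
\end{equation*}
so as to reduce the $W^{2,2}$ estimate to $L^2$ bounds that have already been established. Taking the $L^2$ norm on both sides, using $(a+b+c)^2 \le 3(a^2+b^2+c^2)$, multiplying by $\tau$, and summing over $k=1,\ldots,\lceil T/\tau \rceil$ should give
\begin{equation*}
\kappa_2^2 \int_0^T \|\Delta \overline{w}_{\tau}(t)\|_{L^2}^2 \,dt
\le 3\gamma_2^2 \int_0^T \|\overline{w}_{\tau}(t)\|_{L^2}^2\,dt
+ 3\varepsilon_2^2 \sum_{k=1}^{\lceil T/\tau \rceil}\frac{\|w_{\tau}^k-w_{\tau}^{k-1}\|_{L^2}^2}{\tau}
+ 3\sum_{k=1}^{\lceil T/\tau \rceil}\tau\|u_{\tau}^{k-1}\|_{L^2}^2.
\end{equation*}

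The first term on the right is uniformly bounded by Lemma \ref{wL2}, the second by Lemma \ref{UB4w}. For the third, I split off the $k=1$ contribution $\tau\|u_{\tau}^0\|_{L^2}^2$, which is controlled by the Young-convolution estimate \eqref{initialL2}, while the remainder $\sum_{k=2}^{\lceil T/\tau \rceil}\tau\|u_{\tau}^{k-1}\|_{L^2}^2 \le \int_0^T \|\overline{u}_{\tau}(t)\|_{L^2}^2\,dt$ is bounded uniformly in $\tau$ by Lemma \ref{uL2p}. Hence $\int_0^T \|\Delta \overline{w}_{\tau}\|_{L^2}^2\,dt$ is uniformly bounded.

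To upgrade $\|\Delta w\|_{L^2}$ to the full $\|w\|_{W^{2,2}}$ norm, I use the Plancherel identity on $\mathbb{R}^d$, which yields $\|D^2 w\|_{L^2}=\|\Delta w\|_{L^2}$ for $w\in H^2(\mathbb{R}^d)$. Combined with the pointwise-in-time bounds $\|\overline{w}_{\tau}(t)\|_{L^2}\le C_T$ from Lemma \ref{wL2} and $\|\nabla \overline{w}_{\tau}(t)\|_{L^2}\le C_T$ from Lemma \ref{UB4w}, this gives the claimed integrated $W^{2,2}$ bound. I do not foresee a serious obstacle: the argument is essentially elliptic regularity applied to the discrete equation, and every right-hand quantity has already been controlled uniformly in $\tau$ earlier in this section. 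The only point to be careful about is the handling of the initial slice, since $\|u_{\tau}^0\|_{L^2}$ may diverge as $\tau\downarrow 0$, but the weight $\tau$ compensates via \eqref{initialL2}.
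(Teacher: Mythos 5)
Your proposal is correct and follows essentially the same route as the paper's proof: rewrite the third Euler--Lagrange equation in \eqref{EL} so that $\kappa_2\Delta w_{\tau}^k$ is expressed in terms of $\gamma_2 w_{\tau}^k$, $\varepsilon_2\partial_t w_{\tau}^k$, and $u_{\tau}^{k-1}$, then take $L^2$ norms, multiply by $\tau$, sum, and invoke Lemmas \ref{UB4w}, \ref{wL2}, \ref{uL2p} together with \eqref{initialL2}. Where you are slightly more explicit than the paper is in the final upgrade from a bound on $\int_0^T\|\Delta\overline{w}_{\tau}\|_{L^2}^2\,dt$ to a bound on $\int_0^T\|\overline{w}_{\tau}\|_{W^{2,2}}^2\,dt$: the paper simply says ``applying elliptic estimates,'' whereas you spell out that on $\mathbb{R}^d$ Plancherel gives $\|D^2 w\|_{L^2}=\|\Delta w\|_{L^2}$ exactly and that the remaining lower-order pieces $\|\overline{w}_{\tau}\|_{L^2}$ and $\|\nabla\overline{w}_{\tau}\|_{L^2}$ are already controlled pointwise in $t$ by Lemmas \ref{wL2} and \ref{UB4w}. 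This is a cleaner and more transparent version of the same argument, and your care with the initial slice $\tau\|u_{\tau}^0\|_{L^2}^2$ via \eqref{initialL2} matches what the paper does.
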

\begin{proof}
From the Euler-Lagrange equation \eqref{EL}, we derive 
\begin{equation*} 
\begin{split} 
\kappa_2 \Delta w_{\tau}^k
 = \varepsilon_2\partial_t w_{\tau}^k -\gamma_2 w_{\tau}^k + u_{\tau}^{k-1}. 
\end{split}
\end{equation*}
Applying elliptic estimates yields
\begin{equation*} 
\begin{split} 
\sum_{k=1}^{\lceil T/\tau \rceil}
\tau \|\kappa_2 w_{\tau}^k\|_{W^{2,2}}^2
&  =C 
 \sum_{k=1}^{\lceil T/\tau \rceil}
 \tau 
 \| \varepsilon_2\partial_t w_{\tau}^k -\gamma_2 w_{\tau}^k + u_{\tau}^{k-1}\|_{L^2}^2 \\
 & \leq 
 3C \sum_{k=1}^{\lceil T/\tau \rceil}\tau\| \varepsilon_2\partial_t w_{\tau}^k\|_{L^2}^2
 + 3C \sum_{k=1}^{\lceil T/\tau \rceil} \tau \|\gamma_2 w_{\tau}^k\|_{L^2}^2 
 + 3C \sum_{k=1}^{\lceil T/\tau \rceil}\tau \|u_{\tau}^{k-1}\|_{L^2}^2, 
\end{split}
\end{equation*}
for some positive constant $C$. 
The right-hand side is uniformly bounded by Lemmas \ref{uL2p} and \ref{UB4w}, 
and \eqref{initialL2}.
\end{proof}

\begin{lem}
Assume \eqref{uniform L^2} and \eqref{lbc}. 
Then, it holds that 
\begin{equation*} 
\begin{split} 
\sup_{\tau \in (0, \tau_*)} \int_0^T \|v(t)\|_{W^{3,2}}^2\,dt
\leq C
\end{split}
\end{equation*}
\end{lem}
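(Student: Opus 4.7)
The plan is to exploit the second Euler–Lagrange equation in \eqref{EL}, which lets us write
\[
\kappa_1 \Delta v_{\tau}^k = \varepsilon_1 \partial_t v_{\tau}^k + \gamma_1 v_{\tau}^k - w_{\tau}^k,
\]
so that regularity of $v_{\tau}^k$ beyond $W^{2,2}$ can be read off from regularity of the right-hand side. Since $\|v_{\tau}^k\|_{W^{3,2}}^2$ is controlled (by elliptic regularity in $\mathbb{R}^d$) by $\|v_{\tau}^k\|_{W^{2,2}}^2 + \|\nabla \Delta v_{\tau}^k\|_{L^2}^2$, I need only bound the latter quantity, integrated in time, uniformly in $\tau$.

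Taking the gradient of the Euler–Lagrange identity gives
\[
\|\nabla \Delta v_{\tau}^k\|_{L^2} \le \frac{1}{\kappa_1}\Bigl(\varepsilon_1 \|\partial_t \nabla v_{\tau}^k\|_{L^2} + \gamma_1 \|\nabla v_{\tau}^k\|_{L^2} + \|\nabla w_{\tau}^k\|_{L^2}\Bigr).
\]
Squaring and summing $\tau \cdot (\,\cdot\,)$ from $k=1$ to $\lceil T/\tau \rceil$, the three resulting terms are handled as follows. The term $\sum_k \tau\|\nabla v_{\tau}^k\|_{L^2}^2$ is bounded by $T$ times the supremum from Lemma \ref{UB}. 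The term $\sum_k \tau\|\nabla w_{\tau}^k\|_{L^2}^2$ is bounded because $\|\nabla w_{\tau}^k\|_{L^2}$ is uniformly bounded by Lemma \ref{UB4w} (the $\varepsilon_2\kappa_2$ coefficient there gives exactly the right estimate, since $\kappa_1\kappa_2\varepsilon_1\varepsilon_2 \ne 0$). For the remaining term,
\[
\sum_{k=1}^{\lceil T/\tau \rceil} \tau \|\partial_t \nabla v_{\tau}^k\|_{L^2}^2 = \sum_{k=1}^{\lceil T/\tau \rceil} \frac{\|\nabla(v_{\tau}^k - v_{\tau}^{k-1})\|_{L^2}^2}{\tau},
\]
which is bounded uniformly in $\tau$ by Lemma \ref{Hcontinuity}, since the quantity $d_{H^1}^2(v_{\tau}^k,v_{\tau}^{k-1})$ contains $(\varepsilon_1\kappa_2 + \varepsilon_2\kappa_1)\|\nabla(v_{\tau}^k - v_{\tau}^{k-1})\|_{L^2}^2$ and this coefficient is strictly positive under our running assumption.

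Combining these pieces with $\int_0^T \|\overline{v}_\tau(t)\|_{W^{2,2}}^2\,dt \le T \sup_t \|\overline{v}_\tau(t)\|_{W^{2,2}}^2 < \infty$ (Lemma \ref{UB}) gives the claimed uniform $L^2_t W^{3,2}_x$ bound. There is no real obstacle: the main point is simply that all ingredients needed to upgrade elliptic regularity from $W^{2,2}$ to $W^{3,2}$ have already been obtained. The mildly delicate step is converting the discrete time-difference sum into $\int_0^T \|\partial_t \nabla \overline{v}_\tau\|_{L^2}^2\,dt$ and recognising this as already controlled by the energy dissipation identity of Proposition \ref{ED}; everything else is routine $L^2$ estimation.
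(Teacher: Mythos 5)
Your proof is correct and follows essentially the same route as the paper: read off the $W^{3,2}$ bound from the Euler--Lagrange identity $\kappa_1\Delta v_{\tau}^k = \varepsilon_1\partial_t v_{\tau}^k + \gamma_1 v_{\tau}^k - w_{\tau}^k$, and control the resulting three terms (after summing $\tau\|\cdot\|^2$) via Lemma \ref{Hcontinuity} for the discrete time-difference, Lemma \ref{UB} for $v$, and the uniform $H^1$ bound on $w$. The paper phrases the elliptic step as $\|\kappa_1 v_{\tau}^k\|_{W^{3,2}}\leq C\|\varepsilon_1\partial_t v_{\tau}^k-\gamma_1 v_{\tau}^k+w_{\tau}^k\|_{H^1}$ and quotes Lemma \ref{w22} for the $w$ contribution, whereas you take the gradient explicitly and invoke Lemma \ref{UB4w}; both are equivalent and equally valid.
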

\begin{proof}
From the Euler-Lagrange equations \eqref{EL}, we derive
\begin{equation*} 
\begin{split} 
\kappa_1 \Delta v_{\tau}^k
 = \varepsilon_1\partial_t v_{\tau}^k -\gamma_1 v_{\tau}^k + w_{\tau}^k. 
\end{split}
\end{equation*}
Since the right-hand side belongs to $H^1(\mathbb{R}^d)$, elliptic estimates give
\[
\|\kappa_1 v_{\tau}^k\|_{W^{3,2}} \leq C 
\|\varepsilon_1 \partial_t v_{\tau}^k -\gamma_1 v_{\tau}^k + w_{\tau}^k\|_{H^1}
\]
for some positive constant $C$. Consequently, we have 
\begin{equation*} 
\begin{split} 
\sum_{k=1}^{\lceil T/\tau \rceil}\tau \|\kappa_1 v_{\tau}^k\|_{W^{3,2}}^2 \leq 3C 
\sum_{k=1}^{\lceil T/\tau \rceil}\tau \|\varepsilon_1 \partial_t v_{\tau}^k \|_{H^1}^2 
+3C\sum_{k=1}^{\lceil T/\tau \rceil}\tau \|\gamma_1v_{\tau}^k\|_{H^1}^2 
+ 3\sum_{k=1}^{\lceil T/\tau \rceil}\tau \|w_{\tau}^k\|_{H^1}^2. 
\end{split}
\end{equation*}
The right-hand side is uniformly bounded by 
Lemmas \ref{Hcontinuity} and \ref{w22}
\end{proof}
Consequently, all parts of Proposition \ref{uniform bounds} have been proven.

\section{Convergence}
In this section, we demonstrate that 
there exists a sequence of time steps for which 
piecewise constant interpolation of discrete solutions 
converges to the solution of problem \eqref{P}.

\begin{prop}\label{cpt}
Assume \eqref{uniform L^2} and \eqref{lbc}. 
Then, 
there exist a sequence $\{ \tau_n \} $ with $\tau_n \to 0\ (n \to \infty)$ 
and a triple $(u,v,w)$ of functions 
such that for any $t >0$, 
\begin{equation} \label{candidate}
\begin{cases}
\overline{u}_{\tau_n}(t),\ \underline{u}_{\tau_n}(t) 
\rightharpoonup u(t)
&\text{ weakly in }(L^1\cap L^m)(\mathbb{R}^d), \\
\overline{v}_{\tau_n}(t),\ \underline{v}_{\tau_n}(t)
 \rightharpoonup v(t) 
& \text{ weakly in }W^{2,2}(\mathbb{R}^d), \\
\overline{w}_{\tau_n}(t),\ \underline{w}_{\tau_n}(t)
 \rightharpoonup w(t) 
& \text{ weakly in }H^{1}(\mathbb{R}^d). 
\end{cases}
\end{equation}
\end{prop}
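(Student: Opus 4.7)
The plan is a refined Arzelà--Ascoli argument carried out simultaneously in the metrics $\mathcal{W}_2$ on $M\mathscr{P}_2(\mathbb{R}^d)$, $\|\cdot\|_{H^1}$, and $\|\cdot\|_{L^2}$, exploiting the uniform bounds of Proposition \ref{uniform bounds}. First I would upgrade the third line of that proposition to H\"older-$1/2$ continuity in time for the piecewise constant interpolations: for $s \in ((j-1)\tau, j\tau]$ and $t \in ((k-1)\tau, k\tau]$ with $s<t$, the triangle inequality together with Cauchy--Schwarz give
\begin{equation*}
\mathcal{W}_2(\overline{u}_\tau(t), \overline{u}_\tau(s))
\leq \sum_{i=j+1}^{k} \mathcal{W}_2(u_\tau^i, u_\tau^{i-1})
\leq \bigl((k-j)\tau\bigr)^{1/2} \left( \sum_{i=j+1}^{k} \frac{\mathcal{W}_2^2(u_\tau^i, u_\tau^{i-1})}{\tau}\right)^{1/2}
\leq C(|t-s|+\tau)^{1/2},
\end{equation*}
and analogously $\|\overline{v}_\tau(t)-\overline{v}_\tau(s)\|_{H^1}\leq C(|t-s|+\tau)^{1/2}$ and $\|\overline{w}_\tau(t)-\overline{w}_\tau(s)\|_{L^2}\leq C(|t-s|+\tau)^{1/2}$ via Lemma \ref{Hcontinuity} and Lemma \ref{UB4w}.

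Next I would perform a diagonal extraction along the rationals. At each rational $t$, the first inequality of Proposition \ref{uniform bounds} furnishes uniform bounds on $\overline{u}_\tau(t)$ in $L^m$, on $\overline{v}_\tau(t)$ in $W^{2,2}$, and on $\overline{w}_\tau(t)$ in $H^1$, together with uniform control of $\int|x|^2\overline{u}_\tau(t)\,dx$. For $u$, equi-integrability (from the $L^m$ bound) and tightness (from the second moment) yield weak $L^1$-compactness via Dunford--Pettis, while reflexivity of $L^m$ yields weak $L^m$-compactness; for $v$ and $w$, reflexivity of $W^{2,2}$ and $H^1$ gives weak precompactness directly. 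A standard diagonal procedure over an enumeration of the rationals then produces a single subsequence $\{\tau_n\}$ along which all three interpolations converge weakly at every rational time; write $u(t_j)$, $v(t_j)$, $w(t_j)$ for the respective limits.

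To extend to arbitrary $t \geq 0$, fix $t$ and let $v_*$ be any weak $W^{2,2}$-limit point of $\overline{v}_{\tau_n}(t)$. By lower semicontinuity of $\|\cdot\|_{H^1}$ under weak $W^{2,2}$-convergence and the H\"older estimate, $\|v_* - v(t_j)\|_{H^1} \leq C|t-t_j|^{1/2}$ for every rational $t_j$; hence $\{v(t_j)\}$ is $H^1$-Cauchy as $t_j \to t$ and $v_* = \lim_j v(t_j) =: v(t)$ is uniquely determined by $t$, so the entire sequence $\overline{v}_{\tau_n}(t)$ converges weakly in $W^{2,2}$ to $v(t)$. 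The same pattern works for $w$ with $(H^1, L^2)$ replacing $(W^{2,2}, H^1)$. For $u$, one invokes the lower semicontinuity of $\mathcal{W}_2$ under weak $L^1$-convergence on uniformly-bounded-second-moment sets (\cite[Lemma 7.1.4]{a-g-s}) to obtain $\mathcal{W}_2(u_*, u(t_j)) \leq C|t-t_j|^{1/2}$, which uniquely determines $u(t) := \lim_j u(t_j)$ in $\mathcal{W}_2$, and any weak $L^1 \cap L^m$-limit of a sub-subsequence of $\overline{u}_{\tau_n}(t)$ must coincide with this $u(t)$. Finally, since $\underline{u}_\tau(t) = \overline{u}_\tau(t-\tau)$ for $t>\tau$ (with analogues for $\underline{v}_\tau$ and $\underline{w}_\tau$), the H\"older estimates show that the underlined interpolations share the same limits as the overlined ones.

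The delicate step is the $u$-component at irrational $t$: because $L^1$ is not reflexive, uniqueness of the weak limit cannot be read off from mere boundedness and must be propagated through the Wasserstein metric. The interplay of Dunford--Pettis (for relative weak $L^1$-compactness), the uniform second-moment bound (to remain in $\mathscr{P}_2$), and \cite[Lemma 7.1.4]{a-g-s} (for $\mathcal{W}_2$-lower semicontinuity) is what makes the argument go through.
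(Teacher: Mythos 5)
Your proposal is correct and follows essentially the same route as the paper: Hölder-$1/2$ equicontinuity from the dissipation sums, a diagonal extraction over rationals, and propagation to all times via weak lower semicontinuity of the Wasserstein distance and the Hilbert norms, with the subsequence-independence argument closing the loop. The only cosmetic difference is that the paper packages $\mathcal{W}_2$, $\|\cdot\|_{H^1}$, and $\|\cdot\|_{L^2}$ into a single product metric $d$ before running the argument, whereas you keep the three components separate; and you make explicit the shift identity $\underline{u}_\tau(t)=\overline{u}_\tau(t-\tau)$ to cover the underlined interpolations, which the paper leaves implicit.
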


\begin{proof}[Proof of Proposition \ref{cpt}]

The proposition's proof is already presented under abstract assumptions 
in \cite[Proposition 3.3.1]{a-g-s}. 
For clarity and relevance, we provide a tailored proof for this paper. 
Define $\overline{z}_{\tau}(t):= 
(\overline{u}_{\tau}(t), \overline{v}_{\tau}(t), \overline{w}_{\tau}(t))$ and 
the distance $d$ as follows 
\begin{equation*} 
\begin{split} 
d(z_1, z_2):= 
\sqrt{ \mathcal{W}_2^2 (u_1, u_2) 
+ \|v_2-v_1\|_{H^1}^2 
+ \|w_2-w_1\|_{L^2}^2 
}
\end{split}
\end{equation*}
for $z_1:=(u_1, v_1, w_1), z_2:=(u_2, v_2, w_2) \in 
M\mathscr{P}_2(\mathbb{R}^d) \times H^1(\mathbb{R}^d) \times L^2(\mathbb{R}^d)$. 
Firstly, we establish that  
\begin{equation} \label{hh}
d(\overline{z}_{\tau}(t), \overline{z}_{\tau}(s)) 
\leq C \sqrt{|t-s|+\tau}
\end{equation}
for some positive constant $C$, 
Without loss of generality, we can assume $s<t$. 
By the triangle inequality, the Cauchy-Schwarz inequality, 
and the third inequality in Proposition \ref{uniform bounds}, we have  
\begin{equation*}
\begin{split} 
d(\overline{z}_{\tau}(t), \overline{z}_{\tau}(s)) 
& \leq \sum_{k=\lceil s/\tau \rceil +1}^{\lceil t/\tau \rceil}
d(z_{\tau}^k,z_{\tau}^{k-1})
\leq \sqrt{t-s+\tau} 
\left(
\sum_{\lceil s/\tau \rceil +1}
^{\lceil t/\tau \rceil}
\frac{d^2(z_{\tau}^k,z_{\tau}^{k-1})}{\tau}
\right)^{1/2} \\ 
& \leq C\sqrt{
t-s+\tau}, 
\end{split}
\end{equation*}
where $\lceil t/\tau \rceil$ denotes 
the smallest integer not less than $T/\tau$.

Next, we use the diagonal argument to show that for any 
$t \in (0,\infty) \cap \mathbb{Q}$, 
\eqref{candidate} holds. 
First, we take an arbitrary 
$t_1 \in (0,\infty) \cap \mathbb{Q}$. 
From the first inequality in Proposition \ref{uniform bounds}, 
there exist a sequence $\{\tau_n\}=\{\tau_n^1\}$ and $(u(t_1),v(t_1),w(t_1))$ 
that satisfy 
\eqref{candidate} for $t=t_1$. 
Similarly, there exist a subsequence $\{\tau_n^2\}$ of $\{\tau_n^1\}$ and 
functions $(u(t_1),v(t_1),w(t_1))$ 
that satisfy \eqref{candidate} for $t=t_1, t_2$. 
By repeating this argument, we can ensure that 
for any $t \in (0,\infty)\cap \mathbb{Q}$, 
there exist a sequence $\{\tau_n\}$ with $\tau_n:=\tau_n^n$ 
and functions $(u, v, w)$ that satisfy \eqref{candidate}. 

Using inequality \eqref{hh}, weak convergence \eqref{candidate}, 
the weak lower semicontinuity of the norm in Hilbert spaces, 
and the weak lower semicontinuity of 
the Wasserstein distance \cite[Lemma 7.1.4]{a-g-s}, 
for $z(t):=(u(t), v(t), z(t))$ and $t, s \in (0,\infty)\cap \mathbb{Q}$, we have
\begin{equation*} \label{density}
d(z(t), z(s))\leq 
\liminf_{n \to \infty}d(z_{\tau_n}(t), z_{\tau_n}(s)) 
\leq C\sqrt{|t-s|}. 
\end{equation*}
Since $M\mathscr{P}_2(\mathbb{R}^d) \times 
W^{2,2}(\mathbb{R}^d) \times H^1(\mathbb{R}^d)$ 
is complete with respect to the distance $d$, 
the function $z(t):=(u(t), v(t), w(t))$, which is defined on 
$(0,\infty) \cap \mathbb{Q}$, 
can be continuously extended to a function on $(0,\infty)$. 

To establish \eqref{candidate} for 
$t\in (0,\infty) \setminus\mathbb{Q}$, we use 
the density of rational points and continuity of $z(t)$. 
For any $\varepsilon>0$, 
there exist $\delta >0$ and $t_* \in (0,\infty)\cap \mathbb{Q}$ 
such that 
if $|t-t_*|<\delta$, then $d(z(t), z(t_*)) < \varepsilon$. 
From the first inequality in Proposition \ref{uniform bounds}, 
since $\{z_{\tau_n}(t)\}$ is a bounded sequence in 
$(L^1\cap L^m)(\mathbb{R}^d) 
\times W^{2,2}(\mathbb{R}^d)\times H^1(\mathbb{R}^d)$, 
there exists a subsequence $\{\tau_n'\}$ of $\{\tau_n\}$ for which 
convergence similar to \eqref{candidate} can be asserted. 
Let the weak limit be $\Tilde{z}(t)= (\Tilde{u}(t), \Tilde{v}(t), \Tilde{w}(t))$. 
Then, by the weak lower continuity of $d$ and 
\eqref{hh}, we have 
\begin{equation*} 
\begin{split} 
d(\Tilde{z}(t), z(t_*))
\leq \liminf_{n \to \infty}
d(z_{\tau_n'}(t), z_{\tau_n'}(t_*))\leq C\sqrt{|t-t_*|}
< C\sqrt{\varepsilon}. 
\end{split}
\end{equation*}
Thus, 
\begin{equation*} 
\begin{split} 
d(\Tilde{z}(t), z(t))
\leq d(\Tilde{z}(t), z(t_*))
+ d(z(t_*), z(t)) < C\sqrt{\varepsilon} +\varepsilon. 
\end{split}
\end{equation*}
Since $\varepsilon>0$ is arbitrary, 
it follows that $\Tilde{z}(t)=z(t)$. 
This holds for any weakly convergent subsequence 
$\{\overline{z}_{\tau_n'}(t)\}$, 
implying that result holds without taking subsequences. 
Hence, 
\eqref{candidate} is also valid $t \in (0,\infty)\setminus \mathbb{Q}$. 
\end{proof}

Let us consider any $T > 0$ and define the following. 
\begin{equation*} 
\begin{cases}\vspace{2mm}
\displaystyle 
\mathcal{B}_{\tau_n}(t)
:=
\|\overline{u}_{\tau_n}(t)\|_{L^2}
+\|\nabla \overline{u}_{\tau_n}^{m}(t)\|_{L^{\frac{d}{d-1}}}
+ \|\overline{v}_{\tau_n}(t)\|_{W^{3,2}}
+\|\overline{w}_{\tau_n}(t)\|_{W^{2,2}}, \\ \vspace{2mm}
 q_*:=\min \{2, p_*\}>1, \text{ where $p_*$  is specified in \eqref{p_*}}, \\
\displaystyle 
S_{\tau_n}(K):=\{t \in (0,T] \ | \ \mathcal{B}_{\tau_n}^{q_*}(t)> K\}. 
\end{cases}
\end{equation*}
Then, according to the second inequality in Proposition \ref{uniform bounds},  
$\|\mathcal{B}_{\tau_n}\|_{L^{q_*}(0,T)}$ is uniformly bounded. 
Consequently, 
it holds that 
\begin{equation*} 
\begin{split} 
\mathscr{L}^1\bigl ( S_{\tau_n}(K) \bigr )
\leq 
\int_{S_{\tau_n}(K)}
\frac{\mathcal{B}_{\tau_n}^{q_*}(t)}{K}\,dt
\leq \frac{1}{K}%\sup_n 
\int_0^T \mathcal{B}_{\tau_n}^{q_*}(t)\,dt
\to 0 \quad (K \to \infty). 
\end{split}
\end{equation*}

\begin{lem}[pointwise-convergence]\label{pointwise}
Let $(\overline{u}_{\tau_n},\overline{v}_{\tau_n})$ 
and $(u, v)$ be that in Proposition \ref{cpt}. 
Assume
that $\sup_n \mathcal{B}_{\tau_n}(t_0)<\infty$. Then, 
\begin{equation*} 
\begin{cases}\vspace{2mm}
\displaystyle 
\lim_{n \to \infty}
\int_{\mathbb{R}^d} \langle \nabla \overline{u}_{\tau_n}^m(t_0) 
-\overline{u}_{\tau_n}(t_0)\nabla \overline{v}_{\tau_n}(t_0), \bm{\xi} \rangle\,dx
=\int_{\mathbb{R}^d} \langle \nabla u^m(t_0) -u(t_0)\nabla v(t_0), \bm{\xi} \rangle\,dx, \\
\vspace{2mm}
\displaystyle 
\lim_{n \to \infty} 
\int_{\mathbb{R}^d}( \kappa_1\Delta  \overline{v}_{\tau_n}(t_0)
- \gamma_1 \overline{v}_{\tau_n}(t_0) +  \overline{w}_{\tau_n}(t_0))\eta\, dx
= 
\int_{\mathbb{R}^d}( \kappa_1\Delta  v(t_0)
- \gamma_1 v(t_0) +  w(t_0))\eta\, dx, \\ \vspace{2mm}
\displaystyle 
\lim_{n \to \infty} 
\int_{\mathbb{R}^d}( \kappa_2\Delta  \overline{w}_{\tau_n}(t_0)
- \gamma_2 \overline{w}_{\tau_n}(t_0) +  \underline{u}_{\tau_n}(t_0))\eta\, dx
= 
\int_{\mathbb{R}^d}( \kappa_1\Delta  w(t_0)
- \gamma_1 w(t_0) +  u(t_0))\eta\, dx, \\ \vspace{2mm}
\displaystyle 
\lim_{n \to \infty} 
\int_{\mathbb{R}^d}
\langle \nabla ( \kappa_1\Delta  \overline{v}_{\tau_n}
- \gamma_1 \overline{v}_{\tau_n} +  \overline{w}_{\tau_n})(t_0), \bm{\xi}\rangle 
\, dx
= 
\int_{\mathbb{R}^d}
\langle \nabla ( \kappa_1\Delta  v
- \gamma_1 v +  w)(t_0), \bm{\xi} \rangle \, dx, \\
\displaystyle 
\lim_{n \to \infty}
\int_{\mathbb{R}^d}  \underline{u}_{\tau_n}(t_0)\eta \,dx
=\int_{\mathbb{R}^d} u(t_0) \eta\,dx
\end{cases}
\end{equation*}
holds for all $\bm{\xi} \in C^{\infty}_c(\mathbb{R}^d;\mathbb{R}^d)$ 
and for all $\eta \in C^{\infty}_c(\mathbb{R}^d)$. 
\end{lem}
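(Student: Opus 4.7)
The plan is to use the hypothesis $\sup_n\mathcal{B}_{\tau_n}(t_0)<\infty$ to promote the weak convergences of Proposition \ref{cpt} at the single time $t_0$, and then combine them with Rellich--Kondrachov compactness so that each integrand becomes a product of a weakly convergent factor with a strongly convergent one.

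\textbf{Upgraded weak limits and strong compactness.} The bound on $\mathcal{B}_{\tau_n}(t_0)$ makes $\{\overline{u}_{\tau_n}(t_0)\}$, $\{\overline{v}_{\tau_n}(t_0)\}$, $\{\overline{w}_{\tau_n}(t_0)\}$, and $\{\overline{u}_{\tau_n}^m(t_0)\}$ bounded in $L^2(\mathbb{R}^d)$, $W^{3,2}(\mathbb{R}^d)$, $W^{2,2}(\mathbb{R}^d)$, and $W^{1,d/(d-1)}(\mathbb{R}^d)$, respectively. Along a common subsequence, Banach--Alaoglu yields weak limits which, by uniqueness of limits tested against $C_c^\infty$ and Proposition \ref{cpt}, must equal $u(t_0)$, $v(t_0)$, $w(t_0)$, and some $f\in W^{1,d/(d-1)}(\mathbb{R}^d)$. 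On any bounded $\Omega\subset\mathbb{R}^d$, the compact embeddings $W^{2,2}(\Omega)\Subset W^{1,2}(\Omega)$ and $W^{1,d/(d-1)}(\Omega)\Subset L^1(\Omega)$ give, along a further subsequence, $\nabla\overline{v}_{\tau_n}(t_0)\to\nabla v(t_0)$ strongly in $L^2_{\loc}(\mathbb{R}^d)$ and $\overline{u}_{\tau_n}^m(t_0)\to f$ strongly in $L^1_{\loc}(\mathbb{R}^d)$. Extracting an a.e.~subsequence gives $\overline{u}_{\tau_n}(t_0)\to f^{1/m}$ pointwise a.e., and comparing with the weak $L^m$-convergence $\overline{u}_{\tau_n}(t_0)\rightharpoonup u(t_0)$ forces $f=u(t_0)^m$.

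\textbf{Passing to the limit.} The first identity follows by writing $\int_{\mathbb{R}^d}\langle\nabla\overline{u}_{\tau_n}^m(t_0),\bm{\xi}\rangle\,dx=-\int_{\mathbb{R}^d}\overline{u}_{\tau_n}^m(t_0)\diver\bm{\xi}\,dx$ and applying the $L^1_{\loc}$ strong convergence of $\overline{u}_{\tau_n}^m$; for the nonlinear piece $\int_{\mathbb{R}^d}\overline{u}_{\tau_n}(t_0)\langle\nabla\overline{v}_{\tau_n}(t_0),\bm{\xi}\rangle\,dx$ one couples the weak $L^2$-convergence of $\overline{u}_{\tau_n}(t_0)$ on $\supp\bm{\xi}$ with the strong $L^2$-convergence of $\nabla\overline{v}_{\tau_n}(t_0)$. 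The second and fourth identities are linear and follow from the weak $L^2$-convergences of $\Delta\overline{v}_{\tau_n}(t_0)$, $\overline{v}_{\tau_n}(t_0)$, and $\overline{w}_{\tau_n}(t_0)$ against the fixed test function $\eta$ (respectively $\diver\bm{\xi}$ after an integration by parts). The third identity uses, in addition, the upgraded weak $W^{2,2}$-convergence of $\overline{w}_{\tau_n}(t_0)$ so that $\Delta\overline{w}_{\tau_n}(t_0)\rightharpoonup\Delta w(t_0)$ in $L^2$, together with the weak $L^1\cap L^m$-convergence of $\underline{u}_{\tau_n}(t_0)$; the latter is itself the content of the fifth identity and is immediate from Proposition \ref{cpt}.

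The only delicate point is the identification of the strong $L^1_{\loc}$ limit of $\overline{u}_{\tau_n}^m$ with $u(t_0)^m$, because the flux $\nabla u^m-u\nabla v$ is nonlinear in $u$. This is exactly what the $W^{1,d/(d-1)}$-bound incorporated into $\mathcal{B}_{\tau_n}(t_0)$ purchases via Rellich--Kondrachov; everything else reduces to routine weak-strong pairings once the right sequences are seen to converge strongly.
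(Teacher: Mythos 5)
Your proposal is correct and follows essentially the same route as the paper: use the pointwise bound $\sup_n\mathcal{B}_{\tau_n}(t_0)<\infty$ together with Rellich--Kondrachov to upgrade the weak convergences of Proposition~\ref{cpt} at time $t_0$ to strong $L^1_{\loc}$ convergence of $\overline{u}_{\tau_n}^m(t_0)$ and strong $L^2_{\loc}$ convergence of $\nabla\overline{v}_{\tau_n}(t_0)$, identify the strong $L^1_{\loc}$ limit with $u(t_0)^m$ via an a.e.\ subsequence compared against the weak $L^m$ limit, and then close the nonlinear term $\int\overline{u}_{\tau_n}\langle\nabla\overline{v}_{\tau_n},\bm{\xi}\rangle$ by a weak--strong pairing. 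The only cosmetic variation is in the diffusion term of the first identity: you integrate by parts and pass to the limit using the strong $L^1_{\loc}$ convergence of $\overline{u}_{\tau_n}^m(t_0)$, whereas the paper instead records the weak $L^{d/(d-1)}$ convergence $\nabla\overline{u}_{\tau_n}^m(t_0)\rightharpoonup\nabla u(t_0)^m$ directly; both are equivalent once the limit has been identified. One small point worth making explicit (as the paper does) is the subsequence-of-a-subsequence step: since the strong $L^1_{\loc}$ and $L^2_{\loc}$ limits are obtained only along a further subsequence but are uniquely determined (by the weak limits already known from Proposition~\ref{cpt}), every subsequence admits a further subsequence with the same limit, so the convergences in fact hold for the full sequence $\{\tau_n\}$; without this remark your conclusion is formally only along the extracted subsequence.
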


\begin{proof}
By assumption 
$\sup_n \mathcal{B}_{\tau_n}(t_0)<\infty$, 
$\{ u_{\tau_n}^m(t_0) \}$ forms 
a bounded sequence in $W^{1,\frac{d}{d-1}}(\mathbb{R}^d)$. 
Considering the Rellich-Kondrachov theorem, 
there exists a subsequence $\{ \tau_n' \}\subset \{\tau_n\}$ such that 
\begin{equation} \label{limit in W1p}
\begin{split} 
\begin{cases}\vspace{2mm}
u_{\tau_n'}^m(t_0) \to u_* \text{ in strongly }L^1_{\loc}(\mathbb{R}^d), \\
\nabla u_{\tau_n'}^m(t_0) 
\rightharpoonup \nabla u_* \text{ weakly in }L^{\frac{d}{d-1}}(\mathbb{R}^d). 
\end{cases}
\end{split}
\end{equation}
Moreover, for any bounded subset $\Omega \subset \mathbb{R}^d$, 
there exists another subsequence $\{ \tau_n'' \} \subset \{\tau_n'\}$ 
such that 
$u_{\tau_n''}(x, t_0) \to u_*^{\frac{1}{m}}(x, t_0)$ a.e. $x \in \Omega$. 
Combining this with \eqref{limit in W1p}, 
we achieve convergence 
$u_{\tau_n''}(t_0) \to u_*^{\frac{1}{m}}(t_0)$ in $L^m(\Omega)$
(\cite[Proposition 1.33]{a-f-p}). 
Furthermore, since 
$u_{\tau_n''}(t_0) \rightharpoonup u(t_0)$ weakly in $L^m(\mathbb{R}^d)$, 
it follows that $u_*=u^m(t_0)$. 
As this holds for any chosen subsequence, 
the results are valid without extracting a particular subsequence. 
Therefore, we obtain 
\begin{equation*} 
\begin{split} 
\lim_{n \to \infty}
\int_{\mathbb{R}^d} \langle \nabla \overline{u}_{\tau_n}^m(t_0), \bm{\xi} \rangle\,dx
=\int_{\mathbb{R}^d} \langle \nabla u^m(t_0), \bm{\xi} \rangle\,dx. 
\end{split}
\end{equation*}
Given that $\sup_{n \in \mathbb{N}}\|\overline{u}_{\tau_n}(t_0)\|_{L^2}^2< \infty$ 
and 
$\overline{u}_{\tau_n}(t_0) 
\rightharpoonup u(t_0) \text{ weakly in }L^m(\mathbb{R}^d)$,  
it follows that 
$\overline{u}_{\tau_n}(t_0) 
\rightharpoonup u(t_0) \text{ weakly in }L^2(\mathbb{R}^d)$ as well. 
Therefore, we have 
\begin{equation*} 
\begin{split} 
& 
\left |
\int_{\mathbb{R}^d} 
\langle \overline{u}_{\tau_n}(t_0)
\nabla \overline{v}_{\tau_n}(t_0), \bm{\xi}\rangle \,dx
-\int_{\mathbb{R}^d}
\langle u(t_0)\nabla v(t_0), 
\bm{\xi}
\rangle \,dx \right |
\\
& = 
\left | 
\int_{\mathbb{R}^d}
\langle \overline{u}_{\tau_n}(t_0)
\left ( \nabla \overline{v}_{\tau_n}(t_0)
-\nabla v(t_0) \right ), \bm{\xi}\rangle \,dx \right |
+ 
\left | 
\int_{\mathbb{R}^d}
\langle \left ( \overline{u}_{\tau_n}(t_0)
-u(t_0) \right ) \nabla v(t_0), \bm{\xi} \rangle \,dx \right |
 \\
& \leq 
\|\overline{u}_{\tau_n}(t_0)\|_{L^2}\|\bm{\xi}\|_{L^{\infty}}
\int_{\supp{\bm{\xi}}}|\nabla \overline{v}_{\tau_n}(t_0)-\nabla v(t_0)|^2\,dx
+ \left |
\int_{\mathbb{R}^d}
\left ( \overline{u}_{\tau_n}(t_0)
-u(t_0) \right ) 
\langle \nabla v(t_0), \bm{\xi} \rangle \,dx \right |, 
\end{split}
\end{equation*}
where $\supp{\bm{\xi}}$ denotes the support of $\bm{\xi}$. 
The first factor of the fist term on the right-hand side is uniformly bounded 
with respect to $n$, and the third factor converges to $0$ as $n \to \infty$ by 
Proposition \ref{cpt} and Rellich's theorem. 
The second term on the right-hand side converges to $0$ 
since 
$\overline{u}_{\tau_n}(t_0)$ weakly converges to 
$\rightharpoonup u(t_0)$ in $L^2(\mathbb{R}^d)$. 
Thus, the first assertion holds. 
From Proposition \ref{cpt} and its corollary, 
the second and third assertions follow directly. 
The fourth assertion can be proven similarly to the first assertion.  
\end{proof}

\begin{prop}[$L^1$-convergence]\label{L^1}
Assuming conditions \eqref{uniform L^2} and \eqref{lbc}, 
and referring to the triplet 
$(\overline{u}_{\tau_n},\overline{v}_{\tau_n}, \overline{w}_{\tau_n})$ 
as well as $(u, v, w)$ from Lemma \ref{cpt}, 
the following convergence results hold for any test functions 
$\bm{\xi} \in C^{\infty}_c(\mathbb{R}^d\times (0,\infty);\mathbb{R}^d)$ 
and for any $\eta \in C^{\infty}_c(\mathbb{R}^d\times (0,\infty))$, and 
for any $T>0$, 
\begin{equation*} 
\begin{cases}\vspace{2mm}
\displaystyle 
%:1
\lim_{n \to \infty}
\stint{0}{T}{\mathbb{R}^d}
 \langle \nabla \overline{u}_{\tau_n}^m
-\overline{u}_{\tau_n}\nabla \overline{v}_{\tau_n}, \bm{\xi} \rangle\,dxdt
=\stint{0}{T}{\mathbb{R}^d} 
\langle \nabla u^m -u\nabla v, \bm{\xi} \rangle\,dxdt, \\
\vspace{2mm}
%:2 
\displaystyle 
\lim_{n \to \infty} 
\stint{0}{T}{\mathbb{R}^d}( \kappa_1\Delta  \overline{v}_{\tau_n}
- \gamma_1 \overline{v}_{\tau_n} +  \overline{w}_{\tau_n})\eta\, dxdt
= 
\stint{0}{T}{\mathbb{R}^d}( \kappa_1\Delta  v
- \gamma_1 v +  w)\eta\, dxdt, 
 \\ \vspace{2mm}
%:3
\displaystyle 
\lim_{n \to \infty} 
\stint{0}{T}{\mathbb{R}^d}( \kappa_2\Delta  \overline{w}_{\tau_n}
- \gamma_2 \overline{w}_{\tau_n} +  \underline{u}_{\tau_n})\eta\, dxdt
= 
\stint{0}{T}{\mathbb{R}^d}( \kappa_1\Delta  w
- \gamma_1 w +  u)\eta\, dxdt, \\ \vspace{2mm}
%:4 
\displaystyle 
\lim_{n \to \infty} 
\stint{0}{T}{\mathbb{R}^d}
\langle \nabla ( \kappa_1\Delta  \overline{v}_{\tau_n}
- \gamma_1 \overline{v}_{\tau_n} +  \overline{w}_{\tau_n}), \bm{\xi}\rangle 
\, dxdt
= 
\stint{0}{T}{\mathbb{R}^d}
\langle \nabla ( \kappa_1\Delta  v
- \gamma_1 v +  w), \bm{\xi} \rangle \, dxdt, \\
\displaystyle 
\lim_{n \to \infty}
\stint{0}{T}{\mathbb{R}^d}  \underline{u}_{\tau_n}\eta \,dx
=\stint{0}{T}{\mathbb{R}^d} u \eta\,dx. 
\end{cases}
\end{equation*}
\end{prop}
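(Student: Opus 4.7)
The plan is to upgrade the pointwise-in-time convergence of Lemma \ref{pointwise} to convergence of the time-integrals, using the uniform $L^{q_*}(0,T)$-bound on $\mathcal{B}_{\tau_n}$ (with $q_*>1$) furnished by Proposition \ref{uniform bounds}. All five identities are handled by the same template, so I focus on the first.

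First, I would dominate each spatial integrand by $\mathcal{B}_{\tau_n}(t)$. H\"older's inequality applied to
\begin{equation*}
f_n(t) := \int_{\mathbb{R}^d} \langle \nabla \overline{u}_{\tau_n}^m - \overline{u}_{\tau_n}\nabla \overline{v}_{\tau_n}, \bm{\xi}(\cdot, t)\rangle\,dx,
\end{equation*}
together with the uniform $L^{\infty}(0,T;W^{2,2})$-bound on $\overline{v}_{\tau_n}$ from Proposition \ref{uniform bounds}, gives $|f_n(t)| \leq C(\bm{\xi})\,\mathcal{B}_{\tau_n}(t)$. The other four integrands admit analogous bounds after invoking the $L^{\infty}(0,T;H^1)$-bound on $\overline{w}_{\tau_n}$ and, for the identity containing the gradient of $\kappa_1\Delta v - \gamma_1 v + w$, the $L^2(0,T;W^{3,2})$-bound on $\overline{v}_{\tau_n}$. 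Consequently $\{f_n\}$ is bounded, hence equi-integrable, in $L^{q_*}(0,T)$.

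Second, I would discard the bad set $S_{\tau_n}(K)$. By Chebyshev $|S_{\tau_n}(K)| \leq C/K$ uniformly in $n$, and H\"older then yields
\begin{equation*}
\int_{S_{\tau_n}(K)} |f_n(t)-f(t)|\,dt \leq C\,K^{-(q_*-1)/q_*},
\end{equation*}
which vanishes as $K \to \infty$, uniformly in $n$.

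Finally, on the good set $[0,T]\setminus S_{\tau_n}(K)$ one has $|f_n(t)| \leq C K^{1/q_*}$, an integrable dominant. The obstruction to a direct dominated-convergence argument is that this good set depends on $n$, so Lemma \ref{pointwise} cannot be invoked pointwise on a fixed $n$-independent set. I would resolve this by a subsequence-plus-uniqueness maneuver: given an arbitrary subsequence $\{\tau_{n_k}\}$, Fatou's lemma applied to the $L^{q_*}$-bound gives $\liminf_k \mathcal{B}_{\tau_{n_k}}(t) < \infty$ for a.e.\ $t$, and a diagonal extraction produces a further subsequence $\{\tau_{n_{k_j}}\}$ along which the hypothesis of Lemma \ref{pointwise} is satisfied for a.e.\ $t$, whence $f_{n_{k_j}}(t) \to f(t)$ a.e. Vitali's convergence theorem, combined with the equi-integrability and smallness from the previous steps, then delivers $\int_0^T f_{n_{k_j}}\,dt \to \int_0^T f\,dt$. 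Since the limit is uniquely determined by $(u,v,w)$ of Proposition \ref{cpt} independently of the chosen subsequence, the convergence holds along the full sequence $\{\tau_n\}$. The hard part is precisely the mismatch between the $n$-dependent good set and the pointwise hypothesis of Lemma \ref{pointwise}; the subsequence-plus-uniqueness step handles it and is the reason the strict inequality $q_*>1$, rather than mere $L^1$-integrability of $\mathcal{B}_{\tau_n}$, is essential.
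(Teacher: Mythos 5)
The overall architecture matches the paper's: bound $|f_n(t)|$ by $C\,\mathcal{B}_{\tau_n}(t)$, use the uniform $L^{q_*}(0,T)$ bound with $q_*>1$ to control the bad set $S_{\tau_n}(K)$ by Chebyshev and H\"older, and handle the good set by Vitali. You also correctly identify the central obstruction, namely that the good set $[0,T]\setminus S_{\tau_n}(K)$ is $n$-dependent, so Lemma~\ref{pointwise} cannot be invoked on a fixed set.

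However, your resolution of that obstruction has a genuine gap. You claim that Fatou's lemma gives $\liminf_k \mathcal{B}_{\tau_{n_k}}(t)<\infty$ for a.e.\ $t$ (correct), and that ``a diagonal extraction produces a further subsequence along which the hypothesis of Lemma~\ref{pointwise} is satisfied for a.e.\ $t$.'' This does not follow. The hypothesis of Lemma~\ref{pointwise} is $\sup_j \mathcal{B}_{\tau_{n_{k_j}}}(t)<\infty$, a \emph{uniform-in-$j$} bound, whereas $\liminf_k \mathcal{B}_{\tau_{n_k}}(t)<\infty$ only supplies, for each $t$, a $t$-dependent subsequence along which the values stay bounded; there is no countable index to diagonalize over because $t$ ranges over a continuum, and the Chebyshev bound $|\{\mathcal{B}_n>K\}|\leq (C/K)^{q_*}$ is $n$-independent, so it does not produce a summable tail for a Borel--Cantelli argument either. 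Thus the step that is supposed to convert the $n$-dependent good sets into a single $n$-independent one is not justified.

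The paper sidesteps this entirely: it multiplies by the indicator $\mathbbm{1}_{[0,T]\setminus S_{\tau_n}(K)}(t)$ and observes that the product $\rho_n(t)\mathbbm{1}_{[0,T]\setminus S_{\tau_n}(K)}(t)$ tends to $0$ for \emph{every} fixed $t$ along the full sequence: if $t\in S_{\tau_n}(K)$ the indicator kills the term, and along the (possibly finite, possibly infinite) index set where $t\notin S_{\tau_n}(K)$ one has $\mathcal{B}_{\tau_n}(t)\leq K^{1/q_*}$, so Lemma~\ref{pointwise} applies on that sub-index set. Vitali's theorem is then applied to this product rather than to $f_n$ itself, while the bad-set contribution is controlled uniformly in $n$ exactly as you describe. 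This is the decisive simplification your draft is missing; everything else in your proposal is sound, though you should also explicitly note that the limit $\nabla u^m - u\nabla v$ lies in $L^{q_*}(0,T;L^{d/(d-1)})$ (the paper proves this via Fatou and duality) before using $\|f_n - f\|_{L^{q_*}}$ in the bad-set bound.
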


\begin{proof}
Initially, 
we establish that the integrand of the time integral 
on the right-hand side of the first equation in 
the proposition is an element of $L^{q_*}(0,T)$. 
Applying Fatou's Lemma, we obtain 
\begin{equation*} 
\begin{split} 
\int_0^T\liminf_{n \to \infty}\mathcal{B}_{\tau_n}^{q_*}(t)\,dt
\leq \liminf_{n \to \infty}\int_0^T\mathcal{B}_{\tau_n}^{q_*}(t)\,dt 
\leq \sup_{n \in \mathbb{N}}\int_0^T B_{\tau_n}^{q_*}(t)\,dt<\infty. 
\end{split}
\end{equation*}
Thus, there exists 
an $\mathscr{L}^1$-negligible subset such that
\begin{equation}\label{PWB}
\liminf_{n \to \infty}\mathcal{B}_{\tau_n}^{q_*}(t)<\infty, \quad 
 \text{ for all } t \in [0,T]\setminus \mathcal{N}.
\end{equation}
For every $t_0 \in [0,T]\setminus \mathcal{N}$, 
there exists a subsequence 
$\{\tau_n^0\} \subset \{\tau_n\}$ such that 
\begin{equation} \label{subsq}
\sup_{j \in \mathbb{N}}\mathcal{B}_{\tau_n^0}^{q_*}(t_0)<\infty, 
\quad \lim_{n \to \infty}\mathcal{B}_{\tau_n^0}^{q_*}(t_0) 
= \liminf_{n \to \infty}\mathcal{B}_{\tau_n}^{q_*}(t_0). 
\end{equation}
By Lemma \ref{pointwise}, we can deduce that for 
$\bm{\xi} \in C^{\infty}_c(\mathbb{R}^d;\mathbb{R}^d)$, 
\begin{equation*} 
\begin{split} 
\lim_{n \to \infty}\int_{\mathbb{R}^d} 
\langle \nabla \overline{u}_{\tau_{n}^0}^m(t_0)
-\overline{u}_{\tau_{n}^0}(t_0)
\nabla \overline{v}_{\tau_{n}^0}(t_0), \bm{\xi} \rangle\,dx
=\int_{\mathbb{R}^d} \langle \nabla u^m(t_0) -u(t_0)\nabla v(t_0), \bm{\xi} \rangle\,dx
\end{split}
\end{equation*}
holds. 
Furthermore, 
by H\"older's inequality, the triangle inequality and the inequality \eqref{ugradv}, 
the following estimate holds 
\begin{equation*} 
\begin{split} 
\int_{\mathbb{R}^d}\langle 
\nabla \overline{u}_{\tau_{n}^0}^m(t_0)
-\overline{u}_{\tau_{n}^0}(t_0)
\nabla \overline{v}_{\tau_{n}^0}(t_0), \bm{\xi} \rangle\,dx  
& 
\leq 
\|
\nabla \overline{u}_{\tau_{n}^0}^m(t_0)
-\overline{u}_{\tau_{n}^0}(t_0)
\nabla \overline{v}_{\tau_{n}^0}(t_0)
 \|_{L^{\frac{d-1}{d}}}
  \|\bm{\xi}\|_{L^d} \\ 
& \leq 
( \|\nabla \overline{u}_{\tau_{n}^0}^m(t_0)\|_{L^{\frac{d}{d-1}}}
+ C_5\|\overline{u}_{\tau_{n}^0}(t_0)\|_{L^2})\|\bm{\xi}\|_{L^d}
\\
& \leq C
\mathcal{B}_{\tau_n^0}(t_0)   \|\bm{\xi}\|_{L^d}
\end{split}
\end{equation*}
for some positive constant $C$. 
Thus, by duality and \eqref{subsq}, we deduce that 
\begin{equation*} 
\begin{split} 
\| \nabla u^m(t_0) -u(t_0)\nabla v(t_0)\|_{L^{\frac{d}{d-1}}}
\leq \liminf_{n \to \infty}C\mathcal{B}_{\tau_n}(t_0). 
\end{split}
\end{equation*}
Since this estimate holds for every $t_0 \in [0,T]\setminus \mathcal{N}$, 
we can conclude that
\begin{equation*} 
\begin{split} 
\int_0^T\| \nabla u^m(t) -u(t)\nabla v(t)\|_{L^{\frac{d}{d-1}}}^{q_*}\,dt
& \leq 
C
\int_0^T
\liminf_{n \to \infty}\mathcal{B}_{\tau_n}^{q_*}(t)\,dt
\leq C
\liminf_{n \to \infty}\int_0^T\mathcal{B}_{\tau_n}^{q_*}(t)\,dt<\infty.
\end{split}
\end{equation*}
For arbitrary $\bm{\xi} \in C^{\infty}_c(\mathbb{R}^d \times (0,\infty);\mathbb{R}^d)$, 
let us define 
\begin{equation*} 
\begin{split} 
\rho_n(t):=\left |  \int_{\mathbb{R}^d}
\langle \nabla \overline{u}_{\tau_n}^m
-\overline{u}_{\tau_n}\nabla \overline{v}_{\tau_n}, \bm{\xi}\rangle \,dx
-\int_{\mathbb{R}^d}
\langle \nabla u^m -u \nabla v, \bm{\xi} \rangle \,dx \right | 
\end{split}
\end{equation*}
and note that $\rho_n \in L^{q_*}(0,T)$ with $\sup_{n}\|\rho_n\|_{L^{q_*}}<\infty$.  
According to Lemma \ref{pointwise}, it holds that 
$\rho_n(t)\mathbbm{1}_{[0,T]\setminus S_{\tau_n}}(t) \to 0$ 
for all $t \in [0,T]$. Furthermore, 
by H\"older's inequality
\begin{equation*} 
\begin{split}
\int_{E}\rho_n(t)\mathbbm{1}_{[0,T]\setminus S_{\tau_n}}(t) \,dt 
\leq \left (\mathscr{L}^1(E) \right )^{\frac{q_*-1}{q_*}}
\|\rho_n\|_{L^{q_*}}
\leq (\mathscr{L}^1(E))^{\frac{q_*-1}{q_*}}
\sup_n \|\rho_n\|_{L^{q_*}}, 
\end{split}
\end{equation*}
thereby showing that $\rho_n\mathbbm{1}_{[0,T]\setminus S_{\tau_n}}$ 
is uniform integrable. 
By the Vitali convergence theorem, we have 
\begin{equation*} 
\begin{split} 
\lim_{n \to \infty}\int_{[0,T]\setminus S_{\tau_n}}\rho_n(t)\,dt=
\lim_{n \to \infty}\int_0^T \rho_n(t)\mathbbm{1}_{[0,T]\setminus  S_{\tau_n}}(t)\,dt
=0. 
\end{split}
\end{equation*}
On the other hand, we have 
\begin{equation*} 
\begin{split} 
 \int_{S_{\tau_n}}\rho_n(t)\,dt
 & \leq \|\rho_n\|_{L^{q_*}}
\bigl (
\mathscr{L}^1(S_{\tau_n})
\bigr )
 ^{\frac{q_*-1}{q_*}} \\
 & \leq 
 \bigl (
\mathscr{L}^1(S_{\tau_n})
\bigr )
 ^{\frac{q_*-1}{q_*}}
\sup_n \|\rho_n\|_{L^{q_*}}
  \to 0 \ (K \to \infty). 
\end{split}
\end{equation*}
Therefore, 
\begin{equation*} 
\begin{split} 
\int_0^T \rho_n (t)\,dt 
& = \int_{[0,T]\setminus S_{\tau_n}(K)}\rho_n(t)\,dt + \int_{S_{\tau_n}(K)}\rho_n(t)\,dt
\end{split}
\end{equation*}
where the first term on the right-hand side converges to $0$ as $n \to \infty$ 
for any $K$, and the second term on the right-hand side can be made arbitrarily small 
by choosing $K$ sufficiently large, independently of $n$. 
Thus, the left-hand side approaches $0$ as $n \to \infty$, 
establishing the first assertion of the proposition. 
Similarly, the second, third and fourth assertions can also be demonstrated.
\end{proof}

\section{Proof of Theorems \ref{subcritical} and \ref{critical}} 
From the Taylor expansion, we obtain
\begin{equation*} 
\begin{split} 
|\langle y-x, \nabla \varphi(y) \rangle
-(\varphi(y)-\varphi(x))|
\leq  \frac{\|D^2 \varphi\|_{L^{\infty}}}{2}
|x-y|^2. 
\end{split}
\end{equation*}
Integrating both sides over 
$p_k \in \Gamma_o(u_{\tau}^{k-1}, u_{\tau}^k)$, we obtain 
\begin{multline*}
\displaystyle 
\left |  \int_{\mathbb{R}^d \times \mathbb{R}^d}
\langle y-x, \nabla \varphi(y) \rangle \,Mdp_k(x,y)
 - \int_{\mathbb{R}^d} (u_{\tau}^k-u_{\tau}^{k-1})\varphi \,dx
 \right |
 \leq   
 \frac{\|D^2 \varphi\|_{L^{\infty}}}{2}
\mathcal{W}_2^2(u_{\tau}^k, u_{\tau}^{k-1}). 
\end{multline*}
Thus, for any function 
${\varphi} \in C^{\infty}_c(\mathbb{R}^d \times (0,\infty))$, defining 
\begin{equation*} 
\begin{cases}\vspace{2mm}
\varphi_{\tau}^k :=\varphi(\cdot, k\tau), \\
\dot{\varphi}_{\tau}^k:=\partial_t \varphi(\cdot, k\tau),  
\end{cases}
\begin{cases}  \vspace{2mm}
\overline{\varphi}_{\tau}(t) := \varphi_{\tau}^k, \quad 
 t \in ((k-1)\tau, k\tau], \\ \vspace{2mm}
\underline{\dot{\varphi}}_{\tau}(t):=\dot{\varphi}_{\tau}^{k-1}, 
\quad  t \in ((k-1)\tau, k\tau],  
\end{cases}
\bm{\xi}= \nabla \varphi_{\tau}^k, 
\end{equation*}
we establish from the first equation in \eqref{EL} that 
\begin{equation*} 
\begin{split} 
\left | \int_{\mathbb{R}^d} (u_{\tau}^k-u_{\tau}^{k-1})\varphi_{\tau}^k \,dx
+  \tau \int_{\mathbb{R}^d} \langle \nabla (u_{\tau}^k)^m
-  u_{\tau}^k \nabla v_{\tau}^{k}, \nabla \varphi_{\tau}^k
 \rangle\, dx \right |
\leq 
 \frac{\|D^2_x \varphi\|_{L^{\infty}}}{2}
\mathcal{W}_2^2(u_{\tau}^k, u_{\tau}^{k-1}). 
\end{split}
\end{equation*}
Considering the identity 
\[
(u_{\tau}^k-u_{\tau}^{k-1})\varphi_{\tau}^k
= u_{\tau}^k\varphi_{\tau}^k - u_{\tau}^{k-1}\varphi_{\tau}^{k-1}
- u_{\tau}^{k-1}(\varphi_{\tau}^{k}-\varphi_{\tau}^{k-1})
\]
and the inequality
\[
|(\varphi_{\tau}^{k}-\varphi_{\tau}^{k-1})
- \tau \dot{\varphi}_{\tau}^{k-1}|
\leq \frac{\|\partial_t^2 \varphi\|_{L^{\infty}}}{2}\tau^2, 
\]
summing from $k=1$ to $k=\lceil T/\tau \rceil$, we derive 
\begin{multline}\label{preweakform}
\left | 
\int_{\mathbb{R}^d}
(u_{\tau}^{\lceil T/\tau \rceil}\varphi_{\tau}^{\lceil T/\tau \rceil} 
- u_{\tau}^{0}\varphi_{\tau}^{0})\,dx
- 
\stint{0}{\lceil T/\tau \rceil \tau}{\mathbb{R}^d}
( \underline{u}_{\tau}\underline{\dot{\varphi}}_{\tau}  
-\langle \nabla \overline{u}_{\tau}^m
-  \overline{u}_{\tau} \nabla \overline{v}_{\tau}, \nabla \overline{\varphi}_{\tau}
 \rangle ) \,dxdt
  \right | \\
\leq  \frac{\|D^2_x \varphi\|_{L^{\infty}}}{2}\sum_{k=1}^{\lceil T/\tau \rceil}
\mathcal{W}_2^2(u_{\tau}^k, u_{\tau}^{k-1})
+ \lceil T/\tau \rceil \tau^2 \frac{M\|\partial_t^2 \varphi\|_{L^{\infty}}}{2}. 
\end{multline}
Considering $\{\tau_n\}$ as $\tau$ from Lemma \ref{cpt}, 
as $n \to \infty$, the right-hand side converges to $0$ 
due to Proposition \ref{uniform bounds}. 
Given that $\varphi \in C^{\infty}_c(\mathbb{R}^d) \times (0,\infty)$, 
the first term inside the absolute value on the left-hand side is $0$ 
for sufficiently large $T$, independent of $n$. 
As $n \to \infty$, 
$\underline{\dot{\varphi}}_{\tau_n}$ and $\nabla \overline{\varphi}_{\tau_n}$ 
converge uniformly to $\partial_t \varphi$ and $\nabla \varphi$ respectively. 
Therefore, considering Proposition \ref{L^1}, 
the second term inside that absolute value on the left-hand side converges to 
\[
\stint{0}{T}{\mathbb{R}^d}
[u\partial_t \varphi- \langle \nabla u^m - u \nabla v, \nabla \varphi\rangle  ]\,dxdt, 
\]
which equals to $0$. 
This holds for any sufficiently large $T>0$, 
and by letting $T \to \infty$, 
it follows that $(u,v)$ satisfies the first equation of 
Definition \ref{weak solutions}-(iv). 
From the second and the third equations in \eqref{EL}, 
by similarly summing over $k$ and considering the limit as $n \to \infty$, 
it becomes evident that the limit functions $(u,v,w)$ constitute a weak solution as defined in Definition \ref{weak solutions}.

\section{Proof of Theorem \ref{energy inequality}}

\begin{defn}[De Giorgi variational interpolation {\cite[Definition 3.3.1]{a-g-s}}]
\label{DefinitionDGVI}
The \textbf{De Giorgi variational interpolation} $\Tilde{U}_{\tau}$ 
of $\{u_{\tau}^k\}$ 
is defined by 
\begin{equation*}
\Tilde{U}_{\tau}(t) :=U_{\sigma}^k \   \text{ for }\ 
t= (k-1)\tau + \sigma. 
\end{equation*}
where 
$U_{\sigma}^k$ is defined by 
\begin{equation*}\label{DGVI}
\begin{split}
U_{\sigma}^k & 
\in \argmin_{u \in M\mathscr{P}_2(\mathbb{R}^d)\cap L^m(\mathbb{R}^d)}
\left \{ \mathcal{E}(u, v_{\tau}^k) 
+ \frac{1}{2\sigma}\mathcal{W}_2^2(u, u_{\tau}^{k-1}) \right \}
\end{split}
\end{equation*}
for $ \sigma \in (0,\tau]$ and for $k=1, 2, \cdots, N$. 
\end{defn}
$\overline{u}_{\tau}$ and $\Tilde{U}_{\tau}$ 
take the same values $u_{\tau}^k$ at 
$t \in \{1, 2, \cdots, N\}$ and differ in their interpolation 
for $t\in ((k-1)\tau, k\tau)$, 
but they belong to the same function space and have the same limit. 
\begin{lem}[slope estimate]\label{lem slope estimate2}
Let $U_{\sigma}^k$ be a solution in \eqref{DGVI}. 
Then, 
\begin{equation} \label{slope estimate2}
\begin{split} 
\left ( \int_{\mathbb{R}^d}\frac{|\nabla (U_{\sigma}^k)^m
-U_{\sigma}^k\nabla v_{\tau}^k|^2}{U_{\sigma}^k}\,dx \right )^{\frac{1}{2}}
\leq \frac{\mathcal{W}_2(U_{\sigma}^k,u_{\tau}^{k-1})}{\sigma}
\end{split}
\end{equation}
holds. 
\end{lem}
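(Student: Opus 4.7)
The plan is to mimic the proof of Lemma \ref{Lem slope estimate} almost verbatim, substituting the discrete step $\tau$ with the intermediate parameter $\sigma \in (0,\tau]$ and the discrete solution $u_{\tau}^k$ with the De Giorgi interpolant $U_{\sigma}^k$. By Definition \ref{DefinitionDGVI}, $U_{\sigma}^k$ minimizes exactly the same type of functional as $u_{\tau}^k$ did in \eqref{mm}, namely
\[
u \mapsto \mathcal{E}(u, v_{\tau}^k) + \frac{1}{2\sigma}\mathcal{W}_2^2(u, u_{\tau}^{k-1})
\]
over $M\mathscr{P}_2(\mathbb{R}^d)\cap L^m(\mathbb{R}^d)$, the only differences being the denominator $\sigma$ in place of $\tau$ and the fact that the potential $v_{\tau}^k$ is frozen (rather than coupled). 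Consequently, the four-step derivation in the proof of Proposition \ref{propEL} (G\^ateaux derivative of the internal energy, G\^ateaux derivative of the potential energy, right G\^ateaux derivative of the Wasserstein distance, assembly of the Euler-Lagrange inequality) applies without modification.

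First, I would write down the resulting Euler-Lagrange equation for $U_{\sigma}^k$: testing with $U_s:=(\bm{id}+s\bm{\xi})\push U_{\sigma}^k$ for $\bm{\xi} \in C^{\infty}_c(\mathbb{R}^d;\mathbb{R}^d)$ and letting $s \downarrow 0$ yields
\[
\frac{1}{\sigma}\int_{\mathbb{R}^d \times \mathbb{R}^d} \langle y-x, \bm{\xi}(y)\rangle\,Mdp_{\sigma}^k(x,y) + \int_{\mathbb{R}^d}\langle \nabla (U_{\sigma}^k)^m - U_{\sigma}^k\nabla v_{\tau}^k, \bm{\xi}\rangle\,dx = 0,
\]
where $p_{\sigma}^k \in \Gamma_o(u_{\tau}^{k-1}, U_{\sigma}^k)$. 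Next, I would apply the Cauchy-Schwarz inequality to the transport-plan term, bounding it by $\mathcal{W}_2(U_{\sigma}^k, u_{\tau}^{k-1})\bigl(\int_{\mathbb{R}^d}|\bm{\xi}|^2 U_{\sigma}^k\,dx\bigr)^{1/2}$, so that the linear functional $\bm{\xi} \mapsto \int\langle \nabla(U_{\sigma}^k)^m-U_{\sigma}^k\nabla v_{\tau}^k, \bm{\xi}\rangle\,dx$ extends continuously to $L^2(\mathbb{R}^d;U_{\sigma}^k\mathscr{L}^d)$ with norm at most $\mathcal{W}_2(U_{\sigma}^k,u_{\tau}^{k-1})/\sigma$. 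The Riesz representation theorem then furnishes $R_{\sigma}^k \in L^2(\mathbb{R}^d;U_{\sigma}^k\mathscr{L}^d)$ with
\[
\int_{\mathbb{R}^d}\langle \nabla(U_{\sigma}^k)^m - U_{\sigma}^k \nabla v_{\tau}^k, \bm{\xi}\rangle\,dx
= \int_{\mathbb{R}^d}\langle R_{\sigma}^k, \bm{\xi}\rangle\,U_{\sigma}^k\,dx,
\qquad \|R_{\sigma}^k\|_{L^2(U_{\sigma}^k \mathscr{L}^d)} \leq \frac{\mathcal{W}_2(U_{\sigma}^k,u_{\tau}^{k-1})}{\sigma},
\]
and identifying $U_{\sigma}^k R_{\sigma}^k = \nabla(U_{\sigma}^k)^m - U_{\sigma}^k \nabla v_{\tau}^k$ almost everywhere on $\{U_{\sigma}^k>0\}$ gives \eqref{slope estimate2} after dividing by $U_{\sigma}^k$ and squaring.

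I do not expect any genuine obstacle: the regularity required for the push-forward calculus (namely $u_{\tau}^{k-1}, U_{\sigma}^k \in M\mathscr{P}_2(\mathbb{R}^d)\cap L^m(\mathbb{R}^d)$ and $v_{\tau}^k \in W^{4,2}(\mathbb{R}^d)$) is already in force by Proposition \ref{propEL} and Lemma \ref{pointwise estimate}, and the fact that $p_{\sigma}^k$ plans between $u_{\tau}^{k-1}$ and $U_{\sigma}^k$ rather than between $u_{\tau}^{k-1}$ and $u_{\tau}^k$ plays no role beyond changing the Wasserstein distance that appears in the final bound. The only small bookkeeping point worth stating explicitly is that existence of $U_{\sigma}^k$ itself is guaranteed by the same argument used for $u_{\tau}^k$ since the lower bound \eqref{LEG}-type estimate for the internal-potential-Wasserstein sum depends only on the variational form and not on the size of the step; after that, the slope estimate is a formal consequence of the Euler-Lagrange equation and Riesz duality.
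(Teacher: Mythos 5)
Your proposal is correct and matches the paper's argument exactly: the paper simply cites the derivation of the first Euler--Lagrange equation in Proposition \ref{propEL} (verbatim with $\sigma$ replacing $\tau$ and $U_{\sigma}^k$ replacing $u_{\tau}^k$) and then invokes the same Cauchy--Schwarz/Riesz-duality argument as in Lemma \ref{Lem slope estimate}. You have merely made explicit the steps the paper compresses into the phrases ``similarly to'' and ``by the same method.''
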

\begin{proof}
Similarly to the derivation of the first equation in 
Proposition \ref{EL}, we obtain  
\[\int_{\mathbb{R}^d \times \mathbb{R}^d}
\langle y-x, \bm{\xi}(y) \rangle \,MdP_k(x,y)+ 
 \sigma \int_{\mathbb{R}^d} \langle \nabla (U_{\sigma}^k)^m
- U_{\sigma}^k \nabla v_{\tau}^{k}, \bm{\xi} \rangle\, dx  = 
0. \]
Here, $P_k \in \Gamma_o(u_{\tau}^{k-1}, U_{\sigma}^k)$. 
From this, by the same method as the derivation of 
Lemma \ref{Lem slope estimate}, 
we obtain 
\[
\left ( \int_{\mathbb{R}^d}\frac{|\nabla (U_{\sigma}^k)^m
-U_{\sigma}^k\nabla v_{\tau}^k|^2}{U_{\sigma}^k}\,dx \right )^{\frac{1}{2}}
\leq \frac{\mathcal{W}_2(U_{\sigma}^k,u_{\tau}^{k-1})}{\sigma}. 
\]
\end{proof}

\begin{lem}\label{preEI}
Let $\overline{u}_{\tau}$ and $\Tilde{U}_{\tau}$ be the piecewise constant interpolation 
defined in Definition \ref{piecewise constant interpolation} 
and the De Giorgi variational interpolation defined in \ref{DefinitionDGVI}, 
respectively. Then, for any $T>0$ 
the following inequality holds. 
\begin{multline*}
\frac{1}{2}\stint{0}{T}{\mathbb{R}^d} 
\frac{|\nabla \overline{u}_{\tau}^m -  \overline{u}_{\tau} 
\nabla \overline{v}_{\tau}|^2}
{\overline{u}_{\tau}}\,dxdt
+ 
\frac{1}{2}
\stint{0}{T}{\mathbb{R}^d} 
\frac{|\nabla \Tilde{U}_{\tau}^m
-\Tilde{U}_{\tau}\nabla \overline{v}_{\tau}|^2}{\Tilde{U}_{\tau}}\,dxdt \\
+ 
\frac{\varepsilon_1 \kappa_2 + \varepsilon_2 \kappa_1}{\varepsilon_1^2}
\stint{0}{T}{\mathbb{R}^d}
|\nabla 
\left \{ \kappa_1
\Delta \overline{v}_{\tau}(t) -\gamma_1\overline{v}_{\tau}(t)
+ \overline{w}_{\tau}(t) \right \} |^2 \,dxdt \\
+ \frac{\gamma_1 \varepsilon_2 + \gamma_2 \varepsilon_1}{\varepsilon_1^2}
\stint{0}{T}{\mathbb{R}^d}|
\kappa_1 \Delta \overline{v}_{\tau}(t) -\gamma_1\overline{v}_{\tau}(t)
+ \overline{w}_{\tau}(t)|^2 \,dxdt \\
\leq \mathcal{L}(u_{\tau}^{0}, v_{\tau}^{0}, w_{\tau}^{0}) 
- \mathcal{L}
(u_{\tau}^{\lceil T/\tau \rceil }, 
v_{\tau}^{\lceil T/\tau \rceil }, 
w_{\tau}^{\lceil T/\tau \rceil }). 
\end{multline*}
\end{lem}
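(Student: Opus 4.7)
The plan is to mimic the proof of Proposition \ref{ED}, but replace the one-sided minimization inequality used there with the sharper De Giorgi envelope identity, and then insert the slope estimates from Lemmas \ref{Lem slope estimate} and \ref{lem slope estimate2} to turn the Wasserstein increments into the two dissipation integrals in $\bar u_\tau$ and $\tilde U_\tau$.

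Concretely, the first step is the envelope-theorem observation for the parametric minimization defining $U_{\sigma}^k$. For fixed $k$, the function $g_k(\sigma):=\mathcal{E}(U_{\sigma}^k,v_{\tau}^k)+\tfrac{1}{2\sigma}\mathcal{W}_2^2(U_{\sigma}^k,u_{\tau}^{k-1})$ coincides with the minimized value over $\sigma$; by the standard envelope computation (as in \cite[Lemma 3.1.2, Theorem 3.1.4]{a-g-s}) $g_k$ is absolutely continuous on $(0,\tau]$ with $g_k(0^+)=\mathcal{E}(u_\tau^{k-1},v_\tau^k)$ and $g_k'(\sigma)=-\mathcal{W}_2^2(U_{\sigma}^k,u_{\tau}^{k-1})/(2\sigma^2)$. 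Integrating over $\sigma\in(0,\tau]$ yields the De Giorgi identity
\begin{equation*}
\mathcal{E}(u_\tau^{k-1},v_\tau^k)-\mathcal{E}(u_\tau^k,v_\tau^k)
=\frac{\mathcal{W}_2^2(u_\tau^k,u_\tau^{k-1})}{2\tau}
+\int_{0}^{\tau}\frac{\mathcal{W}_2^2(U_{\sigma}^k,u_{\tau}^{k-1})}{2\sigma^2}\,d\sigma.
\end{equation*}
Substituting this (in place of the single-step inequality \eqref{estimate of wsd}) into the argument of Proposition \ref{ED} — that is, expanding $\mathcal{E}(u_\tau^{k-1},v_\tau^k)=\mathcal{E}(u_\tau^{k-1},v_\tau^{k-1})-\tau\langle u_\tau^{k-1},\partial_t v_\tau^k\rangle_{L^2}$ and then expanding the last pairing via the EL system \eqref{RWEL} into the same $I_1+I_2+I_3$ — produces the improved one-step identity
\begin{equation*}
\frac{\mathcal{W}_2^2(u_\tau^k,u_\tau^{k-1})}{2\tau}
+\int_{0}^{\tau}\frac{\mathcal{W}_2^2(U_{\sigma}^k,u_{\tau}^{k-1})}{2\sigma^2}\,d\sigma
+\mathcal{D}_\tau^k
\leq \mathcal{L}(u_\tau^{k-1},v_\tau^{k-1},w_\tau^{k-1})-\mathcal{L}(u_\tau^k,v_\tau^k,w_\tau^k).
\end{equation*}

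The second step is to convert the two Wasserstein sums on the left into the dissipation integrals in the lemma using the slope bounds. Lemma \ref{Lem slope estimate} gives $\tau\int|\nabla(u_\tau^k)^m-u_\tau^k\nabla v_\tau^k|^2/u_\tau^k\,dx\le\mathcal{W}_2^2(u_\tau^k,u_\tau^{k-1})/\tau$, which after dividing by $2$ and summing in $k$ produces exactly $\tfrac{1}{2}\int\!\!\int|\nabla\bar u_\tau^m-\bar u_\tau\nabla\bar v_\tau|^2/\bar u_\tau$; Lemma \ref{lem slope estimate2}, integrated in $\sigma\in(0,\tau]$ and summed in $k$, produces the analogous half-dissipation for $\tilde U_\tau$, because $\tilde U_\tau(t)=U_{\sigma}^k$ with $t=(k-1)\tau+\sigma$ gives $dt=d\sigma$. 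Thus the two slope estimates transform the left-hand side of the one-step inequality into precisely the two flux terms $\frac12\int|\nabla\bar u_\tau^m-\bar u_\tau\nabla\bar v_\tau|^2/\bar u_\tau$ and $\frac12\int|\nabla\tilde U_\tau^m-\tilde U_\tau\nabla\bar v_\tau|^2/\tilde U_\tau$ appearing in the lemma.

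For the remaining two terms on the left-hand side of the lemma, it suffices to keep only the first-order-in-$\tau$ contributions inside $\mathcal{D}_\tau^k$, discarding the other nonnegative terms:
\begin{equation*}
\mathcal{D}_\tau^k \ \ge\ \tau(\varepsilon_1\kappa_2+\varepsilon_2\kappa_1)\|\partial_t\nabla v_\tau^k\|_{L^2}^2 + \tau(\gamma_1\varepsilon_2+\gamma_2\varepsilon_1)\|\partial_t v_\tau^k\|_{L^2}^2.
\end{equation*}
The second equation of the EL system gives $\varepsilon_1\partial_t v_\tau^k=\kappa_1\Delta v_\tau^k-\gamma_1 v_\tau^k+w_\tau^k$, so $\|\partial_t v_\tau^k\|_{L^2}^2=\varepsilon_1^{-2}\|\kappa_1\Delta v_\tau^k-\gamma_1 v_\tau^k+w_\tau^k\|_{L^2}^2$ and likewise for the gradient. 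Summing from $k=1$ to $N=\lceil T/\tau\rceil$ converts these into the two remaining integrals in the lemma over $[0,N\tau]$, with the prescribed constants $(\varepsilon_1\kappa_2+\varepsilon_2\kappa_1)/\varepsilon_1^2$ and $(\gamma_1\varepsilon_2+\gamma_2\varepsilon_1)/\varepsilon_1^2$. Finally, since all four integrands are nonnegative, restricting the time integral from $[0,N\tau]$ to $[0,T]\subset[0,N\tau]$ only decreases the left side, while the right-hand side telescopes exactly to $\mathcal{L}(u_\tau^0,v_\tau^0,w_\tau^0)-\mathcal{L}(u_\tau^{\lceil T/\tau\rceil},v_\tau^{\lceil T/\tau\rceil},w_\tau^{\lceil T/\tau\rceil})$.

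The main obstacle is the envelope/De Giorgi identity in the first step: justifying that $g_k$ is absolutely continuous with the claimed derivative when the penalizing quantity is the Wasserstein squared distance rather than a Hilbert norm. The argument is the classical one of \cite[\S 3.1]{a-g-s}, using the lower semicontinuity of $\mathcal{E}(\cdot,v_\tau^k)+\mathcal{W}_2^2(\cdot,u_\tau^{k-1})/(2\sigma)$, the existence and $L^m$--$\mathscr{P}_2$ compactness of minimizers established in Section 2, and the elementary identity $g_k(\sigma_1)-g_k(\sigma_2)=(\tfrac1{2\sigma_1}-\tfrac1{2\sigma_2})\mathcal{W}_2^2(U_{\sigma_1}^k,u_\tau^{k-1})+\text{error}$; once absolute continuity is in hand the envelope formula follows by differentiating under the optimum. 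Apart from this technical point, every remaining step is a direct transcription of the argument in Proposition \ref{ED}, enriched with the two slope estimates \eqref{slope estimate} and \eqref{slope estimate2}.
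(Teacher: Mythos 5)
Your proposal is correct and follows essentially the same route as the paper: both invoke the De Giorgi/Moreau--Yosida envelope identity (the paper cites \cite[Theorem 3.1.4]{a-g-s} directly, you derive it as an absolute-continuity/envelope argument from the same source), substitute this in place of the one-sided minimization step of Proposition \ref{ED}, and then lower-bound the two Wasserstein sums with the slope estimates \eqref{slope estimate} and \eqref{slope estimate2} and the remainder $\sum_k\mathcal{D}_\tau^k$ by its order-$\tau$ part rewritten through the Euler--Lagrange equation. A minor stylistic remark: the one-step relation you label an ``improved identity'' and write with ``$\leq$'' is in fact an equality in the paper (the De Giorgi formula upgrades the minimality inequality of Proposition \ref{ED} to an equality), though of course only the inequality direction is needed here.
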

\begin{proof}
By applying the derivative of Moreaux-Yosida approximation 
\cite[Theorem 3.1.4]{a-g-s} 
to the third minimizing problem in \eqref{mm}, 
it holds for $k=1, 2, \ldots$ that 
\begin{equation*}
\begin{split}
\frac{\mathcal{W}_2^2(u_{\tau}^k, u_{\tau}^{k-1})}{2\tau}
+\int_0^{\tau}\frac{\mathcal{W}_2^2(U_{\sigma}^k, u_{\tau}^{k-1})}{2\sigma^2}\,d\sigma 
& = \mathcal{E}(u_{\tau}^{k-1},v_{\tau}^k) - \mathcal{E}(u_{\tau}^k,v_{\tau}^k) \\
& = \mathcal{L}(u_{\tau}^{k-1}, v_{\tau}^{k-1}, w_{\tau}^{k-1}) 
- \mathcal{L}(u_{\tau}^k, v_{\tau}^k, w_{\tau}^k)
-\mathcal{D}_{\tau}^k, 
\end{split}
\end{equation*}  
Summing over $k$, we have 
\begin{multline*}
\sum_{k=1}^{\lceil T/\tau \rceil }
\frac{\mathcal{W}_2^2(u_{\tau}^k, u_{\tau}^{k-1})}{2\tau}
+
\sum_{k=1}^{\lceil T/\tau \rceil }
\int_0^{\tau}\frac{\mathcal{W}_2^2(U_{\sigma}^k, u_{\tau}^{k-1})}{2\sigma^2}\,d\sigma 
+ \sum_{k=1}^{\lceil T/\tau \rceil }
\mathcal{D}_{\tau}^k \\
 = \mathcal{L}(u_{\tau}^{0}, v_{\tau}^{0}, w_{\tau}^{0}) 
- \mathcal{L}
(u_{\tau}^{\lceil T/\tau \rceil}, 
v_{\tau}^{\lceil T/\tau \rceil }, 
w_{\tau}^{\lceil T/\tau \rceil }). 
\end{multline*}
The three terms on the left-hand side are bounded below as follows, according to 
the inequalities \eqref{slope estimate} and \eqref{slope estimate2},  
and the representation of $\mathcal{D}_{\tau}^k$.
\begin{align*}
& \sum_{k=1}^{\lceil T/\tau \rceil}
\frac{\mathcal{W}_2^2(u_{\tau}^k, u_{\tau}^{k-1})}{2\tau}
\geq 
\sum_{k=1}^{\lceil T/\tau \rceil}
 \frac{\tau}{2} 
\int_{\mathbb{R}^d} 
\frac{| \nabla (u_{\tau}^k)^m -  u_{\tau}^k \nabla v_{\tau}^k|^2}{u_{\tau}^k}\,dx
\geq \frac{1}{2} 
\stint{0}{T}{\mathbb{R}^d} 
\frac{|\nabla \overline{u}_{\tau}^m -  \overline{u}_{\tau} 
\nabla \overline{v}_{\tau}|^2}
{\overline{u}_{\tau}}\,dxdt.  \\
&\sum_{k=1}^{\lceil T/\tau \rceil}\int_0^{\tau}
\frac{\mathcal{W}_2^2(U_{\sigma}^k, u_{\tau}^{k-1})}{2\sigma^2}\,d\sigma 
 \geq 
 \frac{1}{2}
\sum_{k=1}^{\lceil T/\tau \rceil}
\stint{0}{\tau}{\mathbb{R}^d}
\frac{|\nabla (U_{\sigma}^k)^m-U_{\sigma}^k\nabla v_{\tau}^k|^2}{U_{\sigma}^k}\,dxd\sigma  \\
& \hspace{42mm}=\frac{1}{2} \sum_{k=1}^{\lceil T/\tau \rceil}
\stint{(k-1)\tau}{k\tau}{\mathbb{R}^d} 
\frac{|\nabla \Tilde{U}_{\tau}^m
-\Tilde{U}_{\tau}\nabla \overline{v}_{\tau}|^2}{\Tilde{U}_{\tau}}\,dxdt 
%\displaybreak
\\ & 
\hspace{42mm} \geq \frac{1}{2}\stint{0}{T}{\mathbb{R}^d} 
\frac{|\nabla \Tilde{U}_{\tau}^m
-\Tilde{U}_{\tau}\nabla \overline{v}_{\tau}|^2}{\Tilde{U}_{\tau}}\,dxdt,  
%\displaybreak
\\
&\sum_{k=1}^{\lceil T/\tau \rceil}\mathcal{D}_{\tau}^k
 \geq 
(\varepsilon_1 \kappa_2 + \varepsilon_2 \kappa_1)
\sum_{k=1}^{\lceil T/\tau \rceil}\tau \|\partial_t \nabla v_{\tau}^k \|_{L^2}^2 
+ 
(\gamma_1 \varepsilon_2 + \gamma_2 \varepsilon_1)
\sum_{k=1}^{\lceil T/\tau \rceil}\tau \|\partial_t v_{\tau}^k \|_{L^2}^2 \\
& \hspace{12mm} \geq 
(\varepsilon_1 \kappa_2 + \varepsilon_2 \kappa_1)
\int_0^T\|\nabla \partial_t \overline{v}_{\tau}(t) \|_{L^2}^2 \,dt
+ (\gamma_1 \varepsilon_2 + \gamma_2 \varepsilon_1)
\int_0^T\|\partial_t \overline{v}_{\tau}(t) \|_{L^2}^2 \,dt
%\displaybreak
 \\
& \hspace{12mm} =
\frac{\varepsilon_1 \kappa_2 + \varepsilon_2 \kappa_1}{\varepsilon_1^2}
\int_0^T
\|\nabla 
\left \{ \kappa_1
\Delta \overline{v}_{\tau}(t) -\gamma_1\overline{v}_{\tau}(t)
+ \overline{w}_{\tau}(t) \right \} \|_{L^2}^2 \,dt \\
& \hspace{42mm}
+ \frac{\gamma_1 \varepsilon_2 + \gamma_2 \varepsilon_1}{\varepsilon_1^2}
\int_0^T\|
\kappa_1 \Delta \overline{v}_{\tau}(t) -\gamma_1\overline{v}_{\tau}(t)
+ \overline{w}_{\tau}(t)\|_{L^2}^2 \,dt. 
\end{align*}
By organizing the above, we obtain the inequality stated in the lemma.
\end{proof}

\begin{lem}\label{properties of DGVI} 
Assume \eqref{uniform L^2} and \eqref{lbc}. 
Let $\{\tau_n\}$ 
and $u$ be that in Lemma \ref{cpt}. 
Then, the following holds: 
\begin{equation*} 
\begin{cases}\vspace{5mm}
\displaystyle
\Tilde{U}_{\tau_n}(t) 
\rightharpoonup u(t) \ \text{weakly in } (L^1\cap L^m)(\mathbb{R}^d) 
\   \text{ for any }t >0, 
\\
\displaystyle
\sup_{n} 
\left (
\int_0^T\|\Tilde{U}_{\tau_n}(t)\|_{L^2}^2
\,dt
+ \int_0^T
\|\nabla \Tilde{U}_{\tau_n}^{m}(t)\|_{L^{\frac{d}{d-1}}}^{p_*}
\,dt
\right )<\infty,  \text{ for any }T>0.
\end{cases}
\end{equation*}
\end{lem}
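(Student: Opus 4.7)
The plan is to parallel the arguments of Sections 4--6, replacing the minimization parameter $\tau$ by $\sigma \in (0,\tau]$ and using the slope estimate of Lemma \ref{lem slope estimate2} in place of \eqref{slope estimate}. The tools already in place are the minimizing property of $U_\sigma^k$, the standard monotonicity of the Moreau--Yosida resolvent \cite[Theorem 3.1.4]{a-g-s}, and the summed dissipation identity appearing in the proof of Lemma \ref{preEI}.

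For the uniform space-time bounds, I would first derive a pointwise-in-$t$ estimate on $\|\Tilde{U}_\tau(t)\|_{L^m}$ from the minimization inequality $\mathcal{E}(U_\sigma^k, v_\tau^k) \leq \mathcal{E}(u_\tau^{k-1}, v_\tau^k)$, absorbing the bilinear term $\int U_\sigma^k v_\tau^k\,dx$ via the Young/Sobolev chain of \eqref{FE} and invoking the bounds on $u_\tau^{k-1}$ and $v_\tau^k$ from Lemma \ref{UB}. With this $L^m$ bound in hand, the bootstrap of Lemma \ref{uL2p} and Corollary \ref{umW1p} carries over verbatim with $\tau$ replaced by $\sigma$, producing pointwise inequalities analogous to \eqref{uL2PW}. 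Dividing by $\sigma$, integrating $d\sigma$ over $(0,\tau)$, summing over $k$ up to $\lceil T/\tau \rceil$, and controlling the dominant Wasserstein contribution via the Moreau--Yosida identity
\[
\sum_{k=1}^{\lceil T/\tau \rceil}\int_0^\tau \frac{\mathcal{W}_2^2(U_\sigma^k, u_\tau^{k-1})}{\sigma^2}\,d\sigma \leq 2\,\mathcal{L}(u_\tau^0, v_\tau^0, w_\tau^0)
\]
(which is part of the computation in the proof of Lemma \ref{preEI}) yields the claimed uniform integrability of $\|\Tilde{U}_{\tau_n}\|_{L^2}^2$ and $\|\nabla \Tilde{U}_{\tau_n}^m\|_{L^{d/(d-1)}}^{p_*}$.

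For the pointwise weak convergence, the decisive observation is the monotonicity $\sigma \mapsto \mathcal{W}_2^2(U_\sigma^k, u_\tau^{k-1})$, which for $\sigma \leq \tau$ gives
\[
\mathcal{W}_2\bigl(\Tilde{U}_\tau(t), \underline{u}_\tau(t)\bigr) \leq \mathcal{W}_2\bigl(\overline{u}_\tau(t), \underline{u}_\tau(t)\bigr) \quad \text{pointwise in } t.
\]
Integrating over $[0,T]$ and invoking Lemma \ref{Hcontinuity},
\[
\int_0^T \mathcal{W}_2^2\bigl(\Tilde{U}_\tau(t), \underline{u}_\tau(t)\bigr)\,dt \leq \tau \sum_k \mathcal{W}_2^2(u_\tau^k, u_\tau^{k-1}) = O(\tau^2),
\]
so along a subsequence $\mathcal{W}_2(\Tilde{U}_{\tau_n}(t), \underline{u}_{\tau_n}(t)) \to 0$ for a.e.\ $t$. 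Combined with the convergence $\underline{u}_{\tau_n}(t) \rightharpoonup u(t)$ from Proposition \ref{cpt} and the fact that Wasserstein convergence controls integration against Lipschitz test functions, this identifies the weak $L^1$-limit of $\Tilde{U}_{\tau_n}(t)$ as $u(t)$ for a.e.\ $t$; the pointwise $L^m$ bound then upgrades this to every $t$ via a uniqueness-of-limits argument applied to arbitrary subsequences.

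The main obstacle is the pointwise-in-$t$ $L^m$ control of $\Tilde{U}_\tau$, since the Lyapunov monotonicity of Proposition \ref{ED} is available only for $\overline{u}_\tau$ at the partition points. One must therefore work with the raw minimization inequality $\mathcal{E}(U_\sigma^k, v_\tau^k) \leq \mathcal{E}(u_\tau^{k-1}, v_\tau^k)$ and be careful to absorb the bilinear term without introducing factors of $\sigma^{-1}$; in the subcritical regime this uses strict concavity of the map $s \mapsto s^{2\theta}$ with $2\theta<m$, and in the critical regime it relies on the strict inequality $M<M_*$ of Proposition \ref{lower bounds in critical}.
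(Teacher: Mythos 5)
Your uniform boundedness argument aligns closely with the paper's: both first obtain an $\|\Tilde{U}_\tau(t)\|_{L^m}$ bound from the raw minimization inequality $\mathcal{E}(U_\sigma^k, v_\tau^k) + \tfrac{1}{2\sigma}\mathcal{W}_2^2(U_\sigma^k, u_\tau^{k-1}) \leq \mathcal{E}(u_\tau^{k-1}, v_\tau^k)$ together with the coercivity \eqref{coercivity of L}, then run the bootstrap of Lemma \ref{uL2p} and Corollary \ref{umW1p} to get the pointwise-in-$\sigma$ analogue of \eqref{uL2PW}, integrate in $\sigma$, sum in $k$, and close via the summed Moreau--Yosida identity (equivalently, Lemma \ref{preEI}). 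This part is correct and essentially the paper's proof.

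In the convergence part you introduce an unnecessary weakening that leaves a small gap. After establishing the pointwise-in-$t$ inequality $\mathcal{W}_2(\Tilde{U}_\tau(t), \underline{u}_\tau(t)) \leq \mathcal{W}_2(\overline{u}_\tau(t), \underline{u}_\tau(t))$, you integrate over $[0,T]$ to get an $O(\tau^2)$ bound and deduce a.e.-$t$ convergence only along a subsequence, and then assert that a ``uniqueness-of-limits argument'' upgrades this to every $t$; that last step is not justified, since there is no continuity in $t$ available to transfer the identification $\Tilde{U}_{\tau_n}(t)\rightharpoonup u(t)$ from a.e.\ $t$ to the exceptional null set. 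The fix is already in your hands and is exactly what the paper does (see \eqref{CSI/DGVI}): discard all but one term of the telescoping sum in Lemma \ref{Hcontinuity} to get the \emph{uniform-in-$t$} estimate $\mathcal{W}_2^2(u_\tau^k, u_\tau^{k-1}) \leq 2\tau(T+1)\,\mathcal{L}(u_\tau^0,v_\tau^0,w_\tau^0)$, so that $\mathcal{W}_2(\Tilde{U}_\tau(t),\overline{u}_\tau(t)) = O(\sqrt\tau)$ for every $t\in[0,T]$ along the full sequence. From there you can either argue as you propose (Kantorovich--Rubinstein duality with $W_1\le W_2$ plus the uniform $L^m$ bound to pass from narrow convergence to weak $L^1\cap L^m$ convergence), or, as the paper does, take a weak subsequential limit $\Tilde U(t_0)$ of $\Tilde U_{\tau_n}(t_0)$, use the weak lower semicontinuity of $\mathcal{W}_2$ to conclude $\mathcal{W}_2(\Tilde U(t_0),u(t_0))=0$, and then note the identification is independent of the subsequence.
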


\begin{proof}
The proof is divided into two parts.

\vspace{2mm}

\noindent
\textbf{(i) convergence} 

\vspace{2mm}

From the definition of 
$U_{\sigma}^k$, we have
\begin{equation*}
\begin{split}
&\mathcal{E}(U_{\sigma}^k, v_{\tau}^k) + 
\frac{1}{2\sigma}\mathcal{W}_2^2(U_{\sigma}^k, u_{\tau}^{k-1}) \\
&\leq  \mathcal{E}(u_{\tau}^k, v_{\tau}^k) 
+ \frac{1}{2\sigma}\mathcal{W}_2^2(u_{\tau}^k, u_{\tau}^{k-1}) \\
&= \left \{
\mathcal{E}(u_{\tau}^k, v_{\tau}^k) 
+ \frac{1}{2\tau}\mathcal{W}_2^2(u_{\tau}^k, u_{\tau}^{k-1}) 
\right \} + \frac{1}{2}
\left( \frac{1}{\sigma} - \frac{1}{\tau} \right )\mathcal{W}_2^2(u_{\tau}^k, u_{\tau}^{k-1}) 
\\ 
& \leq \mathcal{E}(U_{\sigma}^k, v_{\tau}^k) 
+ \frac{1}{2\tau}\mathcal{W}_2^2(U_{\sigma}^k, u_{\tau}^{k-1})
+\frac{1}{2}\left ( \frac{1}{\sigma} - \frac{1}{\tau} \right ) 
\mathcal{W}_2^2(u_{\tau}^k, u_{\tau}^{k-1}) 
\end{split}
\end{equation*}
from which we obtain
$\mathcal{W}_2(U_{\sigma}^k, u_{\tau}^{k-1}) 
\leq \mathcal{W}_2(u_{\tau}^k, u_{\tau}^{k-1})$. 
Consequently, for $t =(k-1)\tau + \sigma$, $\sigma \in (0,\tau]$, 
using the triangle inequality, the above inequality, Young's inequality, 
and Proposition \ref{ED}, we have 
\begin{equation}\label{CSI/DGVI}
\begin{split}
\mathcal{W}_2^2(\Tilde{U}_{\tau}(t), \overline{u}_{\tau}(t)) 
& = \mathcal{W}_2^2(U_{\sigma}^k, u_{\tau}^k) \\
& \leq 
\bigl ( \mathcal{W}_2(U_{\sigma}^k, u_{\tau}^{k-1}) 
+ \mathcal{W}_2(u_{\tau}^k, u_{\tau}^{k-1}) 
\bigr )^2 \\
& \leq 4 \mathcal{W}_2^2(u_{\tau}^k, u_{\tau}^{k-1}) \leq 
8 \tau \left (\frac{1}{2\tau}\sum_{k=1}^N
\mathcal{W}_2^2(u_{\tau}^k, u_{\tau}^{k-1}) \right )
\\
& \leq 8 \tau 
\left [ \mathcal{L}(u_{\tau}^0, v_{\tau}^0, w_{\tau}^0) 
-\mathcal{L}(u_{\tau}^N, v_{\tau}^N, w_{\tau}^N) \right  ] 
\to 0\ (\tau \to 0). 
\end{split}
\end{equation}
Again, from the definition of $U_{\sigma}^k$, we have 
\begin{equation*} 
\begin{split} 
\mathcal{E}(U_{\sigma}^k, v_{\tau}^k) 
+ \frac{1}{2\sigma}\mathcal{W}_2^2(U_{\sigma}^k, u_{\tau}^{k-1})
& \leq \mathcal{E}(u_{\tau}^{k-1}, v_{\tau}^k) 
= \frac{1}{m-1}\int_{\mathbb{R}^d}(u_{\tau}^{k-1})^m\,dx
- \int_{\mathbb{R}^d}u_{\tau}^{k-1}v_{\tau}^k\,dx \\
& \leq 
\frac{1}{m-1}\int_{\mathbb{R}^d}(u_{\tau}^{k-1})^m\,dx 
+ C_*M^{1-\theta}\|u_{\tau}^{k-1}\|_{L^m}^{\theta}\|\Delta v_{\tau}^k\|_{L^m},
\end{split}
\end{equation*}
where 
$C_*$ is defined by \eqref{CM}. 
By Proposition \ref{uniform bounds}, the right-hand side is bounded 
independently of $\tau$, $\sigma$, and $k$, 
so $\mathcal{E}(U_{\sigma}^k, v_{\tau}^k)$ is also bounded.  

From \eqref{coercivity of L}, we have 
\begin{equation*} 
\begin{split} 
\mathcal{L}(U_{\sigma}^k,v_{\tau}^k,w_{\tau}^k)
& =\mathcal{E}(U_{\sigma}^k,v_{\tau}^k)
+ 
\frac{\kappa_1 \kappa_2}{2}\|\Delta v_{\tau}^k\|_{L^2}^2 
+ \frac{\gamma_1 \kappa_2 + \gamma_2 \kappa_1}{2}
\|\nabla v_{\tau}^k\|_{L^2}^2
+ \frac{\gamma_1 \gamma_2}{2}
\|v_{\tau}^k\|_{L^2}^2 \\
& \hspace{5mm}+ 
\displaystyle 
\frac{\varepsilon_2}{2\varepsilon_1}
\|\kappa_1 \Delta v_{\tau}^k - \gamma_1 v_{\tau}^k + w_{\tau}^k\|_{L^2}^2 \\
& \geq 
C_1\|U_{\sigma}^k\|_{L^m}^m
\end{split}
\end{equation*}
for some positive constant $C_1$. 
Therefore, considering \eqref{coercivity of L}, 
we see that $\|U_{\sigma}^k\|_{L^m}$ is bounded independently of 
$\tau$, $\sigma$, and $k$. 
Hence, 
\[
\sup_{\substack{t\in [0,T] \\ \tau \in (0, \tau_*)}} \|\Tilde{U}_{\tau}(t)\|_{L^m}<\infty. 
\]
Now, if $\{ \tau_n \}$ is the sequence from Proposition \ref{cpt}, 
then for any fixed $t_0 \in [0,T]$, 
there exists a subsequence $\{\tau_n'\} \subset \{\tau_n\}$ and 
$\Tilde{U}(t_0)$ such that 
\[
\Tilde{U}_{\tau_n'}(t_0) \rightharpoonup \Tilde{U}(t_0)
\quad \text{weakly in }(L^1\cap L^m)(\mathbb{R}^d). 
\]
By inequality \eqref{CSI/DGVI} and 
the weak lower semicontinuity of the Wasserstein distance \cite[Lemma 7.1.4]{a-g-s}, 
we have 
\[
\mathcal{W}_2(\Tilde{U}(t_0), u(t_0))
\leq \liminf_{n \to \infty}
\mathcal{W}_2(\Tilde{U}_{\tau_n'}(t_0), u_{\tau_n'}(t_0))=0. 
\]
Thus, $\Tilde{U}(t_0)= u(t_0)$. 
This holds for any subsequence of $\{\Tilde{U}_{\tau_n}(t_0)\}$, 
so the result holds without extracting subsequences. 
Therefore, for any $t \in [0,T]$, 
\[
\Tilde{U}_{\tau_n}(t) \rightharpoonup u(t)
\quad \text{weakly in }(L^1\cap L^m)(\mathbb{R}^d). 
\]

\vspace{2mm}

\noindent 
\textbf{(ii) uniform boundedness }

\vspace{2mm}

Let $1<m<2$. 
Following the same argument as in the derivation of inequalities 
\eqref{a priori} and \eqref{uL2PW}, we obtain 
\begin{equation*} 
\frac{C_3^2}{2}  \|U_{\sigma}^k\|_{L^2}^{\frac{4m+2(m-1)d}{d(2-m)}}
\leq 2C_4^2\int_{\mathbb{R}^d}\frac{|\nabla (U_{\sigma}^k)^m
-U_{\sigma}^k\nabla v_{\tau}^k|^2}{U_{\sigma}^k}\,dx
+C_6. 
\end{equation*}
By integrating both sides with respect to $\sigma$ over $(0,\tau]$ 
and summing from $k=1$ to ${\lceil T/\tau \rceil}$, we obtain 
\begin{equation*} 
\frac{C_3^2}{2}  
\sum_{k=1}^{\lceil T/\tau \rceil}
\int_0^{\tau}\|U_{\sigma}^k\|_{L^2}^{\frac{4m+2(m-1)d}{d(2-m)}}\,
d\sigma
\leq 2C_4^2
\sum_{k=1}^{\lceil T/\tau \rceil}
\stint{0}{\tau}{\mathbb{R}^d}\frac{|\nabla (U_{\sigma}^k)^m
-U_{\sigma}^k\nabla v_{\tau}^k|^2}{U_{\sigma}^k}\,dxd\sigma
+C_6(T+\tau_*). 
\end{equation*}
This implies 
\begin{equation*} 
\frac{C_3^2}{2}  
\int_0^T \|\Tilde{U}_{\tau_n}\|_{L^2}^{\frac{4m+2(m-1)d}{d(2-m)}}\,
dt
 \leq 2C_4^2
\stint{0}{T+\tau_*}{\mathbb{R}^d} 
\frac{|\nabla \Tilde{U}_{\tau_n}^m
-\Tilde{U}_{\tau_n}\nabla \overline{v}_{\tau_n}|^2}{\Tilde{U}_{\tau_n}}\,dxdt
+C_6(T+\tau_*). 
\end{equation*}
The right-hand side is bounded independently of $n$ 
by Lemma \ref{preEI}. 
Due to \eqref{uniform L^2}, 
the exponent of the power of the integrand on the left-hand side 
is at least 2. 
Consequently, the result of lemma follows 
in a similar manner to Corollary \ref{umW1p}.  
\end{proof}

\begin{lem}\label{quadratic lsc}
Let $(\overline{u}_{\tau_n},\overline{v}_{\tau_n}, \overline{w}_{\tau_n})$ 
and $(u, v, w)$ be that in Lemma \ref{cpt}. 
Then, for any $T>0$ it holds that 
\begin{equation*} 
\begin{cases} \vspace{2mm}
\displaystyle 
\stint{0}{T}{\mathbb{R}^d} 
\frac{|\nabla u^m -  u 
\nabla v|^2}
{u}\,dxdt
\leq \liminf_{n \to \infty}
\stint{0}{T}{\mathbb{R}^d} 
\frac{|\nabla \overline{u}_{\tau_n}^m -  \overline{u}_{\tau_n} 
\nabla \overline{v}_{\tau_n}|^2}
{\overline{u}_{\tau_n}}\,dxdt, 
\\ \vspace{2mm}
\displaystyle 
\stint{0}{T}{\mathbb{R}^d} 
\frac{|\nabla u^m -  u 
\nabla v|^2}
{u}\,dxdt
\leq \liminf_{n\to \infty}
\stint{0}{T}{\mathbb{R}^d} 
\frac{|\nabla \Tilde{U}_{\tau}^m
-\Tilde{U}_{\tau}\nabla \overline{v}_{\tau}|^2}{\Tilde{U}_{\tau}}\,dxdt, 
\\ \vspace{2mm}
\displaystyle
\stint{0}{T}{\mathbb{R}^d}
|\kappa_1 \Delta v -\gamma_1v
+ w|^2 \,dxdt
\leq \liminf_{n \to \infty}
\stint{0}{T}{\mathbb{R}^d}
|\kappa_1 \Delta \overline{v}_{\tau_n} -\gamma_1\overline{v}_{\tau_n}
+ \overline{w}_{\tau_n}|^2 \,dxdt, \\
\displaystyle
\stint{0}{T}{\mathbb{R}^d}
|\nabla 
\left (
\kappa_1 \Delta v -\gamma_1v
+ w \right ) |^2 \,dxdt
\leq 
\liminf_{n \to \infty}\stint{0}{T}{\mathbb{R}^d}
|\nabla 
\left (
\kappa_1 \Delta \overline{v}_{\tau_n} -\gamma_1\overline{v}_{\tau_n}
+ \overline{w}_{\tau_n} \right ) |^2 \,dxdt. 
\end{cases}
\end{equation*}
\end{lem}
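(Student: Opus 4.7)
The plan is to treat the four inequalities in two groups. Inequalities three and four are consequences of the weak lower semicontinuity of the $L^{2}$ norm on space-time, while inequalities one and two rest on the convex-duality representation of the Benamou--Brenier type functional $(u,\bm{F})\mapsto \iint |\bm{F}|^{2}/u\,dxdt$, combined with the weak convergence of both the densities and the fluxes already established in previous sections.

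For the third and fourth inequalities, I would first upgrade the pointwise-in-$t$ weak convergence supplied by Proposition \ref{cpt} to weak convergence in the appropriate Bochner spaces. The uniform bounds on $\overline{v}_{\tau_{n}}$ in $L^{2}(0,T;W^{3,2})$ and on $\overline{w}_{\tau_{n}}$ in $L^{2}(0,T;W^{2,2})$ given by Proposition \ref{uniform bounds} permit extraction of a subsequence along which $\overline{v}_{\tau_{n}}\rightharpoonup v$ in $L^{2}(0,T;W^{3,2})$ and $\overline{w}_{\tau_{n}}\rightharpoonup w$ in $L^{2}(0,T;W^{2,2})$, the identification of the limits being forced by the uniqueness of weak limits almost everywhere in $t$. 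Consequently $\kappa_{1}\Delta \overline{v}_{\tau_{n}}-\gamma_{1}\overline{v}_{\tau_{n}}+\overline{w}_{\tau_{n}}$ converges weakly in $L^{2}(0,T;H^{1})$, and the two inequalities follow from the weak lower semicontinuity of the $L^{2}$ and $H^{1}$ norms on space-time.

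For the first inequality I would use the variational identity
\[
\int_{\mathbb{R}^{d}}\frac{|\bm{F}|^{2}}{u}\,dx
=\sup_{\bm{\xi}\in C_{c}^{\infty}(\mathbb{R}^{d};\mathbb{R}^{d})}
\int_{\mathbb{R}^{d}}\bigl(2\langle \bm{F},\bm{\xi}\rangle-u|\bm{\xi}|^{2}\bigr)\,dx,
\]
which extends to the space-time integral by choosing $\bm{\xi}\in C_{c}^{\infty}(\mathbb{R}^{d}\times(0,T);\mathbb{R}^{d})$. Writing $\bm{F}_{n}:=\nabla \overline{u}_{\tau_{n}}^{m}-\overline{u}_{\tau_{n}}\nabla \overline{v}_{\tau_{n}}$ and $\bm{F}:=\nabla u^{m}-u\nabla v$, the linear term $\iint \langle \bm{F}_{n},\bm{\xi}\rangle\,dxdt$ converges to $\iint \langle \bm{F},\bm{\xi}\rangle\,dxdt$ by Proposition \ref{L^1}, while the quadratic term $\iint \overline{u}_{\tau_{n}}|\bm{\xi}|^{2}\,dxdt\to \iint u|\bm{\xi}|^{2}\,dxdt$ follows from the pointwise-in-$t$ weak convergence of Proposition \ref{cpt}, the uniform mass bound $\|\overline{u}_{\tau_{n}}(t)\|_{L^{1}}=M$, and dominated convergence in $t$. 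Taking the supremum over $\bm{\xi}$ then yields the first inequality.

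For the second inequality the same convex-duality step applies with $\overline{u}_{\tau_{n}}$ replaced by $\Tilde{U}_{\tau_{n}}$, and I expect this to be the main obstacle, since Proposition \ref{L^1} is stated only for the piecewise constant interpolation. I would therefore first establish the analogue
\[
\iint \langle \nabla \Tilde{U}_{\tau_{n}}^{m}-\Tilde{U}_{\tau_{n}}\nabla \overline{v}_{\tau_{n}},\bm{\xi}\rangle\,dxdt
\longrightarrow \iint \langle \nabla u^{m}-u\nabla v,\bm{\xi}\rangle\,dxdt
\]
by repeating the proof of Proposition \ref{L^1} verbatim, using the slope estimate \eqref{slope estimate2} from Lemma \ref{lem slope estimate2} in place of \eqref{slope estimate}, the pointwise-in-$t$ weak convergence $\Tilde{U}_{\tau_{n}}(t)\rightharpoonup u(t)$ in $(L^{1}\cap L^{m})(\mathbb{R}^{d})$, and the uniform $L^{2}(0,T;L^{2})$ and $L^{p_{*}}(0,T;W^{1,d/(d-1)})$ bounds supplied by Lemma \ref{properties of DGVI}. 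These are precisely the ingredients needed to run the Vitali-type argument that separates the set $S_{\tau_{n}}(K)$ from its complement. Once this convergence is in hand, the convex-duality step reduces, mutatis mutandis, to the one carried out for the first inequality, and the second inequality follows.
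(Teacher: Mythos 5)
Your proposal is correct and uses essentially the same mechanism as the paper: weak convergence of the fluxes (Proposition \ref{L^1}, extended to the De Giorgi interpolant via Lemma \ref{properties of DGVI}) combined with a convex-duality lower semicontinuity argument. The Benamou--Brenier variational representation you invoke for the first two inequalities is just a repackaging of the paper's explicit Cauchy--Schwarz-plus-duality step, and your Bochner-space weak-convergence treatment of the last two inequalities is an equivalent formulation of the $L^2$ lower semicontinuity the paper uses implicitly.
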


\begin{proof}
Since they can all be proven in a similar manner, 
we will only demonstrate the second inequality. 
From Lemma \ref{properties of DGVI}, 
we know that the first equation of  
Proposition \ref{L^1} holds even when 
$\{u_{\tau_n}\}$ is replaced by $\Tilde{U}_{\tau_n}$. 
Therefore, 
\begin{equation*} 
\begin{split} 
& \stint{0}{T}{\mathbb{R}^d} 
\langle \nabla u^m -u\nabla v, \bm{\xi} \rangle\,dxdt 
 = 
\liminf_{n \to \infty}
\stint{0}{T}{\mathbb{R}^d}
 \langle \nabla \Tilde{U}_{\tau_n}^m
-\Tilde{U}_{\tau_n}\nabla \overline{v}_{\tau_n}, \bm{\xi} \rangle\,dxdt \\
& \leq \liminf_{n \to \infty}
\left ( \stint{0}{T}{\mathbb{R}^d} 
\frac{|\nabla \Tilde{U}_{\tau_n}^m
-\Tilde{U}_{\tau_n}\nabla \overline{v}_{\tau}|^2}{\Tilde{U}_{\tau_n}}\,dxdt
\right )^{\frac{1}{2}}
\left ( \stint{0}{T}{\mathbb{R}^d} 
|\bm{\xi}|^2\Tilde{U}_{\tau_n}\,dxdt 
\right )^{\frac{1}{2}} \\
& = \liminf_{n \to \infty}
\left ( \stint{0}{T}{\mathbb{R}^d} 
\frac{|\nabla \Tilde{U}_{\tau_n}^m
-\Tilde{U}_{\tau_n}\nabla \overline{v}_{\tau}|^2}{\Tilde{U}_{\tau_n}}\,dxdt
\right )^{\frac{1}{2}}
\left ( \stint{0}{T}{\mathbb{R}^d} 
|\bm{\xi}|^2u\,dxdt 
\right )^{\frac{1}{2}}. 
\end{split}
\end{equation*}
By duality, the second inequality of the lemma holds. 
\end{proof}

\begin{proof}[Proof of Theorem \ref{energy inequality}]
Substituting $\tau_n$ from Lemma \ref{cpt} into $\tau$ in 
Lemma \ref{preEI} and taking the limit as $n \to \infty$, 
we see from Lemma \ref{quadratic lsc} that 
the desired inequality holds. 
\end{proof}

%:References 
\bibliography{chimera}

\begin{thebibliography}{10}

\bibitem{a-f-p}
Luigi Ambrosio, Nicola Fusco, and Diego Pallara.
\newblock {\em Functions of bounded variation and free discontinuity problems}.
\newblock Oxford university press, 2000.

\bibitem{a-g-s}
Luigi Ambrosio, Nicola Gigli, and Giuseppe Savar{\'e}.
\newblock {\em Gradient flows: in metric spaces and in the space of probability
  measures}.
\newblock Springer Science \& Business Media, 2005.

\bibitem{a-s}
Luigi Ambrosio and Giuseppe Savar{\'e}.
\newblock Gradient flows of probability measures.
\newblock In {\em Handbook of differential equations: evolutionary equations},
  volume~3, pages 1--136. Elsevier, 2007.

\bibitem{a-t}
Gurusamy Arumugam and Jagmohan Tyagi.
\newblock Keller-segel chemotaxis models: A review.
\newblock {\em Acta Applicandae Mathematicae}, 171:1--82, 2021.

\bibitem{b1998}
Piotr Biler.
\newblock Local and global solvability of some parabolic systems modelling
  chemotaxis.
\newblock {\em Advances in Mathematical Sciences and Applications}, 8:715--743,
  1998.

\bibitem{b-z}
Piotr Biler and Jacek Zienkiewicz.
\newblock Blowing up radial solutions in the minimal keller--segel model of
  chemotaxis.
\newblock {\em Journal of Evolution Equations}, 19(1):71--90, 2019.

\bibitem{b-c-c}
Adrien Blanchet, Vincent Calvez, and Jos{\'e}~A Carrillo.
\newblock Convergence of the mass-transport steepest descent scheme for the
  subcritical patlak--keller--segel model.
\newblock {\em SIAM Journal on Numerical Analysis}, 46(2):691--721, 2008.

\bibitem{b-c-l}
Adrien Blanchet, Jos{\'e}~A Carrillo, and Philippe Lauren{\c{c}}ot.
\newblock Critical mass for a patlak--keller--segel model with degenerate
  diffusion in higher dimensions.
\newblock {\em Calculus of Variations and Partial Differential Equations},
  35(2):133--168, 2009.

\bibitem{b2015}
Adrien Blanchet, Jos{\'e}~Antonio Carrillo, David Kinderlehrer, Micha{\l}
  Kowalczyk, Philippe Lauren{\c{c}}ot, and Stefano Lisini.
\newblock A hybrid variational principle for the keller--segel system in
  {$\mathbb{R}^2$}.
\newblock {\em ESAIM: Mathematical Modelling and Numerical Analysis},
  49(6):1553--1576, 2015.

\bibitem{b-d-p}
Adrien Blanchet, Jean Dolbeault, and Beno{\^\i}t Perthame.
\newblock Two-dimensional keller-segel model: Optimal critical mass and
  qualitative properties of the solutions.
\newblock {\em Electronic Journal of Differential Equations (EJDE)[electronic
  only]}, 2006:Paper--No, 2006.

\bibitem{b-l}
Adrien Blanchet and Philippe Lauren{\c{c}}ot.
\newblock The parabolic-parabolic keller-segel system with critical diffusion
  as a gradient flow in {$\mathbb{R}^d$, $d \geq 3$}.
\newblock {\em Communications in Partial Differential Equations},
  38(4):658--686, 2013.

\bibitem{c-c}
Vincent Calvez and Lucilla Corrias.
\newblock The parabolic-parabolic keller-segel model in r2.
\newblock {\em Communications in Mathematical Sciences}, 6(2):417--447, 2008.

\bibitem{f-s}
Kentarou Fujie and Takasi Senba.
\newblock Application of an adams type inequality to a two-chemical substances
  chemotaxis system.
\newblock {\em Journal of Differential Equations}, 263(1):88--148, 2017.

\bibitem{f-s2}
Kentarou Fujie and Takasi Senba.
\newblock Blowup of solutions to a two-chemical substances chemotaxis system in
  the critical dimension.
\newblock {\em Journal of Differential Equations}, 266(2-3):942--976, 2019.

\bibitem{g-z}
Herbert Gajewski, Klaus Zacharias, and Konrad Gr{\"o}ger.
\newblock Global behaviour of a reaction-diffusion system modelling chemotaxis.
\newblock {\em Mathematische Nachrichten}, 195(1):77--114, 1998.

\bibitem{g-t}
David Gilbarg and Neil Trudinger.
\newblock {\em Elliptic partial differential equations of second order}.
\newblock Springer, 1977.

\bibitem{h-i-y}
Takahiro Hashira, Sachiko Ishida, and Tomomi Yokota.
\newblock Finite-time blow-up for quasilinear degenerate keller--segel systems
  of parabolic--parabolic type.
\newblock {\em Journal of Differential Equations}, 264(10):6459--6485, 2018.

\bibitem{h-v1}
Miguel~A Herrero and Juan~JL Vel{\'a}zquez.
\newblock Chemotactic collapse for the keller-segel model.
\newblock {\em Journal of Mathematical Biology}, 35:177--194, 1996.

\bibitem{h-l}
Tatsuya Hosono and Philippe Lauren{\c{c}}ot.
\newblock Global existence and boundedness of solutions to a fully parabolic
  chemotaxis system with indirect signal production in {$\mathbb{R}^4$}.
\newblock {\em arXiv preprint arXiv:2404.01724}, 2024.

\bibitem{h-o}
Tatsuya Hosono and Takayoshi Ogawa.
\newblock Global existence of solutions to the 4d attraction--repulsion
  chemotaxis system and applications of brezis--merle inequality.
\newblock {\em Nonlinearity}, 36(11):5860, 2023.

\bibitem{i-l-m}
Kazuhiro Ishige, Philippe Lauren{\c{c}}ot, and Noriko Mizoguchi.
\newblock Blow-up behavior of solutions to a degenerate parabolic--parabolic
  keller--segel system.
\newblock {\em Mathematische Annalen}, 367:461--499, 2017.

\bibitem{j-k-o}
Richard Jordan, David Kinderlehrer, and Felix Otto.
\newblock The variational formulation of the fokker--planck equation.
\newblock {\em SIAM journal on mathematical analysis}, 29(1):1--17, 1998.

\bibitem{k-w}
Debabrata Karmakar and Gershon Wolansky.
\newblock On patlak-keller-segel system for several populations: a gradient
  flow approach.
\newblock {\em Journal of Differential Equations}, 267(12):7483--7520, 2019.

\bibitem{l-m}
Philippe Lauren{\c{c}}ot and Noriko Mizoguchi.
\newblock Finite time blowup for the parabolic--parabolic keller--segel system
  with critical diffusion.
\newblock {\em Annales de l'IHP Analyse non lin{\'e}aire}, 34(1):197--220,
  2017.

\bibitem{m-m-s}
Daniel Matthes, Robert~J McCann, and Giuseppe Savar{\'e}.
\newblock A family of nonlinear fourth order equations of gradient flow type.
\newblock {\em Communications in Partial Differential Equations},
  34(11):1352--1397, 2009.

\bibitem{ym1}
Yoshifumi Mimura.
\newblock Critical mass of degenerate keller-segel system with no-flux and
  neumann boundary conditions.
\newblock {\em Discrete and Continuous Dynamical Systems}, 37(3):1603--1630,
  2016.

\bibitem{ym2}
Yoshifumi Mimura.
\newblock The variational formulation of the fully parabolic keller--segel
  system with degenerate diffusion.
\newblock {\em Journal of Differential Equations}, 263(2):1477--1521, 2017.

\bibitem{ym3}
Yoshifumi Mimura.
\newblock Global existence of solutions to parabolic-parabolic keller-segel
  system in between two critical exponents.
\newblock {\em Advances in Mathematical Sciences and Applications},
  33(1):77--96, 2024.

\bibitem{n-s-y}
Toshitaka Nagai, Takasi Senba, and Kiyoshi Yoshida.
\newblock Application of the trudinger-moser inequah. ty to a parabolic system
  of chemotaxis.
\newblock {\em Funkc. Ekvacioj}, 40:411--433, 1997.

\bibitem{o-s-w}
Takayoshi Ogawa, Takeshi Suguro, and Hiroshi Wakui.
\newblock Finite time blow up and concentration phenomena for a solution to
  drift-diffusion equations in higher dimensions.
\newblock {\em Calculus of Variations and Partial Differential Equations},
  62(2):47, 2023.

\bibitem{o-w}
Takayoshi Ogawa and Hiroshi Wakui.
\newblock Finite time blow up and non-uniform bound for solutions to a
  degenerate drift-diffusion equation with the mass critical exponent under
  non-weight condition.
\newblock {\em Manuscr. Math.}, 159:475--509, 2019.

\bibitem{o1}
Felix Otto.
\newblock {\em Double degenerate diffusion equations as steepest descent}.
\newblock Citeseer, 1996.

\bibitem{s}
Filippo Santambrogio.
\newblock Optimal transport for applied mathematicians.
\newblock {\em Birk{\"a}user, NY}, 55(58-63):94, 2015.

\bibitem{s-t}
Takashi Suzuki and Ryo Takahashi.
\newblock Degenerate parabolic equation with critical exponent derived from the
  kinetic theory. {II}. {B}lowup threshold.
\newblock {\em Differential Integral Equations}, 22(11-12):1153--1172, 2009.

\bibitem{v2}
C{\'e}dric Villani.
\newblock {\em Topics in optimal transportation}, volume~58.
\newblock American Mathematical Soc., 2021.

\bibitem{v1}
C{\'e}dric Villani et~al.
\newblock {\em Optimal transport: old and new}, volume 338.
\newblock Springer, 2009.

\end{thebibliography}

\end{document}